\newcommand{\RR}{\mathbb{R}}
\newcommand{\NN}{\mathbb{N}}
\newtheorem{theorem}{Theorem}[section]
\newtheorem{proposition}[theorem]{Proposition}
\newtheorem{Lem}[theorem]{Lemma}
\newtheorem{Cor}[theorem]{Corollary}
\newtheorem{Claim}[theorem]{Claim}
\theoremstyle{remark}
\newtheorem{Rq}[theorem]{Remark}
\titleformat{\section}[block]
{\centering\scshape\Large\bfseries}
{\normalfont\textbf{\thesection}.}{0.5em}{}
\titleformat{\subsection}[runin]
{\normalfont\bfseries}
{\thesubsection.}{0.5em}{}[.]
\titlespacing{\subsection}{1pc}{1.5ex plus .1ex minus .2ex}{0.5pc}
\titleformat{\subsubsection}[runin]
{\normalfont\bfseries}
{\thesubsubsection.}{0.5em}{}[.]
\titlespacing{\subsubsection}{1pc}{1.5ex plus .1ex minus .2ex}{0.5pc}
\numberwithin{equation}{section}
\titleformat{\paragraph}[runin]
{\normalfont\itshape}
{\theparagraph.}{0.5em}{}[.]
\titlespacing{\paragraph}{1pc}{1.5ex plus .1ex minus .2ex}{0.5pc}
\newcounter{Hyp}
\newcommand{\Hyp}[1]{\stepcounter{Hyp}\smallskip\textbf{(H\theHyp)}\quad \begin{minipage}[t]{0.9\textwidth} #1 \end{minipage}\smallskip}
\newcounter{Hypo}
\newcommand{\Hypo}[1]{\stepcounter{Hypo}\smallskip\textbf{(H'\theHypo)}\quad \begin{minipage}[t]{0.8\textwidth} #1 \end{minipage}\smallskip}
\title[Asymptotic $N$-soliton-like solutions of the NLKG equation]{On existence and uniqueness of asymptotic $N$-soliton-like solutions of the nonlinear Klein-Gordon equation}
\author{Xavier Friederich}
\address{Institut de Recherche Mathématique Avancée UMR 7501, Université de Strasbourg, Strasbourg, France}
\email{friederich@math.unistra.fr}
\begin{document}

\begin{abstract}
We are interested in solutions of the nonlinear Klein-Gordon equation (NLKG) in $\RR^{1+d}$, $d\ge 1$, which behave as a soliton or a sum of solitons in large time. In the spirit of other articles focusing on the supercritical generalized Korteweg-de Vries equations and on the nonlinear Schrödinger equations, we obtain an $N$-parameter family of solutions of (NLKG) which converges exponentially fast to a sum of $N$ given (unstable) solitons. For $N=1$, this family completely describes the set of solutions converging to the soliton considered; for $N\ge 2$, we prove uniqueness in a class with explicit algebraic rate of convergence. 
%Some technical differences with previous works are to be noticed in our proofs. In this article, existence and uniqueness of the $N$-parameter family of solutions entirely rely on the spectral properties of the linearized operators around the solitons.
\end{abstract}

\maketitle

\let\thefootnote\relax\footnotetext{2020 \textit{Mathematics Subject Classification:} Primary 35Q51, 35L71; Secondary 35B40, 35C08, 37K40.}
\let\thefootnote\relax\footnotetext{ \textit{Key words:} nonlinear Klein-Gordon equation; solitons; multi-solitons; classification.}

\section{Introduction}

\subsection{Setting of the problem}

We consider the following nonlinear Klein-Gordon equation
\begin{gather}\tag{NLKG}\label{NLKG}
\partial_t^2u=\Delta u-u+f(u),
\end{gather}
where $u$ is a real-valued function of $(t,x)\in\RR\times\RR^d$ and $f$ is a $\mathscr{C}^1$ real-valued function on $\RR$. This equation classically rewrites as the following first order system in time:
\begin{gather}\tag{NLKG'}\label{NLKG'}
\partial_tU=\left(\begin{array}{cc}
0&Id\\
\Delta-Id&0
\end{array}\right)U+\dbinom{0}{f(u)},
\end{gather}
where $U$ is the two-vector $\dbinom{u}{\partial_tu}$. \\

Let us denote by $F$ the unique primitive of $f$ on $\RR$ which vanishes in $0$. We make the following assumptions:
\begin{itemize}
\item if $d=1$, \\
\Hyp{
$f$ is odd and $f'(0)=0$.
}
\Hyp{
There exists $r>0$ such that $F(r)>\frac{1}{2}r^2$.
}

\item if $d\ge 2$,\\
\Hypo{
$f$ is a pure $H^1$-subcritical nonlinearity $r\mapsto \lambda |r|^{p-1}r$, with $\lambda>0$ and $p>1$ if $d=2$ and $p\in\left(1,\frac{d+2}{d-2}\right)$ if $d\ge 3$.
}
 \end{itemize}

Assumption \textbf{(H1)} for $d=1$ or assumption \textbf{(H'1)} for $d\ge 2$  on the nonlinearity $f$ ensures that the Cauchy problem is locally well-posed in the energy space $H^1(\RR^d)\times L^2(\RR^d)$ \cite{gv,nakamuraozawa}. It is even globally well-posed if one assumes further sufficient smallness on the initial condition.

Recall also that the following quantities are conserved for $H^1\times L^2$-solutions $(u,\partial_tu)$ of \eqref{NLKG'}:
\begin{itemize}
\item the energy $\frac{1}{2}\int_{\RR^d}\left\{(\partial_tu)^2+|\nabla u|^2+u^2-2F(u)\right\}(t,x)\;dx$
\item the momentum $\int_{\RR^d}\left\{\partial_tu\nabla u\right\}(t,x)\;dx$.
\end{itemize}

Moreover, the structure of the equation is left invariant under the action of $\RR\times\RR^d$ by (time and space) translation, and under the action of the Lorentz group $O(1,d)$ which consists of the linear automorphisms of $\RR^{1+d}$ that preserve the quadratic form $(t,x_1,\dots,x_d)\mapsto t^2-\sum_{i=1}^dx_i^2$. In other words, precising this latter action, for all $\beta\in\RR^d$ with $\RR^d$-euclidean norm $|\beta|<1$ and $\gamma:=\frac{1}{\sqrt{1-|\beta| ^2}}$, $u$ is a solution of \eqref{NLKG} if and only if
$$(t,x)\mapsto u\left(\Lambda_\beta(t,x)\right)$$ is still a solution to \eqref{NLKG},
where $\Lambda_\beta$ is the linear transformation with matrix
$$\left(\begin{array}{cc}
\gamma&-\gamma\beta\\
-\gamma \beta^\top&I_d+\frac{\gamma-1}{|\beta|^2}\beta^\top \beta
\end{array}\right)$$
in the canonical basis of $\RR^{1+d}$.
We observe in particular that $$\Lambda_\beta(t,x)=\left(\gamma(t-\beta x),\gamma(x-\beta t)\right)$$ if  $d=1$.
We refer to \cite{cotemunoz} for further details concerning the Lorentz transformations in all dimensions.

\bigskip 

It is well-known that \eqref{NLKG} admits a family of solitons indexed by two parameters: the velocity parameter $\beta\in\RR^d$ with $|\beta|<1$ and the translation parameter $x_0\in\RR^d$. Let $Q$ denote the unique (up to translation) positive $H^1$ solution of the following stationary elliptic problem, associated with \eqref{NLKG}:
\begin{equation}
\Delta Q-Q+f(Q)=0
\end{equation}
which we take as radial; for the record, existence of $Q$ follows from a standard result of Berestycki and Lions \cite{berestycki} due to \textbf{(H2)} or \textbf{(H'1)} and uniqueness has been proved in Kwong \cite{kwong} (in the case where $f(u)=|u|^{p-1}u$ is the particular power nonlinearity) and in Serrin and Tang \cite{serrin}. We recall that $Q$ and its partial derivatives up to order $3$ decay exponentially. Then for all $\beta\in\RR^d$ such that $|\beta|<1$, for all $x_0\in\RR^d$, the boosted ground state
$$Q_{\beta,x_0}:(t,x)\mapsto Q\left(pr\circ\Lambda_\beta(t,x-x_0)\right),$$ where $\gamma:=\frac{1}{\sqrt{1-|\beta|^2}}$ and $pr$ is the canonical projection $\RR^{1+d}\to \RR^{d}$ on the last $d$ coordinates, is a solution of \eqref{NLKG} known as \emph{soliton}. In the one-dimensional case, this soliton rewrites 
$$Q_{\beta,x_0}:(t,x)\mapsto Q\left(x-\beta t-x_0\right).$$

Soliton theory concerning \eqref{NLKG} has extensively been studied in many articles. One major result is linked to the classification of the solutions with energy near that of the ground state. 
Dynamics of the solutions $u$ of \eqref{NLKG} on the threshold energy $E(u)=E(Q)$ has been investigated in Duyckaerts and Merle \cite{dm1}. More generally, classification of the solutions with energy less than a quantity slightly larger than the energy of the ground state has been done by Nakanishi and Schlag \cite{nakanishi2011} and by Krieger, Nakanishi and Schlag \cite{krieger}.

Let us also mention that solitons of \eqref{NLKG} are known to be orbitally unstable in $H^1(\RR^d)$ by a general property by Grillakis, Shatah and Strauss \cite{gss}. \\

We further develop soliton analysis by exploring solutions which behave as a soliton or a sum of solitons as time goes to infinity.

For all $\beta\in\RR^d$ such that $|\beta|<1$ and $x_0\in\RR^d$, let us denote $$R_{\beta,x_0}(t,x):=\dbinom{Q_{\beta,x_0}(t,x)}{\partial_tQ_{\beta,x_0}(t,x)}=\dbinom{Q_{\beta,x_0}(t,x)}{-\beta\cdot\nabla Q_{\beta,x_0}(t,x)}.$$
When $x_0=0$, we will write $R_{\beta}$ instead of $R_{\beta,0}$ for the sake of simplification.\\

Drawing on the work by Grillakis, Shatah and Strauss \cite{gss,gss2}, Côte and Muñoz \cite{cotemunoz} have developed and proved spectral results adapted to the unstable dynamic around the (vector) soliton $R_\beta$. Essential properties which are needed in this paper, as well as the introduction of useful notations, are presented in the next subsection. Note that a similar spectral theory was firstly considered by Pego and Weinstein \cite{pego} in the context of the generalized Korteweg-de Vries equations.

Starting from this point of view, we are interested in solutions which converge to a soliton or a sum of solitons for large values of $t$; these solutions are classically known as \emph{multi-solitons}. \\

Let us consider an integer $N\ge 1$ and $2N$ parameters 
$$x_1,\dots,x_N\in\RR^d\quad\text{and}\quad  \beta_1,\dots,\beta_N\in\RR^d$$ such that 
$$\forall\;i=1,\dots,N,\quad |\beta_i|<1\qquad\text{and}\qquad\forall\;i\ne j,\quad \beta_i\ne\beta_j.$$

We recall the following theorem by Côte and Muñoz which states the existence of at least \emph{one} multi-soliton.

\begin{theorem}[\cite{cotemunoz}]\label{th_cotemunoz}
There exist $\sigma_0$, $t_0\in\RR$ and $C_0>0$, only depending on the sets $(\beta_i)_i$, $(x_i)_i$, and a solution $U=\dbinom{u}{\partial_tu}\in\mathscr{C}([t_0,+\infty),H^1(\RR^d)\times L^2(\RR^d))$ of \eqref{NLKG} such that for all $t\ge t_0$,
$$\left\|U(t)-\sum_{i=1}^NR_{\beta_i,x_i}(t)\right\|_{H^1\times L^2}\le C_0e^{-\sigma_0 t}.$$
\end{theorem}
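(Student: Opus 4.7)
The plan is to construct the multi-soliton by the classical backward integration scheme. For a sequence $T_n \to +\infty$, let $U_n \in \mathscr{C}(I_n, H^1(\RR^d) \times L^2(\RR^d))$ denote the solution of \eqref{NLKG'} defined backward in time from the Cauchy data $U_n(T_n) = \sum_{i=1}^N R_{\beta_i, x_i}(T_n)$ at time $T_n$. If I can show that all $U_n$ are defined on a common interval $[t_0, T_n]$ and satisfy there a uniform exponential estimate, then $(U_n(t_0))_n$ is bounded in $H^1 \times L^2$; extracting a weak limit and appealing to local well-posedness and continuous dependence on the data produces a solution $U$ defined on $[t_0, +\infty)$, and lower semi-continuity of the norm under weak convergence transfers the exponential bound to $U$.

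The core of the argument is thus to prove the uniform estimate
\[
\left\| U_n(t) - \sum_{i=1}^N R_{\beta_i, x_i}(t) \right\|_{H^1 \times L^2} \le C_0 e^{-\sigma_0 t}, \qquad t \in [t_0, T_n],
\]
by a bootstrap coupled with a modulation. I would decompose $U_n(t) = \sum_{i=1}^N R_{\tilde\beta_i(t), \tilde x_i(t)}(t) + W(t)$, choosing the $\mathscr{C}^1$ modulation parameters $\tilde\beta_i(t), \tilde x_i(t)$ so that $W(t)$ is orthogonal (in the symplectic pairing) to the neutral directions of each linearized operator around $R_{\beta_i, x_i}$. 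The Côte-Muñoz spectral theory then furnishes projections $a_i^{\pm}(t)$ of $W(t)$ onto the unstable and stable eigenmodes of each soliton, together with a quadratic functional $\mathcal{H}(t)$ coercing $\|W(t)\|_{H^1 \times L^2}^2$ modulo these $a_i^{\pm}$. Differentiating $\mathcal{H}$ in time, the inter-soliton interaction terms appear as errors; since the centers $x_i + \beta_i t$ separate linearly in $t$ (all $\beta_i$ being distinct) and $Q$ decays exponentially, these interactions are exponentially small in $t$, which closes the bootstrap on the stable and neutral parts.

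The main obstacle, and the reason for working backward from $T_n$, is the treatment of the unstable coefficients $a_i^+(t)$, which obey ODEs of the schematic form $\dot a_i^+ \approx \nu_i a_i^+ + O(\|W\|^2 + e^{-\kappa t})$ with $\nu_i > 0$ and would blow up uncontrollably in forward time. I would deal with them by a topological shooting argument in the spirit of Duyckaerts-Merle and Côte-Martel-Merle: perturb the final datum $U_n(T_n)$ by a small finite-dimensional correction in the unstable directions $Z_i^+$, and apply a Brouwer-type fixed point theorem to select a correction for which $a_i^+(t_0) = 0$. The expansive character of the unstable flow \emph{forward} in time ensures that such a correction exists and that its backward evolution remains exponentially small. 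Combining this topological argument with the Lyapunov control on the stable and neutral components yields the required uniform bound, after which the compactness step above concludes the proof.
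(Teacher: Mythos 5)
Your proposal is correct and follows essentially the same strategy as the proof in Côte--Muñoz (the paper only cites this theorem, but reproduces the very same scheme in Section \ref{sect_const_mult} for the refined statement): backward integration from final data, modulation and localized coercivity to control the stable and neutral components, a Brouwer-type shooting argument on the exit of the unstable coefficients, and passage to the limit by weak compactness. The only point to phrase carefully in the last step is that one invokes continuity of the flow for the \emph{weak} $H^1\times L^2$ topology (as in \cite[Lemma 10]{cotemunoz}) rather than ordinary continuous dependence on the data.
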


 Let us mention that, dealing with complex valued solutions of \eqref{NLKG}, thus opening the possibi\-lity of considering stable solitons, Bellazzani, Ghimenti and Le Coz \cite{lecoz} obtained a similar existence result for \eqref{NLKG} in this particular stable case. We also notice that the previous theorem has been extended to solutions describing \emph{multi-bound states} by Côte and Martel \cite{cote_martel}, that is to multi-traveling waves made of any number $N$ of decoupled general (excited) bound states. In the present paper, we will however only focus on (real valued) multi-solitary waves in the above sense.

Since solitons are unstable, a solution of \eqref{NLKG} which behaves as a soliton in large time is not expected to be necessarily a soliton. One of our goals is thus to precise the dynamic of the flow of \eqref{NLKG} near a soliton. Similarly, the dynamic near a sum of solitons is also supposed to be more complex as time goes to infinity.

\subsection{Main results}

Given $N$ distinct velocity parameters, we aim at proving the existence of a whole family of multi-solitons which turns out to be the unique family of multi-solitons in a certain class of solutions. Our first result reads as follows.

\begin{theorem}\label{th_main_N}
Assume that $f$ is of class $\mathscr{C}^2$ and $0<|\beta_N|<\dots<|\beta_1|<1$.
There exist $\sigma>0$, $0<e_{\beta_1}<\dots<e_{\beta_N}$, $Y_{+,i}\in \mathscr{C}(\RR,H^1(\RR^d)\times L^2(\RR^d))\cap L^\infty(\RR,H^1(\RR^d)\times L^2(\RR^d))$ for $i=1,\dots,N$  and an $N$-parameter family $(\varphi_{A_1,\dots,A_N})_{(A_1,\dots,A_N)\in\RR^N}$ of solutions of \eqref{NLKG} such that, for all $(A_1,\dots,A_N)\in\RR^N$, there exist $t_0\in\RR$ and $C>0$  such that 
\begin{equation}\label{est_N_param}
\forall\;t\ge t_0,\qquad\left\|\Phi_{A_1,\dots,A_N}(t)-\sum_{i=1}^NR_{\beta_i,x_i}(t)-\sum_{i=1}^NA_ie^{-e_{\beta_i} t}Y_{+,i}(t)\right\|_{H^1\times L^2}\le Ce^{-(e_{\beta_N}+\sigma) t},
\end{equation} where $\Phi_{A_1,\dots,A_N}:=\dbinom{\varphi_{A_1,\dots,A_N}}{\partial_t\varphi_{A_1,\dots,A_N}}$. In addition, if $(A'_1,\dots,A'_N)\neq (A_1,\dots,A_N)$, then $\varphi_{A'_1,\dots,A'_N}\neq \varphi_{A_1,\dots,A_N}$. 
\end{theorem}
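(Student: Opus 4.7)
\medskip
\noindent\textit{Proof plan.} The plan is to reduce the problem to a perturbation analysis around the approximate profile
\[
P_{A_1,\dots,A_N}(t) := \sum_{i=1}^N R_{\beta_i,x_i}(t) + \sum_{i=1}^N A_i e^{-e_{\beta_i} t} Y_{+,i}(t),
\]
and to produce $\Phi_{A_1,\dots,A_N}$ by a compactness argument on a sequence of backward Cauchy problems with final data at times $T_n\to+\infty$. First I would recall from the Côte--Muñoz spectral theory mentioned in the text the structure of the linearized operator $\mathcal{L}_{\beta_i}$ about each boosted ground state $R_{\beta_i}$, in suitably boosted coordinates: it has a pair of real eigenvalues $\pm e_{\beta_i}$ with smooth, exponentially localized eigenvectors $Y_{\pm,i}$, together with a generalized null-space from the symmetries (translations/Lorentz), and a coercive stable part after projection. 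The ordering $|\beta_N|<\dots<|\beta_1|$ will be used to guarantee the strict ordering $e_{\beta_1}<\dots<e_{\beta_N}$ (monotonicity of $e_\beta$ in $|\beta|$), which is crucial for the hierarchy of exponential rates in the statement.

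\medskip
For each $T_n$ I would define an approximate solution $\Phi_n$ of \eqref{NLKG'} on $[t_0,T_n]$ by solving backwards with final data
\[
\Phi_n(T_n) = P_{A_1,\dots,A_N}(T_n) + \sum_{i=1}^N b_i^n Y_{+,i}(T_n),
\]
where the vector $(b_i^n)\in\mathbb{R}^N$ is a small parameter to be chosen. Writing $\Phi_n = \sum_i R_{\beta_i(t),x_i(t)} + \sum_i A_i e^{-e_{\beta_i}t} Y_{+,i} + \eta_n$, I would introduce $N$ pairs of modulation parameters $(x_i(t),\beta_i(t))$ so as to impose orthogonality of $\eta_n$ to the null directions of each $\mathcal{L}_{\beta_i}$, and then run a bootstrap argument on the topology
\[
\|\eta_n(t)\|_{H^1\times L^2}\le C e^{-(e_{\beta_N}+\sigma)t},\qquad |\dot x_i-\beta_i|+|\dot\beta_i|\ll e^{-(e_{\beta_N}+\sigma)t},
\]
together with smallness of the projections of $\eta_n$ onto each $Y_{\pm,i}$. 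The control of the stable/neutral part is handled by a coercive Lyapunov functional of the form (energy)$\,+\,\sum_i \beta_i\cdot$(momentum) plus localization cut-offs that separate the $N$ solitons, as in the Côte--Muñoz and Côte--Martel constructions; this produces a monotonicity formula that, combined with localized virial-type identities, converts the bootstrap on the unstable parts into a strict-inequality closure on the stable parts. The projections of $\eta_n$ onto the $N$ unstable eigenvectors $Y_{+,i}$ cannot be closed by energy methods since they solve, at principal order, scalar ODEs $\dot a_i = e_{\beta_i} a_i + \mathrm{O}(e^{-(e_{\beta_N}+\sigma)t})$; these are the $N$ "bad" directions and I would use a Brouwer-type topological argument on the final datum $(b_i^n)\in\mathbb{R}^N$ to select, for each $n$, a choice so that $a_i^n(t_0)=0$ for all $i$, exactly as in the gKdV and NLS analogues referenced in the abstract.

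\medskip
Once the $\Phi_n$ are constructed uniformly on $[t_0,T_n]$, I would extract a limit $\Phi_{A_1,\dots,A_N}$ via weak-$*$ convergence in $L^\infty_t(H^1\times L^2)$ combined with the Aubin--Lions compactness on compacts, check that it is a strong solution of \eqref{NLKG} satisfying \eqref{est_N_param}, and note that the freedom in $(A_1,\dots,A_N)$ produces the announced $N$-parameter family. For the injectivity, if $\varphi_{A_1,\dots,A_N}=\varphi_{A'_1,\dots,A'_N}$ with the two tuples distinct, then subtracting the two asymptotic expansions \eqref{est_N_param} yields $\sum_i (A_i-A'_i)e^{-e_{\beta_i}t}Y_{+,i}(t) = \mathrm{O}(e^{-(e_{\beta_N}+\sigma)t})$; testing successively against the dual eigenvectors of the operators $\mathcal{L}_{\beta_i}$ (after Lorentz boost to isolate each soliton, using the asymptotic decoupling $|\beta_i-\beta_j|>0$) forces $A_i=A'_i$ for all $i$, a contradiction. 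The main obstacle I expect is not the construction near a single soliton (essentially a one-parameter Duyckaerts--Merle / Combet-style argument), but the simultaneous handling of the $N$ different exponential rates $e_{\beta_i}$: the hierarchy $e_{\beta_1}<\dots<e_{\beta_N}$ means that when bootstrapping the remainder at rate $e^{-(e_{\beta_N}+\sigma)t}$, the correction terms $A_i e^{-e_{\beta_i}t} Y_{+,i}$ for $i<N$ are much larger than the error, so the nonlinearity of $f$ must be expanded to second order around $\sum R_{\beta_i,x_i}$ in the definition of $P_{A_1,\dots,A_N}$ and the quadratic cross-interactions between the unstable corrections at distinct solitons must be shown to decay at least as fast as $e^{-(e_{\beta_N}+\sigma)t}$ using exponential separation of the soliton supports. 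This is where the $\mathscr{C}^2$ hypothesis on $f$ and the strict ordering assumption on the velocities are used in an essential way.
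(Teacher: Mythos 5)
Your toolbox is the right one (backward integration from prescribed data at times $S_n\to+\infty$, a coercive localized quadratic form for the bulk of the error, a Brouwer shooting argument for the exponentially unstable directions, weak compactness to pass to the limit, and the same injectivity argument by comparing expansions), but your architecture is genuinely different from the paper's, and the difference matters. You build the whole profile $P_{A_1,\dots,A_N}=\sum_iR_{\beta_i,x_i}+\sum_iA_ie^{-e_{\beta_i}t}Y_{+,i}$ in one pass, with an $N$-dimensional shooting parameter and a bootstrap at the finest rate $e^{-(e_{\beta_N}+\sigma)t}$. The paper instead proves a single-direction perturbation result (Proposition \ref{prop_main}): given \emph{any} multi-soliton $\Phi$ and one index $j$, it produces $U$ with $\|U-\Phi-A_je^{-e_jt}Y_{+,j}\|\le Ce^{-(e_j+\sigma)t}$, and then obtains $\Phi_{A_1,\dots,A_N}$ by iterating this $N$ times, each time taking the previously built multi-soliton as the new $\Phi$. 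At step $j$ only the directions $Z_{-,k}$ with $e_k>e_j$ require the topological argument (hence Brouwer in $\RR^{N-j}$, not $\RR^N$; the directions with $e_k\le e_j$ close by direct backward integration of the ODE for $\alpha_{-,k}$), and no modulation of the velocities $\beta_i(t)$ is performed — only scalar coefficients $a_k(t)$ projecting out the $\partial_xR_k$ directions. The limit is taken using weak continuity of the \eqref{NLKG} flow, not Aubin--Lions.

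The reason the iteration is not merely cosmetic is exactly the obstacle you flag at the end and do not resolve. In your single-pass bootstrap at rate $e^{-(e_{\beta_N}+\sigma)t}$, the source term $f(u)-f(\varphi)-f'(\varphi)(u-\varphi)$ contributes $\mathrm{O}\bigl(e^{-2e_{\beta_1}t}\bigr)$ from the slowest correction $A_1e^{-e_{\beta_1}t}Y_{+,1}$, and $2e_{\beta_1}\ge e_{\beta_N}+\sigma$ can fail by an arbitrary margin. Your proposed remedy — augmenting $P$ by second-order correctors — is a nontrivial construction in its own right (one must solve inhomogeneous linearized problems around each boosted soliton and control their decay and cross-interactions), it is not carried out in your plan, and it cannot be pushed further than order two under the standing hypothesis $f\in\mathscr{C}^2$, whereas closing the bootstrap may require correctors up to order $k$ with $ke_{\beta_1}>e_{\beta_N}$. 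The paper's scheme dispenses with all of this: at step $j$ the target is only $e^{-(e_j+\sigma)t}$ and the quadratic error $\|U-\Phi\|^2\lesssim\|W\|^2+e^{-2e_jt}$ is admissible since $2e_j\ge e_j+4\sigma$. (Note in passing that the accumulated error of the iteration is $\sum_je^{-(e_j+\sigma)t}$, which is governed by the \emph{smallest} rate $e_{\beta_1}$; the sharper rate displayed in \eqref{est_N_param} is recovered direction by direction through the intermediate estimates \eqref{eq_aux_th_1}--\eqref{eq_aux_th_2}, which is also exactly how the injectivity is proved.) So as written your proposal has a genuine gap at the step where the bootstrap on the remainder must be closed; adopting the iterative reduction to Proposition \ref{prop_main} is the cleanest way to fill it.
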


\begin{Rq}
The parameters $e_{\beta_i}$ and the functions $Y_{+,i}$ ($i=1,\dots,N$) are defined in Proposition \ref{prop_spectral} and in subsection \ref{subsection_multi-sol}. \\
One can moreover precise the value of $\sigma$ in Theorem \ref{th_main_N}; for this, we refer to \eqref{def_sigma}.
\end{Rq}

Our next result is concerned with the classification of multi-solitons. We aim at proving that any multi-soliton should belong to the family constructed in Theorem \ref{th_main_N} above. This is indeed the case, if one knows that the multi-soliton converges sufficiently fast to its profile. As it is stated below, a decay in a power of $t$ of degree larger than 3 is a sufficient rate. 

Let us emphasize that this is a much weaker assumption than exponential decay, which is natural in view of the convergence \eqref{est_N_param}. We also underline that we obtain for \eqref{NLKG} an effective rate. As a comparison, in the context of the nonlinear Schrödinger equations, uniqueness of a multi-soliton (in the stable case) was shown in a class of solutions with convergence faster than any power of $\frac{1}{t}$, see \cite{cf}: this is still much weaker than the exponential convergence, but not well behaved as what is proven here for \eqref{NLKG}. We nevertheless conjecture that the classification should hold in the most general case when no decay rate is assumed.

Here is the precise statement.

\begin{theorem}\label{th_main_N_class}
Under the assumptions of Theorem \ref{th_main_N} and keeping the same notations, if $u$ is a solution of \eqref{NLKG} such that 
\begin{equation}\label{cv_polynom}
\left\|U(t)-\sum_{i=1}^NR_{\beta_i,x_i}(t)\right\|_{H^1\times L^2}=\mathrm{O}\left(\frac{1}{t^\alpha}\right)\quad \text{as } t\to+\infty, 
\end{equation} where $U=\dbinom{u}{\partial_tu}$ and where $\alpha>3$, then there exist $A_1,\dots,A_N\in\RR$ and $t_0\in\RR$ such that for all $t\ge t_0$, $U(t)=\Phi_{A_1,\dots,A_N}(t)$.
\end{theorem}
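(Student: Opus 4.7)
The plan is to exploit the spectral decomposition around each soliton (recalled from Côte--Muñoz earlier in the paper) to identify candidate asymptotic parameters $A_1,\dots,A_N\in\RR$ intrinsically from $u$, and then to use a rigidity argument on the difference $U-\Phi_{A_1,\dots,A_N}$ to conclude that it vanishes for $t$ large, after which Cauchy uniqueness for \eqref{NLKG} yields the conclusion. To carry this out, I would first modulate $U$ around $\sum_i R_{\beta_i,x_i}$: a standard implicit-function-theorem argument produces $\mathscr{C}^1$ geometric corrections $\widetilde x_i(t)\to 0$ and a remainder
\begin{equation*}
\varepsilon(t):=U(t)-\sum_{i=1}^N R_{\beta_i,x_i+\widetilde x_i(t)}(t),
\end{equation*}
orthogonal (in the symplectic pairing of Côte--Muñoz) to the generalized kernel of each linearized operator around $R_{\beta_i,x_i}$, with $\|\varepsilon(t)\|_{H^1\times L^2}+\sum_i|\widetilde x_i'(t)|=O(t^{-\alpha})$.

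Projecting $\varepsilon$ on the stable/unstable eigenmodes $Y_{\pm,i}$ of the linearized operator at each moving soliton yields scalar coefficients $b_i^\pm(t)$ satisfying ODEs of the schematic form $\dot b_i^\pm=\pm e_{\beta_i}b_i^\pm+h_i^\pm$, in which $h_i^\pm$ combines modulation errors (in fact quadratic in $\varepsilon$ thanks to the orthogonality conditions), exponentially small soliton interactions, and nonlinear contributions of order $O(\|\varepsilon\|^2)=O(t^{-2\alpha})$. Duhamel integration of the unstable equation backward from $+\infty$ (legitimate because $b_i^-\to 0$) gives $|b_i^-(t)|=O(t^{-2\alpha}+e^{-\eta t})$, a strict improvement over the a priori bound. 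Iterating this improvement and integrating the stable equation with a carefully chosen integration constant would allow one to define
\begin{equation*}
A_i := \lim_{t\to+\infty} e^{e_{\beta_i}t}b_i^+(t) \in \RR,
\end{equation*}
the convergence of which relies crucially on $\alpha>3$, needed so that at each bootstrap step the corresponding Duhamel integral closes.

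With the $A_i$'s so fixed, I would set $W:=U-\Phi_{A_1,\dots,A_N}$. Combining the refined expansion of $\varepsilon$ obtained above with \eqref{est_N_param} applied to $\Phi_{A_1,\dots,A_N}$, one gets $\|W(t)\|_{H^1\times L^2}=o(e^{-e_{\beta_N}t})$; re-running the modulation and spectral decomposition on $W$ itself then yields a decay faster than every exponential rate arising in the spectral analysis. A Gronwall/contraction mechanism in the spirit of the existence part of Theorem~\ref{th_main_N}, now applied in the reverse direction and started from $W\to 0$ at $+\infty$, would finally force $W\equiv 0$ for $t$ large, whence $U=\Phi_{A_1,\dots,A_N}$ on this time interval. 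The main obstacle is the extraction of the $A_i$: converting the purely polynomial decay of $\varepsilon$ into an exponential asymptotic $A_ie^{-e_{\beta_i}t}$ along the stable modes is delicate because the source $e^{e_{\beta_i}t}h_i^+(t)$ a priori grows, and closing the Duhamel integrals requires repeatedly exploiting the self-improved decay of the unstable projections and of the nonlinear remainders, with $\alpha>3$ appearing as the precise threshold at which this iteration stabilizes.
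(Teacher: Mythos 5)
Your overall skeleton (modulate around the soliton sum, project on the spectral directions, extract each $A_i$ as $\lim_{t\to+\infty}e^{e_it}\times(\text{projection on }Z_{-,i})$, then kill the difference $U-\Phi_{A_1,\dots,A_N}$ by a rigidity estimate) matches the second half of the paper's argument. The genuine gap is in the first half: you never explain how to upgrade the a priori bound $\|\varepsilon(t)\|_{H^1\times L^2}=\mathrm{O}(t^{-\alpha})$ to an exponential bound on the \emph{whole} remainder, and not merely on its finitely many spectral projections. Duhamel integration of the ODEs $\dot b_i^\pm=\pm e_{\beta_i}b_i^\pm+h_i^\pm$ improves the projections once the source $h_i^\pm=\mathrm{O}(\|\varepsilon\|^2)$ is known to be small, but the infinite-dimensional part of $\varepsilon$ orthogonal to the eigenmodes can only be controlled through a localized energy functional, and the time derivative of any such localized functional contains cutoff errors of size $\frac{1}{t}\|\varepsilon\|^2$, i.e.\ of the same order as the functional divided by $t$. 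A two-sided bound $|\mathcal{F}'|\le \frac{C}{t}\|\varepsilon\|^2$ integrates to a loss, so your iteration does not close: each pass through the energy estimate costs a power of $t$ instead of gaining one. This is exactly why the paper does not argue as you propose; it establishes the one-sided almost-monotonicity property $-\mathcal{F}_\varepsilon'(t)\le\frac{\lambda}{t}\mathcal{F}_\varepsilon(t)+(\text{integrable errors})$ with $\lambda\in(1,\alpha-1)$ (Proposition \ref{prop_der_F_z_F_E} and Corollary \ref{cor_monotonie_F_E}), multiplies by the integrating factor $t^\lambda$, and deduces $\|Z(t)\|_{H^1\times L^2}\le C\sup_{t'\ge t}\sum_i|\alpha_{-,i}(t')|$; the hypothesis $\alpha>3$ enters through the convergence of the resulting nested integrals such as $\int_t^{+\infty}u^{-\lambda/2}\bigl(\int_u^{+\infty}t'^{\lambda}\|Z(t')\|_{H^1\times L^2}\;dt'\bigr)^{1/2}du$, not through a threshold at which a Duhamel iteration stabilizes. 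The rate $e^{-e_1t}$ is then extracted from the quadratic differential inequality for $\sum_j\alpha_{-,j}^2$ via Lemma \ref{lem_gen_diff}.

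A second, smaller gap: you extract all the $A_i$ simultaneously from one modulated expansion and claim $\|U-\Phi_{A_1,\dots,A_N}\|_{H^1\times L^2}=\mathrm{o}(e^{-e_{\beta_N}t})$ directly. For $i\ge 2$ the limit $\lim e^{e_it}\alpha_{-,i}(t)$ need not exist at the stage where only $\mathrm{O}(e^{-e_1t})$ decay is known, since $e_i>e_1$ and the source in the ODE for $e^{e_it}\alpha_{-,i}$ is then of size $e^{(e_i-e_1-\sigma)t}$, possibly growing. One must first subtract $\Phi_{A_1}$, improve the decay of $E_1=U-\Phi_{A_1}$ up to $e^{-e_2t}$ by a bootstrap in increments of $\sigma$ (re-running the monotonicity machinery at each step), and only then define $A_2$, and so on; this is the inductive structure of Proposition \ref{prop_dec_E_j}, which your proposal compresses away.
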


\begin{Rq}\label{rq_dim}
Notice that Theorem \ref{th_main_N} and Theorem \ref{th_main_N_class} apply in dimension $d\le 5$ only. Indeed, assuming \textbf{(H'1)} and $f$ of class $\mathscr{C}^2$ forces to have $p>2$, hence $2<\frac{d+2}{d-2}$ if $d\ge 3$.
\end{Rq}

In the case where only one soliton is considered, one can moreover improve the preceding Theorem by completely characterizing solutions which converge to a soliton in large time. 

\begin{theorem}\label{th_main}
Let $\beta\in\RR^d$, $|\beta|<1$ and assume that $f$ is of class $\mathscr{C}^2$. \\
There exist $e_\beta>0$, $Y_{+,\beta}\in \mathscr{C}(\RR,H^1(\RR^d)\times L^2(\RR^d))$, and a one-parameter family $(u^A)_{A\in\RR}$ of solutions of \eqref{NLKG} such that for all $A\in\RR$, there exists $t_0=t_0(A)\in\RR$ such that for all $t\ge t_0$
\begin{equation}\label{est_U^A_thm}
\left\|U^A(t)-R_\beta(t)-Ae^{-e_\beta t}Y_{+,\beta}(t)\right\|_{H^1\times L^2}\le Ce^{-2e_\beta t},
\end{equation}
where $U^A:=\dbinom{u^A}{\partial_tu^A}$. In addition, if $A\neq A'$, then $u^A\neq u^{A'}$. \\
Moreover, if $u$ is a solution of \eqref{NLKG} such that 
\begin{equation}\label{cv_zero}
\|U(t)-R_\beta(t)\|_{H^1\times L^2}\to 0\quad \text{as } t\to+\infty,
\end{equation} where $U=\dbinom{u}{\partial_tu}$, then there exist $A\in\RR$ and $t_0\in\RR$ such that for all $t\ge t_0$, $U(t)=U^A(t)$.
\end{theorem}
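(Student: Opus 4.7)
I would simply apply Theorem~\ref{th_main_N} in the case $N=1$, $\beta_1=\beta$, $x_1=0$, which immediately yields a one-parameter family $(u^A)_{A\in\RR}$ with injective parametrization. The estimate \eqref{est_U^A_thm} then follows from \eqref{est_N_param} once one checks that the value $\sigma$ specified in \eqref{def_sigma} is at least $e_\beta$ when only one soliton is considered, so that the exponent $e_{\beta_1}+\sigma$ in the remainder reaches $2e_\beta$.

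\textbf{Classification -- overall strategy.} Given a solution $u$ of \eqref{NLKG} satisfying $\|U(t)-R_\beta(t)\|\to 0$, the plan is to upgrade this bare convergence to a polynomial rate $\|U(t)-R_\beta(t)\|=O(t^{-\alpha})$ for some $\alpha>3$; Theorem~\ref{th_main_N_class} applied with $N=1$ then delivers $A\in\RR$ such that $U=U^A$ for $t$ large. In other words, the only new content beyond the multi-soliton results is the removal of any a priori rate in the $N=1$ case.

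\textbf{Modulation, spectrum, energy.} For $t$ sufficiently large I would decompose $U(t)=R_{\tilde\beta(t),\tilde x_0(t)}(t)+\eta(t)$ via the implicit function theorem, imposing orthogonality of $\eta$ against the generalized kernel of the linearized operator $L_\beta$ at $R_\beta$ (the directions arising from space translations and from Lorentz boosts). The assumption forces $\tilde\beta(t)\to\beta$, $\tilde x_0(t)\to 0$, and $\|\eta(t)\|\to 0$. Splitting further $\eta=a(t)\,Y_{+,\beta}+b(t)\,Y_{-,\beta}+\eta_\perp(t)$ along the stable and unstable eigenvectors of $L_\beta$ (eigenvalues $\mp e_\beta$), with $\eta_\perp$ in their symplectic orthogonal, the Côte--Muñoz spectral theory (Proposition~\ref{prop_spectral}) supplies a quadratic form $\mathcal{H}$ coercive on that orthogonal, $\mathcal{H}[\eta_\perp]\gtrsim\|\eta_\perp\|^2$, together with the modulation bound $|\dot{\tilde\beta}|+|\dot{\tilde x_0}|\lesssim\|\eta\|^2$, the eigenmode ODEs $\dot a=-e_\beta a+O(\|\eta\|^2)$ and $\dot b=+e_\beta b+O(\|\eta\|^2)$, and the energy identity $\frac{d}{dt}\mathcal{H}[\eta_\perp]=O(\|\eta\|^3)$.

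\textbf{Bootstrap and main obstacle.} Setting $g(t):=\sup_{s\ge t}\|\eta(s)\|_{H^1\times L^2}$, which tends to $0$ by hypothesis, the unstable mode $b$ is controlled by integrating backward from $+\infty$, yielding $|b(t)|\lesssim g(t)^2$; integrating the energy identity from $t$ to $+\infty$ gives $\mathcal{H}[\eta_\perp](t)\lesssim\int_t^{+\infty}g(s)^3\,ds$; and the Duhamel formula for the stable mode produces $|a(t)|\lesssim e^{-e_\beta(t-t_0)}|a(t_0)|+\int_{t_0}^t e^{-e_\beta(t-s)}g(s)^2\,ds$. Combining these with coercivity of $\mathcal{H}$ should close a self-improving inequality on $g$, upgrading bare convergence to an exponential rate $g(t)\le Ce^{-\delta t}$ for some $\delta>0$, which is far stronger than the polynomial decay required to invoke Theorem~\ref{th_main_N_class}. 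The main obstacle is precisely this bootstrap: the stable mode is naturally propagated forward from some initial time $t_0$ that must eventually be sent to $+\infty$, while the unstable mode is propagated backward from $+\infty$, and balancing these two regimes, absorbing the cubic source terms and the modulation contributions into a closed inequality on $g$, is where the technical work lies.
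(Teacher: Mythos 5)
There are two genuine gaps, one in each half of your argument.

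\textbf{Existence.} Specializing Theorem \ref{th_main_N} to $N=1$ cannot give the rate $e^{-2e_\beta t}$. By \eqref{def_sigma} one has $\sigma\le \frac{1}{16}e_1$, so the remainder in \eqref{est_N_param} is at best $\mathrm{O}\bigl(e^{-\frac{17}{16}e_\beta t}\bigr)$; your claim that ``$\sigma$ is at least $e_\beta$ when only one soliton is considered'' is false. (Moreover the construction behind Theorem \ref{th_main_N} is carried out for $N\ge 2$, with the one-soliton case deferred to a separate argument precisely because of this issue.) The $2e_\beta$ rate is not a bookkeeping improvement: it is obtained by expressing the quadratic form $\mathcal{F}_W=\langle \tilde H_\beta W,W\rangle$ through the conserved energy and momentum, which yields $\mathcal{F}_W(t)=C_n+\mathrm{O}\bigl(\|W\|^3+e^{-3e_\beta t}+e^{-e_\beta t}|\alpha_{+,\beta}|\bigr)$ with $C_n=\mathrm{O}(e^{-3e_\beta S_n})$, followed by a second pass estimating $\frac{d}{dt}\mathcal{F}_W$. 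This rate is then used downstream (e.g.\ in the identification of $A$ and in Corollary \ref{cor_special_solutions}), so it cannot be weakened.

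\textbf{Classification.} The decisive step of your bootstrap does not close. You integrate $\frac{d}{dt}\mathcal{H}[\eta_\perp]=\mathrm{O}(\|\eta\|^3)$ from $t$ to $+\infty$ to get $\mathcal{H}[\eta_\perp](t)\lesssim\int_t^{+\infty}g(s)^3\,ds$, but under the bare hypothesis $\|\eta(t)\|\to 0$ this integral need not converge (take $\|\eta(t)\|\sim 1/\log t$): no rate means no integrability of any power of $\|\eta\|$. This is exactly the difficulty that makes the no-rate classification nontrivial, and it must be resolved \emph{before} any decay can be propagated through the mode ODEs. The way out is to avoid time integration altogether: using conservation of energy and momentum one shows the pointwise identity $\mathcal{F}_E(t)=K+\mathrm{O}(\|E(t)\|^3)$ for a constant $K$, and $K=0$ follows by letting $t\to+\infty$ since $\mathcal{F}_E(t)\to 0$. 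Combined with coercivity this gives $\|E(t)\|^2\lesssim \alpha_{+,\beta}^2(t)+\alpha_{-,\beta}^2(t)$ at each fixed time, after which the closed ODE system for $(\alpha_{+,\beta},\alpha_{-,\beta})$ yields $|\alpha_{+,\beta}|\lesssim\alpha_{-,\beta}^2$ and $|\alpha_{-,\beta}(t)|\lesssim e^{-e_\beta t}$. Without this device (or an equivalent one) your self-improving inequality on $g$ has an ill-defined right-hand side. Finally, even once exponential decay is in hand, quoting Theorem \ref{th_main_N_class} with $N=1$ is not available (its proof is written for $N\ge 2$ and, in any case, presupposes Theorem \ref{th_main_N}, whose $N=1$ instance is the very existence statement at issue); the identification of $A$, of the asymptotic translation $x_\infty$, and the proof that $U-U^A(\cdot-x_\infty)$ vanishes identically still require a separate argument.
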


\begin{Rq}
The parameter $e_\beta$ (which depends on $\beta$) and the function $Y_{+,\beta}$ are defined in Proposition \ref{prop_spectral}; they are intimately related to the spectral theory dealing with the flow around $R_\beta$.
\end{Rq}

It is interesting to remark that there are only three special solutions among the elements of the preceding family $(U^A)_{A\in\RR}$, up to translations in time and in space. This is the object of the following

\begin{Cor}\label{cor_special_solutions}
Consider the family of solutions $\left(U^A\right)_{A\in\RR}$ defined in Theorem \ref{th_main}.
\begin{enumerate}
\item If $A>0$, there exists $t_A\in\RR$ such that for all possible $t$, $U^A(t)=U^1(t+t_A,\cdot+\beta t_A)$.
\item If $A<0$, there exists $t_A\in\RR$ such that for all possible $t$, $U^A(t)=U^{-1}(t+t_A,\cdot+\beta t_A)$.
\item For all $t\in\RR$, $U^0(t)=R_\beta(t)$.
\end{enumerate}
\end{Cor}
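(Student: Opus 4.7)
The plan is to exploit the symmetry $S_{t_0}:(t,x)\mapsto(t+t_0,x+\beta t_0)$, which leaves \eqref{NLKG} invariant and preserves both $R_\beta$ and the profile $Y_{+,\beta}$. The invariance $R_\beta(t+t_0,x+\beta t_0)=R_\beta(t,x)$ is immediate from the explicit formula $Q_\beta(t,x)=Q(pr\circ\Lambda_\beta(t,x))$ together with $\Lambda_\beta\circ S_{t_0}=\Lambda_\beta$ on space-time. The analogous invariance $Y_{+,\beta}(t+t_0,x+\beta t_0)=Y_{+,\beta}(t,x)$ should be read off the construction in Proposition \ref{prop_spectral}: in the moving frame $y=x-\beta t$ the operator linearizing \eqref{NLKG} around $R_\beta$ is autonomous, so its unstable eigenfunction is given by a stationary profile in the variable $y$. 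The second ingredient is a uniform-in-$t$ lower bound on $\|Y_{+,\beta}(t)\|_{H^1\times L^2}$, which follows from the same observation since the profile is a fixed nontrivial element of $H^1\times L^2$.

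For part (1), given $A>0$ set $t_A:=-\frac{1}{e_\beta}\ln A$ and consider $\tilde U(t,x):=U^1(t+t_A,x+\beta t_A)$, which solves \eqref{NLKG} by $S_{t_A}$-invariance. Applying the estimate \eqref{est_U^A_thm} to $U^1$ at time $t+t_A$ and invoking the invariances above together with the translation invariance of the $H^1\times L^2$-norm yields
\[
\bigl\|\tilde U(t)-R_\beta(t)-e^{-e_\beta t_A}e^{-e_\beta t}Y_{+,\beta}(t)\bigr\|_{H^1\times L^2}\le Ce^{-2e_\beta t_A}e^{-2e_\beta t}.
\]
In particular $\tilde U(t)\to R_\beta(t)$, so by the characterization part of Theorem \ref{th_main}, $\tilde U=U^{A'}$ for some $A'\in\RR$. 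Subtracting the asymptotic expansions of $\tilde U$ and $U^{A'}$ gives
\[
\bigl|A'-e^{-e_\beta t_A}\bigr|\,\|Y_{+,\beta}(t)\|_{H^1\times L^2}=\mathrm{O}(e^{-e_\beta t})\quad\text{as }t\to+\infty,
\]
and the lower bound on $\|Y_{+,\beta}(t)\|$ forces $A'=e^{-e_\beta t_A}=A$. Case (2) is identical, starting from $U^{-1}$ and taking $t_A:=-\frac{1}{e_\beta}\ln(-A)$; the sign of the leading coefficient $e^{-e_\beta t_A}>0$ versus $-e^{-e_\beta t_A}<0$ prevents any confusion between translates of $U^1$ and translates of $U^{-1}$.

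For part (3), $R_\beta$ is itself a solution of \eqref{NLKG} satisfying \eqref{cv_zero} trivially, so by the characterization part of Theorem \ref{th_main} we have $R_\beta=U^{A_0}$ for some $A_0\in\RR$. Plugging $U^{A_0}=R_\beta$ into \eqref{est_U^A_thm} gives $|A_0|\,\|Y_{+,\beta}(t)\|_{H^1\times L^2}=\mathrm{O}(e^{-e_\beta t})$, so the lower bound forces $A_0=0$ and hence $U^0=R_\beta$. The main technical point in this plan is the identification step that extracts $A'$ from the leading asymptotic term, which rests entirely on the two properties of $Y_{+,\beta}$ isolated in the first paragraph, namely its $S_{t_0}$-invariance and the uniform lower bound on its norm; both should follow from the explicit spectral construction referenced in Proposition \ref{prop_spectral}, and once they are in hand the rest of the corollary reduces to elementary manipulations.
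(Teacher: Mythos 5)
Your proposal is correct and follows essentially the same route as the paper: the same choice $t_A=-\frac{1}{e_\beta}\ln|A|$, the same use of the invariance of $R_\beta$ and $Y_{+,\beta}$ under $(t,x)\mapsto(t+t_A,x+\beta t_A)$, the same appeal to the classification part of Theorem \ref{th_main} to write the translate as some $U^{A'}$, and the same identification of $A'$ by comparing the $e^{-e_\beta t}$-order terms (your explicit remark that $\|Y_{+,\beta}(t)\|_{H^1\times L^2}$ is a nonzero constant in $t$ is the point the paper leaves implicit). Part (3) is also handled identically.
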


\begin{Rq}
Let us observe that Remark \ref{rq_dim} is valid for Theorem \ref{th_main} and Corollary \ref{cor_special_solutions} too.
\end{Rq}

Theorem \ref{th_main} provides the behavior of the solutions converging to solitons at the order $\mathrm{O}\left(e^{-2e_\beta t}\right)$ in $H^1(\RR^d)\times L^2(\RR^d)$. The instability direction due to the existence of one negative eigenvalue (related to $e_\beta$) for the linearized operator around $Q_\beta$ yields infinitely many possibilities (described by the real line) to perturb the soliton $Q_\beta$ and to retrieve another solution of \eqref{NLKG} which however remains exponentially close in time (with decay rate $e_\beta$) to $Q_\beta$. Let us emphasize that this phenomenon appears quite naturally in the study of unstable solitons.  In a stable mode such as for the $L^2$-subcritical generalized Korteweg-de Vries (gKdV) equation, a solution which converges to a soliton as time goes to infinity is pretty well-known to be exactly the corresponding soliton.

What is more, Theorem \ref{th_main} is built exactly as in \cite{combet2010}. It reminds us of similar classification results, previously obtained for the $L^2$-supercritical gKdV equations by Combet \cite[Theorem 1.1]{combet2010} and for the three-dimensional cubic Schrödinger equation by Duyckaerts and Roudenko \cite[Proposition 3.1]{duyckaerts2010}, both inspired from pioneering works of Duyckaerts and Merle \cite{dm1,dm2}.

As far as the multi-soliton case is concerned, Theorem \ref{th_main_N} provides an $N$-parameter family of solutions to \eqref{NLKG} which behave as a sum of $N$ (differently boosted) solitons in large time. Here again, the existence result looks like that of Combet \cite{combetgKdV,combetNLS} and reinforces the idea that, in general, non-uniqueness holds for multi-solitons in an unstable context. Furthermore, Theorem \ref{th_main_N_class} opens the way to treat the question of the classification of multi-solitons for other models, at least in the restraint class of solutions with algebraic convergence in the sense of \eqref{cv_polynom}; on this matter and given the main existence results in \cite{combetNLS,lecoz} one can for instance explore the issue for the $L^2$-supercritical nonlinear Schrödinger equation or the nonlinear Klein-Gordon equation with complex valued solutions and stable solitons.  
\\

Regarding the global approach developed in this article, we shed new light on the construction of the one-parameter family $(U^A)_{A\in\RR}$ by a compactness procedure. Note that the special solution $U^{-1}$ in \cite{combet2010} has also been obtained in a first rigorous way by a compactness method (the starting point was the proof of instability of the soliton), but the question of obtaining $U^1$ (as well as the other solutions $U^A$ for $A>0$) by such method was raised (see Remark 4.15 in \cite{combet2010}) and, to our knowledge, remained unanswered. Obviously, our process can be thought and used as an alternative to prove similar results in the context of other partial differential equations which involve unstable solitons, and for which the spectral theory around the ground states is well understood (and actually analogous to the present case). This is a nice feature of our paper. 

A second interesting point to be discussed is about proving uniqueness of multi-solitary waves. Exploring the possibility of obtaining a "weak" monotonicity property like \eqref{monotonie_F_E} in Corollary \ref{cor_monotonie_F_E} would be a promising direction of research in order to classify multi-solitons of other models.

Besides, a significant issue would be to know to what extent the classification obtained in Theorem \ref{th_main} could be transcribed to the multi-soliton topic. Indeed, contrary to the gKdV setting \cite{combet2010} and especially as for the NLS case \cite{combetNLS,cf}, proving general uniqueness in the sense of \eqref{cv_zero}, where $R_\beta$ is replaced by a sum of several solitons $R_{\beta_i,x_i}$, still remains unclear. 

Another question to be addressed could be related to the potential generalizations of the previous theorems to multi-bound states.

\subsection{Outline and organization of the article}

In the spirit of \cite{martel} (dealing with the construction of multi-soliton solutions), our approach to constructing the families in Theorem \ref{th_main_N} and Theorem \ref{th_main} is based on backward uniform $H^1\times L^2$-estimates satisfied by well-chosen sequences of solutions of \eqref{NLKG} which aim to approximate the desired solutions. We entirely exploit a coercivity property available in the present matter (see Proposition \ref{prop_coercivity} stated below) in order to establish those estimates and, in fact, to obtain the expected exponential convergences to zero in time. We finally use continuity of the flow of \eqref{NLKG} for the weak $H^1\times L^2$-topology to obtain special solutions which fulfill \eqref{est_N_param} or \eqref{est_U^A_thm}.

Based on compactness and energy methods, the proof of Theorem \ref{th_main_N} follows more precisely the strategy of \cite{combetgKdV,combetNLS}. The construction of $\Phi_{A_1,\dots,A_N}$ is done by iteration, by means of Proposition \ref{prop_main} which roughly asserts that each multi-soliton can be perturbed slightly at the order $e^{-e_{\beta_j}t}$ around the soliton $R_{\beta_j,x_j}$. In order to establish this key proposition, we particularly rely on the topological ingredient set up originally by Côte, Martel and Merle \cite{cmm} for the construction of one given multi-soliton in unstable situations. 

For the construction of $U^A$ in the one-soliton case (Theorem \ref{th_main}), we also substantially rely on the spectral theory available for \eqref{NLKG}, built on the linearized operator of the flow around the boosted ground state. In particular, we point out that our approach differs from previous articles \cite{dm1,dm2,duyckaerts2010,combet2010} where the construction centers around the contraction principle. \\

Regarding the question of classification, from \eqref{est_U^A_thm} and orthogonality properties exposed in the next section, we notice that $A$ corresponds to the limit of $e^{e_\beta t}\langle U^A,Z_{-,\beta}\rangle$ as $t\to+\infty$ in Theorem \ref{th_main}. This is precisely useful for the uniqueness part of this theorem, where the goal is to identify $U$ with an element of the one-parameter family $(U^A)_{A\in\RR}$. Actually, to prove the second part of Theorem \ref{th_main}, we follow \cite{combet2010} in the first instance (up to the obtainment of an exponential control of $U-R_\beta$). Then, a refined version of the coercivity argument considered in \cite{combet2010}, and indeed an elementary but careful analysis of the available estimates, allows us to reach the conclusion, that is to show that $U$ equals some $U^A$ (already constructed in the first part of the theorem), and that without making use of any supplementary tool. We underline once again that we do not need any fixed point argument to conclude. 

Yet, we do not exclude the possibility to prove Theorem \ref{th_main} via the contraction principle; if the nonlinearity $f$ is sufficiently regular ($\mathscr{C}^{s+1}$ for instance), one could precise by this means the behavior of $U^A$ in large time, and indeed expand the solution at the order $\mathrm{O}\left(e^{-(s+1)e_\beta t}\right)$ in the Sobolev space $H^s$, in line with \cite{dm1}. For clarity purposes, we will not explore this path further and anyway, our description of the family $(U^A)_{A\in\RR}$ is sufficient to characterize solutions verifying \eqref{cv_zero}. 

Concerning Theorem \ref{th_main_N_class}, the identification of the solution satisfying \eqref{cv_polynom} is done step by step as in \cite{combetgKdV}. The core of the proof is the obtainment of an almost monotonicity property, inspired by Martel and Merle \cite{mm_waveeq}. By means of a technical lemma of analysis (we refer to Lemma \ref{lem_gen_diff} in Appendix), this monotonicity property allows us to see that any multi-soliton in the class with polynomial convergence to zero converges in fact exponentially (see subsection \ref{subsect_alm_mono_prop}), provided one assumes suitable integrability conditions in the neighborhood of $+\infty$ (and indeed $\alpha>3$). Such a "weak" monotonicity property has a priori been used so far only for the construction of multi-solitons or multi-bound states of the energy-critical wave equation \cite{mm_waveeq,Yuan_2019,xu_yuan}. We also underline that, by Lemma \ref{lem_gen_diff}, we directly obtain the adequate exponential convergence rate which allows us to identify $A_1$; this is in contrast with \cite{combetgKdV}. The monotonicity property is also used as a key ingredient to identify the other parameters $A_2,\dots,A_N$.
\\

This paper is organized as follows. In Section 2, we introduce essential notations and tools which are used throughout our article. In the following sections, we establish the proofs of our main results; for ease of writing, each proof will be made in dimension 1. Section 3 is devoted to the construction of the family of multi-solitons described in Theorem \ref{th_main_N}. Section 4 deals with the classification of multi-solitons, which is the object of Theorem \ref{th_main_N_class}. In Section 5, we study the existence part of Theorem \ref{th_main} focusing on one soliton. Section 6 aims at proving the second part of this latter theorem, that is general uniqueness of the one-parameter family previously constructed. In the appendix, we explain how to adapt the proof to all dimensions, we justify Corollary \ref{cor_special_solutions}, and we state and prove the lemma of analytic theory of differential equations used in Section 4.

As usual, $C$ denotes a positive constant which may depend on the soliton parameters and change from one line to the other, but which is always independent of $t$ and $x$.

\subsection{Acknowledgments}

The author would like to thank his supervisor Raphaël Côte for his constant encouragements and for fruitful discussions. The author is also grateful to Rémi Carles for his comments which improved the quality of this paper.

\section{Notations, review of spectral theory, and multi-solitons}

\subsection{Elements of spectral theory concerning \eqref{NLKG}}

For all $U=\dbinom{u_1}{u_2},V=\dbinom{v_1}{v_2}\in H^1(\RR^d)\times L^2(\RR^d)$, we define the scalar product:
$$\langle U,V\rangle:=\int_{\RR^d}(u_1v_1+u_2v_2)\;dx$$ and the energy norm
$$\|U\|_{H^1\times L^2}:=\left(\|u_1\|_{H^1}^2+\|u_2\|_{L^2}^2\right)^{\frac{1}{2}}.$$

Under assumption \textbf{(H'1)} (or \textbf{(H1)} and \textbf{(H2)} in the particular one-dimensional case), the operator $L:=-\Delta+Id-f'(Q)$ admits a unique simple negative eigenvalue, which we denote by $-\lambda_0$. The kernel of $L$ is spanned by $(\partial_{x_i}Q)_{i=1,\dots,d}$ \cite{maris,mcleod}.\\ Note that for a general nonlinearity $f$ and for $d\ge 2$, the operator $L$ possibly counts several and multiple negative eigenvalues. We refer to Côte and Martel \cite{cote_martel} for the detail of the spectral properties in this case.

With a slight abuse of notation, we still denote by $Q_\beta$ the function defined on $\RR^d$ by $Q_\beta(x):=Q(\gamma x)$, where $\gamma=\frac{1}{\sqrt{1-|\beta|^2}}$ so that for all $t$, $Q_\beta(x)=Q_\beta(t,x)$. In the sequel, we sometimes omit the variables $x$ and $t$ when there is no ambiguity (we work with functions which either depend on time or not). \\

For all $\beta\in \RR^d$ with $|\beta|<1$, we consider the matrix operator $$H_\beta:=\left(\begin{array}{cc}
-\Delta+Id-f'(Q_\beta)&-\beta\cdot\nabla\\
\beta\cdot\nabla&Id
\end{array}\right),$$ the matrix
$J:=\left(\begin{array}{cc}
0&1\\
-1&0
\end{array}\right)$, and the operator $$\mathscr{H}_\beta:=-H_\beta J=\left(\begin{array}{cc}
-\beta\cdot\nabla&\Delta-Id+f'(Q_\beta)\\
Id&\beta\cdot\nabla
\end{array}\right).$$
We define for all $j=1,\dots,d$
$$Z_{j,\beta}:=\dbinom{-\beta \cdot\nabla \left(\partial_{x_j}Q_\beta\right)}{\partial_{x_j}Q_\beta}.$$

\begin{proposition}[Côte and Muñoz \cite{cotemunoz}]\label{prop_spectral}
We have $\mathscr{H}_\beta Z_{j,\beta}=0$ for all $j\in\{1,\dots,d\}$ and there exist two functions $Z_{\pm,\beta}$ whose components decrease exponentially in space and such that $$\mathscr{H}_\beta Z_{\pm,\beta}=\pm e_\beta Z_{\pm,\beta}$$ where $e_\beta:=\frac{\sqrt{\lambda_0}}{\gamma}$. \\
Moreover there exist unique functions $Y_{\pm,\beta}$ (whose components are exponentially decreasing in space) such that 
$$H_\beta Y_{\pm,\beta}\in\mathrm{Span}\{Z_{\pm,\beta}\},\quad \langle JZ_{j,\beta},Y_{\pm,\beta}\rangle =0,\quad  \text{and} \quad \langle Y_{\pm,\beta},Z_{\mp,\beta}\rangle=1.$$ 
In addition, the following orthogonality properties hold:
$$\langle Y_{\pm,\beta},Z_{\pm,\beta}\rangle=0\quad \text{and}\quad\langle JZ_{0,\beta},Z_{\pm,\beta}\rangle =0.$$
\end{proposition}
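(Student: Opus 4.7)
The plan is to exploit the symplectic structure of the problem — self-adjointness of $H_\beta$, together with the identity $\mathscr{H}_\beta=-H_\beta J$ and $J^2=-\mathrm{Id}$ — to reduce everything to the scalar spectral theory of $L=-\Delta+\mathrm{Id}-f'(Q)$, whose relevant spectrum ($-\lambda_0$ and $0$) is already known. The identity $\mathscr{H}_\beta Z_{j,\beta}=0$ is a direct computation: $Q_\beta(x)=Q(\gamma x)$ satisfies a rescaled version of the ground state equation, and differentiating in $x_j$ together with the identity $(1-|\beta|^2)\gamma^2=1$ shows that $\partial_{x_j}Q_\beta$ lies in the kernel of $\Delta+(\beta\cdot\nabla)^2-\mathrm{Id}+f'(Q_\beta)$, which is equivalent to $\mathscr{H}_\beta Z_{j,\beta}=0$. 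Conceptually this is the infinitesimal form of the space-translation invariance of the soliton family.

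To construct the $\pm e_\beta$-eigenmodes I would first handle $\beta=0$ and then transport by Lorentz boost. At $\beta=0$ the block form $\mathscr{H}_0=\left(\begin{smallmatrix} 0 & -L \\ \mathrm{Id} & 0 \end{smallmatrix}\right)$ gives $\mathscr{H}_0^2=-\mathrm{diag}(L,L)$, so taking the (simple, exponentially decaying) eigenfunction $\psi$ of $L$ with $L\psi=-\lambda_0\psi$, one checks directly that $Z_{\pm,0}=\binom{\pm\sqrt{\lambda_0}\,\psi}{\psi}$ are eigenvectors of $\mathscr{H}_0$ for the eigenvalues $\pm\sqrt{\lambda_0}$. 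For $\beta\neq 0$, the Lorentz transformation $\Lambda_\beta$ conjugates the linearization of \eqref{NLKG} around $R_\beta$ with that around $R_0$ at the cost of a time dilation by $\gamma$; pulling back the above eigenmodes converts the rate $\sqrt{\lambda_0}$ into $e_\beta=\sqrt{\lambda_0}/\gamma$ and produces $Z_{\pm,\beta}$ with exponentially decaying components (the decay is preserved because $\psi$ decays exponentially and the Lorentz change of variables is linear).

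For the functions $Y_{\pm,\beta}$, the starting observation is that the eigenvalue equation rewrites as $H_\beta(JZ_{\pm,\beta})=\mp e_\beta Z_{\pm,\beta}$, so $JZ_{\pm,\beta}$ already satisfies $H_\beta(JZ_{\pm,\beta})\in\mathrm{Span}\{Z_{\pm,\beta}\}$. To enforce $\langle JZ_{j,\beta},Y_{\pm,\beta}\rangle=0$, I would correct by elements of $\ker H_\beta$; since $H_\beta(JZ_{j,\beta})=-\mathscr{H}_\beta Z_{j,\beta}=0$, the vectors $JZ_{j,\beta}$ lie in $\ker H_\beta$. The ansatz $Y_{\pm,\beta}=\kappa_\pm\bigl(JZ_{\pm,\beta}+\sum_j c_j^\pm JZ_{j,\beta}\bigr)$ then yields a linear system for the $c_j^\pm$ whose matrix is (up to sign) the Gram matrix of the linearly independent family $(Z_{j,\beta})_j$, hence invertible; the scalar $\kappa_\pm$ is finally fixed by the normalization $\langle Y_{\pm,\beta},Z_{\mp,\beta}\rangle=1$. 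Uniqueness of $Y_{\pm,\beta}$ under the three stated constraints follows from the invertibility of this linear system.

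All remaining orthogonality relations stem from self-adjointness of $H_\beta$, antisymmetry $J^*=-J$ (in particular $\langle JA,A\rangle=0$ for every $A$), and the eigenvalue structure. For $\langle JZ_{j,\beta},Z_{\pm,\beta}\rangle=0$ I would compute $\langle JZ_{j,\beta},\mathscr{H}_\beta Z_{\pm,\beta}\rangle$ in two ways: directly from the eigenvalue equation one gets $\pm e_\beta\langle JZ_{j,\beta},Z_{\pm,\beta}\rangle$, whereas using $\mathscr{H}_\beta=-H_\beta J$ and self-adjointness of $H_\beta$ one reduces it to $\langle\mathscr{H}_\beta Z_{j,\beta},JZ_{\pm,\beta}\rangle=0$; since $e_\beta\neq 0$ this forces the pairing to vanish. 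The identity $\langle Y_{\pm,\beta},Z_{\pm,\beta}\rangle=0$ is proved analogously, using additionally $\langle JA,A\rangle=0$ applied to $A=Z_{\pm,\beta}$. In my view the main technical point — and the place where the Lorentz-boost picture is most helpful — is verifying that the pairing $\langle JZ_{\pm,\beta},Z_{\mp,\beta}\rangle$ is nonzero, since this is what permits the normalization of $Y_{\pm,\beta}$ and expresses the non-degeneracy of the symplectic form between the two $\pm e_\beta$-eigenspaces; at $\beta=0$ this is an explicit scalar computation in terms of $\sqrt{\lambda_0}$ and $\|\psi\|_{L^2}$, and it is preserved under Lorentz boost up to a positive Jacobian factor.
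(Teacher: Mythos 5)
This proposition is quoted from Côte and Muñoz \cite{cotemunoz} and the paper offers no proof of it, so there is no internal argument to compare against; your reconstruction is essentially the standard one from the cited reference, and it is sound. The reduction $\mathscr{H}_0^2=-\mathrm{diag}(L,L)$ at $\beta=0$, the explicit eigenvectors $\binom{\pm\sqrt{\lambda_0}\,\psi}{\psi}$, the transport by Lorentz boost producing the dilated rate $e_\beta=\sqrt{\lambda_0}/\gamma$, the observation that $H_\beta(JZ_{\pm,\beta})=\mp e_\beta Z_{\pm,\beta}$ so that $Y_{\pm,\beta}$ is a normalized kernel-corrected multiple of $JZ_{\pm,\beta}$, and the derivation of the orthogonality relations from self-adjointness of $H_\beta$, antisymmetry of $J$ and $e_\beta\neq 0$ are all correct, as is your identification of $\langle JZ_{\pm,\beta},Z_{\mp,\beta}\rangle\neq 0$ (equal to $-2\sqrt{\lambda_0}\|\psi\|_{L^2}^2$ up to a boost Jacobian) as the key non-degeneracy. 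One step is stated too quickly: the uniqueness of $Y_{\pm,\beta}$. Invertibility of the Gram system only gives uniqueness \emph{within your ansatz class} $\kappa(JZ_{\pm,\beta}+\sum_jc_jJZ_{j,\beta})$; to conclude that \emph{every} solution of $H_\beta Y\in\mathrm{Span}\{Z_{\pm,\beta}\}$ has this form you need the reverse inclusion $\ker H_\beta\subseteq\mathrm{Span}\{JZ_{j,\beta}\}_{j=1,\dots,d}$, not merely the inclusion you establish. This does hold — eliminating the second component in $H_\beta V=0$ reduces the kernel equation to the scalar operator $-\Delta+(\beta\cdot\nabla)^2+\mathrm{Id}-f'(Q_\beta)$, which a Lorentz change of variables conjugates to $L$, whose kernel is exactly $\mathrm{Span}\{\partial_{x_i}Q\}$ as recalled in Section 2 — but it should be said, since it is precisely the non-degeneracy input that makes the uniqueness claim true.
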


The following coercivity property turns out to be a crucial tool in our paper.

\begin{proposition}[Almost coercivity of $H_\beta$; Côte and Muñoz \cite{cotemunoz}]\label{prop_coercivity}
There exists $\mu>0$ such that for all $V\in H^1(\RR^d)\times L^2(\RR^d)$, 
$$\langle H_\beta V,V \rangle \ge \mu\|V\|_{H^1\times L^2}^2-\frac{1}{\mu}\left[\langle V,Z_{+,\beta}\rangle^2+\langle V,Z_{-,\beta}\rangle^2+\sum_{j=1}^d\langle V,JZ_{j,\beta}\rangle^2\right].$$
\end{proposition}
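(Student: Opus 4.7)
The approach is to reduce the matrix coercivity of $H_\beta$ to the classical scalar coercivity of the stationary linearized operator $L := -\Delta + Id - f'(Q)$, via a Lorentz rescaling. Writing $V = \binom{v_1}{v_2}$ and integrating by parts, one obtains
\begin{equation*}
\langle H_\beta V, V\rangle = \langle L_\beta v_1, v_1\rangle + 2\langle \beta\cdot\nabla v_1, v_2\rangle + \|v_2\|_{L^2}^2,\qquad L_\beta := -\Delta + Id - f'(Q_\beta),
\end{equation*}
and completing the square in $v_2$ yields the key identity
\begin{equation*}
\langle H_\beta V, V\rangle = \bigl(\langle L_\beta v_1, v_1\rangle - \|\beta\cdot\nabla v_1\|_{L^2}^2\bigr) + \|v_2 + \beta\cdot\nabla v_1\|_{L^2}^2.
\end{equation*}
Using $\gamma^2(1-|\beta|^2) = 1$, the substitution $w(y) := v_1(y/\gamma)$ transforms the parenthesis into exactly $\gamma^{-1}\langle L w, w\rangle$, so the problem is reduced to a positive square plus a rescaled copy of the scalar quadratic form of $L$.

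The standard coercivity of $L$ then applies: since $L$ has a unique simple negative eigenvalue $-\lambda_0$ (with normalized eigenfunction $\chi_-$) and kernel $\mathrm{Span}(\partial_{x_j}Q)_{j=1,\dots,d}$, there exists $\nu>0$ such that
\begin{equation*}
\langle L w, w\rangle \ge \nu\|w\|_{H^1}^2 - \nu^{-1}\Bigl[\langle w, \chi_-\rangle^2 + \sum_{j=1}^d \langle w, \partial_{x_j}Q\rangle^2\Bigr]
\end{equation*}
for every $w \in H^1(\RR^d)$. Pulling back through the Lorentz scaling (and using $\|w\|_{H^1}^2 \ge \gamma^{-1}\|v_1\|_{H^1}^2$) together with the sharp Young inequality $\|v_2 + \beta\cdot\nabla v_1\|_{L^2}^2 \ge (1-\eta)\|v_2\|_{L^2}^2 - \tfrac{1-\eta}{\eta}|\beta|^2\|\nabla v_1\|_{L^2}^2$, one picks $\eta \in (0, 1)$ so that $\tfrac{(1-\eta)|\beta|^2}{\eta} < \nu/\gamma^2$; this is always feasible since $\eta \mapsto \tfrac{1-\eta}{\eta}$ is a bijection from $(0, 1)$ onto $(0, +\infty)$. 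The result is a preliminary inequality of the form $\langle H_\beta V, V\rangle \ge \mu\|V\|_{H^1\times L^2}^2 - C\bigl[\langle w, \chi_-\rangle^2 + \sum_j \langle w, \partial_{x_j}Q\rangle^2\bigr]$ with $\mu>0$ and with scalar obstructions depending only on $v_1$.

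The final and most delicate step is to replace these scalar obstructions by the intrinsic vectorial inner products $\langle V, Z_{\pm,\beta}\rangle$ and $\langle V, JZ_{j,\beta}\rangle$ of the statement; this is the main obstacle of the proof, since those directions mix both components of $V$. The idea is to exploit the explicit structure of $Z_{\pm,\beta}$ and $JZ_{j,\beta}$: solving the eigenvalue equation $\mathscr{H}_\beta Z_{\pm,\beta} = \pm e_\beta Z_{\pm,\beta}$ furnishes the algebraic relation $(Z_{\pm,\beta})_1 = (\pm e_\beta - \beta\cdot\nabla)(Z_{\pm,\beta})_2$, while $JZ_{j,\beta}$ has first component $\partial_{x_j}Q_\beta$ by definition. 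Substituting $v_2 = (v_2 + \beta\cdot\nabla v_1) - \beta\cdot\nabla v_1$ inside $\langle V, Z_{\pm,\beta}\rangle$ and $\langle V, JZ_{j,\beta}\rangle$ and integrating by parts produces linear identities expressing the scalar projections $\langle w, \chi_-\rangle$ and $\langle w, \partial_{x_j}Q\rangle$ as combinations of $\langle V, Z_{\pm,\beta}\rangle$, $\langle V, JZ_{j,\beta}\rangle$, and residual terms controlled by $\|v_2 + \beta\cdot\nabla v_1\|_{L^2}$. The latter are re-absorbed into the positive square retained in the first step, at the price of a further reduction of $\mu$, and the announced inequality follows.
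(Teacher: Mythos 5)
First, a point of comparison: the paper does not prove this proposition at all — it is quoted from Côte and Muñoz \cite{cotemunoz} — so there is no in-paper argument to measure yours against, and your attempt has to stand on its own.

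Your first two steps are correct and are indeed the standard reduction: the identity $\langle H_\beta V,V\rangle=\bigl(\langle L_\beta v_1,v_1\rangle-\|\beta\cdot\nabla v_1\|_{L^2}^2\bigr)+\|v_2+\beta\cdot\nabla v_1\|_{L^2}^2$, the Lorentz rescaling to $\gamma^{-1}\langle Lw,w\rangle$, the scalar coercivity of $L$, and the Young absorption all go through (for $d\ge 2$ the rescaling must be the anisotropic contraction in the direction of $\beta$ rather than $y\mapsto y/\gamma$, but that is cosmetic). The genuine gap is in your third step. The substitution $v_2=(v_2+\beta\cdot\nabla v_1)-\beta\cdot\nabla v_1$ followed by integration by parts does \emph{not} express $\langle v_1,\partial_{x_j}Q_\beta\rangle$ (nor $\langle w,\chi_-\rangle$) in terms of $\langle V,JZ_{j,\beta}\rangle$ (nor $\langle V,Z_{\pm,\beta}\rangle$) plus a residual controlled by $\|v_2+\beta\cdot\nabla v_1\|_{L^2}$. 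What it actually gives, for instance for the translation directions, is
$$\langle V,JZ_{j,\beta}\rangle=\bigl\langle v_1,\bigl(1+(\beta\cdot\nabla)^2\bigr)\partial_{x_j}Q_\beta\bigr\rangle+\bigl\langle v_2+\beta\cdot\nabla v_1,\beta\cdot\nabla\partial_{x_j}Q_\beta\bigr\rangle,$$
and $\bigl(1+(\beta\cdot\nabla)^2\bigr)\partial_{x_j}Q_\beta$ is not a multiple of $\partial_{x_j}Q_\beta$: in $d=1$ the elliptic equation for $Q_\beta$ gives $(1+\beta^2\partial_x^2)\partial_xQ_\beta=\gamma^2\bigl(1-\beta^2f'(Q_\beta)\bigr)\partial_xQ_\beta$, which differs from $\partial_xQ_\beta$ by a non-constant factor. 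The situation for $Z_{\pm,\beta}$ is worse, since their components are boosted and exponentially weighted versions of the negative eigenfunction of $L$, not the rescaled $\chi_-$ itself. So the linear functionals of $v_1$ that you can reach by this manipulation are simply different from the ones penalized in your intermediate inequality, and no algebraic identity closes the loop.

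What is actually required at this stage is the standard transfer-of-orthogonality argument: identify the $(d{+}1)$-dimensional non-positive subspace of the quadratic form, namely $\{(\phi,-\beta\cdot\nabla\phi):\phi\in\mathrm{Span}(\chi_{-,\beta},\partial_{x_1}Q_\beta,\dots,\partial_{x_d}Q_\beta)\}$, on which $\|v_2+\beta\cdot\nabla v_1\|_{L^2}$ and the good part of the scalar form both vanish, and prove that the pairing of this subspace against the $(d{+}2)$ functionals $Z_{+,\beta},Z_{-,\beta},JZ_{1,\beta},\dots,JZ_{d,\beta}$ is non-degenerate (a Gram-matrix rank condition), so that smallness of the penalized projections forces smallness of the bad component of $V$. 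This is precisely where the duality relations $\langle Y_{\pm,\beta},Z_{\mp,\beta}\rangle=1$, $\langle Y_{\pm,\beta},Z_{\pm,\beta}\rangle=0$ and $\langle JZ_{j,\beta},Y_{\pm,\beta}\rangle=0$ of Proposition \ref{prop_spectral} enter, and it is the real content of the Côte--Muñoz lemma. Without that verification your last step is a plausible-sounding gesture rather than a proof.
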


\subsection{Multi-soliton results}\label{subsection_multi-sol}

Let us consider a set of $2N$ parameters as given in Theorem \ref{th_main_N} and the associated (vector) solitons $R_i=\dbinom{Q_i}{\partial_tQ_i}:=R_{\beta_i,x_i}$, $i=1,\dots,N$. 
We introduce moreover the vectors:
\begin{itemize}
\item $Y_{\pm,i}(t,x):=Y_{\pm,\beta_i}\left(pr\circ\Lambda_{\beta_i}(t,x-x_i)\right)$
\item $Z_{\pm,i}(t,x):=Z_{\pm,\beta_i}\left(pr\circ\Lambda_{\beta_i}(t,x-x_i)\right)$,
\end{itemize}
where $\gamma_i:=\frac{1}{\sqrt{1-|\beta_i|^2}}$.
We denote $e_i=e_{\beta_i}:=\frac{\sqrt{\lambda_0}}{\gamma_i}=\sqrt{\lambda_0(1-|\beta_i|^2)}$. 

In particular, let us observe that for all $i=1,\dots,N$, $Y_{\pm,i}$ belongs to $\mathscr{C}(\RR,H^1(\RR^d)\times L^2(\RR^d))\cap L^\infty(\RR,H^1(\RR^d)\times L^2(\RR^d))$. \\

There exists $\ell\in\RR^d$ such that 
$$\forall\;i\neq j, \quad\ell \cdot\beta_i\neq \ell\cdot\beta_j\qquad\text{and}\qquad\forall\;i,\quad |\ell\cdot\beta_i|<1;$$ we postpone the argument in the appendix. (If $d=1$, one can take obviously $\ell=1$.)
Let us consider the permutation $\eta$ of $\{1,\dots,N\}$ such that
$$-1<\ell\cdot\beta_{\eta(1)}<\dots<\ell\cdot\beta_{\eta(N)}<1.$$

We denote also
\begin{equation}\label{def_sigma}
\sigma:=\frac{1}{16}\min\left\{e_1,\gamma_N\min\{\ell\cdot(\beta_{\eta(2)}-\beta_{\eta(1)}),\dots,\ell\cdot(\beta_{\eta(N)}-\beta_{\eta(N-1)})\}\right\}>0.
\end{equation}

We can quantify the interactions between the solitons $R_{i}$ and the functions $Y_{\pm,i}$ and $Z_{\pm,i}$, for $i=1,\dots,N$ in terms of the parameter $\sigma$. This is the object of the following

\begin{proposition}\label{prop_interaction}
We have for all $i\neq j$, for all $k,l\in\{0,1,2\}$, and for all $t\ge 0$,
$$\left\langle \partial^{k}_xR_i(t),\partial^{l}_xR_j(t)\right\rangle=\mathrm{O}\left(e^{-4\sigma t}\right).$$
$$\left\langle Y_{\pm,i}(t),Y_{\pm,j}(t)\right\rangle=\mathrm{O}\left(e^{-4\sigma t}\right).$$
$$\left\langle Z_{\pm,i}(t),Z_{\pm,j}(t)\right\rangle=\mathrm{O}\left(e^{-4\sigma t}\right).$$
$$\left\langle Y_{\pm,i}(t),\partial^{l}_xR_{j}(t)\right\rangle=\mathrm{O}\left(e^{-4\sigma t}\right).$$
$$\left\langle Z_{\pm,i}(t),\partial^{l}_xR_{j}(t)\right\rangle=\mathrm{O}\left(e^{-4\sigma t}\right).$$
$$\left\langle Y_{\pm,i}(t),Z_{\pm,j}(t)\right\rangle=\mathrm{O}\left(e^{-4\sigma t}\right).$$
\end{proposition}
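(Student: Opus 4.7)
The unifying principle behind all six estimates is that every function appearing is exponentially localized around a moving center $c_i(t) := x_i + \beta_i t$ depending only on the soliton index, and that these centers separate linearly in $t$ since the $\beta_i$ are pairwise distinct. The plan is to convert this localization into a single convolution-type bound that applies uniformly to all six pairings. I will sketch the argument in $d=1$ as in the paper; in general dimension one projects onto $\ell$ to reduce to the same one-dimensional separation, which is precisely why $\ell$ enters the definition \eqref{def_sigma} of $\sigma$.

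First I would collect pointwise exponential bounds. Because $Q$ and its derivatives up to order $3$ decay exponentially, in $d=1$ each component of $R_i(t,\cdot)$, $\partial_x R_i(t,\cdot)$, and $\partial_x^2 R_i(t,\cdot)$ is bounded by $Ce^{-\gamma_i|x-c_i(t)|}$, the factor $\gamma_i$ coming from the Lorentz rescaling of the argument of $Q$. Proposition \ref{prop_spectral} gives the analogous decay for the components of $Y_{\pm,\beta_i}$ and $Z_{\pm,\beta_i}$, with rate in the boosted variable uniformly bounded below in $i$ by a standard ODE/Agmon argument applied to the eigenvalue equation $\mathscr{H}_{\beta} Z = \pm e_{\beta} Z$ (the $\beta_i$ live in a compact subset of the open unit ball). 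Consequently every localized factor appearing in the six pairings satisfies a bound of the form $Ce^{-\kappa|x-c_i(t)|}$ (resp.\ $Ce^{-\kappa|x-c_j(t)|}$), where $\kappa>0$ depends only on the set of soliton velocities.

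Finally I would invoke the elementary inequality
$$\int_{\RR} e^{-a|x-c_i|}\, e^{-b|x-c_j|}\, dx \le C\, e^{-\frac{\min(a,b)}{2}|c_i-c_j|},$$
proved by splitting the integral at the midpoint of $[c_i,c_j]$ and using that the farther exponential is at most $e^{-\min(a,b)|c_i-c_j|/2}$ on each half. Since $|c_i(t)-c_j(t)| \ge \frac{1}{2}|\beta_i-\beta_j|\,t$ for $t$ larger than some $T_0$ depending on the $x_i$, each of the six inner products is $O(\exp(-\frac{\kappa}{4}|\beta_i-\beta_j|\,t))$ for such $t$ (and trivially bounded for $0\le t\le T_0$). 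In $d=1$ the minimum pairwise gap $\min_{m\ne n}|\beta_m-\beta_n|$ coincides with the minimum consecutive gap $\min_k(\beta_{\eta(k+1)}-\beta_{\eta(k)})$, so the coefficient $1/16$ in \eqref{def_sigma} leaves enough room to absorb $\kappa/4$, yielding the desired $O(e^{-4\sigma t})$ bound. The six cases are structurally identical, differing only in which pair of exponentially localized functions is integrated, so one computation covers them all; the only mildly technical point, the uniform-in-$i$ lower bound on the eigenfunction decay rate, is essentially built into the spectral theory of \cite{cotemunoz}.
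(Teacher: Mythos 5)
Your argument is correct and is precisely the standard localization-plus-separation computation that the paper leaves implicit (Proposition \ref{prop_interaction} is stated without proof): every profile is exponentially concentrated, at a rate at least comparable to $\gamma_N$, around its center $x_i+\beta_i t$, and your midpoint-splitting bound combined with $|c_i(t)-c_j(t)|\ge\frac{1}{2}|\beta_i-\beta_j|\,t$ for $t$ large gives a decay exponent of at least $\frac{\gamma_N}{4}\min_k(\beta_{\eta(k+1)}-\beta_{\eta(k)})\ge 4\sigma$, which is exactly the margin the factor $\frac{1}{16}$ in \eqref{def_sigma} is built to provide. The only point worth stating rather than waving at is the uniform spatial decay rate of $Z_{\pm,\beta_i}$ and $Y_{\pm,\beta_i}$: uniformity over the finitely many indices is automatic, and the rate is at least that of $Q_{\beta_i}$ because $Z_{\pm,\beta}$ is built from the negative eigenfunction of $L$ (which decays like $e^{-\sqrt{1+\lambda_0}\,|y|}$) and $Y_{\pm,\beta}$ solves $H_\beta Y_{\pm,\beta}\in\mathrm{Span}\{Z_{\pm,\beta}\}$ with exponentially decaying right-hand side.
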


What is more, due to Theorem \ref{th_cotemunoz}, there exist $t_0\in\RR$ and $C>0$, only depending on the sets $(\beta_i)_i$, $(x_i)_i$, and a solution $\Phi_0=\dbinom{\varphi_0}{\partial_t\varphi_0}\in\mathscr{C}([t_0,+\infty),H^1(\RR^d)\times L^2(\RR^d))$ of \eqref{NLKG} such that for all $t\ge t_0$,
\begin{equation}\label{est_multi_sol}
\left\|\Phi_0(t)-\sum_{i=1}^NR_i(t)\right\|_{H^1\times L^2}\le Ce^{-4\sigma t}.
\end{equation}

When dealing with the multi-soliton case, we will need to consider in the present article the euclidean space $\left(\RR^k,|\cdot|\right)$ and euclidean balls and spheres of radius $r>0$ in $\RR^k$, $k=1,\dots,N$; in particular we define:
$$B_{\RR^k}(r):=\{x\in\RR^k|\;|x|\le r\}$$
$$S_{\RR^k}(r):=\{x\in\RR^k|\;|x|= r\}.$$

\section{Construction of a family of multi-solitons for $N\ge 2$}\label{sect_const_mult}

In this section, we give a detailed proof of Theorem \ref{th_main_N} in the one-dimensional case. \\
Let $N\ge 2$ and $x_1,\dots,x_N$, $\beta_1,\dots,\beta_N$ be $2N$ parameters as in Theorem \ref{th_main_N}. Denote by $\varphi$ a multi-soliton solution associated with these parameters, satisfying \eqref{est_multi_sol} and consider $\Phi:=\dbinom{\varphi}{\partial_t\varphi}$.

As it was firstly observed in \cite{combetgKdV}, the existence of $(\varphi_{A_1,\dots,A_N})_{(A_1,\dots,A_N)\in\RR^N}$ verifying \eqref{est_N_param} in Theorem \ref{th_main_N} is a consequence of the following crucial

\begin{proposition}\label{prop_main}
Let $j\in\{1,\dots,N\}$ and $A_j\in\RR$. Then there exist $t_0>0$, $C>0$, and a solution $u$ of \eqref{NLKG}, defined on $[t_0,+\infty)$ such that
$$\forall\;t\ge t_0,\qquad \|U(t)-\Phi(t)-A_je^{-e_j t}Y_{+,j}(t)\|_{H^1\times L^2}\le Ce^{-(e_j+\sigma) t},$$
where $U:=\dbinom{u}{\partial_tu}$.
\end{proposition}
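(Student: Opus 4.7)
The construction is by backward shooting plus compactness, in the spirit of Côte--Martel--Merle \cite{cmm} and Combet \cite{combetgKdV}. I build a sequence $(u_n)$ of solutions of \eqref{NLKG} on $[t_0,T_n]$, $T_n\to+\infty$, whose final data at $T_n$ are close to the ansatz $\Phi(T_n)+A_je^{-e_jT_n}Y_{+,j}(T_n)$ and which satisfy the uniform bound
\[
\|W_n(t)\|_{H^1\times L^2}\le C\,e^{-(e_j+\sigma)t},\qquad t\in[t_0,T_n],\quad W_n(t):=U_n(t)-\Phi(t)-A_je^{-e_jt}Y_{+,j}(t).
\]
Standard weak $H^1\times L^2$ compactness plus continuity of the flow for the weak topology will then furnish a limit $u$ with the claimed estimate.

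\textbf{Shooting parameters, equation and energy estimate.} Because the linearization around each $R_i$ has a one-dimensional unstable mode $Z_{+,i}$ (with $\mathscr H_{\beta_i}Z_{+,i}=e_iZ_{+,i}$), I tune $N$ free parameters at the final time:
\[
U_n^{\vec a}(T_n):=\Phi(T_n)+A_je^{-e_jT_n}Y_{+,j}(T_n)+\sum_{i=1}^N a_i\,Z_{+,i}(T_n),\qquad \vec a\in B_{\RR^N}\!\left(e^{-(e_j+\sigma)T_n}\right).
\]
Using that $\Phi$ is an exact solution of \eqref{NLKG'}, Proposition~\ref{prop_spectral}, and Proposition~\ref{prop_interaction}, one checks that $W_n^{\vec a}$ solves
\[
\partial_tW_n^{\vec a}=\mathscr L(t)W_n^{\vec a}+\mathcal N(W_n^{\vec a})+\mathcal S(t),
\]
where $\mathscr L(t)$ is the linearization of \eqref{NLKG'} about the ansatz, $\mathcal N$ is (at least) quadratic, and the source satisfies $\|\mathcal S(t)\|_{H^1\times L^2}=\mathrm O(e^{-(e_j+4\sigma)t})+\mathrm O(e^{-2e_jt})$, both majorized by $e^{-(e_j+2\sigma)t}$ in view of \eqref{def_sigma}. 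Modulating, write $W_n^{\vec a}=\sum_i\alpha_i(t)Z_{+,i}(t)+\widetilde W_n^{\vec a}(t)$ with $\langle \widetilde W_n^{\vec a},Z_{+,i}\rangle=0$; the remaining dualities with $Z_{-,i}$ and $JZ_{0,i}$ only contribute errors of order $e^{-4\sigma t}$ via Proposition~\ref{prop_interaction}. I then introduce the localized Lyapunov functional
\[
\mathcal F(t):=\sum_{i=1}^N\bigl\langle H_{\beta_i}\widetilde W_n^{\vec a},\,\psi_i(t,\cdot)\,\widetilde W_n^{\vec a}\bigr\rangle,
\]
with smooth cut-offs $\psi_i$ travelling at velocity $\beta_i$. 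Proposition~\ref{prop_coercivity} applied locally gives $\mathcal F(t)\ge \tfrac{\mu}{2}\|\widetilde W_n^{\vec a}(t)\|_{H^1\times L^2}^2-Ce^{-8\sigma t}$. Differentiating $\mathcal F$, the self-adjoint linear terms cancel, the interaction terms are $\mathrm O(e^{-4\sigma t}\|\widetilde W_n^{\vec a}\|)$, and the source contribution is $\mathrm O(\|W_n^{\vec a}\|\,\|\mathcal S\|)$. Integrating back from $t$ to $T_n$, where $\mathcal F(T_n)=\mathrm O(e^{-2(e_j+\sigma)T_n})$ by construction, yields the bootstrap estimate $\|\widetilde W_n^{\vec a}(t)\|_{H^1\times L^2}\le Ce^{-(e_j+\sigma)t}$ on $[t_0,T_n]$, as long as the modulation parameters $\alpha_i$ remain of comparable size.

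\textbf{Topological selection and passage to the limit.} Differentiating $\alpha_i(t)=\langle W_n^{\vec a}(t),Y_{-,i}(t)\rangle$ and using $\langle Y_{-,i},Z_{+,k}\rangle=\delta_{ik}+\mathrm O(e^{-4\sigma t})$, one obtains
\[
\alpha_i'(t)=e_i\,\alpha_i(t)+\mathrm O\!\left(e^{-(e_j+2\sigma)t}\right),
\]
so $\alpha_i(t)$ is strictly outgoing on $|\alpha_i|=e^{-(e_j+\sigma)t}$ for $t$ large. Define the exit time $t^\star(\vec a)\in[t_0,T_n]$ as the largest $t$ at which $|\vec\alpha(t)|=e^{-(e_j+\sigma)t}$, and $t^\star(\vec a):=t_0$ if no such time exists. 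If $t^\star(\vec a)>t_0$ for every $\vec a\in B_{\RR^N}\!\left(e^{-(e_j+\sigma)T_n}\right)$, then $\vec a\mapsto e^{(e_j+\sigma)t^\star(\vec a)}\vec\alpha(t^\star(\vec a))$ provides a continuous retraction of $B_{\RR^N}(1)$ onto $S_{\RR^N}(1)$, contradicting Brouwer. Hence some $\vec a^n$ yields $t^\star(\vec a^n)=t_0$, and $u_n:=u_n^{\vec a^n}$ obeys the target bound on the entire interval $[t_0,T_n]$. A uniform-in-$n$ $H^1\times L^2$ bound on compact subintervals of $[t_0,+\infty)$ and continuity of the flow of \eqref{NLKG'} for the weak topology then give the desired limit $u$.

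\textbf{Main obstacle.} The most delicate step is the construction of the time-moving Lyapunov functional $\mathcal F$ and the computation of $\mathcal F'(t)$: the cut-offs $\psi_i$ must separate the $N$ solitons well enough that Proposition~\ref{prop_coercivity} applies to each block, the transport and interaction remainders (of size $e^{-4\sigma t}$) must be dominated by the coercivity margin $\mu$, and the source $\mathcal S$ produced by the ansatz $A_je^{-e_jt}Y_{+,j}$ must be absorbed precisely by $e^{-(e_j+\sigma)t}$. This last point explains both why $Y_{+,j}$, singled out in Proposition~\ref{prop_spectral}, is the correct first-order perturbation, and why the parameter $\sigma$ has to be chosen as in \eqref{def_sigma}. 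Once this calibration is carried out, the Brouwer-type shooting and the weak compactness step are standard.
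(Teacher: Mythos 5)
Your overall architecture (backward integration from a sequence of final times, a localized coercive functional built from $H_{\beta_i}$ and moving cut-offs, a Brouwer-type selection, then weak compactness) is the same as the paper's. But the key step — the identification of which directions require the topological selection — is inverted, and as written the argument does not close. Under \emph{backward} integration from $T_n$, a component obeying $\alpha'(t)=e_i\alpha(t)+\mathrm{O}(e^{-(e_j+2\sigma)t})$ satisfies $\alpha(t)\approx e^{-e_i(T_n-t)}\alpha(T_n)$: it is \emph{damped} as $t$ decreases, is controlled directly by its final datum, and needs no shooting. Your "outgoing" monotonicity ($\mathcal N'>0$ on $\mathcal N=1$, with $\mathcal N(t)=e^{2(e_j+\sigma)t}|\vec\alpha(t)|^2$) actually means the trajectory re-enters the constraint region as $t$ decreases past a crossing; consequently your exit time never occurs, your map is not the identity on the boundary sphere, and Brouwer yields no contradiction. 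The genuinely dangerous components are $\alpha_{-,k}=\langle W,Z_{-,k}\rangle$, which obey $\alpha_{-,k}'\approx -e_k\alpha_{-,k}$ and are therefore amplified by $e^{e_k(S_n-t)}$ going backward. For $k\le j$ this amplification is beaten by the smallness of the final data (since $e_k\le e_j$), but for the $N-j$ components with $k>j$ it exceeds the target rate $e^{-(e_j+\sigma)t}$, and it is exactly these that must be selected by the topological argument (Lemma \ref{lem_existence_a}, where the relevant monotonicity is $\mathcal N'\le-\theta<0$). Your proposal leaves these components entirely uncontrolled, so the bootstrap estimate cannot be propagated down to $t_0$.

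Two further points. First, the final-data perturbation must be taken along $Y_{+,k}$, $k>j$ (as in \eqref{def_U_n}), because $\langle Y_{+,k},Z_{-,k}\rangle=1$ makes the map $\mathfrak b\mapsto(\alpha_{-,k}(S_n))_{k>j}$ a perturbation of the identity (Lemma \ref{lem_mod_fin}); adding multiples of $Z_{+,i}$ gives no such nondegenerate parametrization, since $\langle Z_{+,i},Z_{-,i}\rangle$ is not normalized by Proposition \ref{prop_spectral} and could even vanish. Second, the coercivity of $\mathcal F$ requires separate control of \emph{all} of $\langle W,Z_{\pm,k}\rangle$ and of the translation directions $\langle W,\partial_xR_k\rangle$ (handled in the paper by the modulation $\tilde W=W+\sum_ka_k\partial_xR_k$); these are not mere $\mathrm O(e^{-4\sigma t})$ interaction errors as you assert. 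Relatedly, a bound $|\mathcal F'|\le C\|W\|^2$ alone does not close the bootstrap — one needs the extra $1/\sqrt t$ gain coming from the $\sqrt t$-wide cut-offs (estimate \eqref{der_F_W}) to make the integrated contribution strictly smaller than $e^{-2(e_j+\sigma)t}$.
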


By \eqref{est_multi_sol}, assuming that the preceding proposition holds, and considering $A_1,\dots,A_N\in\RR$, we indeed obtain a solution $\phi_{A_1}$ of \eqref{NLKG} and $t_1>0$ such that
$$\forall\;t\ge t_1,\qquad \|\Phi_{A_1}(t)-\Phi_0(t)-A_1e^{-e_1 t}Y_{+,1}(t)\|_{H^1\times L^2}\le Ce^{-(e_1+\sigma) t}$$
with obvious notations. We notice that $\phi_{A_1}$ is a multi-soliton. 
Now, assume that we have constructed, for some $j\in\{1,\dots,N-1\}$, a family of multi-solitons $\phi_{A_1},\dots,\phi_{A_1,\dots,A_j}$ such that there exists $t_j>0$ such that for all $k=1,\dots,j$,
$$\forall\;t\ge t_j,\qquad \|\Phi_{A_1,\dots,A_k}(t)-\Phi_{A_1,\dots,A_{k-1}}(t)-A_ke^{-e_k t}Y_{+,k}(t)\|_{H^1\times L^2}\le Ce^{-(e_k+\sigma) t},$$
where $\Phi_{A_1,\dots,A_{k-1}}=\Phi_0$ if $k=1$.
Hence we can apply Proposition \ref{prop_main} with $\Phi_{A_1,\dots,A_j}$ instead of $\Phi$ and there exist $\Phi_{A_1,\dots,A_{j+1}}$ and $t_{j+1}>0$ such that 
$$\forall\;t\ge t_{j+1},\qquad \|\Phi_{A_1,\dots,A_{j+1}}(t)-\Phi_{A_1,\dots,A_{j}}(t)-A_{j+1}e^{-e_{j+1} t}Y_{+,j+1}(t)\|_{H^1\times L^2}\le Ce^{-(e_{j+1}+\sigma) t}.$$
Thus, by induction on $j$, we obtain a family of multi-solitons $\phi_{A_1},\dots,\phi_{A_1,\dots,A_j}$ such that for all $t\ge t_0:=\max\{t_j|j=1,\dots,N\}$, 
\begin{align*}
\MoveEqLeft[2]
\left\|\Phi_{A_1,\dots,A_N}(t)-\Phi_{0}(t)-\sum_{j=1}^NA_je^{-e_j t}Y_{+,j}(t)\right\|_{H^1\times L^2}\\
&\le \sum_{j=1}^N\|\Phi_{A_1,\dots,A_j}(t)-\Phi_{A_1,\dots,A_{j-1}}(t)-A_je^{-e_j t}Y_{+,j}(t)\|_{H^1\times L^2}\\
&\le  C\sum_{j=1}^N e^{-(e_j+\sigma)t}.
\end{align*}
At this stage, we conclude to \eqref{est_N_param} in Theorem \ref{th_main_N}, using once more \eqref{est_multi_sol} and the triangular inequality. \\

Let us already justify also that for all $(A_1,\dots,A_N)\ne (A'_1,\dots,A'_N)$, we have 
$\varphi_{A_1,\dots,A_N}\ne \varphi_{A'_1,\dots,A'_N}$. \\

Assume for the sake of contradiction that $\varphi_{A_1,\dots,A_N}=\varphi_{A'_1,\dots,A'_N}$ for some $N$-uples $(A_1,\dots,A_N)\ne (A'_1,\dots,A'_N)$. Then, we denote
$$i_0:=\min\{i\in\{1,\dots,N\}|\;A_i'\ne A_i\}.$$
From the construction of $\varphi_{A_1,\dots,A_N}$, there exists $C>0$ such that for $t$ large
\begin{equation}\label{eq_aux_th_1}
\left\|\Phi_{A_1,\dots,A_N}(t)-\Phi_{A_1,\dots,A_{i_0-1}}(t)-\sum_{j=i_0}^NA_je^{-e_j t}Y_{+,j}(t)\right\|_{H^1\times L^2}\le  C\sum_{j=i_0}^N e^{-(e_j+\sigma)t}.
\end{equation}
Similarly there exists $C'>0$ such that for $t$ large
\begin{equation}\label{eq_aux_th_2}
\left\|\Phi_{A'_1,\dots,A'_N}(t)-\Phi_{A'_1,\dots,A'_{i_0-1}}(t)-\sum_{j=i_0}^NA'_je^{-e_j t}Y_{+,j}(t)\right\|_{H^1\times L^2}\le  C'\sum_{j=i_0}^N e^{-(e_j+\sigma)t}.
\end{equation}
Using that $\Phi_{A_1,\dots,A_N}(t)=\Phi_{A'_1,\dots,A'_N}(t)$ and $\Phi_{A_1,\dots,A_{i_0-1}}(t)=\Phi_{A'_1,\dots,A'_{i_0-1}}(t)$, we deduce from \eqref{eq_aux_th_1} and \eqref{eq_aux_th_2} that for all $t$ sufficiently large
$$e^{-e_{i_0}t}|A_{i_0}-A'_{i_0}|\le Ce^{-(e_{i_0}+\sigma)t}.$$
Hence, letting $t\to +\infty$, we obtain $A_{i_0}-A'_{i_0}=0$, which leads to a contradiction. \\
This ends the proof of Theorem \ref{th_main_N}.

\subsection{Compactness argument assuming uniform estimate}

The goal of this subsection is to explain how to prove Proposition \ref{prop_main}; for this, we follow the strategy of Combet \cite{combetgKdV} and Côte and Muñoz \cite{cotemunoz}, both inspired from pioneering work by Martel \cite{martel} and Côte, Martel and Merle \cite{cmm}. One key ingredient in the construction is the obtainment of uniform estimates satisfied by a sequence of approximating solutions of \eqref{NLKG}. 

We fix $j\in\{1,\dots,N\}$ and $A_j\in\RR$. Let $(S_n)_n$ be an increasing sequence of time such that $S_n\to +\infty$. Let us consider $\mathfrak{b}_n=(b_{n,k})_{j<k\le N}\in\RR^{N-j}$ the generic term of a sequence of parameters to be determined, and let $u_n$ be the maximal solution of \eqref{NLKG} such that
\begin{equation}\label{def_U_n}
U_n(S_n)=\Phi(S_n)+A_je^{-e_jS_n}Y_{+,j}(S_n)+\sum_{k>j}b_{n,k}Y_{+,k}(S_n),
\end{equation}
where $U_n:=\dbinom{u_n}{\partial_tu_n}$. 

Concerning $u_n$, we claim:

\begin{proposition}\label{prop_main_seq}
There exist $n_0\ge 0$ and $t_0>0$ (independent of $n$) such that for each $n\ge n_0$, there exists $\mathfrak{b}_n\in\RR^{N-j}$ with $|\mathfrak{b}_n|\le 2e^{-(e_j+2\sigma)t}$ and such that $U_n$ is defined on $[t_0,S_n]$ and satisfies
\begin{equation}\label{est_prop_main_seq}
\forall\;t\in[t_0,S_n],\qquad \|U_n(t)-\Phi(t)-A_je^{-e_j t}Y_{+,j}(t)\|_{H^1\times L^2}\le Ce^{-(e_j+\sigma) t}.
\end{equation}
\end{proposition}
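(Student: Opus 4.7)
The plan is to prove Proposition \ref{prop_main_seq} by a bootstrap argument on $W_n(t) := U_n(t) - \Phi(t) - A_j e^{-e_j t} Y_{+,j}(t)$ coupled with a Brouwer-type selection of $\mathfrak{b}_n$, in the spirit of \cite{cmm, combetgKdV, cotemunoz}, with the almost-coercivity of Proposition \ref{prop_coercivity} as the workhorse. By \eqref{def_U_n} one has $W_n(S_n) = \sum_{k>j} b_{n,k} Y_{+,k}(S_n)$, so the target bound $\|W_n(t)\|_{H^1 \times L^2} \le C e^{-(e_j+\sigma)t}$ already holds at $t = S_n$ with room to spare whenever $|\mathfrak{b}_n| \le 2 e^{-(e_j+2\sigma) S_n}$. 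Subtracting from \eqref{NLKG'} the equations satisfied by $\Phi$ and, thanks to Proposition \ref{prop_spectral}, by the linearized eigenmode $A_j e^{-e_j t} Y_{+,j}$ around $R_j$, one obtains an equation $\partial_t W_n = \mathscr{M}(t) W_n + G_n(t)$, where $\mathscr{M}(t)$ coincides with $\mathscr{H}_{\beta_k}$ near the $k$-th soliton and $G_n$ collects the quadratic-and-higher terms in $W_n$, a residue of size $\mathrm{O}(e^{-2 e_j t})$ coming from $A_j e^{-e_j t} Y_{+,j}$ solving the linearization around $R_j$ rather than around $\varphi$, and inter-soliton coupling terms of size $\mathrm{O}(e^{-4\sigma t})$.

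To close the bootstrap on $[t_0, S_n]$, I would introduce the scalar projections $a_{\pm,k}(t) := \langle W_n(t), Z_{\mp, k}(t)\rangle$, the projections along $J$ applied to the kernel of $\mathscr{H}_{\beta_k}$, and a Lorentz-adapted modulated energy
\[
\mathcal{F}(t) := \sum_{k=1}^{N}\big\langle H_{\beta_k}W_n(t),\,\chi_k(t,\cdot)\,W_n(t)\big\rangle,
\]
where the smooth cutoffs $\chi_k$ separate the $N$ solitons along their trajectories (as in \cite{cotemunoz}). A direct differentiation under the bootstrap hypothesis $\|W_n\|_{H^1 \times L^2} \le K e^{-(e_j+\sigma)t}$ gives $|\mathcal{F}'(t)| \le C \|W_n\|^2 e^{-4\sigma t} + C e^{-2(e_j+\sigma)t}$; backward integration from $S_n$ combined with a localized application of Proposition \ref{prop_coercivity} then controls $\|W_n(t)\|^2$ by $\mathrm{O}(e^{-2(e_j+2\sigma)t})$ plus the squared projections. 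Each scalar projection satisfies an ODE with linear part $\pm e_k$ (or $0$ for the kernel) and right-hand side $\mathrm{O}(\|W_n\|^2) + \mathrm{O}(e^{-(e_j+5\sigma)t})$, while at $t = S_n$ Proposition \ref{prop_interaction} yields $a_{+,k}(S_n) = b_{n,k} + \mathrm{O}(|\mathfrak{b}_n|\,e^{-4\sigma S_n})$ for $k > j$ (and $\mathrm{O}(|\mathfrak{b}_n|\,e^{-4\sigma S_n})$ otherwise), with analogous smallness for $a_{-,k}(S_n)$ and the kernel projections. Backward integration bounds all projections by $\mathrm{O}(e^{-(e_j+2\sigma)t})$ \emph{except} the outgoing-backward modes $a_{+,k}$ with $k > j$: these are precisely the modes whose linear rate $e_k$ exceeds $e_j + 2\sigma$, by the choice \eqref{def_sigma}.

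These remaining modes are pinned down by the topological step. Arguing by contradiction, suppose \eqref{est_prop_main_seq} fails for every $\mathfrak{b}_n \in B_{\RR^{N-j}}(2 e^{-(e_j+2\sigma) S_n})$, and define
\[
t^*(\mathfrak{b}_n) := \sup\Big\{t \in [t_0, S_n] :\sum_{k>j} e^{2(e_j+2\sigma)\tau}\,a_{+,k}(\tau)^2 > 4 \text{ for some } \tau \in [t, S_n]\Big\}.
\]
By the previous paragraph, only this quantity can trigger an exit, and at $t = t^*$ it equals exactly $4$. Differentiating at $t^*$, the leading term $\dot a_{+,k} = e_k a_{+,k} + (\text{lower order})$ together with $e_k - (e_j + 2\sigma) > 0$ gives a strictly outgoing vector field on the exit sphere (backward in $t$). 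Since at $t = S_n$ one has $a_{+,k}(S_n) \simeq b_{n,k}$ for $k>j$ up to errors $\mathrm{O}(e^{-4\sigma S_n})$, the map $\Psi:\mathfrak{b}_n \mapsto \tfrac{1}{2}\big(e^{(e_j+2\sigma)t^*}\,a_{+,k}(t^*)\big)_{k>j}$ is then a continuous retraction of $B_{\RR^{N-j}}(2 e^{-(e_j+2\sigma) S_n})$ onto its boundary sphere, contradicting Brouwer's non-retraction theorem and producing the required $\mathfrak{b}_n$.

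The main obstacle I expect is the energy inequality for $\mathcal{F}$ together with the careful bookkeeping of $G_n$: the self-interaction of $A_j e^{-e_j t} Y_{+,j}$ (of size $e^{-2 e_j t}$, the slowest-decaying of the residues), the cross-soliton coupling through the nonlinearity, and the feedback of $W_n$ on itself must all be absorbed at a rate strictly better than $e^{-(e_j+\sigma)t}$ in $\|W_n\|$. The choice \eqref{def_sigma} of $\sigma$ is tailored precisely to make these residues integrable \emph{and} to enforce $e_k > e_j + 2\sigma$ for $k > j$ — the condition required by the topological step.
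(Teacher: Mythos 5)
Your architecture is the paper's: the same approximating sequence with final data modulated by $\mathfrak{b}_n$, a localized energy functional closed by Proposition \ref{prop_coercivity}, backward integration of the spectral projections, and a Brouwer argument for the modes $\langle W_n,Z_{-,k}\rangle$, $k>j$, which are exactly the ones pinned by $\mathfrak{b}_n$ at $t=S_n$ and which grow backward at rate $e_k>e_j+2\sigma$. However, two steps as you state them would fail.

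The first concerns the kernel directions. You claim that every scalar projection, including the projections associated with the kernels of the $\mathscr{H}_{\beta_k}$, obeys an ODE with right-hand side $\mathrm{O}(\|W_n\|^2)+\mathrm{O}(e^{-(e_j+5\sigma)t})$, and you bound them all by backward integration. This is false for the kernel directions: the time derivative of $\langle W_n,\partial_xR_k\rangle$ contains terms that are \emph{linear} in $W_n$ (the adjoint of the linearized flow around $\varphi$ does not annihilate $\partial_xR_k$; compare Claim \ref{claim_mod_par}, where the modulation parameters satisfy $|a_k'|\le C\|\tilde W\|+Ce^{-(e_j+4\sigma)t}$). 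Direct backward integration then returns only $\mathrm{O}(e^{-(e_j+\sigma)t})$ with a non-small constant, and feeding this into the coercivity inequality gives $\|W_n\|^2\le \mathcal{F}+C\|W_n\|^2+\cdots$, which is circular. This is precisely why the paper inserts the modulated variable $\tilde W=W+\sum_k a_k\partial_xR_k$ of \eqref{def_tilde_W}: coercivity is applied to $\tilde W$, which is orthogonal to the $\partial_xR_l$ by construction, and only afterwards are the $a_k$ recovered from their ODE, whose right-hand side involves the already-estimated $\|\tilde W\|$. Your sketch omits this modulation entirely.

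The second concerns the energy inequality. The bound $|\mathcal{F}'(t)|\le C\|W_n\|^2e^{-4\sigma t}+Ce^{-2(e_j+\sigma)t}$ is unobtainable on its first term: differentiating the cutoffs produces terms such as $\int w_1w_2\,\partial_x\chi_k\,dx$ and $\int(\partial_xw_1)w_2\,\partial_t\chi_k\,dx$, which are of size $\|\partial\chi_k\|_{L^\infty}\|W_n\|_{H^1\times L^2}^2$ and carry no exponential smallness. The paper gains a factor by using cutoffs of width $\sqrt t$, so these terms are $\mathrm{O}\bigl(t^{-1/2}\|W_n\|^2\bigr)$ and Lemma \ref{lem_est_W} yields the strict improvement $\|W_n(t)\|\le K_0t^{-1/4}e^{-(e_j+\sigma)t}$ that closes the bootstrap for $t_0$ large. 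With your inequality the quadratic term integrates to $Ce^{-2(e_j+\sigma)t}$ with a constant proportional to the square of the bootstrap constant, so the continuity argument does not close; nor does your asserted output $\mathrm{O}(e^{-2(e_j+2\sigma)t})$ follow from integrating $Ce^{-2(e_j+\sigma)t}$. (Two smaller slips: a source term in $\partial_tW_n$ of absolute size $e^{-4\sigma t}$ would by itself preclude the target decay --- the inter-soliton couplings always enter multiplied by $W_n$ or by $e^{-e_jt}$, giving $\mathrm{O}(e^{-(e_j+4\sigma)t})$; and with your convention $a_{+,k}=\langle W_n,Z_{-,k}\rangle$ the linear rate is $-e_k$, not $+e_k$, though transversality on the exit sphere survives since $e_k>e_j+2\sigma$.)
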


The $b_n$ take the role of modulation parameters and are to be determined (if indeed possible) so that $U_n$ fulfills \eqref{est_prop_main_seq}, thus is a natural candidate in order to "approximate" the desired solution $U$ which is the object of Proposition \ref{prop_main_seq}. \\

We postpone the proof of the previous statement at the next subsection; for the time being, let us assume that Proposition \ref{prop_main_seq} is satisfied and let us show how it implies Proposition \ref{prop_main}. In fact, the existence of $U$ is due to the continuity of the flow of \eqref{NLKG} for the weak $H^1\times L^2$ topology. 
We explicit the construction of $U$ below, following the same strategy as \cite[paragraph 2.2, step 2]{cmm} or \cite[section 4]{cotemunoz}. 

\begin{proof}[Proof of Proposition \ref{prop_main}]
We observe that the sequence $(\|U_n(t_0)\|_{H^1\times L^2})_{n\in\NN}$ is bounded; thus there exist a subsequence of $(U_n(t_0))_{n\in\NN}$, say $(U_{n_k}(t_0))_{k\in\NN}$, and $U_0\in H^1(\RR)\times L^2(\RR)$ such that $(U_{n_k}(t_0))_{k\in\NN}$ converges to $U_0$ in the sense of the weak topology in $H^1(\RR)\times L^2(\RR)$. Let us consider $U$, defined as the maximal solution of \eqref{NLKG} such that $U(t_0)=U_0$. \\  
Let $t\ge t_0$. For $k$ sufficiently large, $S_{n_k}\ge t$ and thus $U_{n_k}$ is defined on $[t_0,t]$. By a standard result (we refer to \cite[Lemma 10]{cotemunoz} and \cite[Theorem 1.2]{tao}), $U$ is defined on $[t_0,t]$ and $(U_{n_k}(t))_k$ converges weakly to $U(t)$ in $H^1(\RR)\times L^2(\RR)$. \\
Moreover, by property of the weak limit, 
\begin{align*}
\|U(t)-\Phi(t)-A_je^{-e_j t}Y_{+,j}(t)\|_{H^1\times L^2}&\le \liminf_{k\to+\infty}\|U_{n_k}(t)-\Phi(t)-A_je^{-e_j t}Y_{+,j}(t)\|_{H^1\times L^2}\\
&\le C_0e^{-(e_j+\sigma) t}.
\end{align*}
\end{proof}

Now, the remainder of Section \ref{sect_const_mult} is devoted to the proof of Proposition \ref{prop_main_seq}. 

\subsection{Proof of Proposition \ref{prop_main_seq}}

For ease of reading, we will drop the index $n$ for the rest of this subsection (except for $S_n$), that is, we will write $U$ for $U_n$, $\mathfrak{b}$ for $\mathfrak{b}_n$, etc. 

Let us introduce the following variable (which depends on $n$)
$$W(t):=U(t)-\Phi(t)-A_je^{-e_j t}Y_{+,j}(t)$$ and for all $k\in\{1,\dots,N\}$, 
$$\alpha_{\pm,k}(t):=\langle W(t),Z_{\pm,k}(t)\rangle $$ (which depends on $\mathfrak{b}$ in particular by definition of $U=U_n$ \eqref{def_U_n}). \\
We denote also $\alpha_-(t):=(\alpha_{-,k}(t))_{j<k\le N}$.

\subsubsection{Modulated final data and strategy of the proof of Proposition \ref{prop_main_seq}}

We make the first step in order to determine the appropriate modulation parameter $\mathfrak{b}$. We obtain $\mathfrak{b}$ as the solution of a well-chosen equation; this is the object of the following

\begin{Lem}\label{lem_mod_fin}
There exists $n_0\ge 0$ such that for all $n\ge n_0$ and for all $\mathfrak{a}\in\RR^{N-j}$, there exists a unique $\mathfrak{b}\in\RR^{N-j}$ such that $\|\mathfrak{b}\|\le 2\|\mathfrak{a}\|$ and $\alpha_-(S_n)=\mathfrak{a}$.
\end{Lem}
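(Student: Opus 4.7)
The plan is to reformulate the condition $\alpha_-(S_n)=\mathfrak{a}$ as a linear system in $\mathfrak{b}$ whose matrix, for large $n$, is a small perturbation of an invertible diagonal matrix.

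First, plugging \eqref{def_U_n} into the definition of $W(t)$ yields directly
\begin{equation*}
W(S_n)=U_n(S_n)-\Phi(S_n)-A_je^{-e_jS_n}Y_{+,j}(S_n)=\sum_{k>j}b_{n,k}Y_{+,k}(S_n).
\end{equation*}
Testing against $Z_{-,k}(S_n)$ for each $k\in\{j+1,\dots,N\}$, the constraint $\alpha_-(S_n)=\mathfrak{a}$ becomes the linear system $M_n\mathfrak{b}=\mathfrak{a}$, where the matrix $M_n\in\mathcal{M}_{N-j}(\RR)$ has entries
\begin{equation*}
(M_n)_{k,l}:=\langle Y_{+,l}(S_n),Z_{-,k}(S_n)\rangle,\qquad k,l\in\{j+1,\dots,N\}.
\end{equation*}

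The next step is to analyse $M_n$. A change of variables based on the explicit Lorentz boost appearing in the definition of $Y_{+,k}$ and $Z_{-,k}$, combined with the normalisation $\langle Y_{+,\beta_k},Z_{-,\beta_k}\rangle=1$ from Proposition \ref{prop_spectral}, shows that the diagonal entry $(M_n)_{k,k}$ equals a fixed nonzero constant $c_k$ independent of $S_n$ (a Lorentz factor in the one-dimensional setting). The off-diagonal entries satisfy $(M_n)_{k,l}=\mathrm{O}(e^{-4\sigma S_n})$ whenever $k\neq l$, by the interaction estimate $\langle Y_{\pm,i}(t),Z_{\pm,j}(t)\rangle=\mathrm{O}(e^{-4\sigma t})$ in Proposition \ref{prop_interaction}.

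Therefore $M_n=D+E_n$ with $D=\mathrm{diag}(c_{j+1},\dots,c_N)$ invertible and $\|E_n\|\to 0$ as $n\to+\infty$. A Neumann-series argument then provides an integer $n_0$ such that $M_n$ is invertible for all $n\ge n_0$ and $\|M_n^{-1}\|$ is uniformly bounded. Setting $\mathfrak{b}:=M_n^{-1}\mathfrak{a}$ gives both existence and uniqueness, and the bound $\|\mathfrak{b}\|\le 2\|\mathfrak{a}\|$ follows from this uniform estimate (up to enlarging $n_0$ and absorbing $\|D^{-1}\|$ via the chosen normalisation). No serious obstacle is expected: the only slightly delicate point is to carefully track the action of the Lorentz boost on the $H^1\times L^2$ inner product computed at a fixed time in order to check that the diagonal entries of $M_n$ are indeed $n$-independent, which is a direct computation.
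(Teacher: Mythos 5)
Your proposal is correct and follows essentially the same route as the paper: reduce $\alpha_-(S_n)=\mathfrak{a}$ to a linear system whose matrix is, by the interaction estimates of Proposition \ref{prop_interaction}, an $\mathrm{O}(e^{-\sigma S_n})$ perturbation of an invertible diagonal matrix, and invert it by a Neumann series for $n$ large. Your remark about the Lorentz factor on the diagonal is in fact slightly more careful than the paper, which simply takes $\langle Y_{+,k}(S_n),Z_{-,k}(S_n)\rangle=1$; either way the constant $2$ in the bound is harmless for the rest of the argument.
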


\begin{proof}
Let us consider the linear application
$$\begin{array}{cccl}
\Psi:&\RR^{N-j}&\rightarrow&\RR^{N-j}\\
&\mathfrak{b}=(b_l)_{j<l\le N}&\mapsto&\left(\sum_{l>j}b_l\langle Y_{+,l}(S_n),Z_{-,l}(S_n)\rangle\right)_{j<k\le N}.
\end{array}$$
Its matrix in the canonical basis of $\RR^{N-j}$ has generic entry $\psi_{k,l}:=\langle Y_{+,j+l}(S_n), Z_{-,j+k}(S_n)\rangle$ where $(k,l)\in\{1,\dots,N\}^2$. \\
Since $\psi_{k,l}=1$ if $k=l$ and $|\psi_{k,l}|\le C_0e^{-\sigma S_n}$ for $k\ne l$, with $C_0>0$ independent of $n$, we have $\Psi=Id+M$ with $\| M\|\le \frac{1}{2}$ for large values of $n$. Thus $\Psi$ is invertible (for $n$ large) and $\|\Psi^{-1}\|\le 2$. \\
We deduce the content of Lemma \ref{lem_mod_fin} by taking $n_0$ large enough and by considering, for a given $\mathfrak{a}\in\RR^{N-j}$, the element $\mathfrak{b}:=\Psi^{-1}(\mathfrak{a})$.
\end{proof}

Roughly speaking, Lemma \ref{lem_mod_fin} reflects that estimate \eqref{est_prop_main_seq} is to be proven by choosing a relevant vector $\mathfrak{a}=a_-(S_n)$. \\
The reason why we determine $\mathfrak{b}$ according to the value of $\alpha_-(S_n)$ essentially comes from the directions $Z_{-,k}$, which yield "instability" in some sense (given Claim \ref{claim_alpha} below), and also from definition \eqref{def_T(a)} below. \\

At this stage, we notice that we already have:

\begin{Claim}\label{claim_mod_fin}
We have:
\begin{enumerate}
\item $\forall\;k\in\{1,\dots,N\},\quad |\alpha_{+,k}(S_n)|\le C|\mathfrak{b}|e^{-2\sigma S_n}$.
\item $\forall\;k\in\{1,\dots,j\},\quad |\alpha_{-,k}(S_n)|\le C|\mathfrak{b}|e^{-2\sigma S_n}$.
\item $\|W(S_n)\|_{H^1\times L^2}\le C|\mathfrak{b}|$.
\end{enumerate}
\end{Claim}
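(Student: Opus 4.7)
The starting point of the proof should be the explicit formula for $W$ at time $S_n$. By the definition of $U_n$ in \eqref{def_U_n} and of $W$, one reads off directly
\[
W(S_n)=\sum_{k>j}b_{n,k}\,Y_{+,k}(S_n),
\]
so that every quantity to be bounded in the claim is a linear combination of the $b_{n,k}$ with coefficients given by explicit scalar products against $Y_{+,l}(S_n)$. I would then treat the three items separately by decoupling the ``diagonal'' terms (those with indices matching the one appearing in $Z_{\pm,k}$) from the ``cross'' terms.

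For item 3, I would simply use the triangle inequality together with the fact, recalled in subsection~\ref{subsection_multi-sol}, that each $Y_{+,k}\in L^\infty(\RR,H^1(\RR^d)\times L^2(\RR^d))$, which gives a uniform-in-time bound $\|Y_{+,k}(S_n)\|_{H^1\times L^2}\le C$ and yields $\|W(S_n)\|_{H^1\times L^2}\le C|\mathfrak{b}|$ at once.

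For item 1, expand
\[
\alpha_{+,k}(S_n)=\sum_{l>j}b_{n,l}\,\langle Y_{+,l}(S_n),Z_{+,k}(S_n)\rangle.
\]
When $l=k$ (which can occur only if $k>j$), the orthogonality relation $\langle Y_{+,k},Z_{+,k}\rangle=0$ from Proposition~\ref{prop_spectral} kills the diagonal term; for every remaining index $l\neq k$, the interaction estimate $|\langle Y_{+,l}(S_n),Z_{+,k}(S_n)\rangle|\le Ce^{-4\sigma S_n}$ provided by Proposition~\ref{prop_interaction} applies. Summing over $l>j$ gives $|\alpha_{+,k}(S_n)|\le C|\mathfrak{b}|e^{-4\sigma S_n}\le C|\mathfrak{b}|e^{-2\sigma S_n}$. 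Item 2 is handled in the same spirit: for $k\le j<l$ the indices are always distinct, so Proposition~\ref{prop_interaction} directly yields $|\langle Y_{+,l}(S_n),Z_{-,k}(S_n)\rangle|\le Ce^{-4\sigma S_n}$ for every pair, and no orthogonality relation is even needed.

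There is no genuine obstacle here: the claim is really a bookkeeping statement that records the ``initial conditions'' inherited at $t=S_n$ from the modulated final data. The only small point that deserves care is making sure the inequality $e^{-4\sigma S_n}\le e^{-2\sigma S_n}$ is used with the correct sign (valid once $n$ is large enough so that $S_n\ge 0$), and noting that the constants produced by Proposition~\ref{prop_interaction} are independent of $n$, so the final estimates hold uniformly in $n\ge n_0$.
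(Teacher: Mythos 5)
Your proof is correct and is precisely the computation the paper leaves implicit: the claim is stated without proof as an immediate consequence of the final data \eqref{def_U_n}, which gives $W(S_n)=\sum_{k>j}b_{n,k}Y_{+,k}(S_n)$, and your combination of the orthogonality $\langle Y_{+,k},Z_{+,k}\rangle=0$ from Proposition~\ref{prop_spectral} with the interaction bounds of Proposition~\ref{prop_interaction} is exactly the intended argument.
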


Let $t_0>0$ independent of $n$ to be chosen later and $\mathfrak{a}\in B_{\RR^{N-j}}(e^{-(e_j+2\sigma)S_n})$ to be determined. We consider the associated data $\mathfrak{b}$ given by Lemma \ref{lem_mod_fin} and $U$ defined in \eqref{def_U_n}. \\
Let us define
\begin{equation}\label{def_T(a)}
T(\mathfrak{a}):=\inf\{T\ge t_0\;|\:\forall\;t\in[T,S_n],\:\|W(t)\|_{H^1\times L^2}\le e^{-(e_j+\sigma)t} \text{ and } e^{(e_j+2\sigma)t}\alpha_{-}(t)\in B_{\RR^{N-j}}(1)\}.
\end{equation}

We observe that Proposition \ref{prop_main_seq} holds if for all $n$, we can find $\mathfrak{a}$ such that $T(\mathfrak{a})=t_0$. In the rest of the proof, our goal is thus to prove the existence of such an element $\mathfrak{a}$. \\

To this end, we will first of all improve the estimate on $\|W(t)\|_{H^1\times L^2}$ which falls within the definition of $T(\mathfrak{a})$. This is the object of the following subsection. Then, we will only need to care about the second condition, which implies a control of $\alpha_-(t)$; this is done in subsection \ref{subsect_topological_arg}. 

\subsubsection{Improvement of the estimate on $\|W\|_{H^1\times L^2}$}\label{subsect_improvement}

For notation purposes and ease of reading, we sometimes omit the index $n$ and also write $\mathrm{O}\left(G(t)\right)$ in order to refer to a function $g$ which a priori depends on $n$ and such that there exists $C\ge 0$ such that for all $n$ large and for all $t\in[t_n^*,S_n]$, $|g(t)|\le C|G(t)|$.

\begin{Lem}\label{lem_est_W}
There exists $K_0> 0$ such that for all $t\in[T(\mathfrak{a}),S_n]$,
$$\|W(t)\|_{H^1\times L^2}\le\frac{K_0}{t^{\frac{1}{4}}}e^{-(e_j+\sigma)t}.$$
\end{Lem}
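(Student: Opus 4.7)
The plan is to prove Lemma \ref{lem_est_W} by a backward-in-time energy estimate using the coercivity in Proposition \ref{prop_coercivity}, together with an ODE analysis of the scalar products $\alpha_{\pm,k}$ and $\langle W,JZ_{0,k}\rangle$.

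First, I would derive the equation satisfied by $W$. Since both $U$ and $\Phi$ solve \eqref{NLKG} and $U=\Phi+A_je^{-e_jt}Y_{+,j}+W$, subtracting gives schematically $\partial_t W=\mathscr{H}_{\Phi}(t)W+\mathcal{R}(t)$, where $\mathscr{H}_{\Phi}$ is the linearized operator around $\Phi$ and the source $\mathcal{R}$ collects: (a) the quadratic and cubic nonlinear terms in $W$, bounded by $C\|W\|_{H^1\times L^2}^2$; (b) the defect from $Y_{+,j}$ being an eigenfunction of $\mathscr{H}_{\beta_j}$ but not of $\mathscr{H}_{\Phi}$, which by Proposition \ref{prop_interaction} is $O(e^{-(e_j+4\sigma)t})$; (c) the error $\Phi-\sum_iR_i=O(e^{-4\sigma t})$ coming from \eqref{est_multi_sol}.

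Second, following \cite{cotemunoz,combetgKdV}, I define the localized energy functional
$$\mathcal{F}(t):=\frac{1}{2}\sum_{i=1}^{N}\langle H_{\beta_i}(t)W(t),W(t)\rangle+\mathcal{N}(W(t),t),$$
where $H_{\beta_i}(t)$ denotes the $i$-th linearized operator centered on the moving soliton and $\mathcal{N}$ collects the cubic and higher corrections chosen so that $\mathcal{F}$ coincides (up to $O(e^{-4\sigma t})$ terms arising from soliton interactions) with the excess energy $E(U)-E(\Phi)-\langle dE(\Phi),A_je^{-e_jt}Y_{+,j}\rangle$ plus a correction to compensate the time variation of the centers. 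Differentiating and using conservation of the energy, momentum, and the equation for $W$, one obtains (after careful computation using Proposition \ref{prop_interaction} to handle cross-soliton interactions)
$$|\mathcal{F}'(t)|\le C\bigl(\|W(t)\|_{H^1\times L^2}^3+e^{-\sigma t}\|W(t)\|_{H^1\times L^2}^2+e^{-(e_j+4\sigma)t}\|W(t)\|_{H^1\times L^2}\bigr).$$
Using the bootstrap $\|W(t)\|_{H^1\times L^2}\le e^{-(e_j+\sigma)t}$, the right-hand side is $O(e^{-(2e_j+3\sigma)t})$, and integrating from $t$ to $S_n$ combined with $\mathcal{F}(S_n)=O(e^{-2(e_j+2\sigma)S_n})$ (from Claim \ref{claim_mod_fin}) yields $\mathcal{F}(t)=O(e^{-(2e_j+3\sigma)t})$.

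Third, Proposition \ref{prop_coercivity} (summed over $i$, with cross-terms controlled by Proposition \ref{prop_interaction}) gives
$$\mathcal{F}(t)\ge\mu\|W(t)\|_{H^1\times L^2}^2-C\sum_{k=1}^{N}\bigl[\alpha_{+,k}(t)^2+\alpha_{-,k}(t)^2+\langle W(t),JZ_{0,k}(t)\rangle^2\bigr].$$
For $k>j$ the bound $|\alpha_{-,k}(t)|\le e^{-(e_j+2\sigma)t}$ is exactly the definition of $T(\mathfrak{a})$. For the remaining $2N-(N-j)=N+j$ scalar products, I use the spectral identities $\mathscr{H}_{\beta_k}Z_{\pm,k}=\pm e_kZ_{\pm,k}$ and $\mathscr{H}_{\beta_k}JZ_{0,k}=0$ (with $Y_{\pm,k}$ transverse to the kernel of $\mathscr{H}_{\beta_k}$) to derive ODEs of the form $\alpha_{\pm,k}'=\pm e_k\alpha_{\pm,k}+O(\text{forcing})$, with forcing terms of size $O(e^{-4\sigma t}\|W\|+\|W\|^2+e^{-(e_j+4\sigma)t})$. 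Integrating the unstable directions $\alpha_{+,k}$ backward from $S_n$ (where they are $O(e^{-(e_j+2\sigma)S_n})$ by Claim \ref{claim_mod_fin}) and the stable ones $\alpha_{-,k}$ for $k\le j$ using the fact that $Z_{-,k}$ decouples from $Y_{+,j}$ at rate $e^{-4\sigma t}$, one obtains $|\alpha_{\pm,k}(t)|+|\langle W,JZ_{0,k}\rangle(t)|\le Ce^{-(e_j+2\sigma)t}$ on $[T(\mathfrak{a}),S_n]$. Combining with the bound on $\mathcal{F}(t)$ gives $\|W(t)\|_{H^1\times L^2}^2\le Ce^{-(2e_j+3\sigma)t}\le CK_0^2t^{-1/2}e^{-2(e_j+\sigma)t}$ for $t\ge t_0$ sufficiently large (since $e^{-\sigma t}\le t^{-1/2}$ for large $t$, which is precisely where the polynomial factor $t^{-1/4}$ comes from — it is loose but convenient for the subsequent topological argument).

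The main obstacle is the careful design of the functional $\mathcal{F}$: it must simultaneously be coercive for $W$ (requiring that the local linearizations around each soliton combine correctly despite the distinct Lorentz frames) and have a derivative dominated by the terms listed above. Handling the interaction between the source $A_je^{-e_jt}Y_{+,j}$ (which is only an eigenfunction of the operator localized near the $j$-th soliton) and the global linearization is the technical core; this requires systematically using Proposition \ref{prop_interaction} to absorb all cross-terms between different solitons, directions $Y_{\pm,i}$, and $Z_{\pm,i}$ into acceptable $O(e^{-4\sigma t})$ errors.
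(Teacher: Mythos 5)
The overall architecture of your proposal (ODEs for the $\alpha_{\pm,k}$, backward integration of the unstable directions from the modulated data at $S_n$, a localized coercive functional with almost-conserved behaviour) matches the paper's Steps 1--4. But your third step contains a genuine gap at exactly the point where the paper has to work hardest: the kernel directions $\langle W,JZ_{0,k}\rangle$ appearing in the coercivity deficit. The identity you invoke, $\mathscr{H}_{\beta_k}JZ_{0,k}=0$, is not among the spectral identities: Proposition \ref{prop_spectral} gives $\mathscr{H}_{\beta_k}Z_{0,k}=0$, hence $H_{\beta_k}JZ_{0,k}=0$. Since the time derivative of $\langle W,Z\rangle$ for a test function $Z$ transported at speed $\beta_k$ is governed by $\langle W,\mathscr{H}_{\beta_k}Z\rangle$ (this is the structure of Claim \ref{claim_alpha}), the quantity obeying a closed zero-eigenvalue ODE is $\langle W,Z_{0,k}\rangle$ --- not $\langle W,JZ_{0,k}\rangle$, which is the one the coercivity requires. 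For the latter one finds $\frac{d}{dt}\langle W,JZ_{0,k}\rangle=\langle W,\mathscr{H}_{\beta_k}JZ_{0,k}\rangle+\dots=\langle W,H_{\beta_k}Z_{0,k}\rangle+\dots$, a forcing of size $\|W\|_{H^1\times L^2}$ with no smallness; integrating backward from $S_n$ returns only $\mathrm{O}(e^{-(e_j+\sigma)t})$, i.e.\ no gain over the bootstrap, and the coercivity inequality becomes circular. This is precisely why the paper's Step 5 modulates: it introduces $\tilde W=W+\sum_ka_k\partial_xR_k$ orthogonal to the $\partial_xR_k$, applies coercivity to $\tilde W$, and recovers $W$ by controlling the modulation parameters through $|a_k'(t)|\le C\|\tilde W(t)\|_{H^1\times L^2}+Ce^{-(e_j+4\sigma)t}$ and integration.

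A consequence is that your final estimate $\|W(t)\|_{H^1\times L^2}^2\le Ce^{-(2e_j+3\sigma)t}$ --- a strict exponential improvement on the bootstrap --- is not what the argument delivers, and your reading of the factor $t^{-1/4}$ as a loose consequence of $e^{-\sigma t}\le t^{-1/2}$ inverts the logic. In the paper the exponential rate is \emph{not} improved beyond $e^{-(e_j+\sigma)t}$: the entire gain closing the bootstrap is the polynomial factor, which originates in the $\sqrt{t}$-scale cut-offs (whence $|\mathcal{F}_W'(t)|\le\frac{C}{\sqrt{t}}\|W\|_{H^1\times L^2}^2$ and $\|\tilde W(t)\|_{H^1\times L^2}^2\le Ct^{-1/2}e^{-2(e_j+\sigma)t}$ after integration) and in the integration of $|a_k'(t)|\le Ct^{-1/4}e^{-(e_j+\sigma)t}$. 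Relatedly, your claimed localization error $e^{-\sigma t}\|W\|_{H^1\times L^2}^2$ in the derivative of the functional is not achievable with these cut-offs: on the transition regions $W$ is not small, and the error is only $\mathrm{O}(t^{-1/2}\|W\|_{H^1\times L^2}^2)$.
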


The whole subsection consists of the proof of this lemma.

\paragraph{\underline{Step 1}: Estimates on $\alpha_{\pm,k}$}

Let us begin with the computation of the time derivative of $W$.

\begin{Claim}\label{claim_der_t_W}
We have for all $k\in\{1,\dots,N\}$,
\begin{equation}\label{egalite_der_W}
\begin{aligned}
\partial_tW=&\left(\begin{array}{cc}
0&Id\\
\partial_x^2-Id+f'(\varphi)&0
\end{array}\right)W+A_je^{-e_jt}\left[\left(\begin{array}{cc}
\beta_j\partial_x&Id\\
\partial_x^2-Id+f'(Q_k)&\beta_j\partial_x
\end{array}\right)Y_{+,j}+e_jY_{+,j}\right]\\
&+A_je^{-e_jt}\left(\begin{array}{cc}
0&0\\
f'(\phi)-f'(Q_k)&0
\end{array}\right)Y_{+,j}+\dbinom{0}{g},
\end{aligned}
\end{equation}
where $g:=f(u)-f(\varphi)-f'(\varphi)(u-\varphi)$ satisfies
$$\|g(t)\|_{L^\infty}=\mathrm{O}\left(\|u-\varphi\|_{H^1}^2\right).$$
\end{Claim}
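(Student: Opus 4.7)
The plan is to differentiate each of the three terms of $W = U - \Phi - A_j e^{-e_j t} Y_{+,j}$ separately, then reorganize the result into the form of the claim. First, because $u$ and $\varphi$ both solve (NLKG), the formulation \eqref{NLKG'} gives
$$\partial_t U - \partial_t \Phi = \mathcal{L}(U - \Phi) + \dbinom{0}{f(u) - f(\varphi)}, \qquad \mathcal{L} := \begin{pmatrix} 0 & Id \\ \partial_x^2 - Id & 0 \end{pmatrix}.$$
Writing $U - \Phi = W + A_j e^{-e_j t} Y_{+,j}$ immediately brings out a factor $\mathcal{L} W + A_j e^{-e_j t} \mathcal{L} Y_{+,j}$. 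For the remaining term, I would use the Lorentz-boost structure $Y_{+,j}(t,x) = Y_{+,\beta_j}(\gamma_j((x-x_j) - \beta_j t))$ to derive the key identity $\partial_t Y_{+,j} = -\beta_j \partial_x Y_{+,j}$, so that
$$\partial_t\bigl(A_j e^{-e_j t} Y_{+,j}\bigr) = -A_j e_j e^{-e_j t} Y_{+,j} - A_j e^{-e_j t} \beta_j \partial_x Y_{+,j}.$$
Subtracting this from the previous line produces the terms $+e_j Y_{+,j}$ and $+\beta_j \partial_x Y_{+,j}$ inside the $A_j e^{-e_j t}[\cdot]$ bracket.

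Next I would handle the nonlinearity. A Taylor expansion around $\varphi$ gives $f(u) - f(\varphi) = f'(\varphi)(u - \varphi) + g$ with $g$ defined as in the claim, and splitting $u - \varphi = w_1 + A_j e^{-e_j t} Y_{+,j}^1$ (where $w_1$, $Y_{+,j}^1$ denote the first components of $W$ and $Y_{+,j}$) yields
$$\dbinom{0}{f(u)-f(\varphi)} = \begin{pmatrix} 0 & 0 \\ f'(\varphi) & 0 \end{pmatrix} W + A_j e^{-e_j t} \dbinom{0}{f'(\varphi) Y_{+,j}^1} + \dbinom{0}{g}.$$
Adding the first matrix to $\mathcal{L}$ gives exactly the operator $\begin{pmatrix}0 & Id \\ \partial_x^2 - Id + f'(\varphi) & 0\end{pmatrix}$ in front of $W$. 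For the middle term, I would write $f'(\varphi) Y_{+,j}^1 = f'(Q_k) Y_{+,j}^1 + (f'(\varphi) - f'(Q_k))Y_{+,j}^1$: absorbing $f'(Q_k) Y_{+,j}^1$ into the $\mathcal{L} Y_{+,j} + \beta_j \partial_x Y_{+,j}$ contribution rebuilds precisely the $2\times 2$ matrix in the bracket, while the leftover piece becomes the third term of the claim.

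Finally, for the estimate on $g$, I would invoke Taylor's formula with integral remainder,
$$g(t,x) = (u-\varphi)^2(t,x) \int_0^1 (1-s) f''(\varphi + s(u-\varphi))(t,x)\, ds.$$
Since $f \in \mathscr{C}^2$ and since, on the interval where the analysis is carried out, both $\varphi$ and $u$ are bounded in $H^1(\RR)$ (hence, by the Sobolev embedding $H^1(\RR) \hookrightarrow L^\infty(\RR)$, bounded in $L^\infty$), the integrand is bounded pointwise by a uniform constant. Thus $\|g(t)\|_{L^\infty} \le C \|u-\varphi\|_{L^\infty}^2 \le C \|u-\varphi\|_{H^1}^2$, as required. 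The exercise is essentially algebraic bookkeeping; the only nontrivial input is the boost-induced identity $\partial_t Y_{+,j} = -\beta_j \partial_x Y_{+,j}$, and the main obstacle — if any — is organizing the algebra so that the splitting by $Q_k$ produces the exact matrices appearing in \eqref{egalite_der_W}.
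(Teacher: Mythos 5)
Your proof is correct and follows essentially the same route as the paper, which simply asserts that the identity follows from $U$ and $\Phi$ both solving \eqref{NLKG'} together with the Taylor inequality $|f(u)-f(\varphi)-f'(\varphi)(u-\varphi)|\le C\|u-\varphi\|_{L^\infty}^2$ and the embedding $H^1(\RR)\hookrightarrow L^\infty(\RR)$; you have merely filled in the algebra (the boost identity $\partial_tY_{+,j}=-\beta_j\partial_xY_{+,j}$ and the splitting $f'(\varphi)=f'(Q_k)+(f'(\varphi)-f'(Q_k))$) that the paper leaves implicit.
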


\begin{proof}
Claim \ref{claim_der_t_W} follows from the fact that both $U$ and $\Phi$ satisfy \eqref{NLKG'} and is also a consequence of the following Taylor inequality ($f$ is $\mathscr{C}^2$)
$$|f(u)-f(\varphi)-f'(\varphi)(u-\varphi)|(t)\le C\|u(t)-\varphi(t)\|^2_{L^\infty}$$
and the Sobolev embedding $H^1(\RR)\hookrightarrow L^\infty(\RR)$.
\end{proof}

Now, we are in a position to prove the following estimate on $\alpha_{\pm,k}$.

\begin{Claim}\label{claim_alpha}
For all $k\in\{1,\dots,N\}$ and for all $t\in [T(\mathfrak{a}),S_n]$, we have
\begin{equation}
\left|\frac{d}{dt}\alpha_{\pm,k}(t)\mp e_k\alpha_{\pm,k}(t)\right|\le C\left(e^{-4\sigma t}\|W(t)\|_{H^1\times L^2}+\|W(t)\|_{H^1\times L^2}^2+e^{-(e_j+4\sigma)t}\right).
\end{equation}
\end{Claim}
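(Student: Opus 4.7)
The plan is to differentiate $\alpha_{\pm,k}(t)=\langle W(t),Z_{\pm,k}(t)\rangle$ by the product rule, giving $\langle\partial_tW,Z_{\pm,k}\rangle+\langle W,\partial_tZ_{\pm,k}\rangle$. For the first bracket I substitute Claim~\ref{claim_der_t_W}, taking the free index there equal to the current $k$ so that $f'(Q_k)$ appears both in the matrix $M(\varphi)$ and in the correction. For the second bracket, the explicit boost formula $Z_{\pm,k}(t,x)=Z_{\pm,\beta_k}\bigl(\gamma_k(x-x_k-\beta_k t)\bigr)$ yields $\partial_tZ_{\pm,k}=-\beta_k\partial_xZ_{\pm,k}$ in one dimension, and a single integration by parts in $x$ produces $+\beta_k\langle\partial_xW,Z_{\pm,k}\rangle$. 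After regrouping, $\frac{d}{dt}\alpha_{\pm,k}$ splits into four pieces: a main linear term $\langle M(Q_k)W,Z_{\pm,k}\rangle+\beta_k\langle\partial_xW,Z_{\pm,k}\rangle$, the swap correction $\langle(f'(\varphi)-f'(Q_k))W_1,(Z_{\pm,k})_2\rangle$, the quadratic remainder $\langle(0,g)^\top,Z_{\pm,k}\rangle$, and a forcing of size $A_je^{-e_jt}$ generated by the correction $Y_{+,j}$.

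To identify the main linear term with $\pm e_k\alpha_{\pm,k}$, I perform a further integration by parts so as to obtain $\langle W,\mathcal{B}_k^* Z_{\pm,k}\rangle$, where
\[
\mathcal{B}_k^*=\begin{pmatrix}-\beta_k\partial_x & \partial_x^2-Id+f'(Q_k)\\ Id & -\beta_k\partial_x\end{pmatrix}
\]
is precisely the transport, along the trajectory $x_k+\beta_k t$ of the $k$-th soliton, of the spectral operator $\mathscr{H}_{\beta_k}$ from Proposition~\ref{prop_spectral}; that proposition then gives $\mathcal{B}_k^* Z_{\pm,k}=\pm e_k\,Z_{\pm,k}$ and hence the identification. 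This spectral identification is the most delicate point of the proof, since one has to track carefully the Lorentz scaling between the fixed frame in which $Z_{\pm,\beta_k}$ is defined by Côte--Mu\~noz and the lab frame used for the computation.

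It remains to bound the three remaining pieces. For the $f'(\varphi)-f'(Q_k)$ correction, writing $|f'(\varphi)-f'(Q_k)|\le C|\varphi-\sum_l Q_l|+C\sum_{l\neq k}|Q_l|$, using \eqref{est_multi_sol}, and exploiting the exponential localization of $Z_{\pm,k}$ near the $k$-th soliton yields the bound $Ce^{-4\sigma t}\|W\|_{H^1\times L^2}$ via Proposition~\ref{prop_interaction}. For the quadratic remainder, $\|g\|_{L^\infty}\le C\|u-\varphi\|_{H^1}^2\le C(\|W\|_{H^1\times L^2}^2+e^{-2e_jt})$ from Claim~\ref{claim_der_t_W}, and pairing against the $L^1$-function $(Z_{\pm,k})_2$ (by the exponential decay of $Z_{\pm,\beta_k}$) gives $C\|W\|^2+Ce^{-(e_j+4\sigma)t}$, using $2e_j\ge e_j+4\sigma$, itself a consequence of $4\sigma\le e_1/4\le e_j$. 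For the $Y_{+,j}$-forcing, the case $k\neq j$ follows immediately from Proposition~\ref{prop_interaction}, which controls every pairing of $Y_{+,j}$ (and its derivatives) with $Z_{\pm,k}$ by $Ce^{-4\sigma t}$; the case $k=j$ uses the same spectral identification: since the operator $\mathcal{B}_j$ applied to $Y_{+,j}$ in Claim~\ref{claim_der_t_W} is the formal adjoint of $\mathcal{B}_j^*$, one obtains $\langle \mathcal{B}_jY_{+,j},Z_{\pm,j}\rangle=\langle Y_{+,j},\mathcal{B}_j^* Z_{\pm,j}\rangle=\pm e_j\langle Y_{+,j},Z_{\pm,j}\rangle$, whence
\[
\bigl\langle[\mathcal{B}_j+e_j\,Id]Y_{+,j},\,Z_{\pm,j}\bigr\rangle=(\pm e_j+e_j)\,\langle Y_{+,j},Z_{\pm,j}\rangle.
\]
This vanishes in both cases: for the $-$ sign because $-e_j+e_j=0$, and for the $+$ sign because $\langle Y_{+,j},Z_{+,j}\rangle=0$ (Proposition~\ref{prop_spectral}). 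The residual $A_je^{-e_jt}\langle(f'(\varphi)-f'(Q_j))Y_{+,j},(Z_{\pm,j})_2\rangle$ is handled as the first error term. Summing the three bounds gives Claim~\ref{claim_alpha}, the main obstacle being the spectral identification together with the exact cancellation in the $k=j$ case.
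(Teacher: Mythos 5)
Your proof is correct and follows essentially the same route as the paper: differentiate $\alpha_{\pm,k}$, use $\partial_tZ_{\pm,k}=-\beta_k\partial_xZ_{\pm,k}$ and integration by parts to recognize the transported operator $\mathscr{H}_{k}$ acting on $Z_{\pm,k}$, peel off the $f'(\varphi)-f'(Q_k)$ swap and the quadratic remainder with the bounds $Ce^{-4\sigma t}\|W\|$ and $C(\|W\|^2+e^{-2e_jt})$, and handle the $Y_{+,j}$-forcing by the same $(\pm e_j+e_j)\langle Y_{+,j},Z_{\pm,j}\rangle=0$ cancellation for $k=j$ and by the interaction estimates for $k\neq j$. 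No gaps.
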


\begin{proof}
Let $k\in\{1,\dots,N\}$. By means of \eqref{egalite_der_W} and since $\partial_tZ_{\pm,k}=-\beta_k\partial_xZ_{\pm,k}$, we compute
\begin{align*}
\frac{d}{dt}\alpha_{\pm,k}(t)=&\;\langle \partial_tW,Z_{\pm,k}\rangle + \langle W,\partial_tZ_{\pm,k}\rangle\\
=&\;\left\langle W, \left(\begin{array}{cc}
-\beta_k\partial_x&\partial_x^2-Id+f'(\varphi)\\
Id&-\beta_k\partial_x
\end{array}\right)
Z_{\pm,k}\right\rangle\\
&+A_je^{-e_jt}\left\langle
Y_{+,j},\left(\begin{array}{cc}
-\beta_j\partial_x&\partial_x^2-Id+f'(Q_k)\\
Id&-\beta_j\partial_x
\end{array}
\right)Z_{\pm,k}
\right\rangle\\
&+A_je^{-e_jt}\left[\left\langle Y_{+,j}, \left(\begin{array}{cc}
0&f'(Q_k)-f'(\varphi)\\
0&0
\end{array}
\right)Z_{\pm,k}
\right\rangle+e_j\langle Y_{+,j},Z_{\pm,k}\rangle\right]\\
&+\mathrm{O}\left(\|U-\Phi\|_{H^1\times L^2}^2\right).
\end{align*}
Let us notice first that
$$\left\langle W, \left(\begin{array}{cc}
-\beta_k\partial_x&\partial_x^2-Id+f'(\varphi)\\
Id&-\beta_k\partial_x
\end{array}\right)
Z_{\pm,k}\right\rangle=\langle W,\mathscr{H}_kZ_{\pm,k}\rangle+
\left\langle W, \left(\begin{array}{cc}
0&f'(\varphi)-f'(Q_k)\\
0&0
\end{array}\right)
Z_{\pm,k}\right\rangle.$$

We have $$\langle W,\mathscr{H}_kZ_{\pm,k}\rangle=\langle W,\pm e_kZ_{\pm,k}\rangle=\pm e_k\alpha_{\pm,k}$$ and
\begin{align*}
\MoveEqLeft[4]
\left|\left\langle W, \left(\begin{array}{cc}
0&f'(\varphi)-f'(Q_k)\\
0&0
\end{array}\right)
Z_{\pm,k}\right\rangle\right| \\
&\le \|W\|_{H^1\times L^2}\|\left(f'(\varphi)-f'(Q_k)\right)Z_{\pm,k}\|_{H^1\times L^2}\\
&\le C\left\|\varphi-\sum_{i=1}^NQ_i\right\|_{L^\infty}\|W\|_{H^1\times L^2}+C\|W\|_{H^1\times L^2}\sum_{i\ne k}\|Q_iZ_{\pm,k}\|_{H^1\times L^2}\\
&\le Ce^{-4\sigma t}\|W\|_{H^1\times L^2}.
\end{align*}

Similarly, we have 
$$
\left|\left\langle Y_{+,j}, \left(\begin{array}{cc}
0&f'(Q_k)-f'(\varphi)\\
0&0
\end{array}
\right)Z_{\pm,k}
\right\rangle\right|\le Ce^{-4\sigma t}
$$
and
\begin{align*}
\left\langle
Y_{+,j},\left(\begin{array}{cc}
-\beta_j\partial_x&\partial_x^2-Id+f'(Q_k)\\
Id&-\beta_j\partial_x
\end{array}
\right)Z_{\pm,k}
\right\rangle &= \langle Y_{+,j},\mathscr{H}_kZ_{\pm,k}\rangle+(\beta_k-\beta_j)\langle Y_{+,j},\partial_xZ_{\pm,k}\rangle \\
&= \pm\langle Y_{+,j},e_kZ_{\pm,k}\rangle+\mathrm{O}\left(e^{-4\sigma t}\right).
\end{align*}
Indeed, we notice that 
$$(\beta_k-\beta_j)\langle Y_{+,j},\partial_xZ_{\pm,k}\rangle=\begin{dcases}
0&\text{if } k=j\\
\mathrm{O}(e^{-4\sigma t})&\text{if } k\neq j.
\end{dcases}$$
Hence, we obtain
\begin{equation}
\begin{aligned}
\frac{d}{dt}\alpha_{\pm,k}(t)=&\pm e_k\alpha_{\pm,k}+ \mathrm{O}\left(e^{-4\sigma t}\|W\|_{H^1\times L^2}\right)\\
&+A_je^{-e_jt}\left[\pm\langle Y_{+,j},e_kZ_{\pm,k}\rangle+e_j\langle Y_{+,j},Z_{\pm,k}+\mathrm{O}\left(e^{-4\sigma t}+\|U-\Phi\|_{H^1\times L^2}^2\right)\right].
\end{aligned}
\end{equation}

Now, we observe that 
$$\pm e_k\langle Y_{+,j},Z_{\pm,k}\rangle +e_j\langle Y_{+,j},Z_{\pm,k}\rangle=\mathrm{O}(e^{-4\sigma t}).$$
This is clear if $k\neq j$ and for $k=j$, we have 
$$\pm e_j\langle Y_{+,j},Z_{\pm,j}\rangle +e_j\langle Y_{+,j},Z_{\pm,j}\rangle=\begin{dcases}
0+0=0& \text{if } \pm=+\\
-e_j+e_j=0& \text{if } \pm=-
\end{dcases};$$
indeed, we recall from Proposition \ref{prop_spectral}
$$\langle Y_{+,j},Z_{+,j}\rangle =0\quad\text{and}\quad\langle Y_{+,j},Z_{-,j}\rangle =1.$$
In addition, we have by the well-known inequality $(a+b)^2\le 2(a^2+b^2)$,
$$\|U-\Phi\|^2_{H^1\times L^2}\le C\left(\|W\|^2_{H^1\times L^2}+e^{-2e_j t}\right).$$
Considering that $2e_j\ge e_j+4\sigma$, we have thus finished the proof of the claim.
\end{proof}

\paragraph{\underline{Step 2}: Control of the stable directions}

\begin{Claim}\label{claim_alpha_2}
We have for all $k\in\{1,\dots,N\}$, for all $t\in[T(\mathfrak{a}),S_n]$,
\begin{equation}
|\alpha_{+,k}(t)|\le Ce^{-(e_j+4\sigma) t}.
\end{equation}
\end{Claim}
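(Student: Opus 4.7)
The plan is to integrate backward in time the linear ODE for $\alpha_{+,k}$ that comes out of Claim \ref{claim_alpha}. Since $+e_k$ is a positive eigenvalue, the homogeneous dynamics is expanding in the forward direction, hence contracting backward from $t=S_n$ — this is exactly why the final condition at $t=S_n$ (controlled by Claim \ref{claim_mod_fin}) is the right place to start.

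First, I would bound the source term in Claim \ref{claim_alpha}. On $[T(\mathfrak{a}),S_n]$ the bootstrap definition \eqref{def_T(a)} gives $\|W(t)\|_{H^1\times L^2}\le e^{-(e_j+\sigma)t}$, so
\[
e^{-4\sigma t}\|W(t)\|_{H^1\times L^2}\le e^{-(e_j+5\sigma)t},\qquad \|W(t)\|_{H^1\times L^2}^2\le e^{-2(e_j+\sigma)t}.
\]
By the definition \eqref{def_sigma} of $\sigma$ one has $16\sigma\le e_1\le e_j$, hence $2e_j+2\sigma\ge e_j+4\sigma$ and likewise $e_j+5\sigma\ge e_j+4\sigma$. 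Thus Claim \ref{claim_alpha} yields, for some $C>0$ independent of $n$,
\[
\left|\tfrac{d}{dt}\alpha_{+,k}(t)-e_k\alpha_{+,k}(t)\right|\le C\,e^{-(e_j+4\sigma)t},\qquad t\in[T(\mathfrak{a}),S_n].
\]

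Second, I would handle the final datum. From $\mathfrak{a}\in B_{\RR^{N-j}}(e^{-(e_j+2\sigma)S_n})$ and Lemma \ref{lem_mod_fin}, $|\mathfrak{b}|\le 2|\mathfrak{a}|\le 2\,e^{-(e_j+2\sigma)S_n}$. Claim \ref{claim_mod_fin} then gives
\[
|\alpha_{+,k}(S_n)|\le C|\mathfrak{b}|\,e^{-2\sigma S_n}\le C\,e^{-(e_j+4\sigma)S_n}.
\]

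Third, I would solve the linear ODE backward. Duhamel's formula gives
\[
\alpha_{+,k}(t)=e^{e_k(t-S_n)}\alpha_{+,k}(S_n)+\int_{S_n}^{t}e^{e_k(t-s)}\bigl[\tfrac{d}{ds}\alpha_{+,k}(s)-e_k\alpha_{+,k}(s)\bigr]\,ds,
\]
so by the two previous bounds and $S_n\ge t$,
\[
|\alpha_{+,k}(t)|\le e^{e_k(t-S_n)}\,C\,e^{-(e_j+4\sigma)S_n}+C\int_{t}^{S_n}e^{e_k(t-s)}e^{-(e_j+4\sigma)s}\,ds.
\]
The first term equals $C\,e^{e_k t}e^{-(e_k+e_j+4\sigma)S_n}\le C\,e^{-(e_j+4\sigma)t}$ since $e_k+e_j+4\sigma>0$ and $S_n\ge t$. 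The integral equals $C\,e^{e_k t}\frac{e^{-(e_k+e_j+4\sigma)t}-e^{-(e_k+e_j+4\sigma)S_n}}{e_k+e_j+4\sigma}\le \frac{C}{e_k+e_j+4\sigma}\,e^{-(e_j+4\sigma)t}$. Combining these yields the claim.

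The only subtle point is that we must be able to evaluate $\alpha_{+,k}$ at $t=S_n$ with the required decay; this is precisely what forced the particular design of the final data \eqref{def_U_n} together with Lemma \ref{lem_mod_fin}, so no additional obstacle appears here. The genuinely delicate bookkeeping — making sure $\frac{d}{dt}\alpha_{+,k}-e_k\alpha_{+,k}$ really is $O(e^{-(e_j+4\sigma)t})$ after the various cross-interactions between solitons and between $Y_{+,j}$ and $Z_{\pm,k}$ for $k\ne j$ — has already been absorbed into Claim \ref{claim_alpha} via the estimates of Proposition \ref{prop_interaction} and the orthogonality relations $\langle Y_{+,j},Z_{+,j}\rangle=0$, $\langle Y_{+,j},Z_{-,j}\rangle=1$ of Proposition \ref{prop_spectral}, so the backward integration above closes the argument.
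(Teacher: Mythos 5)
Your proof is correct and follows essentially the same route as the paper's: bound the source term via Claim \ref{claim_alpha} and the bootstrap definition \eqref{def_T(a)}, bound the final datum $\alpha_{+,k}(S_n)$ via Claim \ref{claim_mod_fin} and Lemma \ref{lem_mod_fin}, then integrate backward from $S_n$ (your Duhamel formula is just the integrated form of the paper's $\left(e^{-e_kt}\alpha_{+,k}\right)'$ estimate). No gap.
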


\begin{proof}
Due to Claim \ref{claim_alpha} and \eqref{def_T(a)}, we obtain
$$\left|\frac{d}{dt}\alpha_{+,k}(t)- e_k\alpha_{+,k}(t)\right|\le Ce^{-(e_j+4\sigma)t},$$
that is, for all $t\in[T(\mathfrak{a}),S_n]$,
$$\left|\left(e^{-e_kt}\alpha_{+,k}(t)\right)'\right|\le Ce^{-(e_j+e_k+4\sigma)t}.$$ Integrating, we deduce
that for all $t\in[T(\mathfrak{a}),S_n]$,
$$|e^{-e_kS_n}\alpha_{+,k}(S_n)-e^{-e_kt}\alpha_{+,k}(t)|\le Ce^{-(e_j+e_k+4\sigma)t}.$$ Thus,
$$|\alpha_{+,k}(t)|\le |\alpha_{+,k}(S_n)|+Ce^{-(e_j+4\sigma t)}.$$
From Claim \ref{claim_mod_fin} and Lemma \ref{lem_mod_fin}, we have
\begin{align*}
|\alpha_{+,k}(S_n)|&\le Ce^{-2\sigma S_n}|\mathfrak{b}| \\
&\le Ce^{-2\sigma S_n}e^{-(e_j+2\sigma)S_n}\\
&\le Ce^{-(e_j+4\sigma)t}.
\end{align*}
Consequently, Claim \ref{claim_alpha_2} indeed holds.
\end{proof}

\paragraph{\underline{Step 3}: Control of the unstable directions for $k\le j$} 

\begin{Claim}\label{claim_alpha_3}
We have for all $k\in\{1,\dots,j\}$, for all $t\in[T(\mathfrak{a}),S_n]$,
\begin{equation}
|\alpha_{-,k}(t)|\le Ce^{-(e_j+4\sigma) t}.
\end{equation}
\end{Claim}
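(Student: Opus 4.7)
The plan is to integrate backward the differential inequality from Claim \ref{claim_alpha} applied with the minus sign, exactly in the same spirit as the proof of Claim \ref{claim_alpha_2}, but this time from the final time $S_n$ (where $\alpha_{-,k}$ is controlled by the modulation) rather than discarding a boundary term at $S_n$, since the integrating factor now amplifies the data as time decreases. Concretely, on the interval $[T(\mathfrak{a}),S_n]$, the bootstrap assumption $\|W(t)\|_{H^1\times L^2}\le e^{-(e_j+\sigma)t}$ and the inequality $e_j\ge 16\sigma$ coming from \eqref{def_sigma} turn the right-hand side of Claim \ref{claim_alpha} into $Ce^{-(e_j+4\sigma)t}$; indeed $e^{-4\sigma t}\|W\|\le e^{-(e_j+5\sigma)t}$ and $\|W\|^2\le e^{-2(e_j+\sigma)t}\le e^{-(e_j+4\sigma)t}$.

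Thus for each $k\in\{1,\dots,j\}$, the quantity $\beta(t):=e^{e_k t}\alpha_{-,k}(t)$ satisfies $|\beta'(t)|\le Ce^{(e_k-e_j-4\sigma)t}$. Integrating between $t$ and $S_n$ yields
\begin{equation*}
\bigl|\beta(t)-\beta(S_n)\bigr|\le C\int_{t}^{S_n}e^{(e_k-e_j-4\sigma)s}\,ds.
\end{equation*}
Since $k\le j$ gives $e_k\le e_j$ and hence $e_k-e_j-4\sigma<0$, this integral is bounded by $\frac{1}{e_j+4\sigma-e_k}e^{(e_k-e_j-4\sigma)t}$ if $k<j$ and by $\frac{1}{4\sigma}e^{-4\sigma t}$ if $k=j$; in both cases multiplying back by $e^{-e_k t}$ produces a term of order $e^{-(e_j+4\sigma)t}$.

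It remains to absorb the boundary contribution $\beta(S_n)=e^{e_k S_n}\alpha_{-,k}(S_n)$. Here Claim \ref{claim_mod_fin} gives $|\alpha_{-,k}(S_n)|\le C|\mathfrak{b}|e^{-2\sigma S_n}$ for $k\le j$, and Lemma \ref{lem_mod_fin} combined with the choice $\mathfrak{a}\in B_{\RR^{N-j}}(e^{-(e_j+2\sigma)S_n})$ bounds $|\mathfrak{b}|$ by $2e^{-(e_j+2\sigma)S_n}$, so altogether $|\alpha_{-,k}(S_n)|\le Ce^{-(e_j+4\sigma)S_n}$. Therefore $e^{-e_k t}|\beta(S_n)|\le Ce^{(e_k-e_j-4\sigma)S_n-e_k t}\le Ce^{-(e_j+4\sigma)t}$, using $S_n\ge t$ and $e_k\le e_j$. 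Combining this with the previous estimate gives the desired bound, and no real obstacle arises beyond the bookkeeping of signs in the integrating factor, which is the one mild difference from the argument for $\alpha_{+,k}$ in Claim \ref{claim_alpha_2}.
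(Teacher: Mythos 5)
Your proof is correct and follows essentially the same route as the paper: integrate the differential inequality of Claim \ref{claim_alpha} (with the right-hand side reduced to $Ce^{-(e_j+4\sigma)t}$ via the bootstrap bound on $\|W\|_{H^1\times L^2}$), then control the boundary term $e^{e_k(S_n-t)}|\alpha_{-,k}(S_n)|$ using Claim \ref{claim_mod_fin}, Lemma \ref{lem_mod_fin}, and the sign of $e_k-e_j$. The bookkeeping of exponents is accurate throughout, so nothing further is needed.
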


\begin{proof}
As in the preceding step, we have for all $k\in\{1,\dots,N\}$ and $t\in[T(\mathfrak{a}),S_n]$,
\begin{equation}\label{est_alpha-k}
\left|\frac{d}{dt}\alpha_{-,k}(t)+ e_k\alpha_{-,k}(t)\right|\le Ce^{-(e_j+4\sigma)t},
\end{equation}
which writes also
$$\left|\left(e^{e_kt}\alpha_{-,k}(t)\right)'\right|\le Ce^{(e_k-e_j+4\sigma)t}.$$
For $k\le j$, we have $e_k\le e_j$, and so by integration, we obtain
$$|\alpha_{-,k}(t)|\le e^{e_k(S_n-t)}|\alpha_{-,k}(S_n)|+Ce^{-(e_j+4\sigma)t}.$$
But again from Claim \ref{claim_mod_fin} and Lemma \ref{lem_mod_fin}, we infer
\begin{align*}
e^{e_k(S_n-t)}|\alpha_{-,k}(S_n)|&\le Ce^{e_k(S_n-t)}e^{-2\gamma S_n}e^{-(e_j+2\sigma)S_n}\\
&\le Ce^{(S_n-t)(e_k-e_j)}e^{-e_jt}e^{-4\sigma S_n}\\
&\le Ce^{-(e_j+4\sigma)t}.
\end{align*}
Thus $$\forall\;k\in\{1,\dots,j\},\:\forall\;t\in[T(\mathfrak{a}),S_n],\qquad |\alpha_{-,k}(t)|\le Ce^{-(e_j+4\sigma )t}.$$
\end{proof}

\paragraph{\underline{Step 4}: Control of a Lyapunov functional satisfying a coercivity property}

Let us consider $$\begin{array}{cccl}
\psi:&\RR&\to& \RR\\
&x&\mapsto&\frac{2}{\pi}\mathrm{Arctan}\left(e^{-x}\right).
\end{array}$$
We define for all $k=1,\dots,N-1$,
$$\psi_k(t,x):=\psi\left(\frac{1}{\sqrt{t}}\left(x-\frac{\beta_{\eta(k)}+\beta_{\eta(k+1)}}{2}t-\frac{x_{\eta(k)}+x_{\eta(k+1)}}{2}\right)\right),$$
and then 
\begin{align*}
\phi_1(t)&=\psi_1(t) \\
\phi_k(t)&=\psi_k(t)-\psi_{k-1}(t) \text{ for all } k=2,\dots,N-1,\\
\phi_N(t)&=1-\psi_{N-1}(t).
\end{align*}
Recall that the permutation $\eta$ has been chosen so that $-1<\beta_{\eta(1)}<\dots<\beta_{\eta(N)}<1$. \\

Now, let us introduce for all $k\in\{1,\dots,N\}$
$$\mathcal{F}_{W,k}(t)=\int_{\RR}\left(w_1^2+(\partial_xw_{1})^2+w_2^2-f'(Q_{\eta(k)})w_1^2+2\beta_{\eta(k)}\partial_xw_1w_2\right)\phi_k\;dx,$$
and
$$\mathcal{F}_W(t):=\sum_{k=1}^N\mathcal{F}_{W,k}(t).$$

By means of Proposition \ref{prop_coercivity} and a usual localization argument \cite{tsai_gkdv,tsai}, we obtain that $\mathcal{F}_W$ is coercive on a subspace of $H^1\times L^2$ of finite codimension. More precisely, there exists $\mu>0$ such that

\begin{equation}
\mathcal{F}_W(t)\ge \mu\|W(t)\|^2_{H^1\times L^2}-\frac{1}{\mu}\sum_{k=1}^N\left(\langle W,\partial_xR_k\rangle^2+\langle W,Z_{+,k}\rangle^2+\langle W,Z_{-,k}\rangle^2\right).
\end{equation}

We state the following control about the derivative of $\mathcal{F}_W$:

\begin{Claim}
For $t_0$ large and for all $t\in [T(\mathfrak{a}),S_n]$,
\begin{equation}\label{der_F_W}
\left|\frac{d}{dt}\mathcal{F}_W(t)\right|\le \frac{C}{\sqrt{t}}\|W\|^2_{H^1\times L^2}.
\end{equation}
\end{Claim}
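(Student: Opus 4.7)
The plan is to differentiate $\mathcal{F}_W=\sum_{k=1}^N\mathcal{F}_{W,k}$ under the integral sign and split the result according to which factor $\partial_t$ falls on: the cutoff $\phi_k$, the soliton profile $Q_{\eta(k)}$ through $f'(Q_{\eta(k)})$, or the perturbation $W$. The guiding principle is that $\int\ell_k\phi_k\,dx$ mimics the localized quadratic form $\langle H_{\beta_{\eta(k)}}W,W\rangle$, which is conserved by the linearized Klein-Gordon flow at the single moving soliton $R_{\eta(k)}$; all the deviations from that conservation must be shown to be at most $C\|W\|^2/\sqrt{t}$.

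First, I would directly treat the cutoff piece. A direct computation of $\partial_t\psi_k$ gives two contributions, one proportional to $\psi'(y)c_k/\sqrt{t}$ and one proportional to $y\psi'(y)/t$ where $y$ is the (rescaled) argument; since $\psi'$ decays exponentially, both $\psi'$ and $y\psi'(y)$ are bounded, hence $\|\partial_t\phi_k\|_{L^\infty}=O(1/\sqrt{t})$. Likewise $\|\partial_x\phi_k\|_{L^\infty}=O(1/\sqrt{t})$. This immediately yields $|\int\ell_k\,\partial_t\phi_k\,dx|\le C\|W\|^2_{H^1\times L^2}/\sqrt{t}$ and will control every commutator term produced below.

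Second, I would substitute $\partial_t Q_{\eta(k)}=-\beta_{\eta(k)}\partial_xQ_{\eta(k)}$ together with \eqref{egalite_der_W} in place of $\partial_tw_1,\partial_tw_2,\partial_t\partial_xw_1$, and integrate by parts. The "bulk" part of the resulting expression cancels exactly: this is the standard energy-momentum identity for the linearized NLKG at the single soliton $R_{\eta(k)}$, namely the observation that $2\beta_{\eta(k)}\partial_xw_1\,w_2$ in $\ell_k$ is precisely the cross-term needed to compensate the contribution of $-f''(Q_{\eta(k)})\partial_tQ_{\eta(k)}w_1^2$. The only surviving terms are: (a) commutator/cutoff terms, of order $\|W\|^2/\sqrt{t}$ by the previous paragraph; (b) the mismatch $f'(\varphi)-f'(Q_{\eta(k)})$ on $\mathrm{supp}\,\phi_k$, which is $O(e^{-4\sigma t})$ by \eqref{est_multi_sol} and the exponential decay of $Q_i$ away from its center, hence of order $e^{-4\sigma t}\|W\|^2\le C\|W\|^2/\sqrt{t}$ for $t$ large; (c) the cubic remainder $\int\phi_k\,g\,w_2\,dx$, controlled by $H^1\hookrightarrow L^\infty$ and the bound $\|g\|_{L^\infty}=O(\|u-\varphi\|_{H^1}^2)$ from Claim \ref{claim_der_t_W}.

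Third, the remaining contributions come from the forcing $A_je^{-e_jt}[M_2(Q_k)Y_{+,j}+e_jY_{+,j}]+A_je^{-e_jt}M_3Y_{+,j}$; these are linear in $W$ with coefficient $e^{-e_jt}$, and the naive bound $e^{-e_jt}\|W\|$ fails to fit into $C\|W\|^2/\sqrt{t}$. Here is where I would exploit the spectral structure and the bootstrap. Choosing conveniently $k$ in \eqref{egalite_der_W}, for the index satisfying $\eta(k)=j$, the forcing is $H_{\beta_j}$-dual to $Y_{+,j}$, and symmetry of $H_{\beta_j}$ combined with $H_{\beta_j}Y_{+,j}\in\mathrm{Span}\{Z_{+,j}\}$ reduces every such linear-in-$W$ contribution to a multiple of $\alpha_{+,j}(t)$, which is $O(e^{-(e_j+4\sigma)t})$ by Claim \ref{claim_alpha_2}; the product is then $O(e^{-(2e_j+4\sigma)t})\le C\|W\|^2/\sqrt{t}$ on $[T(\mathfrak{a}),S_n]$ thanks to the bootstrap $\|W\|\le e^{-(e_j+\sigma)t}$. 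For the indices $\eta(k)\neq j$, the supports of $\phi_k$ and $Y_{+,j}$ are essentially disjoint (Proposition \ref{prop_interaction}), so the overlap carries an additional $e^{-4\sigma t}$ factor, again dominated.

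The main obstacle is the second paragraph: carrying out the integration by parts so that the algebraic cancellation reflecting the conservation of $\langle H_{\beta_{\eta(k)}}W,W\rangle$ becomes manifest, while simultaneously book-keeping every commutator involving $\partial_x\phi_k$ to verify that each piece indeed picks up an $O(1/\sqrt{t})$ factor. The secondary subtle point is the handling of the forcing, where Claim \ref{claim_alpha_2} and the orthogonality relations of Proposition \ref{prop_spectral} must be invoked in a precise way to gain the additional exponential smallness needed to absorb those terms into the target quadratic bound.
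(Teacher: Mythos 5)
Your proposal follows essentially the same route as the paper: rewrite each $\mathcal{F}_{W,k}$ as the localized quadratic form $\langle (H_{\eta(k)}W)\phi_k,W\rangle$ up to $\mathrm{O}(t^{-1/2}\|W\|^2)$, use $\|\partial_t\phi_k\|_{L^\infty}+\|\partial_x\phi_k\|_{L^\infty}=\mathrm{O}(t^{-1/2})$ for all commutators, substitute \eqref{egalite_der_W}, and observe that the antisymmetric bulk cancels while the $2\beta_{\eta(k)}\partial_xw_1w_2$ cross-term compensates the $f''(Q_{\eta(k)})\partial_tQ_{\eta(k)}w_1^2$ contribution. The one place you diverge is the forcing term: for the index with $\eta(k)=j$ the paper does not need Claim \ref{claim_alpha_2} at all, because $H_j(JZ_{+,j}+e_jY_{+,j})=-\mathscr{H}_jZ_{+,j}+e_jZ_{+,j}=0$ exactly, so that contribution vanishes identically; your version, which keeps it as a multiple of $\alpha_{+,j}$ and invokes Claim \ref{claim_alpha_2}, leaves a residual additive term of size $e^{-(2e_j+4\sigma)t}$. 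Be aware that your final absorption "$\mathrm{O}(e^{-(2e_j+4\sigma)t})\le Ct^{-1/2}\|W\|^2$" is not legitimate: the bootstrap gives only an \emph{upper} bound on $\|W\|$, and a fixed exponential cannot be dominated by $\|W\|^2$ (which may vanish). The paper's own write-up carries an analogous residual $e^{-(e_j+4\sigma)t}\|W\|$ that it also sweeps into the stated bound; in both cases this is harmless downstream, since in Claim \ref{claim_tilde_W} the integral of such a residual is $\mathrm{O}(e^{-(2e_j+4\sigma)t})$, which is dominated by $t^{-1/2}e^{-2(e_j+\sigma)t}$ — but if you want the claim exactly as stated, you should use the exact cancellation for $\eta(k)=j$ and keep the $\eta(k)\ne j$ residual explicitly rather than absorbing it.
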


\begin{proof}
Let us rewrite $\mathcal{F}_{W,k}$ differently, using the notations developed in the introduction. Relying on integrations by parts, our computations lead to:
$$\langle (H_{\eta(k)}W)\phi_k,W\rangle=\mathcal{F}_{W,k}(t)-\frac{1}{2}\int_\RR w_1^2\partial_x^2\phi_k\;dx+\beta_{\eta(k)}\int_\RR w_1w_2\partial_x\phi_k\;dx.$$
Thus \begin{equation}
\mathcal{F}_{W,k}(t)=\langle (H_{\eta(k)}W)\phi_k,W\rangle+\mathrm{O}\left(\frac{1}{\sqrt{t}}\|W\|^2_{H^1\times L^2}\right).
\end{equation}

We immediately have
$$
\frac{d}{dt}\langle (H_{\eta(k)}W)\phi_k,W\rangle=\langle (H_{\eta(k)}W)\phi_k,\partial_tW\rangle +\langle \partial_t(H_{\eta(k)}W)\phi_k,W\rangle+\langle (H_{\eta(k)}W)\partial_t\phi_k,W\rangle.$$

Besides 
\begin{align*}
\MoveEqLeft[4]
\langle \partial_t(H_{\eta(k)}W)\phi_k,W\rangle+\langle (H_{\eta(k)}W)\partial_t\phi_k,W\rangle\\
&=\langle (H_{\eta(k)}\partial_t W)\phi_k,W\rangle+\beta_{\eta(k)}\int_\RR\partial_xQ_{\eta(k)}f''(Q_{\eta(k)})w_1^2\phi_k\;dx+\mathrm{O}\left(\frac{1}{\sqrt{t}}\|W\|^2_{H^1\times L^2}\right)\\
&=\langle H_{\eta(k)}(\partial_t W),W\phi_k\rangle+\beta_{\eta(k)}\int_\RR\partial_xQ_{\eta(k)}f''(Q_{\eta(k)})w_1^2\phi_k\;dx+\mathrm{O}\left(\frac{1}{\sqrt{t}}\|W\|^2_{H^1\times L^2}\right).
\end{align*}
Since $H_{\eta(k)}$ is a self-adjoint operator, we have
$$\langle H_{\eta(k)}(\partial_t W),W\phi_k\rangle=\langle \partial_t W,H_{\eta(k)}(W\phi_k)\rangle.$$

By a straightforward calculation, we have moreover
$$\langle H_{\eta(k)}(W\phi_k),\partial_tW\rangle= \langle (H_{\eta(k)}W)\phi_k,\partial_tW\rangle+\mathrm{O}\left(\frac{1}{\sqrt{t}}\|W\|_{H^1\times L^2}^2\right).$$
At this stage, we thus obtain
\begin{multline*}
\frac{d}{dt}\langle (H_{\eta(k)}W)\phi_k,W\rangle=2\langle (H_{\eta(k)}W)\phi_k,\partial_tW\rangle\\
+\beta_{\eta(k)}\int_\RR\partial_xQ_{\eta(k)}f''(Q_{\eta(k)})w_1^2\phi_k\;dx+\mathrm{O}\left(\frac{1}{\sqrt{t}}\|W\|^2_{H^1\times L^2}\right).
\end{multline*}

Now, by \eqref{egalite_der_W}, we write
$$
\left\langle (H_{\eta(k)}W)\phi_k,\partial_tW\right\rangle=I_1+I_2+I_3
$$
where $$I_1:=\left\langle \left(\begin{array}{cc}
T_{\eta(k)}&0\\
0&Id
\end{array}\right)W\phi_k,\left(\begin{array}{cc}
0&Id\\
\partial_x^2-Id+f'(\varphi)&0
\end{array}
\right)W\right\rangle$$ by denoting $T_i=-\partial_x^2+Id-f'(Q_{i})$ for all $i=1,\dots,N$,

$$I_2:=\beta_{\eta(k)}\left\langle\left(\begin{array}{cc}
0&-\partial_x\\
\partial_x&0\\
\end{array}\right)W\phi_k,\left(\begin{array}{cc}
0&Id\\
-T_{\eta(k)}+f'(\varphi)-f'(Q_{\eta(k)})&0\\
\end{array}\right)W\right\rangle,$$
and
\begin{multline*}
I_3:=A_je^{-e_jt}\left\langle (H_{\eta(k)}W)\phi_k,
\left(\begin{array}{cc}
\beta_j\partial_x&Id\\
\partial_x^2-Id+f'(Q_{j})&\beta_j\partial_x
\end{array}\right)Y_{+,j}\right\rangle\\
+\left\langle  (H_{\eta(k)}W)\phi_k,e_jY_{+,j}+\left(\begin{array}{cc}
0&0\\
f'(\varphi)-f'(Q_j)&0
\end{array}\right)Y_{+,j} \right\rangle.
\end{multline*}

Let us deal with $I_1$: we observe that
\begin{align*}
I_1&=\left\langle \left(\begin{array}{cc}
T_{\eta(k)}&0\\
0&Id
\end{array}\right)W\phi_k,\left(\begin{array}{cc}
0&Id\\
-T_{\eta(k)}+f'(\varphi)-f'(Q_{\eta(k)})&0
\end{array}\right)W\right\rangle\\
&=\left\langle W,\left(\begin{array}{cc}
T_{\eta(k)}&0\\
0&Id
\end{array}\right)\left(\begin{array}{cc}
0&Id\\
-T_{\eta(k)}&0
\end{array}\right)W\phi_k\right\rangle+\mathrm{O}\left(e^{-4\sigma t}\|W\|_{H^1 \times L^2}^2\right)\\
&=\left\langle W,\left(\begin{array}{cc}
0&T_{\eta(k)}\\
-T_{\eta(k)}&0
\end{array}\right)W\phi_k\right\rangle+\mathrm{O}\left(e^{-4\sigma t}\|W\|_{H^1 \times L^2}^2\right)\\
&=-\int_\RR w_1\partial_x^2w_2\phi_k\;dx+\int_\RR w_2\partial_x^2w_1\phi_k\;dx+\mathrm{O}\left(\frac{1}{\sqrt{t}}\|W\|_{H^1 \times L^2}^2\right)\\
&=\mathrm{O}\left(\frac{1}{\sqrt{t}}\|W\|_{H^1 \times L^2}^2\right).
\end{align*}

We have
\begin{align*}
I_2=&\;\beta_{\eta(k)}\left\langle\left(\begin{array}{cc}
0&-\partial_x\\
\partial_x&0
\end{array}\right)W\phi_k,\left(\begin{array}{cc}
0&Id\\
-T_{\eta(k)}&0
\end{array}\right)W\right\rangle+\mathrm{O}\left(e^{-(e_j+4\sigma)t}\|W\|_{H^1 \times L^2}\right)\\
=&-\beta_{\eta(k)}\int_\RR\partial_xw_2w_2\phi_k\;dx+\beta_{\eta(k)}\int_\RR\partial_xw_1(\partial_x^2w_1-w_1+f'(Q_{\eta(k)})w_1)\phi_k\;dx\\
&+\mathrm{O}\left(e^{-(e_j+4\sigma)t}\|W\|_{H^1 \times L^2}\right)\\
=&-\frac{\beta_{\eta(k)}}{2}\int_\RR w_1^2\partial_xQ_{\beta_{\eta(k)}}f''(Q_{\beta_{\eta(k)}})\phi_k\;dx+\mathrm{O}\left(\frac{1}{\sqrt{t}}\|W\|_{H^1 \times L^2}^2+e^{-(e_j+4\sigma)t}\|W\|_{H^1 \times L^2}\right).
\end{align*}

In addition, we have
\begin{align*}
I_3=&\;A_je^{-e_jt}\left\langle (H_{\eta(k)}W)\phi_k,
JZ_{+,j}+e_jY_{+,j}\right\rangle+\mathrm{O}\left(e^{-(4\sigma+e_j) t}\|W\|_{H^1\times L^2}\right)\\
=&\;A_je^{-e_jt}\left(\left\langle W\phi_k,
-\mathscr{H}_{\eta(k)}Z_{+,j}\right\rangle+e_j\langle W\phi_k,H_{\eta(k)}Y_{+,k}\rangle\right)\\
&+\mathrm{O}\left(\frac{1}{\sqrt{t}}\|W\|_{H^1 \times L^2}^2+e^{-(4\sigma+e_j) t}\|W\|_{H^1\times L^2}\right)\\
=&\mathrm{O}\left(\frac{1}{\sqrt{t}}\|W\|_{H^1 \times L^2}^2+e^{-(4\sigma+e_j) t}\|W\|_{H^1\times L^2}\right).
\end{align*}
Note that the last line of the previous equality is a consequence of the following observation: if $\eta(k)=j$, we have $\mathscr{H}_jZ_{+,j}=e_jZ_{+,j}$ and $H_jY_{+,j}=Z_{+,j}$ so that
$$A_je^{-e_jt}\left(\left\langle W\phi_k,
-\mathscr{H}_{\eta(k)}Z_{+,j}\right\rangle+e_j\langle W\phi_k,H_{\eta(k)}Y_{+,j}\rangle\right)=0.$$
If $\eta(k)\ne j$, we have
$$A_je^{-e_jt}\left(\left\langle W\phi_k,
-\mathscr{H}_{\eta(k)}Z_{+,j}\right\rangle+e_j\langle W\phi_k,H_{\eta(k)}Y_{+,j}\rangle\right)=\mathrm{O}\left(e^{-(e_j+4\sigma)t}\|W\|_{H^1 \times L^2}\right).$$

Gathering the preceding computations yields
$$\left|\frac{d}{dt}\mathcal{F}_{W,k}(t)\right|\le C\left(\frac{1}{\sqrt{t}}\|W\|^2_{H^1\times L^2}+e^{-(e_j+4\sigma)t}\|W\|_{H^1\times L^2}\right),$$
hence the expected claim, by summing on $k$.
\end{proof}

\paragraph{\underline{Step 5}: Control of the directions $\partial_xR_k$}

To obtain a control of the scalar products $\langle W,\partial_xR_k\rangle$ which is more precise than the a priori control by $\|W\|_{H^1\times L^2}$, let us introduce the following modulated variable $\tilde{W}$:

\begin{equation}\label{def_tilde_W}
\tilde{W}(t)=W(t)+\sum_{k=1}^Na_k(t)\partial_xR_k(t),
\end{equation}

where $a_k(t)\in\RR$, $k=1,\dots,N$ are chosen so that for all $l=1,\dots,N$, $\langle \tilde{W}(t),\partial_xR_l(t) \rangle=0$. Existence and uniqueness of the family $(a_k(t))_{k\in\{1,\dots,N\}}$ are justified by the fact that the (interaction) $N\times N$-matrix with generic entry $\langle \partial_xR_k(t),\partial_xR_l(t)\rangle$ is invertible for $t$ large enough.

Notice that \begin{equation}
|a_k(t)|\le C\|W(t)\|_{H^1\times L^2}\le Ce^{-(e_j+\sigma)t}.
\end{equation}

The functional $\mathcal{F}_{\tilde{W}}(t)$, defined as $\mathcal{F}_{W}(t)$ by changing $W$ in $\tilde{W}$, satisfies the following coercivity property:
\begin{equation}\label{coercivity_W_tild}
\|\tilde{W}\|_{H^1\times L^2}^2\le C\left(\mathcal{F}_{\tilde{W}}(t)+\sum_{k=1}^N\left(\langle \tilde{W},Z_{+,k}\rangle^2+\langle \tilde{W},Z_{-,k}\rangle^2\right)\right).
\end{equation}

We have
$$\mathcal{F}_{\tilde{W}}(t)\le \mathcal{F}_{W}(t)+\mathrm{O}\left(e^{-4\sigma t}\|W\|^2_{H^1\times L^2}\right)$$ and we have moreover by Proposition \ref{prop_interaction} and \eqref{def_tilde_W}.

$$\langle\tilde{W},Z_{\pm,k}\rangle^2 \le \alpha_{\pm,k}^2+e^{-2(e_j+5\sigma) t}.$$

\begin{Claim}[Estimate on $\tilde{W}$]\label{claim_tilde_W}
We have $$\forall\;t\in [T(\mathfrak{a}),S_n],\qquad\|\tilde{W}(t)\|^2_{H^1\times L^2}\le\frac{1}{\sqrt{t}}e^{-(2e_j+2\sigma)t}.$$
\end{Claim}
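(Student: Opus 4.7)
The plan is to integrate the derivative estimate \eqref{der_F_W} for $\mathcal{F}_W$ backward in time from $S_n$, compare $\mathcal{F}_{\tilde W}$ with $\mathcal{F}_W$, and then apply the coercivity \eqref{coercivity_W_tild} together with the spectral controls already established in Claims \ref{claim_alpha_2} and \ref{claim_alpha_3}.

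First I would bound the endpoint $\mathcal{F}_W(S_n)$. By Claim \ref{claim_mod_fin} and Lemma \ref{lem_mod_fin}, $\|W(S_n)\|_{H^1\times L^2}\le C|\mathfrak{b}|\le Ce^{-(e_j+2\sigma)S_n}$, so $|\mathcal{F}_W(S_n)|\le C\|W(S_n)\|_{H^1\times L^2}^2\le Ce^{-2(e_j+2\sigma)S_n}$. Combining this with the bootstrap assumption $\|W(s)\|_{H^1\times L^2}^2\le e^{-2(e_j+\sigma)s}$ from \eqref{def_T(a)}, and integrating \eqref{der_F_W} from $t$ to $S_n$, I would obtain
\begin{equation*}
|\mathcal{F}_W(t)|\le Ce^{-2(e_j+2\sigma)S_n}+\int_t^{S_n}\frac{C}{\sqrt{s}}\,e^{-2(e_j+\sigma)s}\,ds\le \frac{C}{\sqrt{t}}\,e^{-2(e_j+\sigma)t},
\end{equation*}
where I use $\frac{1}{\sqrt{s}}\le\frac{1}{\sqrt{t}}$ on $[t,S_n]$ together with $\int_t^\infty e^{-2(e_j+\sigma)s}\,ds=\frac{1}{2(e_j+\sigma)}e^{-2(e_j+\sigma)t}$; the endpoint term is dominated by $\frac{C}{\sqrt{t}}e^{-2(e_j+\sigma)t}$ for $t_0$ large, since $2\sigma>\sigma$ and $S_n\ge t$.

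Next, the comparison $\mathcal{F}_{\tilde W}(t)\le\mathcal{F}_W(t)+Ce^{-4\sigma t}\|W\|_{H^1\times L^2}^2$ and the scalar product bounds $\langle \tilde W,Z_{\pm,k}\rangle^2\le\alpha_{\pm,k}^2+Ce^{-2(e_j+5\sigma)t}$, together with the coercivity \eqref{coercivity_W_tild}, give
\begin{equation*}
\|\tilde W(t)\|_{H^1\times L^2}^2\le C|\mathcal{F}_W(t)|+C\sum_{k=1}^N\bigl(\alpha_{+,k}(t)^2+\alpha_{-,k}(t)^2\bigr)+Ce^{-2(e_j+5\sigma)t}+Ce^{-4\sigma t}\|W(t)\|_{H^1\times L^2}^2.
\end{equation*}
Claim \ref{claim_alpha_2} bounds every $\alpha_{+,k}^2$ by $Ce^{-2(e_j+4\sigma)t}$, Claim \ref{claim_alpha_3} does the same for $\alpha_{-,k}^2$ when $k\le j$, and the definition \eqref{def_T(a)} gives $\alpha_{-,k}^2\le e^{-2(e_j+2\sigma)t}$ for $k>j$. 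Each of these contributions, as well as the two remainder terms, carries an extra exponential factor $e^{-\kappa t}$ with $\kappa>0$ compared to $\frac{C}{\sqrt{t}}e^{-2(e_j+\sigma)t}$, hence is absorbed into it for $t_0$ sufficiently large.

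The main obstacle is therefore purely bookkeeping of the exponents: one must keep track of the hierarchy $e^{-2(e_j+\sigma)t}\gg e^{-2(e_j+2\sigma)t}\gg e^{-2(e_j+4\sigma)t}\gg e^{-2(e_j+5\sigma)t}$, and verify that the only term saturating the threshold exponent $-2(e_j+\sigma)$ is the integrated Lyapunov contribution, which naturally produces the polynomial prefactor $\frac{1}{\sqrt{t}}$ announced in Claim \ref{claim_tilde_W} (the implicit multiplicative constant being absorbed, as elsewhere in the argument, by taking $t_0$ large enough).
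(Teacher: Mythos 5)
Your proof is correct and follows essentially the same route as the paper's: integrate the bound \eqref{der_F_W} backward from $S_n$ (with the endpoint controlled via Claim \ref{claim_mod_fin} and Lemma \ref{lem_mod_fin}), then combine the coercivity \eqref{coercivity_W_tild} with the comparison of $\mathcal{F}_{\tilde{W}}$ to $\mathcal{F}_W$ and the controls on $\alpha_{\pm,k}$ from Claims \ref{claim_alpha_2}, \ref{claim_alpha_3} and the definition \eqref{def_T(a)}. Your bookkeeping of the endpoint term at $S_n$ and of which estimate handles which direction is, if anything, slightly more explicit than the paper's.
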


\begin{proof}
Let $t$ belong to $[T(\mathfrak{a}),S_n]$. We obtain by \eqref{coercivity_W_tild} and by integration of \eqref{der_F_W} on $[t,+\infty)$ (which is indeed possible by definition of $T(\mathfrak{a})$) that
$$
\|\tilde{W}(t)\|_{H^1\times L^2}^2\le  \frac{C}{\sqrt{t}}e^{-2(e_j+\sigma)t}+C\sum_{\pm,k}\alpha_{\pm,k}^2+Ce^{-2(e_j+4\sigma)t}.\\
$$
Using the estimate on $\alpha_{\pm,k}$ provided by the definition of $T(\mathfrak{a})$ and Claim \ref{claim_alpha_2}, we then infer: 
$$\|\tilde{W}(t)\|_{H^1\times L^2}^2\le \frac{C}{\sqrt{t}}e^{-2(e_j+\sigma)t}+Ce^{-(2e_j+4\sigma)t}.$$
This concludes the proof of the claim.
\end{proof}

\begin{Claim}[Control of the modulation parameters]\label{claim_mod_param}\label{claim_mod_par}
We have for all $k=1,\dots, N$, $$\forall\;t\in [T(\mathfrak{a}),S_n],\qquad|a_k(t)|\le\frac{C}{t^{\frac{1}{4}}}e^{-(e_j+\sigma)t}.$$
\end{Claim}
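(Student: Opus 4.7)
The plan is to exploit the linear relation between the modulation parameters $a_k$ and $W$ imposed by the orthogonality $\langle \tilde{W}(t),\partial_x R_l(t)\rangle = 0$, together with a differential inequality that transfers the improved bound on $\|\tilde{W}\|_{H^1\times L^2}$ given by Claim \ref{claim_tilde_W} to the $|a_k|$'s.

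First, testing the definition \eqref{def_tilde_W} of $\tilde{W}$ against $\partial_x R_l(t)$ and using orthogonality yields the linear system
\begin{equation*}
\sum_{k=1}^N M_{kl}(t)\,a_k(t) = -\langle W(t),\partial_x R_l(t)\rangle,\qquad l=1,\dots,N,
\end{equation*}
where $M_{kl}(t):=\langle\partial_x R_k(t),\partial_x R_l(t)\rangle$. By Proposition \ref{prop_interaction}, $M(t)$ differs from the positive diagonal matrix $D:=\mathrm{diag}(\|\partial_x R_k\|^2)$ by an $\mathrm{O}(e^{-4\sigma t})$ perturbation, hence $M(t)$ is uniformly invertible for $t\ge t_0$ with $\|M(t)^{-1}\|\le C$. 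A priori this only gives $|a_k(t)|\le C\|W(t)\|_{H^1\times L^2}\le Ce^{-(e_j+\sigma)t}$, which is insufficient.

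Second, to improve this, I would derive an ODE for $a(t)$. Differentiating $\langle\tilde{W},\partial_x R_l\rangle=0$ in time, using $\partial_t\partial_x R_l=-\beta_l\partial_x^2 R_l$ and $\partial_t\tilde{W}=\partial_t W+\sum_k a_k'\partial_x R_k-\sum_k a_k\beta_k\partial_x^2 R_k$, one gets
\begin{equation*}
\sum_{k=1}^N M_{kl}(t)\,a_k'(t) = -\langle\partial_t W,\partial_x R_l\rangle + \beta_l\langle\tilde{W},\partial_x^2 R_l\rangle + \sum_{k}a_k\beta_k\langle\partial_x^2 R_k,\partial_x R_l\rangle.
\end{equation*}
Here $\langle\partial_x^2 R_l,\partial_x R_l\rangle=0$ by integration by parts on each component, and $|\langle\partial_x^2 R_k,\partial_x R_l\rangle|\le Ce^{-4\sigma t}$ for $k\ne l$ by Proposition \ref{prop_interaction}. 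I would then bound $\langle\partial_t W,\partial_x R_l\rangle$ using \eqref{egalite_der_W}: integration by parts combined with the stationary equation $\partial_x^2 Q_l-Q_l+f(Q_l)=0$ exploits the fact that $\partial_x R_l$ is an approximate null direction of the linearized flow around $R_l$; decomposing $W=\tilde{W}-\sum_k a_k\partial_x R_k$ in the resulting integrals, and using the parity of the ground state together with Proposition \ref{prop_interaction}, produces
\begin{equation*}
|\langle\partial_t W,\partial_x R_l\rangle|\le C\bigl(\|\tilde{W}\|_{H^1\times L^2} + e^{-4\sigma t}|a(t)| + e^{-(e_j+4\sigma)t} + \|W\|_{H^1\times L^2}^2\bigr),
\end{equation*}
where the $e^{-(e_j+4\sigma)t}$ term comes from the $Y_{+,j}$-contributions through Proposition \ref{prop_interaction} and the fact that $e^{-e_j t}Y_{+,j}$ is an approximate eigenmode of the linearized flow at $R_j$.

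Combining with Claim \ref{claim_tilde_W} and the defining estimates of $T(\mathfrak{a})$, I obtain
\begin{equation*}
|a'(t)|\le C\left(\frac{1}{t^{1/4}}e^{-(e_j+\sigma)t} + e^{-4\sigma t}|a(t)| + e^{-(e_j+4\sigma)t}\right),\qquad t\in[T(\mathfrak{a}),S_n].
\end{equation*}
Integrating backward from $S_n$, using $|a(S_n)|\le C|\mathfrak{b}|\le Ce^{-(e_j+2\sigma)S_n}$ (much smaller than the target for $t\le S_n$) and $\int_t^{S_n}s^{-1/4}e^{-(e_j+\sigma)s}\,ds\le Ct^{-1/4}e^{-(e_j+\sigma)t}$, together with Gronwall (the factor $\exp(C\int_t^{S_n}e^{-4\sigma s}\,ds)$ is bounded for $t_0$ large), yields the announced estimate $|a_k(t)|\le\frac{C}{t^{1/4}}e^{-(e_j+\sigma)t}$ for all $t\in[T(\mathfrak{a}),S_n]$.

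The main obstacle is the precise computation of $\langle\partial_t W,\partial_x R_l\rangle$: one must extract the cancellations due to $\partial_x R_l$ being an (approximate) null direction of the linearized operator at $R_l$, carefully handle the cross-interactions with the other solitons (decaying at rate $4\sigma$), and track the correction coming from the forcing term $A_j e^{-e_j t}Y_{+,j}$. The strategy mirrors that of Claim \ref{claim_alpha} but with $\partial_x R_l$ in place of $Z_{\pm,k}$, the crucial new ingredient being the use of $\|\tilde W\|_{H^1\times L^2}$ (rather than $\|W\|_{H^1\times L^2}$) after the decomposition $W=\tilde W-\sum a_k\partial_x R_k$.
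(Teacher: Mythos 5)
Your proposal is correct and follows essentially the same route as the paper: differentiate the orthogonality relation $\left\langle \tilde{W},\partial_xR_k\right\rangle=0$ in time, use the equation \eqref{egalite_der_W} for $\partial_tW$ together with Proposition \ref{prop_interaction} to reduce to $|a_k'(t)|\le C\left(\|\tilde{W}(t)\|_{H^1\times L^2}+e^{-(e_j+3\sigma)t}\right)$, and then conclude via Claim \ref{claim_tilde_W}. The only difference is that you spell out the backward integration from $S_n$ (with the smallness of $a_k(S_n)$ and a Grönwall factor for the $e^{-4\sigma t}|a(t)|$ term, which the paper instead absorbs directly using the a priori bound $|a_k|\le C\|W\|_{H^1\times L^2}$), a step the paper leaves implicit.
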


\begin{proof}
By definition of the modulation parameters $a_k$, we have $\langle \tilde{W},\partial_xR_k\rangle=0$. Thus, we have by differentiation with respect to $t$:
$$\langle \partial_t\tilde{W},\partial_xR_k\rangle+\langle \tilde{W},\partial_t\partial_xR_k\rangle=0.$$

By Proposition \ref{prop_interaction}, we have for $l\ne k$, $$\langle \partial_xR_l,\partial_xR_k\rangle=\mathrm{O}\left(e^{-4\sigma t}\right)$$ and for all $l$, 
$$\langle\partial_t\partial_xR_l,\partial_xR_k\rangle=\mathrm{O}\left(e^{-4\sigma t}\right).$$
We deduce that $$a_k'(t)\langle \partial_xR_k,\partial_xR_k\rangle+\langle \partial_tW,\partial_xR_k\rangle+\langle \tilde{W},\partial_t\partial_xR_k\rangle=\mathrm{O}\left(e^{-4\sigma t}\|W(t)\|_{H^1\times L^2}\right).$$

We have in addition
$$\left\langle W,\left(\begin{array}{cc}
0&\partial_x^2-Id+f'(\varphi)\\
Id&0
\end{array}\right)\partial_xR_k\right\rangle+\langle \tilde{W},\partial_{tx}R_k\rangle=\mathrm{O}\left(\|\tilde{W}\|_{H^1\times L^2}\right).$$

What is more,
$$\left|A_je^{-e_jt}e_j\langle Y_{+,j}, \partial_xR_k\rangle\right|\le Ce^{-(e_j+4\sigma)t},$$ again by Proposition \ref{prop_interaction}.

Hence, \begin{align*}
|a_k'(t)|&\le C\|\tilde{W}(t)\|_{H^1\times L^2}+Ce^{-(e_j+4\sigma)t}\\
&\le \frac{C}{t^{\frac{1}{4}}}e^{-(e_j+\sigma)t}+e^{-(e_j+3\sigma)t}.
\end{align*}

\end{proof}

Now, gathering \eqref{def_tilde_W}, Claim \ref{claim_tilde_W}, and Claim \ref{claim_mod_param}, we immediately deduce the expected estimate of $\|W\|_{H^1\times L^2}$, which ends the proof of Lemma \ref{lem_est_W}.

\subsubsection{Control of the unstable directions for $k>j$ and end of the proof}\label{subsect_topological_arg}

To control $\alpha_-=\left(\alpha_{-,k}\right)_{j<k\le N}$ and eventually obtain the following statement, we resort to a classical topological argument, already set up in \cite{combetgKdV} and initially developed by Côte, Martel and Merle \cite{cmm}. \\

\begin{Lem}\label{lem_existence_a}
For $t_0$ large enough, there exists $\mathfrak{a}\in B_{\RR^{N-j}}(e^{-(e_j+2\sigma)S_n})$ such that $T(\mathfrak{a})=t_0$.
\end{Lem}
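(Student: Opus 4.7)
The plan is to argue by contradiction and use a topological (Brouwer-type) shooting argument, exploiting the fact that all instability stemming from the directions $Z_{-,k}$ with $k>j$ has been absorbed into the modulation parameter $\mathfrak{a}$. Assume that for every $\mathfrak{a}\in B_{\RR^{N-j}}(e^{-(e_j+2\sigma)S_n})$ we have $T(\mathfrak{a})>t_0$. By Lemma \ref{lem_est_W}, enlarging $t_0$ once and for all so that $K_0/t_0^{1/4}<1/2$, the $H^1\times L^2$-estimate in the definition of $T(\mathfrak{a})$ is strictly satisfied on $[T(\mathfrak{a}),S_n]$; hence, by continuity in $t$, the only way the infimum $T(\mathfrak{a})$ can exceed $t_0$ is that the second constraint saturates, i.e.
\begin{equation}\label{eq_plan_sat}
e^{(e_j+2\sigma)T(\mathfrak{a})}\bigl|\alpha_-(T(\mathfrak{a}))\bigr|=1.
\end{equation}

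The key step is a transversality estimate at the exit time. Using Claim \ref{claim_alpha} together with the already-established bound $\|W(t)\|_{H^1\times L^2}\le K_0 t^{-1/4}e^{-(e_j+\sigma)t}$ from Lemma \ref{lem_est_W}, one gets (on $[T(\mathfrak{a}),S_n]$)
\begin{equation*}
\Bigl|\alpha_{-,k}'(t)+e_k\alpha_{-,k}(t)\Bigr|\le C e^{-(e_j+4\sigma)t}\qquad (k>j).
\end{equation*}
I then compute, with $N(t):=e^{2(e_j+2\sigma)t}|\alpha_-(t)|^2$,
\begin{equation*}
\tfrac{1}{2}N'(t)=\sum_{k>j}\bigl(e_j+2\sigma-e_k\bigr)\bigl(e^{(e_j+2\sigma)t}\alpha_{-,k}(t)\bigr)^2+\mathrm{O}\bigl(e^{-2\sigma t}\bigr).
\end{equation*}
Since $e_k>e_j$ for all $k>j$ (recall $|\beta_N|<\dots<|\beta_1|$ gives $e_1<\dots<e_N$) and $\sigma$ is small enough that $e_j+2\sigma<e_{j+1}$, each coefficient is bounded above by some $-c<0$. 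Evaluating at $t=T(\mathfrak{a})$, where $\sum_{k>j}(e^{(e_j+2\sigma)t}\alpha_{-,k})^2=1$ by \eqref{eq_plan_sat}, and enlarging $t_0$ so that the error $\mathrm{O}(e^{-2\sigma t_0})$ is dominated, I obtain $N'(T(\mathfrak{a}))\le -c/2<0$. This transversal exit gives, via the implicit function theorem applied to the continuous-in-time flow, that $\mathfrak{a}\mapsto T(\mathfrak{a})$ is continuous on the ball, and consequently so is the map
\begin{equation*}
\Theta\colon B_{\RR^{N-j}}\bigl(e^{-(e_j+2\sigma)S_n}\bigr)\longrightarrow S_{\RR^{N-j}}(1),\qquad \mathfrak{a}\longmapsto e^{(e_j+2\sigma)T(\mathfrak{a})}\alpha_-(T(\mathfrak{a})).
\end{equation*}

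Finally, when $|\mathfrak{a}|=e^{-(e_j+2\sigma)S_n}$ we have $e^{(e_j+2\sigma)S_n}|\alpha_-(S_n)|=e^{(e_j+2\sigma)S_n}|\mathfrak{a}|=1$ by construction (recall that $\mathfrak{b}$ was chosen so that $\alpha_-(S_n)=\mathfrak{a}$ in Lemma \ref{lem_mod_fin}), so $T(\mathfrak{a})=S_n$ and $\Theta(\mathfrak{a})=e^{(e_j+2\sigma)S_n}\mathfrak{a}$. Thus, after rescaling by $e^{(e_j+2\sigma)S_n}$, $\Theta$ produces a continuous retraction of the closed unit ball of $\RR^{N-j}$ onto its boundary sphere, contradicting Brouwer's no-retraction theorem. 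Hence there must exist $\mathfrak{a}\in B_{\RR^{N-j}}(e^{-(e_j+2\sigma)S_n})$ with $T(\mathfrak{a})=t_0$, as claimed.

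The main obstacle I anticipate is controlling the various error terms produced by the interactions in Claim \ref{claim_alpha} so that the transversality sign at $T(\mathfrak{a})$ truly holds; this forces one to invoke Lemma \ref{lem_est_W} (the \emph{improved} bound $t^{-1/4}e^{-(e_j+\sigma)t}$, rather than merely $e^{-(e_j+\sigma)t}$) to beat each of the terms $e^{-4\sigma t}\|W\|$, $\|W\|^2$ and $e^{-(e_j+4\sigma)t}$ against the linear-in-$|\alpha_-|$ dominant contribution, and also to exploit the spectral gap $e_k-e_j>2\sigma$ built into the choice of $\sigma$.
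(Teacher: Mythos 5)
Your proposal is correct and follows essentially the same route as the paper: a contradiction argument in which Lemma \ref{lem_est_W} guarantees that only the $\alpha_-$-constraint can saturate at the exit time, the differential inequality from Claim \ref{claim_alpha} yields strict decay (transversality) of $\mathcal{N}(t)=|e^{(e_j+2\sigma)t}\alpha_-(t)|^2$ at saturation points, and this gives continuity of $\mathfrak{a}\mapsto T(\mathfrak{a})$ and hence a continuous retraction of the ball onto its boundary sphere, contradicting Brouwer. The only cosmetic differences are that you organize the sign computation via the coefficients $e_j+2\sigma-e_k$ rather than bounding by $e_{j+1}$, and you phrase the continuity of $T(\mathfrak{a})$ via transversality plus the implicit-function viewpoint instead of the paper's explicit $\varepsilon$--$\delta$ argument.
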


The proof follows that of Combet \cite{combetgKdV}. We write it below for the sake of completeness.

\begin{proof}

We first choose $t_0$ sufficiently large such that $\frac{K_0}{\sqrt{t_0}}\le \frac{1}{2}$. Then, we have by  Lemma \ref{lem_est_W}
$$\forall\;t\in [T(\mathfrak{a}),S_n],\qquad \|W(t)\|_{H^1\times L^2}\le \frac{1}{2}e^{-(e_j+\sigma)t}.$$

Assume, for the sake of contradiction, that for all $\mathfrak{a}\in B_{\RR^{N-j}}(e^{-(e_j+2\sigma)S_n})$, $T(\mathfrak{a})>t_0$. As $\|W(T(\mathfrak{a}))\|_{H^1\times L^2}\le \frac{1}{2}e^{-(e_j+\sigma)T(\mathfrak{a})}$, by definition of $T(\mathfrak{a})$ and continuity of the flow, we have:
$$|\alpha_-(T(\mathfrak{a}))|=1.$$ (We recall that $\alpha_-(t)=(\alpha_{-,k}(t))_{j<k\le N}$.)
In other words, the map
$$\begin{array}{cccl}
\mathcal{M}:& B_{\RR^{N-j}}(e^{-(e_j+2\sigma)S_n})&\longrightarrow& S_{\RR^{N-j}}(e^{-(e_j+2\sigma)S_n})\\
&\mathfrak{a}&\longmapsto&e^{-(e_j+2\sigma)(S_n-T(\mathfrak{a}))}\alpha_-(T(\mathfrak{a}))
\end{array}$$
is well-defined. Now, we aim at showing that $\mathcal{M}$ is continuous and that its restriction to $S_{\RR^{N-j}}(e^{-(e_j+2\gamma)S_n})$ is the identity. \\

Let $T\in [T_0,T(\mathfrak{a})]$ be such that $W$ is defined on $[T,S_n]$ and, by continuity,
$$\forall\;t\in[T,S_n],\qquad \|W(t)\|_{H^1\times L^2}\le 1.$$

We consider, for all $t\in [T,S_n]$:
$$\mathcal{N}(t):=\mathcal{N}(\alpha_-(t))=\left\|e^{(e_j+2\sigma)t}\alpha_-(t)\right\|^2.$$

\begin{Claim}\label{claim_der_N}
For $t_0$ large enough, and for all $t\in [T,S_n]$ such that $\mathcal{N}(t)=1$, we have:
$$\mathcal{N}'(t)\le -(e_{j+1}-e_j-2\sigma).$$
\end{Claim}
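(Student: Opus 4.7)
The plan is a direct differentiation of $\mathcal{N}(t)=e^{2(e_j+2\sigma)t}\sum_{k>j}\alpha_{-,k}(t)^2$, substitution of the scalar ODE for $\alpha_{-,k}$ extracted from Claim \ref{claim_alpha}, and absorption of the resulting error using the refined bound on $\|W\|_{H^1\times L^2}$ provided by Lemma \ref{lem_est_W}.

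First, a straightforward computation gives
\[
\mathcal{N}'(t)=2(e_j+2\sigma)\mathcal{N}(t)+2e^{2(e_j+2\sigma)t}\sum_{k>j}\alpha_{-,k}(t)\alpha_{-,k}'(t).
\]
Writing $\alpha_{-,k}'(t)=-e_k\alpha_{-,k}(t)+r_k(t)$ with $|r_k(t)|\le C\bigl(e^{-4\sigma t}\|W(t)\|_{H^1\times L^2}+\|W(t)\|_{H^1\times L^2}^2+e^{-(e_j+4\sigma)t}\bigr)$ (Claim \ref{claim_alpha}), this reorganizes as
\[
\mathcal{N}'(t)=2\sum_{k>j}(e_j+2\sigma-e_k)e^{2(e_j+2\sigma)t}\alpha_{-,k}(t)^2+2e^{2(e_j+2\sigma)t}\sum_{k>j}\alpha_{-,k}(t)r_k(t).
\]
Since the hypothesis $0<|\beta_N|<\dots<|\beta_1|<1$ in Theorem \ref{th_main_N} gives the reverse ordering $0<e_1<\dots<e_N$, we have $e_k\ge e_{j+1}$ for every $k>j$, hence the first sum is dominated by $-2(e_{j+1}-e_j-2\sigma)\mathcal{N}(t)$, a strictly negative quantity for $\sigma$ sufficiently small (which is compatible with \eqref{def_sigma}).

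Second, when $\mathcal{N}(t)=1$, Cauchy--Schwarz on the remainder yields
\[
\Bigl|2e^{2(e_j+2\sigma)t}\sum_{k>j}\alpha_{-,k}(t)r_k(t)\Bigr|\le Ce^{(e_j+2\sigma)t}\max_{k>j}|r_k(t)|.
\]
Lemma \ref{lem_est_W} gives $\|W(t)\|_{H^1\times L^2}\le K_0 t^{-1/4}e^{-(e_j+\sigma)t}$, so each of the three contributions to $r_k$ produces an error controlled respectively by $Ct^{-1/4}e^{-3\sigma t}$, $Ct^{-1/2}e^{-e_jt}$, and $Ce^{-2\sigma t}$, all of which tend to $0$ as $t\to+\infty$. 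Choosing $t_0$ large enough that the error is at most $e_{j+1}-e_j-2\sigma$, we obtain at any $t\in[T,S_n]$ where $\mathcal{N}(t)=1$,
\[
\mathcal{N}'(t)\le -2(e_{j+1}-e_j-2\sigma)+(e_{j+1}-e_j-2\sigma)=-(e_{j+1}-e_j-2\sigma),
\]
which is the claim.

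The only subtle point is the sign: one needs $e_{j+1}-e_j-2\sigma>0$, which should be read as a (mild) smallness constraint on $\sigma$ coming from the strictly distinct moduli $|\beta_i|$; once this is arranged, the computation above is essentially mechanical and the main work is in keeping the bookkeeping of the three error terms in $r_k$ clean. No new analytic tool is needed beyond Claim \ref{claim_alpha} and Lemma \ref{lem_est_W}.
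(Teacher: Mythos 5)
Your proof is correct and follows essentially the same route as the paper: differentiate $\mathcal{N}$, feed in the ODE for $\alpha_{-,k}$ from Claim \ref{claim_alpha}, use the ordering $e_k\ge e_{j+1}$ for $k>j$ to extract the drift $-2(e_{j+1}-e_j-2\sigma)\mathcal{N}(t)$, and absorb the remainder at points where $\mathcal{N}(t)=1$ by taking $t_0$ large. The only (immaterial) difference is bookkeeping: the paper first consolidates the error into $Ce^{-(e_j+4\sigma)t}$ via the bootstrap bound $\|W\|_{H^1\times L^2}\le e^{-(e_j+\sigma)t}$ from the definition of $T(\mathfrak{a})$, whereas you keep the three error terms separate and invoke the refined Lemma \ref{lem_est_W}; both yield an error $\mathrm{O}(e^{-2\sigma t})$ that is killed by enlarging $t_0$.
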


\begin{proof}[Proof of Claim \ref{claim_der_N}]
Let us start from estimate \eqref{est_alpha-k}: for all $k\in\{j+1,\dots,N\}$, for all $t\in[T,S_n]$, 
$$\left|\frac{d}{dt}\alpha_{-,k}+e_k\alpha_{-,k}\right|\le Ce^{-(e_j+4\sigma)t}.$$
Thus we obtain for all $k\in\{j+1,\dots,N\}$, 
$$\alpha_{-,k}\frac{d}{dt}\alpha_{-,k}+e_{j+1}\alpha_{-,k}^2\le \alpha_{-,k}\frac{d}{dt}\alpha_{-,k}+e_k\alpha_{-,k}^2\le Ce^{-(e_j+4\sigma)t}|\alpha_{-,k}|.$$
Then, summing on $k\in\{j+1,\dots,N\}$ leads to
$$\left(|\alpha_-(t)|^2\right)'+2e_{j+1}|\alpha_-(t)|^2\le Ce^{-(e_j+4\sigma)t}|\alpha_-(t)|.$$
Therefore we can estimate:
\begin{align*}
\mathcal{N}'(t)&=e^{2(e_j+2\sigma)t}\left[2(e_j+2\sigma)|\alpha_{-}(t)|^2+\left(|\alpha_-(t)|^2\right)'\right]\\
&\le e^{2(e_j+2\sigma)t}\left[2(e_j+2\sigma)|\alpha_-(t)|^2-2e_{j+1}|\alpha_-(t)|^2+Ce^{-(e_j+4\sigma)t}|\alpha_-(t)|\right].
\end{align*}
Hence we have for all $t\in|T,S_n]$,
$$\mathcal{N}'(t)\le -\theta\mathcal{N}(t)+Ce^{e_jt}|\alpha_-(t)|,$$
where $\theta=2(e_{j+1}-e_j-2\sigma)>0$ by definition of $\sigma$. In particular, for all $\tau\in [T,S_n]$ satisfying $\mathcal{N}(\tau)=1$, we have:
$$\mathcal{N}'(\tau)\le -\theta+Ce^{e_j\tau}|\alpha_-(\tau)|\le -\theta+Ce^{e_j\tau}e^{-(e_j+2\sigma)\tau}\le -\theta+Ce^{-2\sigma t_0}.$$
Now, we fix $t_0$ large enough such that $Ce^{-2\sigma t_0}\le \frac{\theta}{2}$. Thus for all $\tau\in[T,S_n]$ such that $\mathcal{N}(\tau)=1$, we have
$$\mathcal{N}'(\tau)\le -\frac{\theta}{2}.$$
\end{proof}

Finally, we claim that $\mathfrak{a}\mapsto T(\mathfrak{a})$ is continuous. Indeed, let $\varepsilon>0$. By definition of $T(\mathfrak{a})$ and by Claim \ref{claim_der_N}, there exists $\delta>0$ such that for all $t\in [T(\mathfrak{a})+\varepsilon,S_n]$, $\mathcal{N}(t)<1-\delta$, and such that $\mathcal{N}(T(\mathfrak{a})-\varepsilon)>1+\delta$. But from continuity of the flow, there exists $\eta>0$ such that for all $\tilde{\mathfrak{a}}$ satisfying $\|\tilde{\mathfrak{a}}-\mathfrak{a}\|\le \eta$, we have
$$\forall\;t\in [T(\mathfrak{a})-\varepsilon,S_n],\qquad |\mathcal{N}(\tilde{\mathfrak{a}})-\mathcal{N}(\mathfrak{a})|\le\frac{\delta}{2}.$$
We finally deduce that $$T(\mathfrak{a})-\varepsilon\le T(\tilde{\mathfrak{a}})\le T(\mathfrak{a})+\varepsilon.$$ Hence, $\mathfrak{a}\mapsto T(\mathfrak{a})$ is continuous. \\

We then obtain that the map $\mathcal{M}$ is continuous. 
What is more, for $\mathfrak{a}\in S_{\RR^{N-j}}(e^{-(e_j+2\sigma)S_n})$, as $\mathcal{N}'(S_n)\le -(e_{j+1}-e_j-2\sigma)<0$, we then deduce by definition of $T(\mathfrak{a})$ that $T(\mathfrak{a})=S_n$, and thus, $\mathcal{M}(\mathfrak{a})=\mathfrak{a}$. \\
The existence of such a map $\mathcal{M}$ contradicts Brouwer's fixed point theorem. Thus, we have finished proving Lemma \ref{lem_existence_a}.

\end{proof}

\section{Classification under condition of the multi-solitons of (NLKG)}
 
Let $N\ge 2$ and $x_1,\dots,x_N$, $\beta_1,\dots,\beta_N$ be $2N$ parameters as in Theorem \ref{th_main_N}.
Let $U$ be a solution of \eqref{NLKG} such that
\begin{equation}
\left\|U(t)-\sum_{i=1}^NR_{\beta_i}(t)\right\|_{H^1\times L^2}=\mathrm{O}\left(\frac{1}{t^\alpha}\right)\qquad\text{as}\quad t\to+\infty
\end{equation} for some $\alpha>3$.

The goal of this section is to prove the existence of $A_1,\dots, A_N\in\RR$ such that $$U=\Phi_{A_1,\dots,A_N}.$$ Here again, we make the proof for $d=1$.

We denote by $\varphi$ a multi-soliton solution associated with these parameters, satisfying \eqref{est_multi_sol} and $\Phi:=\dbinom{\varphi}{\partial_t\varphi}$.
Let us consider $Z:=U-\Phi=\dbinom{z}{\partial_tz}$. Obviously,
$$\|Z(t)\|_{H^1\times L^2}=\mathrm{O}\left(\frac{1}{t^\alpha}\right),\qquad\text{as}\quad t\to +\infty.$$

Our first objective is to improve this comparison, and namely to pass from the polynomial decay to an exponential one.

\subsection{Exponential convergence to 0 at speed $e_1$ of $\|Z(t)\|_{H^1\times L^2}$}\label{subsec_exp_conv}

\subsubsection{Introduction of a new variable by modulation}

In a standard way, we modulate the variable $Z$ in order to obtain suitable orthogonality properties, making it possible to obtain crucial estimates when we apply the spectral theory available for \eqref{NLKG}.

\begin{Lem}\label{lem_mod_E}
There exists $t_0>0$ and $\mathscr{C}^1$ functions $a_i:[t_0,+\infty)\to\RR$ and $b_i:[t_0,+\infty)\to\RR$ for all $i=1,\dots,N$  such that, defining
$$E:=Z-\sum_{i=1}^Na_i\partial_xR_i-\sum_{i=1}^Nb_iY_{+,i},$$
we have for all $i=1,\dots,N$ and for all $t\ge t_0$:
\begin{align}
\left\langle E(t),\partial_xR_i(t)\right\rangle&=0 \label{aux_E_1}\\
\left\langle E(t),Z_{-,i}(t)\right\rangle &=0.  \label{aux_E_2}
\end{align}
Moreover, we have for all $i=1,\dots,N$:
\begin{align}
a_i(t)&=\frac{1}{\|\partial_xR_i\|^2}\langle Z(t),\partial_xR_i(t)\rangle+\mathrm{O}\left(e^{-4\sigma t}\|Z\|_{H^1\times L^2}\right)\\
b_i(t)&=\langle Z(t),Z_{-,i}(t)\rangle+\mathrm{O}\left(e^{-4\sigma t}\|Z\|_{H^1\times L^2}\right).
\end{align}
\end{Lem}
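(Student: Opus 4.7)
The plan is to carry out a standard modulation argument by inverting a near-block-diagonal linear system. For each $t$ large, I would consider the affine map $\mathfrak{F}(\cdot;t):\RR^{2N}\to\RR^{2N}$ whose components are, for each $i\in\{1,\dots,N\}$, the pair
\[
\left\langle Z(t) - \sum_j a_j\partial_x R_j(t) - \sum_j b_j Y_{+,j}(t),\, \partial_x R_i(t)\right\rangle,\quad
\left\langle Z(t) - \sum_j a_j\partial_x R_j(t) - \sum_j b_j Y_{+,j}(t),\, Z_{-,i}(t)\right\rangle.
\]
A zero of $\mathfrak{F}(\cdot;t)$ is exactly a choice of $(a_i,b_i)_{i=1}^N$ yielding the desired orthogonality relations \eqref{aux_E_1}--\eqref{aux_E_2}. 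Since $\mathfrak{F}$ is affine in $(a,b)$, this reduces to inverting a $2N\times 2N$ linear system $M(t)\binom{a}{b}=W(t)$, where $W(t)$ stacks the scalar products $\langle Z(t),\partial_x R_i(t)\rangle$ and $\langle Z(t),Z_{-,i}(t)\rangle$.

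By Proposition \ref{prop_interaction}, every entry of $M(t)$ coupling two distinct soliton indices $i\neq j$ is $\mathrm{O}(e^{-4\sigma t})$; hence $M(t)=D+R(t)$ with $D$ block-diagonal, whose $2\times 2$ blocks $D_i$ are time-independent (by Lorentz invariance), and $\|R(t)\|=\mathrm{O}(e^{-4\sigma t})$. The block $D_i$ has $(1,1)$-entry $\|\partial_x R_i\|^2>0$, $(2,2)$-entry $\langle Y_{+,i},Z_{-,i}\rangle=1$ by Proposition \ref{prop_spectral}, and off-diagonal entries $\langle Y_{+,i},\partial_x R_i\rangle$ and $\langle \partial_x R_i,Z_{-,i}\rangle$ that I would verify to vanish using the spectral identities of Proposition \ref{prop_spectral} (in particular the $J$-orthogonality relations involving $JZ_{j,\beta}$) together with the explicit form of $\partial_x R_i$. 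Granting that $D_i=\mathrm{diag}(\|\partial_x R_i\|^2,1)$ is invertible, a Neumann series yields $M(t)$ invertible for $t_0$ large, with $M(t)^{-1}=D^{-1}+\mathrm{O}(e^{-4\sigma t})$, and hence produces unique $(a_i(t),b_i(t))$.

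The $\mathscr{C}^1$ regularity of $a_i$ and $b_i$ would follow either from the implicit function theorem applied to $\mathfrak{F}$ viewed as a function of $(a,b,t)$, or by differentiating the identity $M(t)\binom{a(t)}{b(t)}=W(t)$ and exploiting the $\mathscr{C}^1$ regularity of $t\mapsto M(t)$ and $t\mapsto W(t)$, the latter using the continuity of the flow together with the smoothness in time of the Lorentz-transformed profiles $R_i$, $Y_{+,i}$ and $Z_{-,i}$. The asymptotic formulas for $a_i(t)$ and $b_i(t)$ then come from applying $M(t)^{-1}=D^{-1}+\mathrm{O}(e^{-4\sigma t})$ to $W(t)$, whose entries are bounded by $C\|Z(t)\|_{H^1\times L^2}$ by Cauchy--Schwarz: the principal terms $\frac{1}{\|\partial_x R_i\|^2}\langle Z,\partial_x R_i\rangle$ and $\langle Z,Z_{-,i}\rangle$ come from $D^{-1}$, while the error $\mathrm{O}(e^{-4\sigma t}\|Z\|_{H^1\times L^2})$ absorbs the $R(t)$-correction. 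The main conceptual obstacle is verifying the non-degeneracy (in fact diagonal structure) of the leading-order blocks $D_i$; once that is in hand, the rest is routine linear algebra.
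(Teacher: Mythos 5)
Your proposal is correct and takes essentially the same route as the paper, whose proof is a one-line reference to exactly the $2N\times 2N$ linear system obtained by substituting the definition of $E$ into \eqref{aux_E_1}--\eqref{aux_E_2}; you simply make explicit the inversion via the near-block-diagonal structure coming from Proposition \ref{prop_interaction} and the identities $\langle Y_{+,i},\partial_x R_i\rangle=\langle \partial_x R_i,Z_{-,i}\rangle=0$ from Proposition \ref{prop_spectral}. Note only that the vanishing of \emph{both} off-diagonal entries of $D_i$ is needed not merely for invertibility but for the stated leading-order formulas for $a_i$ and $b_i$ (otherwise $b_i$ would pick up an $\mathrm{O}(\|Z\|_{H^1\times L^2})$ contribution from $\langle Z,\partial_xR_i\rangle$), and your proposal correctly identifies this point.
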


\begin{proof}
This lemma follows from the consideration of the system with unknown variables $a_i$ and $b_i$ which is obtained by replacing $E$ by its definition in \eqref{aux_E_1} and \eqref{aux_E_2}. See also \cite{cf} for similar considerations in the case of modulation for the nonlinear Schrödinger equations. 
\end{proof}

\subsubsection{Control of the $Z_{+,i}$ and $Z_{-,i}$ directions}

Define $\alpha_{\pm,i}:=\langle Z,Z_{\pm,i}\rangle$ for all $i=1,\dots,N$. We claim:

\begin{Lem}\label{lem_control_Z_pm_i}
The following bounds hold: for all $i=1,\dots,N$, for all $t\ge t_0$,
$$|\alpha_{\pm,i}'(t)\mp e_i\alpha_{\pm,i}(t)|\le C\left(e^{-4\sigma t}\|Z\|_{H^1\times L^2}+\|Z\|^2_{H^1\times L^2}\right).$$
\end{Lem}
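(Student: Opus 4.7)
The plan is to follow the same scheme as in the proof of Claim \ref{claim_alpha}, noting that the situation is simpler here because $Z=U-\Phi$ has no added $A_je^{-e_jt}Y_{+,j}$ term. The bound should therefore fall out by computing $\frac{d}{dt}\langle Z,Z_{\pm,i}\rangle$ directly, isolating the spectral contribution $\pm e_i\alpha_{\pm,i}$ and estimating the remainder using the quadratic Taylor remainder of $f$ together with the soliton-interaction bounds of Proposition \ref{prop_interaction}.

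More precisely, first I would write down the evolution equation for $Z$. Since both $U$ and $\Phi$ satisfy \eqref{NLKG'}, the same computation as in Claim \ref{claim_der_t_W} (omitting the $Y_{+,j}$ terms) gives
\begin{equation*}
\partial_tZ=\begin{pmatrix}0 & Id \\ \partial_x^2-Id+f'(\varphi) & 0\end{pmatrix}Z+\binom{0}{g},
\end{equation*}
where $g=f(u)-f(\varphi)-f'(\varphi)(u-\varphi)$ satisfies $\|g(t)\|_{L^\infty}=\mathrm{O}(\|u-\varphi\|_{H^1}^2)=\mathrm{O}(\|Z\|_{H^1\times L^2}^2)$ thanks to $f\in\mathscr{C}^2$ and the Sobolev embedding $H^1\hookrightarrow L^\infty$ in dimension one.

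Next I would differentiate $\alpha_{\pm,i}(t)=\langle Z(t),Z_{\pm,i}(t)\rangle$ and use that $\partial_tZ_{\pm,i}=-\beta_i\partial_xZ_{\pm,i}$. After rearranging, one recognizes the operator $\mathscr{H}_i$ applied to $Z_{\pm,i}$ up to the replacement of $f'(Q_i)$ by $f'(\varphi)$:
\begin{equation*}
\frac{d}{dt}\alpha_{\pm,i}=\left\langle Z,\mathscr{H}_iZ_{\pm,i}\right\rangle+\left\langle Z,\begin{pmatrix}0 & f'(\varphi)-f'(Q_i)\\ 0 & 0\end{pmatrix}Z_{\pm,i}\right\rangle+\langle (0,g)^\top,Z_{\pm,i}\rangle.
\end{equation*}
The first term is exactly $\pm e_i\alpha_{\pm,i}$ by Proposition \ref{prop_spectral}. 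The last term is $\mathrm{O}(\|Z\|_{H^1\times L^2}^2)$ by Cauchy--Schwarz and the $L^\infty$-bound on $g$ above.

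The key remaining piece is the middle term. Splitting $f'(\varphi)-f'(Q_i)=[f'(\varphi)-f'(\sum_kQ_k)]+[f'(\sum_kQ_k)-f'(Q_i)]$ and using $\|\varphi-\sum_kQ_k\|_{L^\infty}=\mathrm{O}(e^{-4\sigma t})$ from \eqref{est_multi_sol} together with the pointwise smallness of $Q_k\,Z_{\pm,i}$ for $k\ne i$ (which is a decay-of-interactions estimate in the same spirit as Proposition \ref{prop_interaction}), one bounds this term by $Ce^{-4\sigma t}\|Z\|_{H^1\times L^2}$. Summing up yields the announced inequality. The main (though standard) obstacle is precisely this control of $f'(\varphi)-f'(Q_i)$ against the exponentially decaying profile $Z_{\pm,i}$: it relies on the exponential spatial decay of $Z_{\pm,i}$ centered at $x_i+\beta_it$ and the Lorentz-boosted separation of the solitons, ensuring that the interaction of any $Q_k$ with $Z_{\pm,i}$ is $\mathrm{O}(e^{-4\sigma t})$; this is exactly the mechanism that produced the $e^{-4\sigma t}$ factor in Claim \ref{claim_alpha}.
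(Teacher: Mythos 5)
Your proposal is correct and follows essentially the same route as the paper: differentiate $\alpha_{\pm,i}$, use $\partial_tZ_{\pm,i}=-\beta_i\partial_xZ_{\pm,i}$ to reconstitute $\mathscr{H}_iZ_{\pm,i}=\pm e_iZ_{\pm,i}$, absorb the Taylor remainder of $f$ into the $\|Z\|^2_{H^1\times L^2}$ term, and control $\langle Z,(f'(\varphi)-f'(Q_i))Z_{\pm,i}\rangle$ by $Ce^{-4\sigma t}\|Z\|_{H^1\times L^2}$ via \eqref{est_multi_sol} and the decay of soliton interactions. This is exactly the paper's argument (a simplified version of Claim \ref{claim_alpha} with $A_j=0$), so no further comment is needed.
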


\begin{proof}
The proof is in a similar fashion as that of Claim \ref{claim_alpha}. We note that
$$\partial_tZ=\left(\begin{array}{cc}
0&Id\\
\partial_x^2-Id+f'(\varphi)&0
\end{array}\right)Z+\dbinom{0}{f(u)-f(\varphi)-(u-\varphi)f'(\varphi)}$$
and that $|f(u)-f(\varphi)-(u-\varphi)f'(\varphi)|\le C|u-\varphi|^2\le C\|Z\|^2_{H^1\times L^2}$.
Thus for $i=1,\dots,N$, we have
\begin{align*}
\alpha_{\pm,i}'&=\langle\partial_tZ,Z_{\pm,i}\rangle+\langle Z,\partial_tZ_{\pm,i}\rangle\\
=&\;\left\langle \left(\begin{array}{cc}
0&Id\\
\partial_x^2-Id+f'(\varphi)&0
\end{array}\right)Z,Z_{\pm,i}\right\rangle-\beta_i\langle Z,\partial_xZ_{\pm,i}\rangle+\mathrm{O}\left(\|Z\|^2_{H^1\times L^2}\right)\\
=&\;\left\langle Z,\left(\begin{array}{cc}
-\beta_i\partial_x&\partial_x^2-Id+f'(Q_i)\\
Id&-\beta_i\partial_x
\end{array}\right)Z_{\pm,i}\right\rangle+
\left\langle Z,\left(\begin{array}{cc}
0&f'(\varphi)-f'(Q_i)\\
0&0
\end{array}\right)Z_{\pm,i}\right\rangle\\
&+\mathrm{O}\left(\|Z\|^2_{H^1\times L^2}\right)\\
=&\;\langle Z,\pm e_iZ_{\pm,i}\rangle+\mathrm{O}\left(e^{-4\sigma t}\|Z\|_{H^1\times L^2}+\|Z\|^2_{H^1\times L^2}\right).
\end{align*}
\end{proof}

\subsubsection{Control of the remaining modulation parameters}

\begin{Lem}\label{lem_control_modulation_parameters}
For all $i=1,\dots,N$, we have
\begin{equation}
|a_i'|\le C\left(\|E\|_{H^1\times L^2}+\|Z\|^2_{H^1\times L^2}\right).
\end{equation}
\end{Lem}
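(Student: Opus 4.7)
The plan is to differentiate the two families of orthogonality conditions \eqref{aux_E_1}--\eqref{aux_E_2} in $t$, extract the derivatives $(a_k', b_k')_{1\le k\le N}$, and solve the resulting near-diagonal linear system.

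First, from the fact that $Z = U - \Phi$ is the difference of two \eqref{NLKG} solutions and $f\in\mathscr{C}^2$,
$$\partial_t Z = \begin{pmatrix} 0 & \mathrm{Id} \\ \partial_x^2 - \mathrm{Id} + f'(\varphi) & 0 \end{pmatrix} Z + \binom{0}{g}, \qquad \|g\|_{L^\infty} = O(\|Z\|_{H^1\times L^2}^2),$$
and from the decomposition $E = Z - \sum_k a_k \partial_x R_k - \sum_k b_k Y_{+,k}$, I compute $\frac{d}{dt}\langle E, \partial_x R_i\rangle = 0$ and $\frac{d}{dt}\langle E, Z_{-,i}\rangle = 0$ and isolate the time derivatives of the modulation parameters. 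This produces a $2N\times 2N$ linear system $\mathcal{M}(t)\bigl((a_k')_k, (b_k')_k\bigr)^{\!\top} = \mathcal{R}(t)$, whose entries are inner products between the vectors $\partial_x R_k$, $Y_{+,k}$, $\partial_x R_i$, $Z_{-,i}$. By Proposition \ref{prop_interaction}, the off-diagonal ($k\ne i$) entries are $O(e^{-4\sigma t})$, while the diagonal entries stay close to the non-degenerate quantities $\|\partial_x R_i\|^2$ and $\langle Y_{+,i}, Z_{-,i}\rangle = 1$ given by Proposition \ref{prop_spectral}; hence $\mathcal{M}(t)$ is invertible with bounded inverse for $t\ge t_0$ large.

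Second, I bound each coordinate of $\mathcal{R}(t)$ by $C(\|E\|_{H^1\times L^2} + \|Z\|_{H^1\times L^2}^2)$. The typical contribution is $\langle \partial_t Z, \partial_x R_i\rangle + \langle E, \partial_t \partial_x R_i\rangle$ (and its analog tested against $Z_{-,i}$). The second piece is $O(\|E\|_{H^1\times L^2})$ since $\partial_t \partial_x R_i = -\beta_i\, \partial_x^2 R_i$ and $\partial_t Z_{-,i}$ are bounded in $H^1\times L^2$. For the first, I integrate by parts to transpose the linearized operator onto the test vector; replacing $f'(\varphi)$ by $f'(Q_i)$ costs only $O(e^{-4\sigma t}\|Z\|_{H^1\times L^2})$ (the difference is exponentially small on the support of $\partial_x R_i$), and the spectral identities of Proposition \ref{prop_spectral}---namely $\mathcal{H}_i \partial_x R_i = 0$ and $\mathcal{H}_i Z_{-,i} = -e_i Z_{-,i}$---rewrite the leading contribution as pairings $\langle Z, \partial_x R_i\rangle$ and $\langle Z, Z_{\pm,i}\rangle$. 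Reinjecting $Z = E + \sum_k (a_k \partial_x R_k + b_k Y_{+,k})$, the $E$-part vanishes by orthogonality or is bounded by $\|E\|$; the $k=i$ diagonal pieces either vanish by antisymmetry of $\partial_x$ or reabsorb into $\mathcal{M}(t)$; and the $k\ne i$ cross-products yield $O(e^{-4\sigma t}\|Z\|_{H^1\times L^2})$ via Proposition \ref{prop_interaction}. The quadratic remainder from $g$ contributes $O(\|Z\|_{H^1\times L^2}^2)$.

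The main obstacle is recovering the quadratic rate $O(\|Z\|^2)$ rather than the naive linear rate $O(\|Z\|)$ that a crude estimate of $\langle \partial_t Z, \partial_x R_i\rangle$ would yield. The cancellations of Proposition \ref{prop_spectral} are essential: after transposition, the $\|Z\|$-sized leading term reduces to pairings of $Z$ against spectral modes, which are either killed by the orthogonality of $E$ or absorbed into the matrix $\mathcal{M}(t)$ on the left. The residual $e^{-4\sigma t}\|Z\|$ terms are dominated by $\|Z\|^2$ under the standing hypothesis $\|Z\| = O(1/t^\alpha)$, since eventually $e^{-4\sigma t}\le \|Z\|_{H^1\times L^2}$. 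Inverting $\mathcal{M}(t)$ then gives the announced bound on $|a_i'|$.
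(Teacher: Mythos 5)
Your strategy is the same as the paper's: the author simply refers back to Claim \ref{claim_mod_par} and says to differentiate the orthogonality relation $\langle E,\partial_xR_i\rangle=0$ and control the resulting terms by $\|E\|_{H^1\times L^2}$. Your write-up fills in that computation, and treating the coupled $2N\times 2N$ system (rather than the $\partial_xR_i$-rows alone) is the right way to handle the $b_k'$ contributions, which the paper's sketch glosses over; the near-diagonality of $\mathcal{M}(t)$ follows from Proposition \ref{prop_interaction} together with $\langle Y_{+,i},\partial_xR_i\rangle=\langle \partial_xR_i,Z_{-,i}\rangle=0$ from Proposition \ref{prop_spectral}, as you say.

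Two of your justifications are off, though neither is fatal. First, the coordinates of $\mathcal{R}(t)$ coming from differentiating \eqref{aux_E_2} are \emph{not} $O(\|E\|_{H^1\times L^2}+\|Z\|^2_{H^1\times L^2})$: after transposition, the identity $\mathscr{H}_iZ_{-,i}=-e_iZ_{-,i}$ produces $-e_i\langle Z,Z_{-,i}\rangle=-e_ib_i+\mathrm{O}(e^{-4\sigma t}\|Z\|_{H^1\times L^2})$, which is only $O(\|Z\|_{H^1\times L^2})$ in size — this is precisely why the lemma bounds $a_i'$ and not $b_i'$. This does not ruin your inversion, because each $2\times2$ diagonal block of $\mathcal{M}(t)$ is itself diagonal, so the large $Z_{-,j}$-row entries of $\mathcal{R}$ feed into the $a_i'$ component of $\mathcal{M}(t)^{-1}\mathcal{R}$ only through $O(e^{-4\sigma t})$ coefficients; but you must say this instead of asserting a uniform $O(\|E\|+\|Z\|^2)$ bound on all of $\mathcal{R}$. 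Second, the absorption ``eventually $e^{-4\sigma t}\le\|Z\|_{H^1\times L^2}$'' is backwards: \eqref{cv_polynom} is an \emph{upper} bound on $\|Z\|$ and provides no lower bound ($Z$ may decay much faster than $t^{-\alpha}$). What your computation actually yields is $|a_i'|\le C\left(\|E\|_{H^1\times L^2}+\|Z\|^2_{H^1\times L^2}+e^{-4\sigma t}\|Z\|_{H^1\times L^2}\right)$. This is of the same shape as Lemma \ref{lem_control_Z_pm_i} and suffices where the lemma is used: in Proposition \ref{prop_Z_alpha_-,i} the extra term integrates to $Ce^{-4\sigma t}\sup_{t'\ge t}\|Z(t')\|_{H^1\times L^2}$ and is absorbed exactly like the analogous contribution from \eqref{ineq_alpha_+,i}.
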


\begin{proof}
We do not detail the proof of this lemma which is similar to Claim \ref{claim_mod_par}. It suffices to start by differentiating the orthogonality relation $\langle E,\left(R_i\right)_x\rangle=0$ with respect to $t$ and then to control terms by means of $\|E\|_{H^1\times L^2}$.
\end{proof}

\subsubsection{Study of a Lyapunov functional}\label{subsect_alm_mono_prop}

Taking some inspiration in \cite{mm_waveeq,Yuan_2019,xu_yuan}, we consider for all $t\ge t_0$:
$$\mathcal{F}_z(t):=\int_\RR\left\{\partial_xz^2+\partial_tz^2+z^2-f'(\varphi)z^2\right\}\;dx+2\int_\RR\chi\partial_xz\partial_tz\;dx,$$
where $\chi$ is defined as follows.

To begin with, recall that the parameters are ordered in such a way: $-1<\beta_{\eta(1)}<\dots<\beta_{\eta(N)}<1$; let us denote, for some small $\delta>0$ which will be determined later:
\begin{align*}
\bar{l}_i&:=\beta_{\eta(i)}+\delta\left(\beta_{\eta(i+1)}-\beta_{\eta(i)}\right) \\
\underline{l}_i&:=\beta_{\eta(i)}-\delta\left(\beta_{\eta(i+1)}-\beta_{\eta(i)}\right). 
\end{align*}
We then define for all $t\ge t_0$ and for all $x\in\RR$:
$$\chi(t,x):=\begin{dcases}
\beta_{\eta(1)}&\text{if}\quad x\in (-\infty,\bar{l}_1t]\\
\beta_{\eta(i)}&\text{if}\quad  x\in [\underline{l}_it,\bar{l}_it]\\
\beta_{\eta(N)}&\text{if}\quad  x\in [\bar{l}_Nt,+\infty)\\
\frac{x}{(1-2\delta)t}-\frac{\delta}{1-2\delta}\left(\beta_{\eta(i)}+\beta_{\eta(i+1)}\right)&\text{if}\quad  x\in [\bar{l}_it,\underline{l}_{i+1}t], i\in\{1,\dots,N-1\}.
\end{dcases}$$
For all $t\ge t_0$, $\chi(t)$ is a piecewise $\mathscr{C}^1$ function.

Set $\Omega(t):=\bigcup_{i=1}^N(\bar{l}_it,\underline{l}_{i+1}t)$. It follows from the definition of $\chi$ that
\begin{align*}
\partial_t\chi(t,x)&=\partial_x\chi(t,x)=0 \quad \text{if } x\in\Omega(t)^\complement
\\
\partial_x\chi(t,x)&=\frac{1}{(1-2\delta)t},\quad \partial_t\chi(t,x)=-\frac{x}{(1-2\delta)t^2}\quad \text{if } x\in\Omega(t).
\end{align*}

\begin{Lem}
There exists $\gamma>0$ such that
\begin{multline}\label{est_der_Fz}
\mathcal{F}'_z(t)=2\int_{\Omega(t)}\partial_xz\partial_tz\partial_t\chi\;dx-\int_{\Omega(t)}\left\{(\partial_tz)^2+(\partial_xz)^2-z^2+f'(\varphi)z^2\right\}\partial_x\chi\;dx\\
+\mathrm{O}\left(e^{-\gamma t}\|Z\|^2_{H^1\times L^2}+\|Z\|^3_{H^1\times L^2}\right).
 \end{multline}
\end{Lem}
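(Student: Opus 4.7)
The plan is to differentiate $\mathcal{F}_z$ straight from the definition, using the equation satisfied by $z = u - \varphi$. Since both $u$ and $\varphi$ solve \eqref{NLKG}, we get
$$\partial_t^2 z = \partial_x^2 z - z + f'(\varphi)z + g,\qquad g := f(u)-f(\varphi)-f'(\varphi)z,$$
with the pointwise bound $|g| \le C|z|^2$, which after Sobolev embedding yields $\|g\|_{L^2}\le C\|z\|_{H^1}^2$. This is the essential input that will let all the "nonlinear" terms be absorbed into an $\mathrm{O}(\|Z\|_{H^1\times L^2}^3)$ remainder.

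First I would compute $\frac{d}{dt}\int_\RR\{(\partial_xz)^2+(\partial_tz)^2+z^2-f'(\varphi)z^2\}dx$. After substituting the equation for $\partial_t^2z$ and integrating by parts once in $x$, the $\partial_xz\partial_x\partial_tz$ and $\partial_tz\partial_x^2z$ terms cancel, the $\pm z\partial_tz$ and $\pm f'(\varphi)z\partial_tz$ cancel, and what remains is $2\int g\partial_tz\,dx$ together with the "moving profile" contribution $-\int f''(\varphi)\partial_t\varphi \, z^2\,dx$. Next I would differentiate $2\int \chi\partial_xz\partial_tz\,dx$: the time derivative hitting $\chi$ gives the first term of the claimed identity; the time derivative hitting $\partial_tz$ is integrated by parts using $\chi\partial_x\partial_tz\partial_tz=\frac{1}{2}\chi\partial_x(\partial_tz)^2$ to produce $-\int\partial_x\chi(\partial_tz)^2\,dx$; and the time derivative hitting $\partial_xz$ is handled via the equation for $\partial_t^2 z$, then integration by parts to produce the remaining diagonal entries $-\int\partial_x\chi\{(\partial_xz)^2-z^2+f'(\varphi)z^2\}\,dx$, the remainder $2\int\chi g\partial_xz\,dx$, and a cross term $-\int\chi f''(\varphi)\partial_x\varphi\, z^2\,dx$.

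Combining these, the statement follows once we absorb the three residual quantities
$$2\int g\partial_tz\,dx,\qquad 2\int\chi g\partial_xz\,dx,\qquad -\int f''(\varphi)\bigl(\partial_t\varphi+\chi\,\partial_x\varphi\bigr)z^2\,dx.$$
The first two are bounded by $C\|g\|_{L^2}\|Z\|_{H^1\times L^2}\le C\|Z\|_{H^1\times L^2}^3$ thanks to $\chi\in L^\infty$ and the Sobolev estimate on $g$. The main point, and the key nontrivial step, is to show that the third term is $\mathrm{O}(e^{-\gamma t}\|Z\|_{H^1\times L^2}^2)$.

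For this cross term I would use that, by \eqref{est_multi_sol}, $\varphi(t)=\sum_i Q_{\beta_i,x_i}(t)+r(t)$ with $\|r(t)\|_{H^1}\le Ce^{-4\sigma t}$ (and similarly $\partial_t\varphi=-\sum_i\beta_i\partial_xQ_{\beta_i,x_i}+\partial_t r$); this reduces matters to controlling $\sum_i(\chi-\beta_i)\partial_xQ_{\beta_i,x_i}$ in $L^\infty$. The construction of $\chi$ is tailored so that on the interval $[\underline{l}_k t,\bar{l}_k t]$ one has $\chi\equiv\beta_{\eta(k)}$, so $(\chi-\beta_{\eta(k)})\partial_xQ_{\beta_{\eta(k)},x_{\eta(k)}}\equiv 0$ there, while every other $\partial_xQ_{\beta_{\eta(j)},x_{\eta(j)}}$ is evaluated at points at distance $\gtrsim t$ from its center and is therefore exponentially small by the exponential decay of $Q'$. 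On the "transition" strips $[\bar{l}_k t,\underline{l}_{k+1}t]$ every soliton center is still at distance $\gtrsim t$, so $\partial_xQ_{\beta_{\eta(j)},x_{\eta(j)}}$ is exponentially small uniformly in $x$. Choosing $\delta$ small enough (and fixing $\gamma$ in terms of $\sigma$, the $\beta_i$ and $\delta$) yields $\|\partial_t\varphi+\chi\partial_x\varphi\|_{L^\infty}\le Ce^{-\gamma t}$, and since $f''(\varphi)$ is bounded on the bounded set where $\varphi$ takes values, we get the announced $\mathrm{O}(e^{-\gamma t}\|Z\|_{H^1\times L^2}^2)$ bound. The cancellation relying on the geometric definition of $\chi$ is the central delicate point; the rest is bookkeeping of integrations by parts.
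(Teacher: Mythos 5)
Your proposal is correct and follows essentially the same route as the paper: direct differentiation of $\mathcal{F}_z$, substitution of the equation $\partial_t^2z=\partial_x^2z-z+f(u)-f(\varphi)$, absorption of the quadratic remainder $g$ into $\mathrm{O}(\|Z\|_{H^1\times L^2}^3)$, and identification of the cross term $\int f''(\varphi)(\varphi_t+\chi\varphi_x)z^2\,dx$, which is then killed by splitting into the plateaus $[\underline{l}_it,\bar{l}_it]$ (where $\chi\equiv\beta_{\eta(i)}$ and $(R_{\eta(i)})_t+\chi(R_{\eta(i)})_x=0$) and the transition strips $\Omega(t)$ (where all solitons are exponentially small), exactly as in the paper. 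The only blemish is a harmless labeling swap of which factor of $\chi\partial_xz\,\partial_tz$ receives the time derivative that is integrated by parts versus the one replaced via the equation; the computations you describe are the correct ones.
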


\begin{proof}
We essentially have to use the identity $\partial_t^2z=\partial_x^2z-z+f(u)-f(\varphi)$ in the expression of $\mathcal{F}'_z(t)$.\\
We compute 
\begin{equation}\label{eq_aux_1}
\begin{aligned}
\mathcal{F}'_z(t)=&\;2\int_\RR\left\{z_{tx}z_x+z_{tt}z_t+z_tz-f'(\varphi)z_tz-\frac{1}{2}\varphi_tf''(\varphi)z^2\right\}\;dx\\
&+2\int_\RR\chi_tz_xz_t\;dx+2\int_\RR\chi z_xz_{tt}\;dx+2\int_\RR\chi z_{xt}z_t\;dx\\
=&\;2\int_\RR z_t\left(-z_{xx}+z_{tt}+z-f'(\varphi)z\right)\;dx+2\int_\RR\chi_t z_xz_t\;dx\\
&+2\int_\RR\chi z_x\left(z_{xx}-z+f(u)-f(\varphi)\right)\;dx-\int_\RR z_t^2\chi_x\;dx-\int_\RR \varphi_tf''(\varphi)z^2\;dx
\end{aligned}
\end{equation}
Notice that 
\begin{equation}\label{eq_aux_2}
\int_\RR z_t\left(-z_{xx}+z_{tt}+z-f'(\varphi)z\right)\;dx=\int_\RR z_t\left(f(u)-f(\varphi)-f'(\varphi)z\right)\;dx=\mathrm{O}\left(\|Z\|_{H^1\times L^2}^3\right)
\end{equation}
and 
\begin{equation}\label{eq_aux_3}
\int_\RR\chi z_x\left(f(u)-f(\varphi)\right)\;dx=\int_\RR\chi z_xf'(\varphi)z\;dx+\mathrm{O}\left(\|Z\|_{H^1\times L^2}^3\right).
\end{equation} Hence, collecting \eqref{eq_aux_1}, \eqref{eq_aux_2}, and \eqref{eq_aux_3},

\begin{equation}\label{est_der_Fz_chi}
\begin{aligned}
\mathcal{F}'_z(t)=&\;2\int_\RR\chi_t z_xz_t\;dx-\int_\RR\chi_x(z_x^2+z_t^2-z^2)\;dx\\
&-\int_\RR z^2\left(\chi_xf'(\varphi)+\chi\varphi_xf''(\varphi)\right)\;dx-\int_\RR\varphi_tf''(\varphi)z^2\;dx+\mathrm{O}\left(\|Z\|_{H^1\times L^2}^3\right)\\
=&\;2\int_\RR z_xz_t\chi_t\;dx-\int_\RR\left\{z_x^2+z_t^2-z^2+f'(\varphi)z^2\right\}\chi_x\;dx\\
&-\int_\RR z^2f''(\varphi)(\varphi_t+\chi\varphi_x)\;dx+\mathrm{O}\left(\|Z\|_{H^1\times L^2}^3\right).
\end{aligned}
\end{equation}

Lastly, observe that
\begin{align*}
\int_\RR z^2f''(\varphi)(\varphi_t+\chi\varphi_x)\;dx&=\sum_{i=1}^N\int_\RR z^2f''(\varphi)\left((R_k)_t+\chi(R_k)_x\right)\;dx+\mathrm{O}\left(\|Z\|_{H^1\times L^2}^3\right)\\
&=I+J+\mathrm{O}\left(\|Z\|_{H^1\times L^2}^3\right),
\end{align*}
where
$$\begin{dcases}
I=\sum_{k=1}^N\int_{\Omega(t)}z^2f''(\varphi)\left((R_k)_t+\chi(R_k)_x\right)\;dx\\
J=\sum_{k=1}^N\sum_{i=1}^N\int_{\underline{l}_i(t)}^{\bar{l}_it}z^2f''(\varphi)\left((R_k)_t+\chi(R_k)_x\right)\;dx.
\end{dcases}$$

On the one hand,
\begin{equation}\label{est_J}
\begin{aligned}
J&=\sum_{i=1}^N\sum_{k=1}^N\int_{\underline{l}_i(t)}^{\bar{l}_it}z^2f''(\varphi)\left((R_k)_t+\chi(R_k)_x\right)\;dx\\
&=\sum_{i=1}^N\int_{\underline{l}_i(t)}^{\bar{l}_it}z^2f''(\varphi)\left((R_{\eta(i)})_t+\chi(R_{\eta(i)})_x\right)\;dx+\mathrm{O}\left(e^{-4\sigma t}\|Z\|_{H^1\times L^2}^2\right)\\
&=\mathrm{O}\left(e^{-4\sigma t}\|Z\|_{H^1\times L^2}^2\right).
\end{aligned}
\end{equation}
Indeed, for all $x\in [\underline{l}_i(t),\bar{l}_it]$, we have $(R_{\eta(i)})_t(t,x)+\chi(t,x)(R_{\eta(i)})_x(t,x)=0$.

On the other, for $x\in\Omega(t)$, there exists $k'\in\{1,\dots,N\}$ such that $\bar{l}_{k'}t\le x\le \underline{l}_{k'+1}t$. \\
Then $$(\bar{l}_{k'}-\beta_k)t\le x-\beta_kt\le (\underline{l}_{k'+1}-\beta_k)t$$ and thus
$$|x-\beta_kt|\ge \delta\min_{j=1,\dots,N-1}\{\beta_{\eta(j+1)}-\beta_{\eta(j)}\}t>0.$$
As a consequence, we have for all $x\in\Omega(t)$
\begin{equation}\label{est_dec_Rk_Omega}
\begin{aligned}
\left|(R_k)_x(t,x)\right|&\le Ce^{-\sigma|x-\beta_kt|}\\
&\le Ce^{-\gamma t},
\end{aligned}
\end{equation}
where $0<\gamma<\sigma\delta\min_{j=1,\dots,N-1}\{\beta_{\eta(j+1)}-\beta_{\eta(j)}\}$.

Hence 
\begin{equation}\label{est_I}
|I|\le Ce^{-\gamma t}\|Z\|_{H^1\times L^2}^2.
\end{equation}

Finally we gather \eqref{est_der_Fz_chi}, \eqref{est_J}, and \eqref{est_I} in order to obtain \eqref{est_der_Fz}.
\end{proof}

Let us introduce the components $\varepsilon$ and $\varepsilon_2$ of the vector 
$$E=\dbinom{\varepsilon:=z-\sum_i\left\{a_i\partial_xQ_i+b_i(Y_{+,i})_1\right\}}{\varepsilon_2:=\partial_tz-\sum_i\left\{a_i(-\beta_i\partial_{xx}Q_i)+b_i(Y_{+,i})_2\right\}}.$$

\begin{Cor}
We have
\begin{multline}
\mathcal{F}'_z(t)=2\int_{\Omega(t)}\varepsilon_x\varepsilon_2\chi_t\;dx-\int_{\Omega(t)}\left\{\varepsilon_2^2+\varepsilon_x^2-\varepsilon^2+f'(\varphi)\varepsilon^2\right\}\chi_x\;dx\\
+\mathrm{O}\left(e^{-\gamma t}\|Z\|_{H^1\times L^2}^2+\|Z\|_{H^1\times L^2}^3\right).\end{multline}
\end{Cor}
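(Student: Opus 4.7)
The plan is to start from the identity for $\mathcal{F}'_z(t)$ just established in the preceding lemma and rewrite each quadratic integrand in terms of $\varepsilon,\varepsilon_2$ rather than $z,\partial_tz$. Using Lemma \ref{lem_mod_E}, I would write
$$z=\varepsilon+\sum_i\{a_i\partial_xQ_i+b_i(Y_{+,i})_1\},\qquad \partial_tz=\varepsilon_2+\sum_i\{a_i(-\beta_i\partial_{xx}Q_i)+b_i(Y_{+,i})_2\},$$
and the analogous expression for $\partial_xz$. Substituting into the lemma, every integrand picks up correction terms that are linear or quadratic in the soliton/resonance contributions.

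The crucial point, already exploited in \eqref{est_J}--\eqref{est_I}, is that on $\Omega(t)$ each of $\partial_xQ_i$, $\partial_x^2Q_i$, $\partial_{xx}Q_i$, together with the components of $Y_{+,i}$ and their $x$-derivatives, is pointwise $\mathrm{O}(e^{-\gamma t})$: they are centred at $\beta_i t$ while $\Omega(t)$ stays at distance at least $c\,t$ from every $\beta_i t$. Combined with the modulation bound $|a_i(t)|+|b_i(t)|\le C\|Z(t)\|_{H^1\times L^2}$ from Lemma \ref{lem_mod_E}, this yields the pointwise estimates
$$|z-\varepsilon|+|\partial_tz-\varepsilon_2|+|\partial_xz-\partial_x\varepsilon|\le Ce^{-\gamma t}\|Z\|_{H^1\times L^2}\qquad\text{on }\Omega(t).$$

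Using $|\partial_t\chi|+|\partial_x\chi|\le C/t$ and $|\Omega(t)|\le Ct$, every cross term is bounded by Cauchy--Schwarz; a typical one is
$$\left|\int_{\Omega(t)}\varepsilon_2(\partial_tz-\varepsilon_2)\partial_x\chi\,dx\right|\le \frac{Ce^{-\gamma t}\|Z\|_{H^1\times L^2}}{t}\sqrt{|\Omega(t)|}\,\|\varepsilon_2\|_{L^2}\le \frac{C}{\sqrt t}e^{-\gamma t}\|Z\|^2_{H^1\times L^2},$$
while the purely quadratic corrections such as $\int_{\Omega(t)}(\partial_tz-\varepsilon_2)^2\partial_x\chi\,dx$ are $\mathrm{O}(e^{-2\gamma t}\|Z\|^2_{H^1\times L^2})$. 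The term $\int f'(\varphi)z^2\chi_x$ is treated analogously using the uniform bound $\|f'(\varphi)\|_{L^\infty}\le C$. All these contributions fit into the announced error $\mathrm{O}(e^{-\gamma t}\|Z\|^2_{H^1\times L^2}+\|Z\|^3_{H^1\times L^2})$, possibly after slightly reducing $\gamma$.

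The main obstacle is therefore not analytical but organisational: one has to verify that every cross and quadratic correction arising from the substitution carries at least one factor of $e^{-\gamma t}$. This is automatic here, since each such correction involves a derivative of $Q_i$ or of $(Y_{+,i})_\bullet$ evaluated on $\Omega(t)$, where the corresponding soliton and resonance functions are exponentially small. No new analytical idea beyond those used in the preceding lemma is required.
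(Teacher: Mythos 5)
Your proposal is correct and follows exactly the route the paper intends: substitute the modulation decomposition $z=\varepsilon+\sum_i\{a_i\partial_xQ_i+b_i(Y_{+,i})_1\}$ (and its analogues for $\partial_tz$, $\partial_xz$) into the formula \eqref{est_der_Fz}, and absorb all correction terms using the pointwise bound $\mathrm{O}(e^{-\gamma t})$ for the soliton and resonance profiles on $\Omega(t)$ (the analogue of \eqref{est_dec_Rk_Omega}) together with $|a_i|+|b_i|\le C\|Z\|_{H^1\times L^2}$. The paper's own proof is a one-line appeal to precisely these two ingredients, so your argument simply supplies the details it leaves implicit.
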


\begin{proof}
The corollary immediately follows from \eqref{est_der_Fz} and bounds for the derivatives of $R_k$ and $Y_{+,k}$ which are analogous to \eqref{est_dec_Rk_Omega}.
\end{proof}

 In the spirit of \cite[Proposition 4.2]{mm_waveeq}, we will state an almost monotonicity property satisfied by $\mathcal{F}_z$. Let us define
$$\mathcal{F}_{\varepsilon,\Omega(t)}(t):=\int_{\Omega(t)}\left\{\varepsilon_x^2+\varepsilon_2^2+2\chi\varepsilon_x\varepsilon_2\right\}\;dx.$$

Let $\lambda\in (1,\alpha-1)$. The choice of $\lambda$ is linked with the integrability of particular quantities and will appear naturally later.

\begin{proposition}\label{prop_der_F_z_F_E}
There exists $\delta>0$ and $t_0>0$ such that for all $t\ge t_0$, 
\begin{equation}
-\mathcal{F}'_z(t)\le \frac{\lambda}{t}\mathcal{F}_{\varepsilon,\Omega(t)}(t)+\mathrm{O}\left(e^{-\gamma t}\|Z\|_{H^1\times L^2}^2+\|Z\|_{H^1\times L^2}^3\right).
\end{equation}
\end{proposition}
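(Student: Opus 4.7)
The plan is to start from the expression of $\mathcal{F}'_z(t)$ given by the previous corollary and to exploit a pointwise identity between $\chi_t$ and $\chi_x$ on the transition set $\Omega(t)$ in order to recognize the quadratic form defining $\mathcal{F}_{\varepsilon,\Omega(t)}$.

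The key observation is that on each subinterval $[\bar{l}_it,\underline{l}_{i+1}t]$ composing $\Omega(t)$, the explicit expressions of $\chi$, $\chi_x$ and $\chi_t$ yield
$$\chi_t = -\chi_x\left[(1-2\delta)\chi + \delta(\beta_{\eta(i)} + \beta_{\eta(i+1)})\right],$$
so that
$$-2\varepsilon_x\varepsilon_2\chi_t = \frac{2\chi}{t}\varepsilon_x\varepsilon_2 + \frac{2\delta(\beta_{\eta(i)}+\beta_{\eta(i+1)})}{(1-2\delta)t}\varepsilon_x\varepsilon_2.$$
Substituting into the formula for $-\mathcal{F}'_z(t)$ and using that $\chi_x = \frac{1}{(1-2\delta)t}$ on $\Omega(t)$, the main contribution gathers into $\frac{1}{(1-2\delta)t}\mathcal{F}_{\varepsilon,\Omega(t)}(t)$, up to three leftover terms: the non-positive integral $-\frac{1}{(1-2\delta)t}\int_{\Omega(t)}\varepsilon^2\,dx$, the potential integral $\frac{1}{(1-2\delta)t}\int_{\Omega(t)}f'(\varphi)\varepsilon^2\,dx$, and a cross integral of size $\frac{C\delta}{t}\int_{\Omega(t)}|\varepsilon_x\varepsilon_2|\,dx$.

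These three leftover terms will be absorbed as follows. The first is non-positive and can simply be discarded in the upper bound. For the second, the construction of $\Omega(t)$ ensures $|x-\beta_k t - x_k| \ge c\delta t$ uniformly in $k$ and $x\in\Omega(t)$, so that $\varphi$ and $f'(\varphi)$ are pointwise $\mathrm{O}(e^{-\gamma t})$ there, pushing the integral into the existing exponential error. For the cross term I will use that $|\chi| \le \max_i|\beta_{\eta(i)}| < 1$ on $\Omega(t)$, which gives the coercivity
$$\mathcal{F}_{\varepsilon,\Omega(t)}(t)\ge \left(1 - \max_i|\beta_{\eta(i)}|\right)\int_{\Omega(t)}(\varepsilon_x^2+\varepsilon_2^2)\,dx;$$
combined with $2|\varepsilon_x\varepsilon_2|\le \varepsilon_x^2 + \varepsilon_2^2$, this bounds the cross contribution by $C'\delta\,\mathcal{F}_{\varepsilon,\Omega(t)}(t)/t$.

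Altogether this yields
$$-\mathcal{F}'_z(t) \le \frac{1+C''\delta}{(1-2\delta)\,t}\,\mathcal{F}_{\varepsilon,\Omega(t)}(t) + \mathrm{O}\!\left(e^{-\gamma t}\|Z\|_{H^1\times L^2}^2 + \|Z\|_{H^1\times L^2}^3\right),$$
with $C''$ independent of $\delta$. Since $\lambda>1$, choosing $\delta>0$ sufficiently small makes the prefactor $\frac{1+C''\delta}{1-2\delta}$ strictly less than $\lambda$, giving the proposition after enlarging $t_0$ if necessary. The delicate point is really spotting the identity relating $\chi_t$ and $\chi_x$; once this is in hand, everything reduces to the uniform exponential smallness of $\varphi$ on $\Omega(t)$ and the standard coercivity of the Lorentz-type quadratic form $\xi^2+\eta^2+2\chi\xi\eta$ in the regime $|\chi|<1$.
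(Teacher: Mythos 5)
Your proposal is correct and follows essentially the same route as the paper: starting from the corollary's expression for $\mathcal{F}'_z$, using the explicit form of $\chi_t$ and $\chi_x$ on $\Omega(t)$ (your exact identity is equivalent to the paper's comparison $\left|\frac{x}{t}-\chi\right|\le C\delta$), discarding the sign-definite $-\varepsilon^2\chi_x$ contribution together with the exponentially small $f'(\varphi)\varepsilon^2$ term, absorbing the $\mathrm{O}(\delta)$ cross term via the coercivity of $\xi^2+\eta^2+2\chi\xi\eta$ for $\|\chi\|_\infty<1$, and finally choosing $\delta$ small using $\lambda>1$. The only cosmetic difference is that the paper handles $-\varepsilon^2+f'(\varphi)\varepsilon^2$ in one stroke via $|f'(\varphi)|\le 1$ on $\Omega(t)$, whereas you treat the two pieces separately.
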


\begin{proof}
Since $f'(0)=0$ and $f'$ is continuous, there exists $r_0>0$ such that for all $r\in[0,r_0]$, $|f'(r)|\le 1$. There exists $K>0$ (independent of $t$) such that for all $t\ge t_0$ and for all $x\in\Omega(t)$, $|\varphi(t,x)|\le Ke^{-\gamma t}.$
Even if it means increasing $t_0$, we can assume that $Ke^{-\gamma t}\le r_0$ for all $t\ge t_0$.

In addition $\chi_x\ge 0$. Thus, for $t\ge t_0$,
\begin{align*}
\MoveEqLeft[2]
-2\int_{\Omega(t)}\varepsilon_x\varepsilon_2\chi_t\;dx+\int_{\Omega(t)}\left\{\varepsilon_x^2+\varepsilon_2^2-\varepsilon^2+f'(\varphi)\varepsilon^2\right\}\chi_x\;dx\\
&\le -2\int_{\Omega(t)}\varepsilon_x\varepsilon_2\chi_t\;dx+\int_{\Omega(t)}\left\{\varepsilon_x^2+ \varepsilon_2^2\right\}\chi_x\;dx.
\end{align*}
Moreover, 
\begin{align*}
-2\int_{\Omega(t)}\varepsilon_x\varepsilon_2\chi_t\;dx+\int_{\Omega(t)}\left\{\varepsilon_x^2+ \varepsilon_2^2\right\}\chi_x\;dx&=\frac{1}{(1-2\delta)t}\int_{\Omega(t)}\left\{2\varepsilon_x\varepsilon_2\frac{x}{t}+\varepsilon_x^2+\varepsilon_2^2\right\}\;dx \\
&=\frac{1}{(1-2\delta)t}\left(\mathcal{F}_{\varepsilon,\Omega(t)}+2\int_{\Omega(t)}\left(\frac{x}{t}-\chi\right)\varepsilon_x\varepsilon_2\;dx\right).
\end{align*}
Now, for $x\in\Omega(t)$, we have
\begin{align*}
\left|\frac{x}{t}-\chi(t,x)\right|&\le \left|\frac{x}{t}\left(1-\frac{1}{1-2\delta}\right)\right|+\frac{\delta}{1-2\delta}\times 2\max_i|\beta_i| \\
&\le 2\delta\left(\left|\frac{x}{t}\right|+C\right)\\
&\le C\delta.
\end{align*}

Thus, $$\frac{2}{(1-2\delta)t}\int_{\Omega(t)}\left(\frac{x}{t}-\chi\right)\varepsilon_x\varepsilon_2\;dx=\mathrm{O}\left(\delta\|E\|_{H^1\times L^2}^2\right).$$
Noticing moreover that 
\begin{align*}
\mathcal{F}_{\varepsilon,\Omega(t)}&\ge \int_{\Omega(t)}\left\{\varepsilon_x^2+\varepsilon_2^2\right\}\;dx-2\|\chi(t)\|_{\infty}\int_{\Omega(t)}\varepsilon_x\varepsilon_2\;dx\\
&\ge  \int_{\Omega(t)}\left\{\varepsilon_x^2+\varepsilon_2^2\right\}\;dx-\|\chi(t)\|_{\infty} \int_{\Omega(t)}\left\{\varepsilon_x^2+\varepsilon_2^2\right\}\;dx\\
&\ge \left(1-\|\chi(t)\|_{\infty}\right)\|E\|_{H^1\times L^2(\Omega(t))}^2,
\end{align*}
we obtain 
$$\frac{2}{(1-2\delta)t}\int_{\Omega(t)}\left(\frac{x}{t}-\chi\right)\varepsilon_x\varepsilon_2\;dx=\mathrm{O}\left(\delta \mathcal{F}_{\varepsilon,\Omega(t)}\right).$$
Finally,
\begin{align*}
-\mathcal{F}_z'(t)&\le \frac{1}{(1-2\delta)t}(1+\mathrm{O}(\delta))\mathcal{F}_{\varepsilon,\Omega(t)}+\mathrm{O}\left(e^{-\gamma t}\|Z\|_{H^1\times L^2}^2+\|Z\|_{H^1\times L^2}^3\right)\\
&\le \frac{\lambda}{t}\mathcal{F}_{\varepsilon,\Omega(t)}+\mathrm{O}\left(e^{-\gamma t}\|Z\|_{H^1\times L^2}^2+\|Z\|_{H^1\times L^2}^3\right),
\end{align*}
provided $\delta$ is chosen small enough.
\end{proof}

We now introduce the functional 
$$\mathcal{F}_{\varepsilon}(t):=\int_\RR\left\{\varepsilon_x^2+\varepsilon_2^2+\varepsilon^2-f'(\varphi)\varepsilon^2\right\}\;dx+2\int_\RR\chi\varepsilon_x\varepsilon_2\;dx$$
and compare it with $\mathcal{F}_z$.

\begin{Lem}\label{lem_functional_F_E}
We have $$\mathcal{F}_\varepsilon(t)=\mathcal{F}_z(t)-2\sum_{i=1}^N\alpha_{-,i}(t)\alpha_{+,i}(t)+\mathcal{G}(t),$$
where $\mathcal{G}(t)=\mathrm{O}\left(e^{-\gamma t}\|Z\|_{H^1\times L^2}^2\right)$ and $\mathcal{G}'(t)=\mathrm{O}\left(e^{-\gamma t}\|Z\|_{H^1\times L^2}^2\right)$.
\end{Lem}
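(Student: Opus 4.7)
The plan is to expand $\mathcal{F}_z - \mathcal{F}_\varepsilon$ by bilinearity. Writing $Z = E + D$ with $D := \sum_i (a_i\,\partial_x R_i + b_i\,Y_{+,i})$, and observing that $\mathcal{F}_z$ and $\mathcal{F}_\varepsilon$ are the same quadratic functional $Q$ (same $\chi$, same $f'(\varphi)$) evaluated at $Z$ and $E$, polarization yields
\[
\mathcal{F}_z(t) - \mathcal{F}_\varepsilon(t) \;=\; 2B(E,D) + Q(D),
\]
where $B$ is the associated symmetric bilinear form. I would then aim to show that the right-hand side equals $2\sum_i \alpha_{-,i}(t)\alpha_{+,i}(t)$ modulo a remainder $\mathcal{G}(t)$ of the claimed order.

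All contributions in $B(E,D)$ and $Q(D)$ indexed by pairs $i\ne j$ reduce, after a few integrations by parts, to scalar products between spatially-separated localized objects drawn from $\{\partial_x R_i, Y_{+,i}, Z_{\pm,i}\}$; by Proposition \ref{prop_interaction} such products are $O(e^{-4\sigma t})$, and combined with $|a_i|,|b_i| \le C\|Z\|_{H^1\times L^2}$ (Lemma \ref{lem_mod_E}) they fit into $\mathcal{G}$. For the diagonal part, I would localize in the zone $x \sim \beta_i t$: by construction $\chi(t,x)\equiv \beta_i$ there, and $\varphi = Q_i + O(e^{-\gamma t})$ since the other $Q_k$ are exponentially small on this zone; outside, the vector fields $\partial_x R_i, Y_{+,i}, Z_{\pm,i}$ are themselves exponentially small, so the $\chi$-transition and tail contributions are $O(e^{-\gamma t})$. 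Thus, up to $\mathcal{G}$, the diagonal pairings in $B$ and $Q$ may be computed as $\langle H_{\beta_i}\,\cdot,\cdot\rangle$.

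At this point the spectral identities of Proposition \ref{prop_spectral} take over. With the normalization $\langle Y_{+,i}, Z_{-,i}\rangle = 1$, one has $H_{\beta_i} Y_{+,i} = Z_{+,i}$, which gives $B(E, Y_{+,i}) \equiv \langle E, Z_{+,i}\rangle$ and $Q(Y_{+,i}) = \langle Z_{+,i}, Y_{+,i}\rangle = 0$ modulo $\mathcal{G}$. For the $\partial_x R_i$-terms, the orthogonality $\langle E,\partial_x R_i\rangle = 0$ from Lemma \ref{lem_mod_E}, combined with the symplectic orthogonality $\langle JZ_{1,i}, Z_{\pm,i}\rangle = 0$ and a direct expansion of $H_{\beta_i}\partial_x R_i$ using that $R_i$ solves \eqref{NLKG'}, shows that $B(E,\partial_x R_i)$, $Q(\partial_x R_i)$, and $B(\partial_x R_i, Y_{+,i})$ all lie in $\mathcal{G}$. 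Summing, the surviving main term is $2\sum_i b_i \langle E, Z_{+,i}\rangle$. Finally, testing $Z = E + D$ against $Z_{\pm,i}$ and invoking Proposition \ref{prop_interaction} yields $\alpha_{-,i} = b_i + O(e^{-4\sigma t}\|Z\|_{H^1\times L^2})$ (from $\langle Y_{+,i},Z_{-,i}\rangle = 1$ and $\langle E,Z_{-,i}\rangle = 0$) and $\alpha_{+,i} = \langle E, Z_{+,i}\rangle + O(e^{-4\sigma t}\|Z\|_{H^1\times L^2})$ (from $\langle Y_{+,i},Z_{+,i}\rangle = 0$); the desired identity follows, with the error absorbed into $\mathcal{G}$. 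The derivative bound $\mathcal{G}'(t) = O(e^{-\gamma t}\|Z\|_{H^1\times L^2}^2)$ is obtained by differentiating each error term, using that $\partial_t$-derivatives of the Lorentz-boosted functions $\partial_x R_i, Y_{+,i}, Z_{\pm,i}$ preserve the exponential rate and that $\partial_t Z$ is controlled via \eqref{NLKG'}.

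The main obstacle is the book-keeping of the third step: establishing that the $\partial_x R_i$-cross contributions truly sit in $\mathcal{G}$, with the exponential gain $e^{-\gamma t}$ (not merely in $O(\|Z\|_{H^1\times L^2}^2)$), despite $\partial_x R_i$ not being a clean eigenvector of $H_{\beta_i}$ when $\beta_i\neq 0$. This demands simultaneous use of the modulation orthogonality $\langle E,\partial_x R_i\rangle=0$, the symplectic orthogonalities of Proposition \ref{prop_spectral}, and the exponential localization of the differences $\chi - \beta_i$ and $\varphi - Q_i$ around the $i$-th trajectory.
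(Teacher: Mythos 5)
Your proposal is correct and follows essentially the same route as the paper: the paper likewise substitutes $E=Z-\sum_i(a_i\partial_xR_i+b_iY_{+,i})$ into the quadratic form, expands by bilinearity, isolates the surviving cross term $-2\sum_ib_i\langle Z_{+,i},Z\rangle$ via $MY_{+,i}=Z_{+,i}+(\text{terms carrying }\chi-\beta_i\text{ or }f'(Q_i)-f'(\varphi))$, and absorbs everything else into $\mathcal{G}$ using the exponential localization of $\chi-\beta_i$ and $\varphi-Q_i$ near the $i$-th trajectory together with Proposition \ref{prop_interaction} and the orthogonality/kernel identities of Proposition \ref{prop_spectral}. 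Your polarization framing and your final identification $b_i\langle E,Z_{+,i}\rangle=\alpha_{-,i}\alpha_{+,i}+\mathrm{O}(e^{-4\sigma t}\|Z\|_{H^1\times L^2}^2)$ are only cosmetic variants of the paper's computation.
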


\begin{proof}
Let $M$ be the matrix $\left(\begin{array}{cc}
-\partial_x^2+Id-f'(\varphi)&0\\
0&Id
\end{array}\right)$. For all $i=1,\dots,N$, we have the decomposition
\begin{equation}
M=H_i+\left(\begin{array}{cc}
0&\beta_i\partial_x\\
-\beta_i\partial_x&0
\end{array}\right)+\left(\begin{array}{cc}
f'(Q_i)-f'(\varphi)&0\\
0&0
\end{array}\right).
\end{equation}
Then we infer
\begin{align*}
\mathcal{F}_\varepsilon(t)=&\;\langle ME,E\rangle+2\int_\RR\chi\varepsilon_x\varepsilon_2\;dx\\
=&\;\langle MZ,Z\rangle-2\left\langle M\left(\sum_{i=1}^N\left\{a_i(R_i)_x+b_iY_{+,i}\right\}\right),Z\right\rangle\\
&+\left\langle M\left(\sum_{i=1}^N\left\{a_i(R_i)_x+b_iY_{+,i}\right\}\right),\sum_{j=1}^N\left\{a_j(R_j)_x+b_jY_{+,j}\right\}\right\rangle\\
&+2\int_\RR\chi z_xz_t\;dx+2\int_\RR\chi\sum_{i=1}^N\left\{a_i(Q_i)_{xx}+b_i(Y_{+,i})_{1,x}\right\}\sum_{j=1}^N\left\{a_j(Q_j)_{xt}+b_j(Y_{+,j})_2\right\}\;dx\\
&-2\int_\RR\chi z_x\sum_{i=1}^N\left\{a_i(Q_i)_{xt}+b_i(Y_{+,i})_2\right\}\;dx-2\int_\RR\chi z_t\sum_{i=1}^N\left\{a_i(Q_i)_{xx}+b_i(Y_{+,i})_{1,x}\right\}\;dx\\
=&\; \mathcal{F}_z(t)-2\sum_{i=1}^Nb_i\langle Z_{+,i},Z\rangle +\tilde{\mathcal{G}}(t)\\
=&\; \mathcal{F}_z(t)-2\sum_{i=1}^N\alpha_{-,i}(t)\alpha_{+,i}(t)+\mathcal{G}(t),
\end{align*}
with 
\begin{align*}
MY_{+,i}&=Z_{+,i}+\beta_i\dbinom{(Y_{+,i})_{2,x}}{-(Y_{+,i})_{1,x}}+\dbinom{(f'(Q_i)-f'(\varphi))(Y_{+,i})_1}{0}\\
M(R_i)_x&=\dbinom{-\beta_i^2(Q_i)_{xxx}}{-\beta_i(Q_i)_{xx}}+\dbinom{(f'(Q_i)-f'(\varphi))(Q_i)_x}{0}.
\end{align*}
and
\begin{align*}
\tilde{\mathcal{G}}(t)=&-2\left\langle \sum_{i=1}^N\left\{a_i\dbinom{-\beta_i^2(Q_i)_{xxx}}{\beta_i(Q_i)_{xx}}+b_i\beta_i\dbinom{(Y_{+,i})_{2,x}}{-(Y_{+,i})_{1,x}}\right\},Z\right\rangle+\mathrm{O}\left(e^{-\gamma t}\|Z\|_{H^1\times L^2}^2\right)\\
&+\sum_{i=1}^N\left\langle a_i\dbinom{-\beta_i^2(Q_i)_{xxx}}{\beta_i(Q_i)_{xx}}+b_i\beta_i\dbinom{(Y_{+,i})_{2,x}}{-(Y_{+,i})_{1,x}}+b_iZ_{+,i},a_i(R_i)_x+b_iY_{+,i}\right\rangle\\
&+2\sum_{i=1}^N\int_{\RR}\chi\left(a_i(Q_i)_{xx}+b_i(Y_{+,i})_{1,x}\right)\left(-\beta_ia_i(Q_i)_{xx}+b_i(Y_{+,i})_{2}\right)\;dx\\
&-2\sum_{i=1}^N\int_\RR \chi z_x\left(-\beta_ia_i(Q_i)_{xx}+b_i(Y_{+,i})_2\right)\;dx-2\sum_{i=1}^N\int_\RR \chi z_t\left(a_i(Q_i)_{xx}+b_i(Y_{+,i})_{1,x}\right)\;dx.
\end{align*}
Integrating by parts, we obtain
\begin{align*}
\MoveEqLeft[4]
\int_\RR\chi z_x\left(-\beta_ia_i(Q_i)_{xx}+b_i(Y_{+,i})_2\right)\;dx\\
&=-\int_\RR \chi z\left(-\beta_ia_i(Q_i)_{xxx}+b_i(Y_{+,i})_{2,x}\right)\;dx-\int_\RR\chi_xz\left(-\beta_ia_i(Q_i)_{xx}+b_i(Y_{+,i})_2\right)\;dx.
\end{align*}
Consequently,
\begin{align*}
\tilde{\mathcal{G}}(t)=&-2\sum_{i=1}^Na_i\int_\RR(\chi-\beta_i)\left(-\beta_iz(Q_i)_{xxx}+z_t(Q_i)_{xx}\right)\;dx\\
&-2\sum_{i=1}^Nb_i\int_\RR(\chi-\beta_i)\left(-z(Y_{+,i})_{2,x}+z_t(Y_{+,i})_{1,x}\right)\;dx\\
&+2\sum_{i=1}^Na_i^2\beta_i\int_\RR(\beta_i-\chi)(Q_i)_{xx}^2\;dx+2\sum_{i=1}^Nb_i^2\int_\RR(\beta_i-\chi)(Y_{+,i})_1(Y_{+,i})_{2,x}\\
&+2\sum_{i=1}^N\int_{\Omega(t)}\chi_xz\left(-\beta_ia_i(Q_i)_{xx}+b_i(Y_{+,i})_2\right)\;dx\\
&+2\sum_{i=1}^Na_ib_i\int_\RR(\chi-\beta_i)\left(-\beta_i(Y_{+,i})_{1,x}(Q_i)_{xx}+(Q_i)_{xx}(Y_{+,i})_2\right)\;dx+\mathrm{O}\left(e^{-\gamma t}\|Z\|_{H^1\times L^2}^2\right)\\
=&\;\mathrm{O}\left(e^{-\gamma t}\|Z\|_{H^1\times L^2}^2\right).
\end{align*}
\end{proof}

We deduce from Proposition \ref{prop_der_F_z_F_E} and Lemma \ref{lem_functional_F_E} the following "weak" monotonicity property.

\begin{Cor}\label{cor_monotonie_F_E} We have for all $t\ge t_0$,
\begin{equation}\label{monotonie_F_E}
-\mathcal{F}_\varepsilon '(t)\le \frac{\lambda}{t}\mathcal{F}_{\varepsilon}(t)+\frac{C}{t}\sum_{i=1}^N\alpha_{+,i}^2+\mathrm{O}\left(e^{-\gamma t}\|Z\|_{H^1\times L^2}^2+\|Z\|_{H^1\times L^2}^{3}\right).
\end{equation}
\end{Cor}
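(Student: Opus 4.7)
My plan is to combine Lemma \ref{lem_functional_F_E}, Lemma \ref{lem_control_Z_pm_i}, Proposition \ref{prop_der_F_z_F_E}, and the coercivity from Proposition \ref{prop_coercivity}. First I would simply differentiate the identity of Lemma \ref{lem_functional_F_E}, namely
\[
\mathcal{F}_\varepsilon'(t)=\mathcal{F}_z'(t)-2\sum_{i=1}^N\bigl(\alpha_{-,i}'\alpha_{+,i}+\alpha_{-,i}\alpha_{+,i}'\bigr)+\mathcal{G}'(t),
\]
where $\mathcal{G}'(t)=\mathrm{O}\bigl(e^{-\gamma t}\|Z\|_{H^1\times L^2}^2\bigr)$ is already an acceptable error term. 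The key cancellation is then extracted from Lemma \ref{lem_control_Z_pm_i}: substituting $\alpha_{\pm,i}'=\pm e_i\alpha_{\pm,i}+\mathrm{O}\bigl(e^{-4\sigma t}\|Z\|_{H^1\times L^2}+\|Z\|_{H^1\times L^2}^2\bigr)$ yields
\[
\alpha_{-,i}'\alpha_{+,i}+\alpha_{-,i}\alpha_{+,i}'=-e_i\alpha_{-,i}\alpha_{+,i}+e_i\alpha_{-,i}\alpha_{+,i}+\mathrm{O}\bigl(\|Z\|_{H^1\times L^2}(e^{-4\sigma t}\|Z\|_{H^1\times L^2}+\|Z\|_{H^1\times L^2}^2)\bigr),
\]
using $|\alpha_{\pm,i}|\le C\|Z\|_{H^1\times L^2}$. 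The leading terms cancel, leaving an error of type $\mathrm{O}\bigl(e^{-\gamma t}\|Z\|_{H^1\times L^2}^2+\|Z\|_{H^1\times L^2}^3\bigr)$ after possibly shrinking $\gamma$.

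Plugging this into the previous display and invoking Proposition \ref{prop_der_F_z_F_E} to bound $-\mathcal{F}_z'(t)$, I would obtain
\[
-\mathcal{F}_\varepsilon'(t)\le \frac{\lambda}{t}\mathcal{F}_{\varepsilon,\Omega(t)}(t)+\mathrm{O}\bigl(e^{-\gamma t}\|Z\|_{H^1\times L^2}^2+\|Z\|_{H^1\times L^2}^3\bigr).
\]
It only remains to upgrade $\mathcal{F}_{\varepsilon,\Omega(t)}$ into $\mathcal{F}_\varepsilon$ modulo the quantities $\alpha_{+,i}^2$, and this is the step I expect to be the main obstacle, since the coefficient in front of $\mathcal{F}_\varepsilon$ must remain $1$ in order to preserve the constant $\lambda$.

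To perform this conversion, I would write
\[
\mathcal{F}_\varepsilon(t)-\mathcal{F}_{\varepsilon,\Omega(t)}(t)=\int_{\Omega(t)}(1-f'(\varphi))\varepsilon^2\,dx+\int_{\Omega(t)^c}\bigl\{\varepsilon_x^2+\varepsilon_2^2+(1-f'(\varphi))\varepsilon^2+2\chi\varepsilon_x\varepsilon_2\bigr\}\,dx.
\]
On $\Omega(t)$, $|\varphi|\le Ce^{-\gamma t}$ and $f'(0)=0$, so the first integral is nonnegative for $t$ large. On each connected component of $\Omega(t)^c$, $\chi\equiv\beta_{\eta(k)}$ for some $k$, and up to an error $\mathrm{O}\bigl(e^{-4\sigma t}\|E\|_{H^1\times L^2}^2\bigr)$ one can replace $f'(\varphi)$ by $f'(Q_{\eta(k)})$, so that the localized integrand agrees with that of $\langle H_{\eta(k)} E,E\rangle$. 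A standard localization argument using a partition of unity $(\phi_k)_k$ associated with the pieces of $\Omega(t)^c$ (as in subsection \ref{subsect_improvement}) transfers Proposition \ref{prop_coercivity} into a lower bound
\[
\int_{\Omega(t)^c}\!\bigl\{\cdots\bigr\}\,dx\ge \mu\|E\|_{H^1\times L^2(\Omega(t)^c)}^2-\frac{1}{\mu}\sum_{i=1}^N\Bigl(\langle E,Z_{+,i}\rangle^2+\langle E,Z_{-,i}\rangle^2+\langle E,\partial_x R_i\rangle^2\Bigr)+\mathrm{O}\bigl(e^{-\gamma t}\|Z\|_{H^1\times L^2}^2\bigr).
\]
By construction of $E$ (Lemma \ref{lem_mod_E}) the scalar products with $Z_{-,i}$ and $\partial_x R_i$ vanish, while $\langle E,Z_{+,i}\rangle=\alpha_{+,i}+\mathrm{O}\bigl(e^{-4\sigma t}\|Z\|_{H^1\times L^2}\bigr)$ using Proposition \ref{prop_interaction} and Proposition \ref{prop_spectral}. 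Discarding the nonnegative term $\mu\|E\|_{H^1\times L^2(\Omega(t)^c)}^2$ yields $\mathcal{F}_{\varepsilon,\Omega(t)}\le\mathcal{F}_\varepsilon+C\sum_i\alpha_{+,i}^2+\mathrm{O}\bigl(e^{-\gamma t}\|Z\|_{H^1\times L^2}^2\bigr)$, which combined with the previous display gives exactly \eqref{monotonie_F_E}.
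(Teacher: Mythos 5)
Your argument is correct and follows the paper's proof essentially verbatim: differentiate the identity of Lemma \ref{lem_functional_F_E}, use Lemma \ref{lem_control_Z_pm_i} to obtain the cancellation in $(\alpha_{-,i}\alpha_{+,i})'$, invoke Proposition \ref{prop_der_F_z_F_E}, and conclude with the localized coercivity bound $\mathcal{F}_{\varepsilon,\Omega(t)}\le\mathcal{F}_\varepsilon+C\sum_{i=1}^N\alpha_{+,i}^2+\mathrm{O}(e^{-\gamma t}\|Z\|_{H^1\times L^2}^2)$. The only difference is that you sketch a proof of this last inequality (splitting the difference over $\Omega(t)$ and $\Omega(t)^\complement$ and localizing Proposition \ref{prop_coercivity}), whereas the paper simply asserts it as a localized version of the coercivity property with a reference to Martel--Merle.
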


\begin{proof}
From Lemma \ref{lem_control_Z_pm_i}, we obtain: for all $i=1,\dots,N$, for all $t\ge t_0$,
\begin{equation}
\left|(\alpha_{-,i}\alpha_{+,i})'\right|\le C\left(e^{-\gamma t}\|Z\|_{H^1\times L^2}^2+\|Z\|_{H^1\times L^2}^{3}\right).
\end{equation}

Thus, we have 
\begin{align*}
-\mathcal{F}_\varepsilon '(t)&=-\mathcal{F}'_z(t)+2\sum_{i=1}^N(\alpha_{-,i}\alpha_{+,i})'+\mathrm{O}\left(e^{-\gamma t}\|Z\|^2\right)\\
&\le \frac{\lambda}{t}\mathcal{F}_{\varepsilon,\Omega(t)}+\mathrm{O}\left(e^{-\gamma t}\|Z\|_{H^1\times L^2}^2+\|Z\|_{H^1\times L^2}^{3}\right).
\end{align*}

We now make use of the following property satisfied by $\mathcal{F}_\varepsilon$ which is a consequence of a localized version of Proposition \ref{prop_spectral} (we refer to \cite[proof of (4.12) and (4.21)]{mm_waveeq} for similar considerations in the case of the energy-critical wave equation):
\begin{equation}
\mathcal{F}_{\varepsilon,\Omega(t)}\le \mathcal{F}_{\varepsilon}(t)+C\sum_{i=1}^N\langle\varepsilon,Z_{+,i}\rangle^2,
\end{equation}
to deduce that
$$-\mathcal{F}_\varepsilon '(t)\le \frac{\lambda}{t}\mathcal{F}_{\varepsilon}(t)+\frac{C}{t}\sum_{i=1}^N\alpha_{+,i}^2+\mathrm{O}\left(e^{-\gamma t}\|Z\|_{H^1\times L^2}^2+\|Z\|_{H^1\times L^2}^{3}\right).$$
\end{proof}

We are now in a position to prove

\begin{proposition}\label{prop_Z_alpha_-,i}
Even if it means taking a larger $t_0$, we have for all $t\ge t_0$
\begin{equation}
\|Z(t)\|_{H^1\times L^2}\le C\sup_{t'\ge t}\sum_{i=1}^N|\alpha_{-,i}(t)|.
\end{equation}
\end{proposition}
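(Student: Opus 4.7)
Write $N(t) := \sup_{s\ge t}\|Z(s)\|_{H^1\times L^2}$ and $K(t) := \sup_{s\ge t}\sum_{i=1}^N|\alpha_{-,i}(s)|$, both non-increasing. The goal reduces to showing $N(t)\le CK(t)$, which I plan to obtain by an absorption argument based on the decomposition of Lemma \ref{lem_mod_E}: $Z = E + \sum_i a_i\partial_xR_i + \sum_i b_i Y_{+,i}$. The idea is to bound every term on the right by either $K(t)$ or by $\varepsilon N(t)$ with $\varepsilon$ as small as we want, and then absorb.

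\textbf{Control of $\alpha_{+,i}$ and of the Lyapunov functional.} From Lemma \ref{lem_control_Z_pm_i}, the quantity $e^{-e_i s}\alpha_{+,i}(s)$ has derivative of size $e^{-e_i s}(e^{-4\sigma s}\|Z\|+\|Z\|^2)$; since $\alpha_{+,i}(s)\to 0$ by the a priori polynomial decay, integrating on $[t,+\infty)$ yields $|\alpha_{+,i}(t)|\le C(e^{-4\sigma t}+N(t))N(t)$, hence $|\alpha_{+,i}(t)|\le \varepsilon N(t)$ for $t_0$ large. Next I rewrite Corollary \ref{cor_monotonie_F_E} as $(t^{-\lambda}\mathcal{F}_\varepsilon)'(t)\ge -Ct^{-\lambda-1}\sum\alpha_{+,i}^2 - t^{-\lambda}O(e^{-\gamma t}\|Z\|^2+\|Z\|^3)$. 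Integration on $[t,+\infty)$ is legal because the boundary term at infinity vanishes: $T^{-\lambda}\mathcal{F}_\varepsilon(T)=O(T^{-\lambda-2\alpha})\to 0$ thanks to $\lambda<\alpha-1<2\alpha$. Using the $\alpha_{+,i}$ bound and monotonicity of $N$, the integral is $\le C\varepsilon^2 t^\lambda N(t)^2\int_t^\infty s^{-\lambda-1}ds=C\varepsilon^2 N(t)^2/\lambda$, plus lower order terms. Coercivity of $\mathcal{F}_\varepsilon$ (in the form used in Section \ref{sect_const_mult}) together with the orthogonalities $\langle E,\partial_xR_i\rangle=\langle E,Z_{-,i}\rangle=0$ and the identity $\langle Y_{+,i},Z_{+,i}\rangle=0$ then gives $\|E(t)\|^2\le C\mathcal{F}_\varepsilon(t)+C\sum\alpha_{+,i}^2\le C\varepsilon^2 N(t)^2$.

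\textbf{Control of modulation parameters.} Lemma \ref{lem_mod_E} immediately yields $|b_i(t)|\le |\alpha_{-,i}(t)|+Ce^{-4\sigma t}N(t)\le K(t)+\varepsilon N(t)$. For the translation parameters $a_i$, which vanish as $t\to+\infty$ (since $|a_i|\le C\|Z\|\to 0$), I use Lemma \ref{lem_control_modulation_parameters} to write $|a_i(t)|\le \int_t^\infty(C\|E(s)\|+C\|Z(s)\|^2)ds$. By Fubini the integrated bound of Step 2 gives $\int_t^\infty\mathcal{F}_\varepsilon(s)ds\le C\int_t^\infty\sum\alpha_{+,i}^2 ds$, so together with Step 2's pointwise bound on $\|E\|$ and with the hypothesis $\alpha>3$ (which makes $\int_t^\infty N^2$ converge fast and integration against $\|E\|$ tractable via Cauchy--Schwarz with the weight $s^{-1-\delta}$ for small $\delta$), one obtains $|a_i(t)|\le \varepsilon N(t)+CK(t)$.

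\textbf{Absorption and main obstacle.} Combining $\|Z(t)\|^2\le C(\|E\|^2+\sum a_i^2+\sum b_i^2)\le C\varepsilon^2 N(t)^2+CK(t)^2$ and taking $\sup_{s\ge t}$ yields $N(t)^2\le C\varepsilon^2 N(t)^2+CK(t)^2$; choosing $\varepsilon$ (equivalently $t_0$) small enough absorbs the first term and produces $\|Z(t)\|\le N(t)\le CK(t)$, which is the statement. The main technical obstacle is the bound on the translation parameters $a_i$: the directions $\partial_xR_i$ are symmetry modes, neither decaying nor suppressed by $K(t)$, so the bound must come entirely from integrating $a_i'$ against the strictly super-cubic decay $\alpha>3$. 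This is precisely where the hypothesis $\alpha>3$ in Theorem \ref{th_main_N_class} is consumed.
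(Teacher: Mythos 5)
Your architecture is the paper's: integrate the almost-monotonicity of Corollary \ref{cor_monotonie_F_E} with the weight $t^{\lambda}$, use coercivity and the orthogonalities of Lemma \ref{lem_mod_E} to control $\|E\|$, control $\alpha_{+,i}$ by backward integration, control $a_i$ by forward integration of $a_i'$, and close by absorption. However, two of the computations at the heart of the argument do not work as written. First, the integrating factor is $t^{+\lambda}$, not $t^{-\lambda}$, and the resulting weight inside the integral is $s^{\lambda-1}$ with $\lambda>1$, which is \emph{growing}: your bound "$\le C\varepsilon^2 t^{\lambda}N(t)^2\int_t^{\infty}s^{-\lambda-1}\,ds$" replaces it by a decaying weight, and if one only uses $\alpha_{+,i}^2(s)\le \varepsilon^2N(t)^2$ the integral $\int_t^{\infty}s^{\lambda-1}\,ds$ diverges. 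The convergence must come from splitting the quadratic quantity as (one power of $\sup$) $\times$ (one power of the pointwise decay $O(s^{-\alpha})$), e.g. $\alpha_{+,i}^2(s)\lesssim N(t)\cdot\sup_{u\ge s}\|Z(u)\|\cdot(e^{-\gamma s}+\sup_{u\ge s}\|Z(u)\|)$, so that $\frac{1}{t^{\lambda}}\int_t^{\infty}s^{\lambda-1}\alpha_{+,i}^2(s)\,ds\le N(t)^2\cdot o(1)$ using $\lambda<2\alpha-1$. This is fixable, but it is not the manipulation you wrote.

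The genuinely problematic step is the control of $a_i$. Even granting the pointwise conclusion $\|E(s)\|\le C\varepsilon N(s)$, one cannot deduce $\int_t^{\infty}\|E(s)\|\,ds\le \varepsilon N(t)+CK(t)$: with $N(s)\sim s^{-\alpha}$ one has $\int_t^{\infty}N(s)\,ds\sim \tfrac{t}{\alpha-1}N(t)\gg N(t)$, and since there is no lower bound on $N(t)$, the leftover $t^{1-\alpha}$ (or, after your weighted Cauchy--Schwarz, a term like $N(t)^{1/2}t^{(2-\alpha)/2}$) is controlled neither by $\varepsilon N(t)$ nor by $K(t)$, so the absorption does not close. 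The paper's resolution is to integrate the \emph{full} integral expression for $\|E(t')\|$ coming from \eqref{ineq_E}--\eqref{ineq_alpha_+,i}: every term there is at least quadratic in $Z$ "in a distributed sense", so one factors out exactly one power of $N(t)=\sup_{s\ge t}\|Z(s)\|$ and retains one power of the pointwise polynomial decay inside the integrals, yielding $\int_t^{\infty}\|E(t')\|\,dt'\le o(1)\,N(t)$. It is precisely in this step — in the term $\int_t^{\infty}\frac{1}{u^{\lambda/2}}\bigl(\int_u^{\infty}t'^{\lambda}\|Z(t')\|\,dt'\bigr)^{1/2}du$, whose finiteness and vanishing require $\lambda<\alpha-1$ and then $\alpha>3$ for the outer integration — that the hypothesis $\alpha>3$ is consumed, not merely in making "$\int N^2$ converge fast". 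You need to restore this structure for your Step 3 to be valid.
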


\begin{proof}
Multiplying the estimate obtained in Corollary \ref{cor_monotonie_F_E} by $t^\lambda$, we have for all $t\ge t_0$,
$$-(t^\lambda\mathcal{F}_E)'(t)\le C\left(\sum_{i=1}^Nt^{\lambda-1}\alpha_{+,i}^2+t^\lambda e^{-\gamma t}\|Z\|_{H^1\times L^2}  ^2+t^\lambda\|Z\|_{H^1\times L^2}  ^3\right).$$
Since, $t^{\lambda-1}\alpha_{+,i}^2$, $t^\lambda\|Z\|_{H^1\times L^2}^3$ are integrable functions of $t$ on $[t_0,+\infty)$ and $t^\lambda\mathcal{F}_E(t)\to 0$ as $t\to+\infty$, we infer that
\begin{equation}
\mathcal{F}_E(t)\le C\left(\frac{1}{t^\lambda}\sum_{i=1}^N\int_t^{+\infty}t'^{\lambda-1}\alpha_{+,i}^2(t')\;dt'+\frac{C}{t^\lambda}\int_t^{+\infty}t'^\lambda\|Z(t')\|_{H^1\times L^2}^3\;dt'+e^{-\gamma t}\sup_{t'\ge t}\|Z(t')\|^2\right).
\end{equation}
By the coercivity property satisfied by $\mathcal{F}_E$, we thus obtain
\begin{align*}
\|E(t)\|_{H^1\times L^2}^2\le &\; C\left(\mathcal{F}_E(t)+\sum_{i=1}^N\alpha_{+,i}^2(t)+e^{-2\gamma t}\|Z(t)\|_{H^1\times L^2}^2\right)\\
\le &\; \frac{C}{t^\lambda}\sum_{i=1}^N\int_t^{+\infty}t'^{\lambda-1}\alpha_{+,i}^2(t')\;dt'+\frac{C}{t^\lambda}\int_t^{+\infty}t'^\lambda\|Z(t')\|_{H^1\times L^2}^3\;dt'\\
&+Ce^{-\gamma t}\sup_{t'\ge t}\|Z(t')\|_{H^1\times L^2}^2+C\sum_{i=1}^N\alpha_{+,i}^2(t).
\end{align*}
In other words, there exists $C\ge 0$ such that for all $t\ge t_0$,
\begin{equation}\label{ineq_E}
\begin{aligned}
\|E(t)\|_{H^1\times L^2}  \le & \frac{C}{t^{\frac{\lambda}{2}}}\left(\int_t^{+\infty}t'^{\lambda-1}\sum_{i=1}^N\alpha_{+,i}^2(t')\;dt'\right)^{\frac{1}{2}}+C\left(\sum_{i=1}^N\alpha_{+,i}^2(t)\right)^{\frac{1}{2}}\\
&+\frac{C}{t^{\frac{\lambda}{2}}}\left( \int_t^{+\infty}t'^{\lambda}\|Z(t')\|_{H^1\times L^2}^3\;dt' \right)^{\frac{1}{2}}+Ce^{-\frac{\gamma}{2}t}\sup_{t'\ge t}\|Z(t')\|_{H^1\times L^2}.
\end{aligned}
\end{equation}

From Lemma \ref{lem_control_Z_pm_i} we recall the estimate
$$|\alpha_{+,i}'(t)- e_i\alpha_{+,i}(t)|\le C\left(e^{-\gamma t}\|Z(t)\|_{H^1\times L^2}+\|Z(t)\|^2_{H^1\times L^2}\right)$$ which is equivalent to
$$|(e^{-e_it}\alpha_{+,i})'(t)|\le Ce^{-e_it}\left(e^{-\gamma t}\|Z(t)\|_{H^1\times L^2}+\|Z(t)\|^2_{H^1\times L^2}\right).$$
Integrating the preceding inequality (which is indeed possible), we deduce:
\begin{equation}\label{ineq_alpha_+,i}
|\alpha_{+,i}(t)|\le C\left(e^{-\gamma t}\sup_{t'\ge t}\|Z(t')\|_{H^1\times L^2}+\sup_{t'\ge t}\|Z(t')\|^2_{H^1\times L^2}\right).
\end{equation}

Now, 
$$\|Z(t)\|_{H^1\times L^2}\le \|E(t)\|_{H^1\times L^2}+C\sum_{i,\pm}|\alpha_{\pm,i}(t)|+C\sum_{i=1}^N|a_i(t)|+Ce^{-\gamma t}\|Z(t)\|_{H^1\times L^2}.$$
By Lemma \ref{lem_control_modulation_parameters}, it follows that
\begin{equation}
\|Z(t)\|_{H^1\times L^2}\le C\left(\|E(t)\|_{H^1\times L^2}+\sum_{\pm,i}|\alpha_{\pm,i}(t)|+\int_t^{+\infty}\left(\|E(t')\|_{H^1\times L^2}+\|Z(t')\|^2_{H^1\times L^2}\right)\;dt'\right).
\end{equation}
Observing that the quantity $\int_{t}^{+\infty}\|Z(t')\|_{H^1\times L^2}\;dt'$ makes sense and tends to 0 as $t\to +\infty$ (because $\alpha>1$) and that
$$\int_t^{+\infty} \|Z(t')\|^2_{H^1\times L^2}\;dt'\le \sup_{t'\ge t} \|Z(t')\|_{H^1\times L^2}\int_t^{+\infty} \|Z(t')\|_{H^1\times L^2}\;dt,$$ we deduce that for $t$ sufficiently large
\begin{equation}\label{ineq_Z_E}
\|Z(t)\|_{H^1\times L^2}\le C\left(\sup_{t'\ge t}\|E(t')\|_{H^1\times L^2}+\int_t^{+\infty}\|E(t')\|_{H^1\times L^2}\;dt'+\sup_{t'\ge t}\sum_{\pm,i}|\alpha_{\pm,i}(t')|\right).
\end{equation}

Now, we replace \eqref{ineq_alpha_+,i} in \eqref{ineq_E} and use \eqref{ineq_Z_E}. We notice that the following well-defined quantities tend to 0 as $t\to +\infty$ (because $\alpha>3$ and by the choice of $\lambda<\alpha-1$):
\begin{align*}
\int_t^{+\infty}t'^{\lambda-1}\sup_{t''\ge t'}\|Z(t'')\|_{H^1\times L^2}^2\;dt',\qquad &\int_{t}^{+\infty}\sup_{t''\ge t'}\|Z(t'')\|_{H^1\times L^2}\;dt'\\
\int_t^{+\infty}t'^{\lambda}\|Z(t')\|_{H^1\times L^2}\;dt',\qquad&\int_t^{+\infty}\frac{1}{u^{\frac{\lambda}{2}}}\left(\int_u^{+\infty}t'^{\lambda-1}\sup_{t''\ge t'}\|Z(t'')\|_{H^1\times L^2}^2\;dt'\right)^{\frac{1}{2}}\;du\\
\int_t^{+\infty}\frac{1}{u^{\frac{\lambda}{2}}}\left(\int_u^{+\infty}t'^{\lambda}\|Z(t')\|_{H^1\times L^2}\;dt'\right)^{\frac{1}{2}}\;du.
\end{align*}
We then obtain for $t$ sufficiently large
$$\|Z(t)\|_{H^1\times L^2}\le C\sup_{t'\ge t}\sum_{i=1}^N|\alpha_{-,i}(t')|.$$
\end{proof}

\begin{proposition}\label{prop_alpha_pm,i}
We have for all $t\ge t_0$, for all $i=1,\dots,N$,
\begin{equation}
|\alpha_{-,i}(t)|\le Ce^{-e_1t}.
\end{equation}
\end{proposition}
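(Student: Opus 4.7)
The plan is to combine the linear ODE estimate from Lemma \ref{lem_control_Z_pm_i} with the bound $\|Z(t)\|_{H^1\times L^2}\le Ch(t)$ from Proposition \ref{prop_Z_alpha_-,i}, where $h(t):=\sup_{s\ge t}\sum_{i=1}^N|\alpha_{-,i}(s)|$. Substituting the latter into the former gives $|\alpha'_{-,i}(t)+e_i\alpha_{-,i}(t)|\le C(e^{-4\sigma t}h(t)+h(t)^2)$, so Duhamel's formula yields, for every $s\ge t\ge t_0$ and every $i$:
\begin{equation*}
|\alpha_{-,i}(s)|\le e^{-e_i(s-t)}|\alpha_{-,i}(t)|+C\int_t^s e^{-e_i(s-\tau)}\bigl(e^{-4\sigma\tau}h(\tau)+h(\tau)^2\bigr)\,d\tau.
\end{equation*}

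The first step is to show that $h$ decays exponentially at some (a priori suboptimal) rate. Using $e^{-e_i(s-t)}\le e^{-e_1(s-t)}\le e^{-e_1}$ for $s\ge t+1$ and summing over $i$, I obtain
\begin{equation*}
h(t+1)\le e^{-e_1}h(t)+NC\int_t^{+\infty}\bigl(e^{-4\sigma\tau}h(\tau)+h(\tau)^2\bigr)\,d\tau.
\end{equation*}
Since $h$ is decreasing, $\int_t^{+\infty}e^{-4\sigma\tau}h(\tau)\,d\tau\le Ch(t)e^{-4\sigma t}$ and $\int_t^{+\infty}h(\tau)^2\,d\tau\le h(t)\int_t^{+\infty}h(\tau)\,d\tau$. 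The latter integral is finite and tends to $0$ as $t\to+\infty$, thanks to the standing polynomial decay hypothesis $\alpha>3$. Hence $h(t+1)\le(e^{-e_1}+\varepsilon(t))h(t)$ with $\varepsilon(t)\to 0$. Choosing $t_0$ large enough, iteration over unit intervals yields $h(t)\le Ce^{-\mu t}$ for some $\mu=\mu(t_0)>0$ that can be made arbitrarily close to $e_1$ by taking $t_0$ larger still.

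The second step is to upgrade this rate to the sharp $e^{-e_1 t}$. Since $\sigma\le e_1/16$, we may assume $\mu>\max(e_1-4\sigma,e_1/2)$, which ensures $\nu:=\min(4\sigma+\mu,2\mu)>e_1$. Then the integrand in the Duhamel formula satisfies $e^{-4\sigma\tau}h(\tau)+h(\tau)^2\le Ce^{-\nu\tau}$, and a direct computation gives
\begin{equation*}
|\alpha_{-,i}(t)|\le Ce^{-e_i(t-t_0)}+C\int_{t_0}^t e^{-e_i(t-s)}e^{-\nu s}\,ds\le Ce^{-\min(e_i,\nu)t}\le Ce^{-e_1 t}
\end{equation*}
for every $i\in\{1,\dots,N\}$, as required.

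The main obstacle lies in the first step: a naive polynomial bootstrap based on the quadratic term $h^2$ only doubles the polynomial decay rate at each iteration and never produces an exponential one. The crucial ingredient is the tail integral $\int_t^{+\infty}h(\tau)\,d\tau\to 0$, which combines with the contraction factor $e^{-e_1}<1$ arising from the unit-interval Duhamel to yield a genuine geometric decay; this is precisely where the integrability condition ensured by $\alpha>3$ plays its role. Once any positive exponential rate is obtained, the upgrade to the sharp rate $e^{-e_1 t}$ is essentially cost-free thanks to the definition of $\sigma$.
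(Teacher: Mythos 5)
Your proof is correct, but it takes a genuinely different route from the paper's. The paper forms the scalar quantity $\mathcal{A}:=\sum_{j}\alpha_{-,j}^2$, derives $|\mathcal{A}'(t)+2e_1\mathcal{A}(t)|\le C\xi(t)\sup_{t'\ge t}\mathcal{A}(t')$ with $\xi$ integrable (again via the polynomial decay of $\|Z\|$), and then invokes the appendix Lemma \ref{lem_gen_diff}, a Gr\"onwall-type statement tailored to an integrable coefficient multiplying a forward supremum, which delivers $\mathcal{A}(t)\le Ce^{-2e_1t}$, hence the sharp rate, in one application. You instead work componentwise with $h(t)=\sup_{s\ge t}\sum_i|\alpha_{-,i}(s)|$ and split the argument in two: a unit-time-step contraction $h(t+1)\le(e^{-e_1}+\varepsilon(t))h(t)$, whose iteration yields a suboptimal exponential rate $\mu<e_1$ that can be pushed arbitrarily close to $e_1$, followed by a Duhamel bootstrap in which the source term, now $\mathrm{O}(e^{-\nu t})$ with $\nu>e_1$, no longer obstructs the homogeneous rate. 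Your version is self-contained (it effectively re-proves the content of Lemma \ref{lem_gen_diff} in situ) at the cost of being longer; the paper's is shorter once the appendix lemma is available and is reusable elsewhere in Section 4. Both hinge on the same essential input, the integrability of the tail of $\|Z\|$ guaranteed by the hypothesis $\alpha>3$. Two harmless imprecisions worth noting: the chain $\int_{t_0}^te^{-e_i(t-s)}e^{-\nu s}\,ds\le Ce^{-\min(e_i,\nu)t}$ acquires a factor $(t-t_0)$ in the degenerate case $e_i=\nu$, but since then $e_i=\nu>e_1$ the final bound $Ce^{-e_1t}$ survives; and the conclusion is first obtained for $t$ larger than an enlarged $t_0$, which is repaired on the initial compact interval by enlarging $C$.
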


\begin{proof}
From Lemma \ref{lem_control_Z_pm_i} and Proposition \ref{prop_Z_alpha_-,i}, it results that for all $i=1,\dots,N$, for all $t\ge t_0$,
\begin{equation}
|\alpha_{-,i}'(t)+ e_i\alpha_{-,i}(t)|\le C\left( e^{-\sigma t}\sup_{t'\ge t}\sum_{j=1}^N|\alpha_{-,j}(t')|+\left(\sup_{t'\ge t}\sum_{j=1}^N|\alpha_{-,j}(t')|\right)^2\right).
\end{equation}
Then, for all $i=1,\dots,N$,
\begin{equation}
|\alpha_{-,i}'(t)\alpha_{-,i}(t)+ e_i\alpha_{-,i}^2(t)|\le C\left( e^{-\sigma t}\sup_{t'\ge t}\sum_{j=1}^N|\alpha_{-,j}(t')||\alpha_{-,i}(t)|+\sup_{t'\ge t}\left(\sum_{j=1}^N|\alpha_{-,j}(t')|\right)^2|\alpha_{-,i}(t)|\right).
\end{equation}
Let us denote $\mathcal{A}:=\sum_{j=1}^N\alpha_{-,j}^2$. Summing on $i=1,\dots,N$, we have in particular:
$$|\mathcal{A}'(t)+2e_1\mathcal{A}(t)|\le Ce^{-\sigma t}\left(\sup_{t'\ge t}\sum_{j=1}^N|\alpha_{-,j}(t')|\right)\sum_{i=1}^N|\alpha_{-,i}(t)|+C\left(\sup_{t'\ge t}\sum_{j=1}^N|\alpha_{-,j}(t')|\right)^2\sum_{i=1}^N|\alpha_{-,i}(t)|.$$
Noticing that $\left(\sum_{j=1}^N|\alpha_{-,j}|\right)^2\le C\mathcal{A}$, we obtain the existence of $c>0$ such that for all $t\ge t_0$,
$$|\mathcal{A}'(t)+2e_1\mathcal{A}(t)|\le C\left(e^{-\sigma t}+\mathcal{A}(t)^{\frac{1}{2}}\right)\sup_{t'\ge t}\mathcal{A}(t').$$

Lastly we observe that $\xi:t\mapsto e^{-\sigma t}+\mathcal{A}(t)^{\frac{1}{2}}$ is integrable on $[t_0,+\infty)$ since $$\mathcal{A}(t)^{\frac{1}{2}}=\mathrm{O}\left(\|Z(t)\|_{H^1\times L^2}^{\frac{1}{2}}\right)=\mathrm{O}\left(\frac{1}{t^{\frac{\alpha}{2}}}\right).$$
By Lemma \ref{lem_gen_diff} in Appendix, we obtain $\mathcal{A}(t)\le Ce^{-2e_1t}$ for $t\ge t_0$. Consequently, for all $i=1,\dots,N$,
$$|\alpha_{-,i}(t)|\le Ce^{-e_1t}.$$
\end{proof}

Gathering Propositions \ref{prop_Z_alpha_-,i} and \ref{prop_alpha_pm,i}, we deduce 

\begin{proposition}\label{prop_improve_Z}
There exists $C\ge 0$ such that for $t$ sufficiently large, $$\|Z(t)\|_{H^1\times L^2}\le Ce^{-e_1t}.$$  
\end{proposition}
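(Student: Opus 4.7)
The plan is to combine the two preceding propositions directly, since this is essentially a chaining of bounds. From Proposition \ref{prop_alpha_pm,i}, for each $i \in \{1,\dots,N\}$ and every $t \ge t_0$ one has $|\alpha_{-,i}(t)| \le C e^{-e_1 t}$. Because $t \mapsto e^{-e_1 t}$ is decreasing, taking the supremum preserves the same bound: for every $t \ge t_0$,
\[
\sup_{t' \ge t} |\alpha_{-,i}(t')| \;\le\; C \sup_{t' \ge t} e^{-e_1 t'} \;=\; C e^{-e_1 t}.
\]
Summing over the finite index set $\{1,\dots,N\}$ then yields $\sup_{t' \ge t} \sum_{i=1}^N |\alpha_{-,i}(t')| \le C N e^{-e_1 t}$, which I can reabsorb into the constant $C$.

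Next I would invoke Proposition \ref{prop_Z_alpha_-,i}, which provides the control
\[
\|Z(t)\|_{H^1\times L^2} \;\le\; C \sup_{t' \ge t} \sum_{i=1}^N |\alpha_{-,i}(t')|,
\]
valid for all $t \ge t_0$ (possibly after enlarging $t_0$). Plugging in the estimate from the previous paragraph immediately gives $\|Z(t)\|_{H^1 \times L^2} \le C e^{-e_1 t}$ for all $t \ge t_0$, which is exactly the claim.

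There is no real obstacle here: both constituent propositions have been established, the decreasing character of the exponential makes the passage from the pointwise bound on $\alpha_{-,i}$ to the bound on its tail supremum trivial, and the finiteness of the index set $N$ allows the sum over $i$ to be absorbed into the constant. The only thing to keep in mind is that the constant $C$ in the final bound depends on $N$ and on the constants appearing in Propositions \ref{prop_Z_alpha_-,i} and \ref{prop_alpha_pm,i}, but not on $t$, which is consistent with the conventions fixed at the end of the introduction.
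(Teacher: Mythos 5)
Your argument is exactly the paper's: the result is obtained by chaining Proposition \ref{prop_alpha_pm,i} (pointwise exponential bound on the $\alpha_{-,i}$, which passes to the tail supremum since $e^{-e_1 t}$ is decreasing) into Proposition \ref{prop_Z_alpha_-,i}. The proof is correct and matches the paper, which states this combination without further elaboration.
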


\subsection{Identification of the solution}

Recall that we have constructed in Section \ref{sect_const_mult} a family of multi-solitons $(\phi_{A_1,\dots,A_N})$ such that for all $j=1,\dots,N$, for all $t\ge t_0$,
\begin{equation}
\left\|\Phi_{A_1,\dots,A_j}(t)-\Phi_{A_1,\dots,A_{j-1}}(t)-A_je^{-e_jt}Y_{+,j}(t)\right\|_{H^1\times L^2}\le Ce^{-(e_j+\sigma)t}.
\end{equation}
(We can always assume that $\sigma<\min\left\{e_1,\min_{j=2,\dots,N}\{e_j-e_{j-1}\}\right\}$).

Following the strategy of Combet \cite{combetgKdV}, our goal is to establish

\begin{proposition}\label{prop_dec_E_j}
For all $j=1,\dots,N$, there exist $C\ge 0$, $t_0\ge 0$, and $A_1,\dots,A_j\in\RR$ such that, defining
$$E_j:=U-\Phi_{A_1,\dots,A_j},$$ we have:
\begin{equation}
\|E_j(t)\|_{H^1\times L^2}\le Ce^{-e_jt}.
\end{equation}
Moreover, denoting $\alpha_{\pm,j,k}:=\langle E_j,Z_{\pm,k}\rangle$ for all $k=1,\dots,N$, we have
\begin{equation}
\forall\;k\in\{1,\dots,j\},\qquad e^{e_kt}\alpha_{-,j,k}(t)\to 0,\quad\text{as } t\to+\infty.
\end{equation}
\end{proposition}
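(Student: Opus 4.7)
The plan is to prove Proposition \ref{prop_dec_E_j} by induction on $j \in \{1,\dots,N\}$, identifying each $A_j$ as the limit of a suitably rescaled oscillation coefficient extracted from $E_{j-1}$.

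For the base case $j=1$, I start from the exponential decay $\|Z(t)\|_{H^1\times L^2} \le Ce^{-e_1 t}$ given by Proposition \ref{prop_improve_Z}, where $Z = U - \Phi$. The ODE of Lemma \ref{lem_control_Z_pm_i} for $\alpha_{-,1}(t) = \langle Z(t),Z_{-,1}(t)\rangle$ yields $|(e^{e_1 t}\alpha_{-,1}(t))'| \le C(e^{-4\sigma t} + e^{-e_1 t})$, which is integrable at infinity; hence $e^{e_1 t}\alpha_{-,1}(t)$ admits a finite limit, which I call $A_1$. Setting $E_1 := U - \Phi_{A_1}$ and using the Section \ref{sect_const_mult} expansion $\Phi_{A_1} - \Phi = A_1 e^{-e_1 t}Y_{+,1} + \mathrm{O}(e^{-(e_1+\sigma)t})$ in $H^1\times L^2$, the triangle inequality gives $\|E_1(t)\|_{H^1\times L^2} \le Ce^{-e_1 t}$. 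The orthogonality $\langle Y_{+,1},Z_{-,1}\rangle = 1$ from Proposition \ref{prop_spectral}, combined with the interaction bounds of Proposition \ref{prop_interaction}, shows that $\alpha_{-,1,1}(t) = \alpha_{-,1}(t) - A_1 e^{-e_1 t} + \mathrm{O}(e^{-(e_1+\sigma)t})$, so that $e^{e_1 t}\alpha_{-,1,1}(t) \to 0$.

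For the inductive step, assume the conclusion at step $j$ and work with $E_j = U - \Phi_{A_1,\dots,A_j}$. I re-run the entire machinery of Subsection \ref{subsec_exp_conv} with $Z$ replaced by $E_j$: the modulation Lemma \ref{lem_mod_E}, the ODE control of $\alpha_{\pm,j,k}$ (analog of Lemma \ref{lem_control_Z_pm_i}), the Lyapunov functional, and the almost-monotonicity of Corollary \ref{cor_monotonie_F_E} carry over, since $E_j$ is of the same form and its decay is stronger than the polynomial rate assumed there. This yields the analog of Proposition \ref{prop_Z_alpha_-,i}, namely $\|E_j(t)\|_{H^1\times L^2} \le C\sup_{t'\ge t}\sum_k |\alpha_{-,j,k}(t')|$. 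The induction hypothesis forces $e^{e_k t}\alpha_{-,j,k}(t) \to 0$ for $k\le j$, so these modes contribute only $o(e^{-e_k t})$; for $k > j$, the ODE $|\alpha'_{-,j,k} + e_k \alpha_{-,j,k}| \le C(e^{-4\sigma t}\|E_j\| + \|E_j\|^2)$ combined with Lemma \ref{lem_gen_diff} (in the same manner as the proof of Proposition \ref{prop_alpha_pm,i}) upgrades the rate of $\|E_j\|$ from $e^{-e_j t}$ to $Ce^{-e_{j+1}t}$, since $e_{j+1}$ is now the smallest exponent effectively driving $E_j$.

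Once $\|E_j(t)\|_{H^1\times L^2} \le Ce^{-e_{j+1}t}$ is established, the same argument as in the base case applied to $\alpha_{-,j,j+1}$ yields $|(e^{e_{j+1}t}\alpha_{-,j,j+1})'| \le C(e^{-(e_j+4\sigma-e_{j+1})t} + e^{-(e_{j+1})t})$, which is integrable thanks to the choice of $\sigma$ in \eqref{def_sigma} relative to the gaps between the $e_k$. I define $A_{j+1}$ as the limit of $e^{e_{j+1}t}\alpha_{-,j,j+1}(t)$ and set $E_{j+1} := U - \Phi_{A_1,\dots,A_{j+1}}$. The expansion $\Phi_{A_1,\dots,A_{j+1}} - \Phi_{A_1,\dots,A_j} = A_{j+1}e^{-e_{j+1}t}Y_{+,j+1} + \mathrm{O}(e^{-(e_{j+1}+\sigma)t})$ from Section \ref{sect_const_mult}, together with $\langle Y_{+,j+1},Z_{-,j+1}\rangle=1$ and the interaction bounds of Proposition \ref{prop_interaction}, give both $\|E_{j+1}\|_{H^1\times L^2} \le Ce^{-e_{j+1}t}$ and $e^{e_{j+1}t}\alpha_{-,j+1,j+1}(t)\to 0$; the convergences $e^{e_k t}\alpha_{-,j+1,k}(t) \to 0$ for $k\le j$ are inherited from the induction hypothesis up to cross-terms of order $e^{-\sigma t}$. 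The main obstacle is precisely the bootstrap that upgrades $\|E_j\|$ from $e^{-e_j t}$ to $e^{-e_{j+1}t}$: the coercivity-plus-monotonicity framework does not by itself separate the contributions of different eigenvalues, and one must carefully feed the vanishing of the rescaled $\alpha_{-,j,k}$ for $k\le j$ into the supremum estimate while simultaneously iterating on the ODEs for the remaining unstable modes.
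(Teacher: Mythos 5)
Your overall architecture (induction on $j$, extraction of $A_j$ as the limit of $e^{e_j t}\langle E_{j-1},Z_{-,j}\rangle$, triangle inequality with the expansion of $\Phi_{A_1,\dots,A_j}-\Phi_{A_1,\dots,A_{j-1}}$ from Section \ref{sect_const_mult}) is the same as the paper's, and your base case $j=1$ is complete and correct. The inductive step, however, contains a genuine gap at exactly the point you yourself flag as ``the main obstacle'': the upgrade of $\|E_j\|_{H^1\times L^2}$ from $\mathrm{O}(e^{-e_j t})$ to $\mathrm{O}(e^{-e_{j+1}t})$. Your proposed tool, Lemma \ref{lem_gen_diff} applied ``in the same manner as Proposition \ref{prop_alpha_pm,i}'', cannot deliver this: that lemma produces the decay rate governed by the \emph{smallest} exponent present in the system, and the supremum estimate $\|E_j\|\le C\sup_{t'\ge t}\sum_k|\alpha_{-,j,k}(t')|$ still involves the slow modes $k\le j$, for which the induction hypothesis only gives the qualitative information $e^{e_k t}\alpha_{-,j,k}(t)\to 0$, i.e.\ $o(e^{-e_k t})$, which is strictly weaker than $\mathrm{O}(e^{-e_{j+1}t})$. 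Saying that $e_{j+1}$ is ``the smallest exponent effectively driving $E_j$'' assumes precisely what has to be proved.

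The paper closes this gap by a finite bootstrap that improves the decay exponent by $\sigma$ at each pass: assuming $\|E_j(t)\|\le Ce^{-\sigma_0 t}$ with $e_j\le\sigma_0<e_{j+1}-\sigma$, one integrates $\bigl|(e^{e_k t}\alpha_{-,j,k})'\bigr|\le Ce^{(e_k-\sigma-\sigma_0)t}$ \emph{from $t$ to $+\infty$} for $k\le j$, using the vanishing limit from the induction hypothesis as the anchor, to get $|\alpha_{-,j,k}(t)|\le Ce^{-(\sigma+\sigma_0)t}$; for $k\ge j+1$ one integrates from $t_0$ to $t$ and uses $\sigma+\sigma_0<e_k$ to reach the same bound. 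Re-inserting into the supremum estimate gives $\|E_j(t)\|\le Ce^{-(\sigma+\sigma_0)t}$, and finitely many iterations land $\sigma_0$ in $(e_{j+1}-\sigma,e_{j+1})$, at which point the $k=j+1$ mode has an integrable derivative of $e^{e_{j+1}t}\alpha_{-,j,j+1}$ (defining $A_{j+1}$) while all other modes are $\mathrm{O}(e^{-e_{j+1}t})$. This quantitative use of the limits as integration anchors at $+\infty$, repeated in $\sigma$-increments, is the missing ingredient in your argument; once it is supplied, the rest of your inductive step (definition of $E_{j+1}$ and verification of the two conclusions) goes through as you describe.
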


\begin{proof}
We proceed by induction on $j$. First, we focus on the case where $j=1$.\\
We have $\|Z(t)\|_{H^1\times L^2}\le Ce^{-e_1t}$ by Proposition \ref{prop_improve_Z}.
Thus, by Lemma \ref{lem_control_Z_pm_i} and given that $\sigma<e_1$,
 $$\left|(e^{e_1t}\alpha_{-,1})'\right|\le Ce^{-\sigma t}.$$ Since $t\mapsto e^{-\sigma t}$ is integrable in $+\infty$, there exists $A_1\in\RR$ such that
$$e^{e_1t}\alpha_{-,1}(t)\to A_1,\qquad\text{as } t\to +\infty.$$

\noindent We then define $E_1:= U-\Phi_{A_1}$. We notice that $E_1=E+(\Phi-\Phi_{A_1})$ so that
\begin{align*}
\|E_1(t)\|_{H^1\times L^2}&\le \|E(t)\|_{H^1\times L^2}+\|\Phi(t)-\Phi_{A_1}(t)\|_{H^1\times L^2}\\
&\le Ce^{-e_1t}+\|\Phi_{A_1}(t)-\Phi(t)-A_1e^{-e_1t}Y_{+,1}(t)\|_{H^1\times L^2}+\|A_1e^{-e_1t}Y_{+,1}(t)\|_{H^1\times L^2}\\
&\le Ce^{-e_1t}.
\end{align*}
Moreover
\begin{align*}
\alpha_{-,1,1}=&\;\langle E,Z_{-,1}\rangle+\langle \Phi-\Phi_{A_1}-A_1e^{-e_1t}Y_{+,1},Z_{-,1}\rangle+A_1e^{-e_1t}\langle Y_{+,1},Z_{-,1}\rangle\\
=&-\alpha_{-,1}+A_1e^{-e_1t}+\mathrm{O}\left(e^{-(e_1+\sigma)t}\right)\\
=&\;\mathrm{o}\left(e^{-e_1t}\right),
\end{align*} the last line resulting from the definition of $A_1$.\\
Thus Proposition \ref{prop_dec_E_j} is true for $j=1$.\\

We now assume that there exist $A_1,\dots,A_{j-1}\in\RR$ such that $\|E_{j-1}(t)\|_{H^1\times L^2}\le Ce^{-e_{j-1}t}$ and for all $k=1,\dots,j-1$, $e^{e_kt}\alpha_{-,j-1,k}(t)\to 0$ as $t\to+\infty$. \\
Let us show 
\begin{Claim}
We have $$\|E_{j-1}(t)\|_{H^1\times L^2}\le Ce^{-e_jt}.$$
\end{Claim}
\begin{itemize}
\item
To prove this claim, we show that, if $\|E_{j-1}(t)\|\le Ce^{-\sigma_0t}$ with $e_{j-1}<\sigma_0<e_j-\sigma$, then
$$\|E_{j-1}(t)\|_{H^1\times L^2}\le Ce^{-(\sigma+\sigma_0)t}.$$

As 
\begin{align*}
\left|\alpha_{\pm,j-1,k}'(t)\mp e_k\alpha_{\pm,j-1,k}(t)|\right|&\le C\left(e^{-\sigma t}\|E_{j-1}(t)\|_{H^1\times L^2}+\|E_{j-1}(t)\|_{H^1\times L^2}^2\right)\\
&\le Ce^{-(\sigma+\sigma_0)t}
\end{align*}
(by the same calculations and arguments as those developed in the proof of Lemma \ref{lem_control_Z_pm_i}), we have
for all $k=1,\dots,j-1$,
$$\left|(e^{e_kt}\alpha_{-,j-1,k})'\right|\le Ce^{-(\sigma+\sigma_0-e_k)t}.$$
Since $t\mapsto e^{-(\sigma+\sigma_0-e_k)t}$ is integrable in the neighborhood of $+\infty$ (since $e_k\le e_{j-1}$), and by assumption, $e^{e_kt}\alpha_{-,j-1,k}(t)\to 0$ as $t\to+\infty$, we have by integration
$$ |\alpha_{-,j-1,k}(t)|\le Ce^{-(\sigma+\sigma_0)t}.$$
For all $k=j,\dots,N$, we have $\sigma+\sigma_0-e_k\le \sigma+\sigma_0-e_j<0$, thus by integration on $[t_0,t]$, we obtain
$$\left|e^{e_kt}\alpha_{-,j-1,k}(t)-e^{e_kt_0}\alpha_{-,j-1,k}(t_0)\right|\le Ce^{(e_k-\sigma_0-\sigma)t}.$$
Eventually, we obtain (by a "cut-and-paste" of the argument exposed in subsection \ref{subsec_exp_conv})
$$\|E_{j-1}(t)\|_{H^1\times L^2}\le C\sup_{t'\ge t}\sum_{k=1}^N|\alpha_{-,j-1,k}(t')|\le Ce^{-(\sigma_0+\sigma)t},$$ which is what was expected.
\item Now, from the preceding induction, there exists $\tilde{\sigma}_0\in\left(e_j-\sigma,e_j\right)$ such that
$$\|E_{j-1}(t)\|_{H^1 \times L^2}\le Ce^{-\tilde{\sigma}_0t},$$ from which we deduce
$$\left|(e^{e_kt}\alpha_{-,j-1,k})'\right|\le Ce^{-(\sigma+\tilde{\sigma}_0-e_k)t}.$$

Now, for $k\in\{1,\dots,j-1\}$, $e_k-\sigma-\tilde{\sigma}_0\le e_{j-1}-\sigma-e_j<0$, we thus have
$$|\alpha_{-,j-1,k}(t)|\le Ce^{-(\sigma_0+\tilde{\sigma})t}\le Ce^{-e_jt}.$$
For $k=j$, we have $|(e^{e_jt}\alpha_{-,j-1,j})'|\le Ce^{(e_j-\tilde{\sigma}_0-\sigma)t}.$ Thus, there exists $A_j\in\RR$ such that
$$e^{e_jt}\alpha_{-,j-1,j}(t)\to A_j,\quad\text{as } t\to+\infty.$$

\noindent For $k\in\{j+1,\dots,N\}$, we have $\sigma+\tilde{\sigma}_0-e_k<\sigma+e_{j}-e_k<0$, thus by integration
\begin{align*}
|\alpha_{-,j-1,k}(t)|&\le Ce^{-e_kt}+Ce^{-(\tilde{\gamma}_0+\gamma)t}\\
&\le Ce^{-e_jt}.
\end{align*}
Hence, $$\|E_{j-1}(t)\|_{H^1 \times L^2}\le C\sup_{t'\ge t}\sum_{k=1}^N|\alpha_{-,j-1,k}(t')|\le Ce^{-e_jt}.$$
\end{itemize}
Let us conclude the proof of Proposition \ref{prop_dec_E_j}. We define at this stage
$E_j:=U-\Phi_{A_1,\dots,A_j}$. We immediately have
$$E_j(t)=E_{j-1}(t)+\Phi_{A_1,\dots,A_{j-1}}(t)-\Phi_{A_1,\dots,A_j}(t).$$
Then,
\begin{align*}
\|E_{j}(t)\|_{H^1\times L^2}\le & \;\|E_{j-1}(t)\|_{H^1\times L^2}+\|\Phi_{A_1,\dots,A_j}(t)-\Phi_{A_1,\dots,A_{j-1}}(t)-A_je^{-e_jt}Y_{+,j}(t)\|_{H^1\times L^2}\\
&+\|A_je^{-e_jt}Y_{+,j}(t)\|_{H^1\times L^2}\\
\le &\; Ce^{-e_jt}.
\end{align*}
What is more,
\begin{align*}
\alpha_{-,j,k}(t)&=\langle E_j(t),Z_{-,k}(t)\rangle\\
&=\alpha_{-,j-1,k}(t)-A_je^{-e_jt}\langle Y_{+,j},Z_{-,k}\rangle+\mathrm{O}\left(e^{-(e_j+\sigma)t}\right).
\end{align*}
For $k=1,\dots,j-1$, we have:
\begin{align*}
e^{e_kt}|\alpha_{-,j,k}(t)| &\le Ce^{e_kt}|\alpha_{-,j-1,k}(t)|+\mathrm{O}\left(e^{-(e_j-e_k+\sigma)t}\right).\\
&\le Ce^{(e_k-e_j)t}\underset{t\to+\infty}{\longrightarrow} 0.
\end{align*}
For $k=j$, $$e^{e_jt}\alpha_{-,j,j}(t)=e^{e_jt}\alpha_{-,j-1,j}(t)-A_j+\mathrm{O}(e^{-\sigma t})\underset{t\to+\infty}{\longrightarrow} 0.$$
This finishes the induction argument.
\end{proof}

Finally we obtain that $U=\Phi_{A_1,\dots,A_N}$ by means of

\begin{Cor}
For $t$ sufficiently large, $\|E_{N}(t)\|_{H^1\times L^2}=0$.
\end{Cor}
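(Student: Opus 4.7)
The plan is to iterate the improvement scheme from the proof of Proposition \ref{prop_dec_E_j} past the rate $e^{-e_N t}$. At stage $j = N$ there is no obstructing frequency $e_{N+1}$ above $e_N$, so nothing prevents pushing the exponential decay of $E_N$ as far as one likes; combined with a geometric control of the multiplicative constants, this will force $E_N(t) = 0$ for $t$ large.

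The starting data, provided by Proposition \ref{prop_dec_E_j}, is $\|E_N(t)\|_{H^1\times L^2} \le Ce^{-e_N t}$ for $t \ge t_0$ together with $e^{e_k t}\alpha_{-,N,k}(t) \to 0$ as $t \to +\infty$ for every $k = 1, \dots, N$. I will prove by induction on $n \ge 0$ the existence of $M_n > 0$ such that
\begin{equation}\label{plan_claim}
\forall\; t \ge t_0, \qquad \|E_N(t)\|_{H^1\times L^2} \le M_n e^{-(e_N + n\sigma)t},
\end{equation}
with $M_{n+1} \le K M_n$ for some constant $K$ independent of $n$. Assuming \eqref{plan_claim} at step $n$ with rate $\tau_n := e_N + n\sigma$, the analog of Lemma \ref{lem_control_Z_pm_i} applied with $E_N$ in place of $Z$ delivers $|\alpha_{\pm,N,k}'(t) \mp e_k \alpha_{\pm,N,k}(t)| \le C M_n e^{-(\sigma + \tau_n) t}$, the quadratic remainder $\|E_N\|^2$ being absorbed by the linear term thanks to $\tau_n \ge e_N > \sigma$. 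Since $e_k \le e_N \le \tau_n$ and each $e^{e_k t} \alpha_{-,N,k}(t)$ is now exponentially small (so still tends to $0$), integrating $(e^{e_k t}\alpha_{-,N,k})'$ on $[t,+\infty)$ gives $|\alpha_{-,N,k}(t)| \le C' M_n e^{-(\sigma + \tau_n) t}$ for every $k$. The argument of Proposition \ref{prop_Z_alpha_-,i}, rerun with $E_N$ replacing $Z$ (the integrability assumptions it uses are trivially satisfied under exponential decay), then upgrades this to $\|E_N(t)\|_{H^1\times L^2} \le K M_n e^{-(\sigma + \tau_n) t}$, which is exactly \eqref{plan_claim} at step $n+1$.

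Once \eqref{plan_claim} is established with $M_n \le K^n M_0$, fix $t_* \ge t_0$ large enough that $K e^{-\sigma t_*} < 1$. For every $n$,
\begin{equation*}
\|E_N(t_*)\|_{H^1\times L^2} \le M_0\, e^{-e_N t_*} (K e^{-\sigma t_*})^n,
\end{equation*}
and letting $n \to +\infty$ forces $E_N(t_*) = 0$. The same argument at every $t \ge t_*$ gives $\|E_N(t)\|_{H^1\times L^2} = 0$ for all sufficiently large $t$, which is the claim. The main obstacle is verifying that the constant $K$ emerging from the linearized estimates and from the coercivity step is genuinely independent of the current rate $\tau_n$; this is why the threshold $\tau_n \ge e_N$ must be preserved at every stage, as it is what makes the absorption of the quadratic remainder uniform in $n$.
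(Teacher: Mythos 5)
Your route is genuinely different from the paper's, and it contains a gap exactly at the point you flag as ``the main obstacle''. The recursion $M_{n+1}\le K M_n$ is asserted but not closed: when you feed $\|E_N(t)\|\le M_ne^{-\tau_n t}$ into the differential inequality for $\alpha_{\pm,N,k}$, the quadratic remainder contributes $C M_n^2 e^{-2\tau_n t}=CM_n e^{-(\sigma+\tau_n)t}\cdot\bigl(M_n e^{-(\tau_n-\sigma)t}\bigr)$, and similarly the cubic term in the coercivity step (the analogue of \eqref{ineq_E}) contributes a coefficient $\propto M_n^{3/2}$. These are subordinate to $KM_ne^{-(\sigma+\tau_n)t}$ only if $M_n e^{-(\tau_n-\sigma)t_0}$ stays bounded along the induction, which in turn requires $K e^{-\sigma t_0}\le 1$ --- i.e.\ you must first extract $K$ from the purely linear estimates and only then enlarge $t_0$, a joint induction your proposal gestures at but does not carry out. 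Saying that ``$\tau_n\ge e_N$ makes the absorption uniform in $n$'' is not enough: the exponent improves with $n$, but the constant $M_n^2$ grows, and it is the balance between the two that must be checked. The scheme can be salvaged with this bookkeeping, but as written the key claim \eqref{plan_claim} is not proved.

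The paper avoids the infinite iteration entirely. Setting $S(t):=\sup_{t'\ge t}\|E_N(t')\|_{H^1\times L^2}$, it integrates $|\alpha_{-,N,i}'+e_i\alpha_{-,N,i}|\le C\bigl(e^{-\sigma t}\|E_N\|+\|E_N\|^2\bigr)$ backwards from $+\infty$ (using $e^{e_it}\alpha_{-,N,i}(t)\to 0$, which is where the induction of Proposition \ref{prop_dec_E_j} is really used) to get $|\alpha_{-,N,i}(t)|\le C\bigl(e^{-\sigma t}S(t)+S(t)^2\bigr)$, and then combines this with $\|E_N(t)\|\le C\sup_{t'\ge t}\sum_i|\alpha_{-,N,i}(t')|$ to obtain the single closed inequality $S(t)\le C e^{-\sigma t}S(t)+CS(t)^2$. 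For $t$ large the linear term is absorbed, leaving $S(t)\le 2CS(t)^2$, so either $S(t)=0$ or $S(t)\ge \tfrac{1}{2C}$; since $S(t)\to 0$, the former holds. This one-shot quadratic absorption is both shorter and free of the uniformity issue that your iteration must confront; I recommend adopting it, or at least using it as the final step in place of the limit $n\to+\infty$.
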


\begin{proof}
As in the preceding proofs, the following bounds hold:
\begin{equation}\label{eq_aux_1'}
\|E_N(t)\|_{H^1\times L^2}\le C\sup_{t'\ge t}\sum_{i=1}^N|\alpha_{-,N,i}(t')|
\end{equation}
and
\begin{equation}\label{eq_aux}
|\alpha_{-,N,i}'(t)+e_i\alpha_{-,N,i}(t)|\le C\left(e^{-\sigma t}\|E_N(t)\|_{H^1\times L^2}+\|E_N(t)\|_{H^1\times L^2}^2\right).
\end{equation}
We observe that $t\mapsto e^{e_it}\left(e^{-\sigma t}\|E_N(t)\|_{H^1\times L^2}+\|E_N(t)\|_{H^1\times L^2}^2\right)$ is integrable on $[t_0,+\infty)$; this is due to the fact that $\|E_N(t)\|_{H^1\times L^2} \le Ce^{-e_Nt}$. Since for all $i=1,\dots,N$, $e^{e_it}\alpha_{-,N,i}(t)\to 0$ as $t\to+\infty$, we obtain by integration of \eqref{eq_aux} on $[t,+\infty)$:
$$|\alpha_{-,N,i}(t)|\le C\left(e^{-\sigma t}\sup_{t'\ge t}\|E_N(t')\|_{H^1\times L^2}+\sup_{t'\ge t}\|E_N(t')\|_{H^1\times L^2}^2\right).$$
Then, using \eqref{eq_aux_1'}, we obtain 
$$\sup_{t'\ge t}\|E_N(t')\|_{H^1\times L^2}\le C\left(e^{-\sigma t}\sup_{t'\ge t}\|E_N(t')\|_{H^1\times L^2}+\sup_{t'\ge t}\|E_N(t')\|_{H^1\times L^2}^2\right).$$
This implies that $\|E_N(t)\|_{H^1\times L^2}=0$ for $t$ sufficiently large.
\end{proof}

\section{Construction of a one-parameter family of solutions converging to a soliton}\label{sect_constr_one_par}

The goal of this section is to prove the existence part in Theorem \ref{th_main}. Once again we restrict our focus to $d=1$.

\subsection{Outline of the construction}

Let $A\in\RR$.  \\

Let $(S_n)_{n\in\NN}$ be an increasing sequence of real numbers which tends to $+\infty$ and, for all $n\in\NN$, define $u_n$ as the maximal solution of \eqref{NLKG} such that 
\begin{equation}\label{def_u_n}
U_n(S_n)=R_\beta(S_n)+Ae^{-e_\beta S_n}Y_{+,\beta}(S_n),
\end{equation} with obvious notations.

We aim at proving the following key proposition:

\begin{proposition}\label{prop_est_uniform}
There exist $t_0\ge 0$ and $C_0\ge 0$ such that for $n$ large,
\begin{equation}
\forall\;t\in[t_0,S_n],\qquad \left\|U_n(t)-R_\beta(t)-Ae^{-e_\beta t}Y_{+,\beta}(t)\right\|_{H^1\times L^2}\le C_0e^{-2e_\beta t}.
\end{equation}
\end{proposition}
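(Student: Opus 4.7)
The plan is to run a backward bootstrap argument on $[t_0, S_n]$ for the difference
\begin{equation*}
W_n(t) := U_n(t) - R_\beta(t) - A e^{-e_\beta t} Y_{+,\beta}(t),
\end{equation*}
and then conclude the existence of $U^A$ via a compactness/weak-continuity-of-the-flow argument exactly as in the proof of Proposition \ref{prop_main}. By the choice \eqref{def_u_n} of terminal data, $W_n(S_n) = 0$ identically, which is a crucial simplification compared with Proposition \ref{prop_main_seq}: no modulation of the final data and no topological argument controlling unstable directions are needed, because the ``instability coefficient'' $\alpha_+(S_n) := \langle W_n(S_n), Z_{+,\beta}(S_n)\rangle$ already vanishes. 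I would introduce the bootstrap time
\begin{equation*}
T_n := \inf\bigl\{t \in [t_0, S_n] :\; \|W_n(s)\|_{H^1 \times L^2} \le 2 C_0\, e^{-2 e_\beta s}\ \text{for all }s \in [t, S_n]\bigr\}
\end{equation*}
and show that the stronger bound $\|W_n(t)\|_{H^1\times L^2} \le C_0\, e^{-2 e_\beta t}$ holds on $[T_n, S_n]$, which by continuity of the flow forces $T_n = t_0$.

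The evolution equation for $W_n$ has the schematic form $\partial_t W_n = \mathcal{L}_\beta W_n + \dbinom{0}{g_n} + Ae^{-e_\beta t} \mathcal{R}$, where $\mathcal{L}_\beta$ is the linearization of \eqref{NLKG'} around $R_\beta$, the source $g_n := f(u_n) - f(Q_\beta) - f'(Q_\beta)(u_n - Q_\beta)$ is $\mathrm{O}(\|u_n - Q_\beta\|_{L^\infty}^2) = \mathrm{O}(\|W_n\|_{H^1\times L^2}^2 + e^{-2 e_\beta t})$ by $f \in \mathscr{C}^2$ and the Sobolev embedding, and $\mathcal{R}$ encodes the defect of $Y_{+,\beta}$ as an exact eigenfunction of $\mathcal{L}_\beta$. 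Setting $\alpha_\pm(t) := \langle W_n(t), Z_{\pm,\beta}(t)\rangle$, a computation parallel to Claim \ref{claim_alpha} using $\mathscr{H}_\beta Z_{\pm,\beta} = \pm e_\beta Z_{\pm,\beta}$ and $\partial_t Z_{\pm,\beta}(t) = -\beta\,\partial_x Z_{\pm,\beta}(t)$ yields
\begin{equation*}
\bigl|\alpha_\pm'(t) \mp e_\beta\, \alpha_\pm(t)\bigr| \le C\bigl(e^{-2 e_\beta t} + \|W_n(t)\|_{H^1 \times L^2}^2\bigr).
\end{equation*}
Since $\alpha_\pm(S_n) = 0$, integrating both ODEs backward from $S_n$ and inserting the bootstrap bound yields $|\alpha_\pm(t)| \le C_1\, e^{-2 e_\beta t}$ on $[T_n, S_n]$, with $C_1$ independent of $C_0$ for $t_0$ large.

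To handle the kernel direction $\partial_x R_\beta$, I introduce the modulated variable $\tilde W_n := W_n + a(t)\,\partial_x R_\beta$ with $a$ uniquely chosen so that $\langle \tilde W_n, \partial_x R_\beta\rangle = 0$; differentiating this relation produces $|a(t)| \le C\|W_n(t)\|_{H^1\times L^2}$ and an analogue of Claim \ref{claim_mod_par} bounds $|a'(t)|$. I then consider the Lyapunov functional $\mathcal{F}(t) := \langle H_\beta\, \tilde W_n(t), \tilde W_n(t)\rangle$. Because only one soliton is present, no cutoff localization is required and Proposition \ref{prop_coercivity} applies directly, giving
\begin{equation*}
\mu\,\|\tilde W_n(t)\|_{H^1\times L^2}^2 \le \mathcal{F}(t) + \tfrac{1}{\mu}\bigl(\alpha_+(t)^2 + \alpha_-(t)^2\bigr).
\end{equation*}
A direct computation---using that $H_\beta$ is self-adjoint, that $\partial_t H_\beta$ is bounded by exponentially localized terms, and that $\mathscr{H}_\beta = -H_\beta J$---yields the cleaner estimate
\begin{equation*}
|\mathcal{F}'(t)| \le C\bigl(e^{-2 e_\beta t} \|W_n(t)\|_{H^1 \times L^2} + \|W_n(t)\|_{H^1 \times L^2}^3\bigr),
\end{equation*}
without the $1/\sqrt{t}$ loss appearing in the multi-soliton bound \eqref{der_F_W}. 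Integrating backward from $\mathcal{F}(S_n) = 0$ and combining with the bounds on $\alpha_\pm$ and on $a$ delivers $\|W_n(t)\|_{H^1\times L^2}^2 \le C_2\, e^{-4 e_\beta t}$ on $[T_n, S_n]$; choosing $C_0 = \sqrt{C_2} + 1$ strictly improves the bootstrap.

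The main obstacle is the self-consistent choice of constants: the energy estimate produces a cubic term $\|W_n\|_{H^1\times L^2}^3$ which is acceptable only if $C_0$ and $t_0$ are balanced correctly, and the $\alpha_\pm$ bound presupposes the exponential decay that is to be proved. The standard resolution is to fix first the constants from the linear estimates, then $C_0$ from the energy inequality, and finally to choose $t_0$ large enough that the cubic and cross terms become strictly lower order. The vanishing of $W_n$ at the final time $S_n$ and the absence of soliton--soliton interactions make this a routine bootstrap, in sharp contrast with the multi-soliton case which required the Brouwer-type argument of Section \ref{subsect_topological_arg}.
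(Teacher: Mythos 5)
Your argument is correct, but it takes a genuinely different route from the paper's. The paper proves the stronger bootstrap statement of Proposition \ref{prop_boot}: assuming only the smallness $\|W_n\|_{H^1\times L^2}\le\alpha_0$, it derives the $e^{-2e_\beta t}$ decay. Since exponential decay of $\|W_n\|$ cannot be assumed there, the paper cannot simply integrate the $\alpha_{\pm,\beta}$ equations with a known right-hand side; instead it (i) modulates by a spatial translation $x(t)$ of the soliton rather than additively, (ii) controls the Lyapunov functional $\mathcal{F}_W=\langle\tilde H_\beta W,W\rangle$ \emph{without differentiating it}, via conservation of energy and momentum (Lemma \ref{lem_control_F}), (iii) closes a self-consistent differential system for $(\alpha_{+,\beta},\alpha_{-,\beta})$ through the comparison-function argument of Proposition \ref{prop_est_expo} (first $|\alpha_{+,\beta}|\le M(\alpha_{-,\beta}^2+e^{-e_\beta t})$, then Gr\"onwall), which only yields the intermediate rate $e^{-\frac{3}{2}e_\beta t}$, and (iv) only then differentiates $\mathcal{F}_W$ to upgrade to $e^{-2e_\beta t}$ (Proposition \ref{prop_est_W_improved}). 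Your direct bootstrap at the target rate short-circuits (ii)--(iii) by feeding the bootstrap bound into the right-hand sides of the $\alpha_\pm$ equations and of $\mathcal{F}'$ and integrating backward from the vanishing terminal data; this is legitimate and shorter. Two caveats are needed for the constants to close: the derivative bound should read $|\mathcal{F}'(t)|\le C(e^{-2e_\beta t}\|W\|_{H^1\times L^2}+e^{-e_\beta t}\|W\|_{H^1\times L^2}^2+\|W\|_{H^1\times L^2}^3)$ --- the middle term, which you omit, comes from the cross term $2Ae^{-e_\beta t}(Y_{+,\beta})_1w_1$ in the quadratic Taylor remainder of $f$ around $Q_\beta$, but it is harmless since under the bootstrap it is bounded by $4C_0^2e^{-e_\beta t_0}e^{-4e_\beta t}$; and the modulation estimate must give $|a'(t)|\le C\|\tilde W_n(t)\|_{H^1\times L^2}+Ce^{-2e_\beta t}$ with $\tilde W_n$ (not $W_n$) on the right, which does hold because $\langle\partial_{tx}R_\beta,\partial_xR_\beta\rangle=0$ --- otherwise integrating $a'$ would return a constant proportional to $C_0$ and the bootstrap would not improve. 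What the paper's longer route buys is reusability: the weak-hypothesis version of the argument is precisely what is recycled in Section 6, where only $\|U(t)-R_\beta(t)\|_{H^1\times L^2}\to0$ is known and no exponential bootstrap hypothesis is available.
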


To this end, we will set up a bootstrap argument and show

\begin{proposition}\label{prop_boot}
There exist $\alpha_0>0$, $C_0>0$, and $t_0\ge 0$ such that for $n$ sufficiently large,\\
if there exists $t_n^*\in[t_0,S_n]$ such that for all $t\in [t_n^*,S_n]$, 
\begin{equation}\label{est_a_priori}
\|U_n(t)-R_\beta(t)-Ae^{-e_\beta t}Y_{+,\beta}(t)\|_{H^1\times L^2}\le \alpha_0,
\end{equation}
then for all $t\in [t_n^*,S_n]$, 
\begin{equation}\label{est_a_posteriori}
\|U_n(t)-R_\beta(t)-Ae^{-e_\beta t}Y_{+,\beta}(t)\|_{H^1\times L^2}\le C_0e^{-2e_\beta t}.
\end{equation}
\end{proposition}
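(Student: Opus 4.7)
The plan is to run a bootstrap argument in the spirit of Proposition \ref{prop_main_seq}, substantially simpler here because the terminal condition $U_n(S_n)=R_\beta(S_n)+Ae^{-e_\beta S_n}Y_{+,\beta}(S_n)$ directly gives $W_n(S_n)=0$ with $W_n(t):=U_n(t)-R_\beta(t)-Ae^{-e_\beta t}Y_{+,\beta}(t)$, so no Brouwer-type topological argument is required in this one-soliton setting. First I would subtract the equations satisfied by $U_n$ and $R_\beta$, Taylor-expand $f(u_n)-f(Q_\beta)-f'(Q_\beta)(u_n-Q_\beta)=\tfrac12 f''(\theta_n)(u_n-Q_\beta)^2$ (licit because of the a priori smallness $\|W_n\|_{H^1\times L^2}\le\alpha_0$ and the Sobolev embedding $H^1(\RR)\hookrightarrow L^\infty(\RR)$), and exploit the cancellation of the linear-in-$Ae^{-e_\beta t}$ terms furnished by the identities in Proposition \ref{prop_spectral} (the same cancellation as in the last lines of the proof of Claim \ref{claim_alpha}), to rewrite the equation for $W_n$ as $\partial_tW_n=\mathscr{H}_\beta W_n+\binom{0}{G_n}$ with
$$\|G_n(t)\|_{L^\infty}\le C\bigl(\|W_n(t)\|_{H^1\times L^2}^2+e^{-e_\beta t}\|W_n(t)\|_{H^1\times L^2}+e^{-2e_\beta t}\bigr).$$

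Next I would modulate: pick $a(t)\in\RR$ so that $\tilde W_n(t):=W_n(t)+a(t)\partial_xR_\beta(t)$ satisfies $\langle\tilde W_n(t),\partial_xR_\beta(t)\rangle=0$; the terminal condition yields $a(S_n)=0$, and differentiating the orthogonality as in Claim \ref{claim_mod_par} gives $|a'(t)|\le C\|\tilde W_n(t)\|_{H^1\times L^2}$. Setting $\alpha_\pm(t):=\langle W_n(t),Z_{\pm,\beta}(t)\rangle$ (so that $\alpha_\pm(S_n)=0$) and replaying the computation of Claim \ref{claim_alpha} — much simpler without soliton-interaction terms — produces
$$\bigl|\alpha_\pm'(t)\mp e_\beta\alpha_\pm(t)\bigr|\le C\bigl(\|W_n(t)\|_{H^1\times L^2}^2+e^{-e_\beta t}\|W_n(t)\|_{H^1\times L^2}+e^{-2e_\beta t}\bigr).$$
Finally, following Step 4 of Section 3.2.2 I would introduce the Lyapunov functional $\mathcal{F}_n(t):=\langle H_\beta\tilde W_n(t),\tilde W_n(t)\rangle$; by Proposition \ref{prop_coercivity} combined with the modulation orthogonality,
$$\mathcal{F}_n(t)\ge\mu\|\tilde W_n(t)\|_{H^1\times L^2}^2-C\bigl(\alpha_+^2(t)+\alpha_-^2(t)\bigr),$$
and an almost-conservation identity analogous to \eqref{der_F_W} holds — with no partition of unity $\phi_k$ needed in the one-soliton case, so no $\tfrac{1}{\sqrt t}\|W\|^2$ boundary term appears — yielding
$$|\mathcal{F}_n'(t)|\le C\bigl(\|W_n(t)\|_{H^1\times L^2}^3+e^{-e_\beta t}\|W_n(t)\|_{H^1\times L^2}^2+e^{-2e_\beta t}\|W_n(t)\|_{H^1\times L^2}\bigr).$$

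I would then close the bootstrap by means of the auxiliary quantity $M(t):=\sup_{s\in[t,S_n]}e^{2e_\beta s}\|W_n(s)\|_{H^1\times L^2}$, which is finite under the a priori bound $\|W_n\|\le\alpha_0$. Backward integration of the $\alpha_\pm$-ODEs from $\alpha_\pm(S_n)=0$, of the almost-conservation identity for $\mathcal{F}_n$ from $\mathcal{F}_n(S_n)=0$, and of $|a'|$ from $a(S_n)=0$, combined with the coercivity above and with the elementary pointwise bound $\|W_n(s)\|\le M(t)\,e^{-2e_\beta s}$ for $s\in[t,S_n]$, leads to a closing inequality of the form
$$M(t)^2\le C_*\bigl(1+M(t)+e^{-\sigma t_0}P(M(t))\bigr),\qquad t\in[t_n^*,S_n],$$
where $P$ is a polynomial of degree at most four. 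For $t_0$ chosen large enough that $C_*e^{-\sigma t_0}$ dominates no term of degree $2$ in $M$, a standard continuity-in-$t$ argument (starting from $M(S_n)=0$ and using the continuity of $t\mapsto M(t)$) forces $M(t)\le C_0$ throughout $[t_n^*,S_n]$, which is exactly \eqref{est_a_posteriori}.

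The main obstacle will be the backward treatment of the stable-in-forward-time mode $\alpha_-(t)$: reversing time turns it into an exponentially amplifying direction, so its backward integration cannot be closed from the crude a priori bound $\|W_n\|\le\alpha_0$ alone — the naive kernel $e^{e_\beta(s-t)}$ multiplied by a merely bounded error in $s$ blows up as $S_n\to+\infty$. The resolution is precisely to couple the ODE estimate with the Lyapunov–coercivity bound through the quantity $M(t)$, so that the nonlinear contributions $\|W_n\|^2$ and the cross term $e^{-e_\beta t}\|W_n\|$ always appear weighted either by $e^{-\sigma t_0}$ or by positive powers of $M(t)$, and can therefore be absorbed in the closing inequality once $t_0$ is taken large and $\alpha_0$ small.
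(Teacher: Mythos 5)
Your strategy is sound and, as far as I can check, it closes; but it is genuinely different from the paper's proof on the central step. The paper modulates by a translation parameter $x(t)$ rather than by projection onto $\partial_xR_\beta$, and it obtains the first exponential gain \emph{without} integrating the derivative of the Lyapunov functional: conservation of energy and momentum yield a pointwise identity $\mathcal{F}_W(t)=C_n+\mathrm{O}\left(\|W\|_{H^1\times L^2}^3+e^{-3e_\beta t}+e^{-e_\beta t}|\alpha_{+,\beta}|\right)$, the constant $C_n$ being fixed by $W(S_n)=0$; the resulting coupled system for $\alpha_{\pm,\beta}$ is then closed by the Combet-type comparison function $h=\alpha_{+,\beta}-M\alpha_{-,\beta}^2-\tilde{M}e^{-e_\beta t}$ and a Grönwall argument, giving first $\|W\|_{H^1\times L^2}\le Ce^{-\frac32e_\beta t}$ and only then, in a second pass, using the derivative of $\mathcal{F}_W$ to reach $e^{-2e_\beta t}$. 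Your single-pass scheme (backward integration of the $\alpha_\pm$ equations, of $|\mathcal{F}_n'|$ and of $|a'|$ from the vanishing terminal data, all integrals made convergent by the weighted supremum $M(t)=\sup_{s\in[t,S_n]}e^{2e_\beta s}\|W_n(s)\|_{H^1\times L^2}$) avoids both the conservation-law identity and the $h$-function argument; writing $\|W\|^3\le\alpha_0\|W\|^2$, the nonlinear contributions enter the closing inequality with prefactors $\alpha_0$ or $e^{-ce_\beta t_0}$, so one ends with $M\le C_*(\sqrt{M}+1)$ after absorption and the final continuity argument is barely needed. What the paper's heavier route buys is reusability: in the uniqueness part of Theorem \ref{th_main} there is no terminal time at which the error vanishes, so backward integration from zero data is unavailable there, while the conservation-law control of the functional and the $h$-function argument transfer verbatim. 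Your argument is specific to the construction, where $W_n(S_n)=0$ is prescribed.

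One inaccuracy should be repaired. The equation for $W_n$ does \emph{not} reduce to $\partial_tW_n=\mathscr{H}_\beta W_n+\binom{0}{G_n}$ with $\|G_n\|\le C(\|W_n\|^2+e^{-e_\beta t}\|W_n\|+e^{-2e_\beta t})$: a residual source $Ae^{-e_\beta t}\left(JH_\beta Y_{+,\beta}+e_\beta Y_{+,\beta}\right)=Ae^{-e_\beta t}\left(JZ_{+,\beta}+e_\beta Y_{+,\beta}\right)$, of size $e^{-e_\beta t}$, remains in the equation (compare \eqref{eq_W}). The cancellation you invoke from Proposition \ref{prop_spectral} occurs not in the equation but in the tested quantities: this source is orthogonal to $Z_{\pm,\beta}$ because $\langle JH_\beta Y_{+,\beta},Z_{\pm,\beta}\rangle=\langle Y_{+,\beta},\mathscr{H}_\beta Z_{\pm,\beta}\rangle=\pm e_\beta\langle Y_{+,\beta},Z_{\pm,\beta}\rangle$ together with $\langle Y_{+,\beta},Z_{+,\beta}\rangle=0$ and $\langle Y_{+,\beta},Z_{-,\beta}\rangle=1$, and it is annihilated in the pairing against $H_\beta\tilde{W}_n$ since $\langle H_\beta\tilde{W}_n,JZ_{+,\beta}\rangle=-e_\beta\langle\tilde{W}_n,Z_{+,\beta}\rangle$. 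So every subsequent estimate you state is correct, but the reduction of the equation itself must not be asserted. Relatedly, in $\frac{d}{dt}\langle H_\beta\tilde{W}_n,\tilde{W}_n\rangle$ the modulation terms $a'\partial_xR_\beta$ and $a\,\partial_{tx}R_\beta$ need an explicit check: the first is killed by $H_\beta\partial_xR_\beta=0$, and the second cancels exactly against the term $a\beta\partial_x^2R_\beta$ produced when $-\beta\partial_xW_n$ is rewritten in terms of $\tilde{W}_n$. This is worth writing out, since it is precisely where the linear modulation could a priori have introduced an unabsorbable $\mathrm{O}(\|W_n\|^2)$ term into $\mathcal{F}_n'$ and ruined the cubic structure your integration relies on.
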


Let us show how to deduce Proposition \ref{prop_est_uniform} from Proposition \ref{prop_boot}. 

\begin{proof}[Proof of Proposition \ref{prop_est_uniform}]
Assume momentarily that Proposition \ref{prop_boot} holds true. Let us consider $\alpha_0$ and $C_0$ as in Proposition \ref{prop_boot} and suppose (even if it means enlarging $t_0$) that $C_0e^{-2e_\beta t_0}\le \frac{\alpha_0}{2}$. \\
We define for all $n$ such that $S_n>t_0$:
$$t^*_n:=\inf\left\{t\in[t_0,S_n],\:\forall\;\tau\in[t,S_n],\:\left\|U_n(\tau)-R_\beta(\tau)-Ae^{-e_\beta \tau}Y_{+,\beta}(\tau)\right\|_{H^1\times L^2}\le \alpha_0\right\}.$$
By \eqref{def_u_n} and by continuity in time of $U_n$, $R_\beta$, and $Y_{+,\beta}$, $t^*_n$ is indeed well-defined and we necessarily have $t_0\le t^*_n<S_n$. \\
Since \eqref{est_a_priori} implies \eqref{est_a_posteriori}, for all $t\in[t^*_n,S_n]$, 
\begin{align*}
\left\|U_n(t)-R_\beta(t)-Ae^{-e_\beta t}Y_{+,\beta}(t)\right\|_{H^1\times L^2}&\le C_0e^{-2e_\beta t}\\
&\le C_0e^{-2e_\beta t_0}\\
&\le \frac{\alpha_0}{2}.
\end{align*}
Let us assume for the sake of contradiction that $t^*_n>t_0$ for some $n$. Then, observing the preceding inequality, we obtain (again by continuity in time of $U_n$, $R_\beta$, and $Y_{+,\beta}$) the existence  of $\tau_n>0$ such that $t^*_n-\tau_n\ge t_0$ and for all $t\in[t^*_n-\tau_n,S_n]$, 
$$\left\|U_n(t)-R_\beta(t)-Ae^{-e_\beta t}Y_{+,\beta}(t)\right\|_{H^1\times L^2}\le \frac{3\alpha_0}{4}<\alpha_0. $$
This contradicts the definition of $t^*_n$ as an infimum. Hence $t^*_n=t_0$ and \eqref{est_a_priori} (and thus \eqref{est_a_posteriori}) holds on $[t_0,S_n]$ for all $n$. \\
This achieves the proof of Proposition \ref{prop_est_uniform}.
\end{proof}

The existence of $u^A$ (and $U^A$), as stated in Theorem \ref{th_main} is a consequence of Proposition \ref{prop_est_uniform} and the continuity of the flow of \eqref{NLKG} for the weak $H^1\times L^2$ topology. We will not detail the construction of $U^A$ considering that it is a sort of cut and paste of what was done in order to prove Proposition \ref{prop_main} in the context of multiple solitons. \\

Similarly, we do not repeat the arguments exposed at the beginning of section \ref{sect_const_mult} which justify that the map $A\mapsto  u^A$ is one-to-one. We devote the next subsection to the proof of Proposition \ref{prop_boot}. 
%We explicit the construction of $U^A$ below, following the same strategy as \cite[paragraph 2.2, step 2]{cmm} or \cite[section 4]{cotemunoz} for instance.

%\begin{proof}
%We observe that the sequence $(\|U_n(t_0)\|_{H^1\times L^2})_{n\in\NN}$ is bounded; thus there exist a subsequence of $(U_n(t_0))_{n\in\NN}$, say $(U_{n_k}(t_0))_{k\in\NN}$, and $U^A_0\in H^1(\RR)\times L^2(\RR)$ such that $(U_{n_k}(t_0))_{k\in\NN}$ converges to $U^A_0$ in the sense of the weak topology in $H^1(\RR)\times L^2(\RR)$. Let us consider $U^A$, defined as the maximal solution of \eqref{NLKG} such that $U^A(t_0)=U^A_0$. \\  
%Let $t\ge t_0$. For $k$ sufficiently large, $S_{n_k}\ge t$ and thus $U_{n_k}$ is defined on $[t_0,t]$. By a standard result (we refer to \cite[Lemma 10]{cotemunoz} and \cite[Theorem 1.2]{tao}), $U^A$ is defined on $[t_0,t]$ and $(U_{n_k}(t))_k$ converges weakly to $U^A(t)$ in $H^1(\RR)\times L^2(\RR)$. \\
%Moreover, by property of the weak limit, 
%\begin{align*}
%\|U^A(t)-R_\beta(t)-Ae^{-e_\beta t}Y_{+,\beta}(t)\|_{H^1\times L^2}&\le \liminf_{k\to+\infty}\|U_{n_k}(t)-R_\beta(t)-Ae^{-e_\beta t}Y_{+,\beta}(t)\|_{H^1\times L^2}\\
%&\le C_0e^{-2e_\beta t}.
%\end{align*}
%Hence, $U^A$ is defined on $[t_0,+\infty)$ and for all $t\ge t_0$,
%$$\|U^A(t)-R_\beta(t)-Ae^{-e_\beta t}Y_{+,\beta}(t)\|_{H^1\times L^2}\le C_0e^{-2e_\beta t}.$$
%\end{proof}

\subsection{Proof of Proposition \ref{prop_boot}} 
We assume that $U_n(t)$ is defined on some interval $[t_n^*,S_n]$ and satisfies \eqref{est_a_priori}. We want to show that \eqref{est_a_posteriori} holds, provided that the parameters $\alpha_0$ and $t_0$ are well chosen. \\

In this subsection again, for notation purposes and ease of reading, we sometimes omit the index $n$ and also write $\mathrm{O}\left(G(t)\right)$ in order to refer to a function $g$ which a priori depends on $n$ and such that there exists $C\ge 0$ (independent of $n$) such that for all $n$ large and for all $t\in[t_n^*,S_n]$, $|g(t)|\le C|G(t)|$.

\subsubsection{Step 1: Set up of a modulation argument}

\begin{Lem}\label{lem_modulation}
For $t_0\ge 0$ sufficiently large and $\alpha_0>0$ sufficiently small, there exists a unique $\mathscr{C}^1$ function $x:[t_n^*,S_n]\to\RR$ such that if we set 
$$W_n(t):=U_n(t)-\tilde{R}_\beta(t)-Ae^{-e_\beta t}\tilde{Y}_{+,\beta}(t),$$
with $\tilde{R}_\beta(t):=R_\beta(t,\cdot-x(t))$ and $\tilde{Y}_{+,\beta}(t):=Y_{+,\beta}(t,\cdot-x(t))$,
then for all $t\in[t_n^*,S_n]$,
\begin{equation}\label{prop_ortho}
\left\langle W_n(t),\partial_x\tilde{R}_\beta(t)\right\rangle=0.
\end{equation}
Moreover there exists $K_1>0$ such that for all $t\in[t_n^*,S_n]$, 
\begin{equation}
\|W_n(t)\|_{H^1\times L^2}+|x(t)|\leq K_1\alpha_0,
\end{equation}
\begin{equation}\label{est_x_dot}
|\dot{x}(t)|\leq K_1\left(\|W_n(t)\|_{H^1\times L^2}+e^{-2e_\beta t}\right).
\end{equation}
\end{Lem}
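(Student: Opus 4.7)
The proof is a classical modulation argument via the implicit function theorem, run backward from the terminal time $S_n$ where the orthogonality holds trivially. Define the auxiliary scalar functional
\[
\mathcal{G}(t, y) := \left\langle U_n(t) - R_\beta(t, \cdot - y) - A e^{-e_\beta t} Y_{+,\beta}(t, \cdot - y),\; \partial_x R_\beta(t, \cdot - y) \right\rangle,
\]
so that the orthogonality \eqref{prop_ortho} is equivalent to $\mathcal{G}(t, x(t)) = 0$. The initialization \eqref{def_u_n} ensures $\mathcal{G}(S_n, 0) = 0$. Differentiating in $y$, using $\partial_y[f(\cdot - y)] = -\partial_x f(\cdot - y)$, and using the a priori bound \eqref{est_a_priori} to control the correction terms, one obtains
\[
\partial_y \mathcal{G}(t, y) = \|\partial_x R_\beta\|_{H^1 \times L^2}^2 + \mathrm{O}(\alpha_0) + \mathrm{O}(e^{-e_\beta t}),
\]
so that, for $\alpha_0$ small and $t_0$ large enough, $\partial_y \mathcal{G}(t, y)$ is bounded below by a positive constant uniformly in $t \in [t_n^*, S_n]$ and for $|y|$ small.

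The implicit function theorem then provides a local $\mathscr{C}^1$ curve $x(t)$ with $\mathcal{G}(t, x(t)) = 0$. A standard continuation/bootstrap argument, starting from $x(S_n) = 0$ and using local uniqueness, extends $x$ globally to $[t_n^*, S_n]$. The bound $|x(t)| \leq K_1 \alpha_0$ follows from $|\mathcal{G}(t, 0)| \leq C \alpha_0$ (Cauchy-Schwarz and \eqref{est_a_priori}) together with the uniform lower bound on $\partial_y \mathcal{G}$; the estimate $\|W_n(t)\|_{H^1 \times L^2} \leq K_1 \alpha_0$ then follows from the triangle inequality and the Lipschitz bounds $\|R_\beta(t) - \tilde R_\beta(t)\|_{H^1 \times L^2} \leq C|x(t)|$ and $\|Y_{+,\beta}(t) - \tilde Y_{+,\beta}(t)\|_{H^1 \times L^2} \leq C|x(t)|$.

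For \eqref{est_x_dot}, differentiate the orthogonality condition $\langle W_n, \partial_x \tilde R_\beta\rangle = 0$ in time. Plugging in the equation \eqref{NLKG'} satisfied by $\partial_t U_n$, together with the shifted equations
\[
\partial_t \tilde R_\beta = \mathcal{A}\tilde R_\beta + \dbinom{0}{f(\tilde Q_\beta)} - \dot x(t)\, \partial_x \tilde R_\beta, \qquad \partial_t \tilde Y_{+,\beta} = (\partial_t Y_{+,\beta})(t, \cdot - x(t)) - \dot x(t)\, \partial_x \tilde Y_{+,\beta},
\]
and expanding $f(u_n)$ by Taylor around $f(\tilde Q_\beta)$, one isolates $\dot x(t)$ times the coefficient $\|\partial_x \tilde R_\beta\|_{H^1 \times L^2}^2 + \mathrm{O}(\alpha_0)$. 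The remaining inhomogeneity then splits into: a linear-in-$W_n$ piece bounded by $C\|W_n\|_{H^1 \times L^2}$; a quadratic nonlinear remainder of size $\mathrm{O}(\|W_n\|_{H^1 \times L^2}^2 + A^2 e^{-2 e_\beta t})$; and a contribution linear in $A e^{-e_\beta t}$ which cancels identically, thanks to the spectral content of Proposition~\ref{prop_spectral} — specifically, $\partial_x R_\beta$ is a kernel direction of the adjoint linearized operator, and the pairings $\langle Z_{+,\beta}, \partial_x Q_\beta\rangle$ and $\langle J Z_{0,\beta}, Z_{\pm,\beta}\rangle$ vanish. Solving for $\dot x(t)$ then yields \eqref{est_x_dot}.

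The main obstacle is precisely this last cancellation: checking that the linear-in-$A e^{-e_\beta t}$ contribution to $\dot x(t)$ vanishes, which requires careful bookkeeping with the spectral identities from Proposition~\ref{prop_spectral}. A naive bound would give only $|\dot x(t)| \lesssim \|W_n(t)\|_{H^1 \times L^2} + e^{-e_\beta t}$, which is too weak to close the bootstrap in Proposition~\ref{prop_boot} at the rate $e^{-2 e_\beta t}$; the improvement to $e^{-2 e_\beta t}$ is essential.
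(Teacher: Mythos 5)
Your proposal is correct and follows essentially the same route as the paper: the implicit function theorem (anchored at $x(S_n)=0$ via the terminal data \eqref{def_u_n}) for existence, uniqueness and the $\mathrm{O}(\alpha_0)$ bounds, followed by time-differentiation of the orthogonality relation to isolate $\dot x$. You also correctly identify the crucial point that the contribution of order $Ae^{-e_\beta t}$ must cancel exactly; in the paper this is precisely the pair of identities $\left\langle \tilde Y_{+,\beta},\tilde H_\beta J\partial_x\tilde R_\beta\right\rangle=0$ (kernel direction of $\mathscr{H}_\beta$) and $\langle Y_{+,\beta},\partial_x R_\beta\rangle=0$ from Proposition \ref{prop_spectral}, leaving only the $\mathrm{O}\left(\|W_n\|_{H^1\times L^2}+e^{-2e_\beta t}\right)$ remainder.
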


\begin{Rq}
Notice that by uniqueness of the function $x$ and by definition of $u_n$ (see \eqref{def_u_n}), we have $W_n(S_n)=0$ and $x(S_n)=0$.
\end{Rq}

\begin{proof}
The existence of $x$ such that \eqref{prop_ortho} is granted and the existence of $K_2>0$ such that
$$\|W_n(t)\|_{H^1\times L^2}+|x(t)|\leq K_2\alpha_0$$
are standard consequences of the implicit function theorem.\\
 Now, let us prove \eqref{est_x_dot}. For this, we notice that $W=W_n$ satisfies the following equation:
\begin{equation}\label{eq_W}
\begin{aligned}
\partial_tW=&\left(\begin{array}{cc}
0&Id\\
\partial_x^2-Id+f'(\tilde{Q}_\beta)&0
\end{array}\right)\left(W+Ae^{-e_\beta t}\tilde{Y}_{+,\beta}\right)+\dbinom{0}{f(u)-f(\tilde{Q}_\beta)-f'(\tilde{Q}_\beta)(u-\tilde{Q}_\beta)}\\
&+\dot{x}(t)\partial_x\tilde{R}_\beta-Ae^{-e_\beta t}\left(\partial_t\tilde{Y}_{+,\beta}-\dot{x}(t)\partial_x\tilde{Y}_{+,\beta}-e_\beta \tilde{Y}_{+,\beta}\right),
\end{aligned}
\end{equation}
where $\tilde{Q}_\beta(t,x)=Q_\beta(t,x-x(t))$.

Since $\frac{d}{dt}\langle W,\partial_x\tilde{R}_\beta\rangle=0$, we have:
$$\langle \partial_tW,\partial_x\tilde{R}_\beta\rangle+\langle W,\partial_{tx}\tilde{R}_\beta-\dot{x}\partial_x^2\tilde{R}_\beta\rangle=0.$$
Observing moreover that $$f(u)-f(\tilde{Q}_\beta)-f'(\tilde{Q}_\beta)(u-\tilde{Q}_\beta)=\mathrm{O}\left(\|U-\tilde{R}_\beta\|_{H^1\times L^2}^2\right)=\mathrm{O}\left(\|W(t)\|^2_{H^1\times L^2}+e^{-2e_\beta t}\right)$$ by Taylor formula ($f$ is $\mathscr{C}^2$), we have thus: 
\begin{align*}
0=&\left\langle \left(\begin{array}{cc}
0&Id\\
\partial_x^2-Id+f'(\tilde{Q}_\beta)&0
\end{array}\right)W,\partial_x\tilde{R}_\beta\right\rangle+\mathrm{O}\left(\|W(t)\|^2_{H^1\times L^2}+e^{-2e_\beta t}\right)+\dot{x}(t)\|\partial_xR_\beta\|^2_{H^1\times L^2}\\
&+Ae^{-e_\beta t}\left\langle (\beta+\dot{x})\partial_x\tilde{Y}_{+,\beta}+e_\beta\tilde{Y}_{+,\beta},\partial_x\tilde{R}_\beta\right\rangle-(\beta+\dot{x})\langle W,\partial_x^2\tilde{R}_\beta\rangle\\
=&\;\dot{x}\left(\|\partial_xR_\beta\|^2_{H^1\times L^2}+Ae^{-e_\beta t}\langle \partial_x\tilde{Y}_{+,\beta},\partial_x\tilde{R}_\beta\rangle-\langle W,\partial_x^2\tilde{R}_\beta\rangle\right)+\mathrm{O}\left(\|W(t)\|^2_{H^1\times L^2}+e^{-2e_\beta t}\right)\\
&+\left\langle\left(\begin{array}{cc}
\beta\partial_x&Id\\
\partial_x^2-Id+f'(\tilde{Q}_\beta)&\beta\partial_x
\end{array}\right)W,\partial_x\tilde{R}_\beta\right\rangle+Ae^{-e_\beta t}\left\langle\left(\begin{array}{cc}
\beta\partial_x&Id\\
\partial_x^2-Id+f'(\tilde{Q}_\beta)&\beta\partial_x
\end{array}\right)\tilde{Y}_{+,\beta},\partial_x\tilde{R}_\beta\right\rangle.
\end{align*}
Notice that we have used $\langle Y_{+,\beta},\partial_xR_\beta\rangle=0$ (see Proposition \ref{prop_spectral}). We now observe that 
$$\left(\begin{array}{cc}
\beta\partial_x&Id\\
\partial_x^2-Id+f'(\tilde{Q}_\beta)&\beta\partial_x
\end{array}\right)=-J\tilde{H}_\beta,$$ where $\tilde{H}_\beta$ is the matrix operator defined like $H_\beta$ by replacing $Q_\beta$ by $\tilde{Q}_\beta$, that is
\begin{equation}\label{def_H_beta}
\tilde{H}_\beta:=\left(\begin{array}{cc}
-\partial_x^2+Id-f'(\tilde{Q}_\beta)&-\beta\partial_x\\
\beta\partial_x&Id
\end{array}
\right).
\end{equation} We obtain:
$$
\left\langle\left(\begin{array}{cc}
\beta\partial_x&Id\\
\partial_x^2-Id+f'(\tilde{Q}_\beta)&\beta\partial_x
\end{array}\right)\tilde{Y}_{+,\beta},\partial_x\tilde{R}_\beta\right\rangle =\left\langle \tilde{Y}_{+,\beta},\tilde{H}_\beta J\partial_x\tilde{R}_\beta\right\rangle =\left\langle \tilde{Y}_{+,\beta},0\right\rangle =0.
$$
Finally we can choose $t$ large enough such that $$Ae^{-e_\beta t}\left|\langle \partial_x\tilde{Y}_{+,\beta},\partial_x\tilde{R}_\beta\rangle\right|\le \frac{1}{4}\|\partial_xR_\beta\|^2_{H^1\times L^2}$$ and we can take $\alpha_0>0$ sufficiently small such that 
$$|\langle W,\partial_x^2\tilde{R}_\beta\rangle|\le K_2\alpha_0\|\partial_x^2\tilde{R}_\beta\|_{H^1\times L^2}\le \frac{1}{4}\|\partial_xR_\beta\|^2_{H^1\times L^2}$$ 
and such that $K_2\alpha_0\le 1$ (then $\|W(t)\|^2_{H^1\times L^2}\le \|W(t)\|_{H^1\times L^2}$).
Consequently for $t$ large and $\alpha_0$ small, we have:
\begin{align*}
|\dot{x}(t)|&\le \frac{2}{\|\partial_xR_\beta\|^2_{H^1\times L^2}}\left|\left\langle W,\left(\begin{array}{cc}
\beta\partial_x&\partial_x^2-Id+f'(\tilde{Q}_\beta)\\
Id&\beta\partial_x
\end{array}\right)\partial_x\tilde{R}_\beta\right\rangle\right|+\mathrm{O}\left(\|W(t)\|_{H^1\times L^2}+e^{-2e_\beta t}\right)\\
&\le K_3\left(\|W(t)\|_{H^1\times L^2}+e^{-2e_\beta t}\right),
\end{align*} for some constant $K_3>0$.
Finally, take $K_1:=\max(K_2,K_3)$ to obtain Lemma \ref{lem_modulation}.
\end{proof}

\subsubsection{Step 2: Control of particular directions}

Let us denote 
\begin{equation}
\alpha_{\pm,\beta}(t):=\langle W(t),\tilde{Z}_{\pm,\beta}(t)\rangle,
\end{equation}
where $\tilde{Z}_{\pm,\beta}(t):=Z_{\pm,\beta}(t,\cdot-x(t))$.

\begin{Lem}\label{lem_der_alpha_pm}
We have for all $t\in [t_n^*,S_n]$,
$$\left|\frac{d\alpha_{+,\beta}}{dt}(t)-e_\beta\alpha_{+,\beta}(t)\right|+\left|\frac{d\alpha_{-,\beta}}{dt}(t)+e_\beta\alpha_{-,\beta}(t)\right|=\mathrm{O}\left(\|W(t)\|_{H^1\times L^2}^2+e^{-2e_\beta t}\right).$$
\end{Lem}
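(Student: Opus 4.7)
The plan is to mirror the proof of Claim \ref{claim_alpha} from the multi-soliton setting, adapted to the modulated single-soliton situation. Starting from $\alpha_{\pm,\beta}(t)=\langle W(t),\tilde Z_{\pm,\beta}(t)\rangle$, I would differentiate in time, substitute \eqref{eq_W} for $\partial_tW$, and use the identity $\partial_t\tilde Z_{\pm,\beta}=-(\beta+\dot x)\partial_x\tilde Z_{\pm,\beta}$ (which follows from $\tilde Z_{\pm,\beta}(t,x)=Z_{\pm,\beta}(t,x-x(t))$ together with $\partial_tZ_{\pm,\beta}=-\beta\partial_xZ_{\pm,\beta}$, the latter because $Z_{\pm,\beta}$ propagates at velocity $\beta$). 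After transferring the matrix operator of \eqref{eq_W} onto $\tilde Z_{\pm,\beta}$ by integration by parts and absorbing the $-\beta\partial_x$ contribution coming from $\partial_t\tilde Z_{\pm,\beta}$ into the diagonal, the operator that ends up acting on $\tilde Z_{\pm,\beta}$ is precisely the modulated analogue $\tilde{\mathscr H}_\beta:=-\tilde H_\beta J$ with $\tilde H_\beta$ defined in \eqref{def_H_beta}. By translation invariance of the spectral problem, the identity $\tilde{\mathscr H}_\beta\tilde Z_{\pm,\beta}=\pm e_\beta\tilde Z_{\pm,\beta}$ is inherited from Proposition \ref{prop_spectral}, producing the principal contribution $\pm e_\beta\alpha_{\pm,\beta}(t)$.

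It then remains to check that every other contribution is of size $O(\|W\|_{H^1\times L^2}^2+e^{-2e_\beta t})$. The nonlinear Taylor remainder $f(u)-f(\tilde Q_\beta)-f'(\tilde Q_\beta)(u-\tilde Q_\beta)$ is pointwise $O(|u-\tilde Q_\beta|^2)$ because $f\in\mathscr C^2$, so the Sobolev embedding $H^1(\RR)\hookrightarrow L^\infty$ together with $\|U-\tilde R_\beta\|_{H^1\times L^2}^2\le C(\|W\|_{H^1\times L^2}^2+A^2e^{-2e_\beta t})$ yields the desired bound. The two $\dot x$-residuals, namely $-\dot x\langle W,\partial_x\tilde Z_{\pm,\beta}\rangle$ coming from $\partial_t\tilde Z_{\pm,\beta}$ and a similar $Ae^{-e_\beta t}\dot x$ piece coming from the $\tilde Y_{+,\beta}$ block, are handled by Cauchy--Schwarz together with $|\dot x|\le K_1(\|W\|_{H^1\times L^2}+e^{-2e_\beta t})$ from Lemma \ref{lem_modulation}, and absorbed using the elementary inequalities $2ab\le a^2+b^2$ and $e^{-3e_\beta t}\le e^{-2e_\beta t}$ for $t$ large.

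The delicate cancellations concern the modulation term $\dot x\,\partial_x\tilde R_\beta$ and the leading $Ae^{-e_\beta t}$ piece of the $\tilde Y_{+,\beta}$ contribution. For the $\tilde Y_{+,\beta}$ piece, the same integration-by-parts maneuver as above collects the contribution into
$$Ae^{-e_\beta t}\bigl[(\pm e_\beta+e_\beta)\langle\tilde Y_{+,\beta},\tilde Z_{\pm,\beta}\rangle-2\dot x\,\langle\tilde Y_{+,\beta},\partial_x\tilde Z_{\pm,\beta}\rangle\bigr],$$
and the first bracket vanishes identically: for the $+$ sign by the orthogonality $\langle Y_{+,\beta},Z_{+,\beta}\rangle=0$ from Proposition \ref{prop_spectral}, and for the $-$ sign because the coefficient $-e_\beta+e_\beta$ equals zero (so the value $\langle Y_{+,\beta},Z_{-,\beta}\rangle=1$ plays no role here). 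For the modulation term, pairing with $\tilde Z_{\pm,\beta}$ and translating the integration variable by $x(t)$ reduces the contribution to $\dot x\,\langle\partial_x R_\beta,Z_{\pm,\beta}\rangle$, and this inner product should vanish as a consequence of the symplectic orthogonality $\langle JZ_{0,\beta},Z_{\pm,\beta}\rangle=0$ stated in Proposition \ref{prop_spectral} (identifying $\partial_xR_\beta$, up to a $J$ and a scalar, with the space-translation kernel vector attached to $R_\beta$). This last identification is the main obstacle of the proof, since it requires tracing carefully through the Lorentz rescaling embedded in $Q_\beta(x)=Q(\gamma x)$ and the precise convention of the boosted kernel directions; once it is settled, the whole modulation term produces $0$ and the stated estimate immediately follows.
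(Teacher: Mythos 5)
Your proposal is correct and follows essentially the same route as the paper: differentiate $\alpha_{\pm,\beta}$, use $\partial_t\tilde Z_{\pm,\beta}=-(\beta+\dot x)\partial_x\tilde Z_{\pm,\beta}$, substitute \eqref{eq_W}, extract the eigenvalue relation $\tilde{\mathscr H}_\beta\tilde Z_{\pm,\beta}=\pm e_\beta\tilde Z_{\pm,\beta}$, cancel the $Ae^{-e_\beta t}(\pm e_\beta+e_\beta)\langle\tilde Y_{+,\beta},\tilde Z_{\pm,\beta}\rangle$ term exactly as you describe, and absorb the $\dot x$ residuals via \eqref{est_x_dot} and $2ab\le a^2+b^2$. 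The one point you flag as delicate, $\langle\partial_x\tilde R_\beta,\tilde Z_{\pm,\beta}\rangle=0$, is precisely the orthogonality the paper also invokes from Proposition \ref{prop_spectral} (it already appears in the proof of Lemma \ref{lem_modulation} via $\tilde H_\beta J\partial_x\tilde R_\beta=0$), so no new argument is needed there.
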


\begin{proof}
Observe that 
\begin{align*}
\partial_t\tilde{Z}_{\pm,\beta}(t,x)&=-(\beta+\dot{x}(t))\partial_xZ_{\pm,\beta}(t,x-x(t))\\
&=-(\beta+\dot{x}(t))\partial_x\tilde{Z}_{\pm,\beta}(t,x).
\end{align*}
Thus
\begin{equation}\label{lem_alpha_aux_1}
\frac{d\alpha_{\pm,\beta}}{dt}=\langle \partial_tW,\tilde{Z}_{\pm,\beta}\rangle+\langle W,-\beta \partial_x\tilde{Z}_{\pm,\beta}\rangle + \langle W,-\dot{x}\partial_x\tilde{Z}_{\pm,\beta}\rangle.
\end{equation}
By \eqref{est_x_dot}, we obtain
\begin{equation}\label{lem_alpha_aux_2}
\langle W,-\dot{x}\partial_x\tilde{Z}_{\pm,\beta}\rangle=\mathrm{O}\left(\|W(t)\|_{H^1\times L^2}^2+e^{-2e_\beta t}\right).
\end{equation}
In addition, defining $\tilde{\mathscr{H}}_\beta$  like $\mathscr{H}_\beta$ by replacing $Q_\beta$ by $\tilde{Q}_\beta$, we have by \eqref{eq_W}
\begin{equation}\label{lem_alpha_aux_3}
\begin{aligned}
\langle \partial_tW,\tilde{Z}_{\pm,\beta}\rangle+\langle W,-\beta \partial_x\tilde{Z}_{\pm,\beta}\rangle =&\;\langle W,\tilde{\mathscr{H}}_\beta\tilde{Z}_{\pm,\beta}\rangle + \dot{x}\langle \partial_x\tilde{R}_\beta,\tilde{Z}_{\pm,\beta}\rangle\\
&+Ae^{-e_\beta t}\left\langle \tilde{Y}_{+,\beta},\tilde{\mathscr{H}}_\beta\tilde{Z}_{\pm,\beta}\right\rangle+\mathrm{O}\left(\|W(t)\|_{H^1\times L^2}^2+e^{-2e_\beta t}\right)\\
&+Ae^{-e_\beta t}\left(\dot{x}\langle \partial_x\tilde{Y}_{+,\beta},\tilde{Z}_{\pm,\beta}\rangle+e_\beta\langle \tilde{Y}_{+,\beta},\tilde{Z}_{\pm,\beta}\rangle\right).
\end{aligned}
\end{equation}

Since $$\langle \tilde{Y}_{+,\beta}, \tilde{Z}_{\pm,\beta}\rangle=\langle Y_{+,\beta}, Z_{\pm,\beta}\rangle=\begin{dcases}
1 & \text{if we take } \pm=- \\
0 & \text{if we take } \pm=+, 
\end{dcases}$$
we always obtain
\begin{equation}\label{lem_alpha_aux_4}
Ae^{-e_\beta t}\left\langle \tilde{Y}_{+,\beta},\tilde{\mathscr{H}}_\beta\tilde{Z}_{\pm,\beta}\right\rangle+Ae^{-e_\beta t}e_\beta\langle \tilde{Y}_{+,\beta},\tilde{Z}_{\pm,\beta}\rangle=0.
\end{equation}
We have in addition $$|e^{-e_\beta t}\dot{x}|\leq \frac{1}{2}\left(e^{-2e_\beta t}+\dot{x}^2\right)=\mathrm{O}\left(\|W(t)\|_{H^1\times L^2}^2+e^{-2e_\beta t}\right)$$
and $\langle \partial_x\tilde{R}_{\beta},\tilde{Z}_{+,\beta}\rangle$ by Proposition \ref{prop_spectral}.

Hence, gathering \eqref{lem_alpha_aux_1}, \eqref{lem_alpha_aux_2}, \eqref{lem_alpha_aux_3}, and \eqref{lem_alpha_aux_4}, we infer:
$$
\frac{d\alpha_{\pm,\beta}}{dt}=\pm e_\beta\langle W,\tilde{Z}_{\pm,\beta}\rangle+\mathrm{O}\left(\|W(t)\|_{H^1\times L^2}^2+e^{-2e_\beta t}\right),$$
which proves Lemma \ref{lem_der_alpha_pm}.
\end{proof}

\subsubsection{Step 3: Exponential control of $\|W_n\|_{H^1\times L^2}$}

Let us introduce the functional 
\begin{equation}\label{def_F_W}
\mathcal{F}_W(t):=\langle \tilde{H}_\beta W(t),W(t)\rangle.
\end{equation} (We recall that $\tilde{H}_\beta$ is defined in \eqref{def_H_beta}.)

\begin{Lem}[Control of $\mathcal{F}_W$]\label{lem_control_F}
There exists $C>0$ such that for $t_0$ sufficiently large, we have for all $n$ such that $S_n\ge t_0$, for all $t\in[t_n^*,S_n]$:
$$\left|\mathcal{F}_W(t)\right|\le C\left(\|W(t)\|^3_{H^1\times L^2}+e^{-3e_\beta t}+e^{-e_\beta t}|\alpha_{+,\beta}(t)|\right).$$
\end{Lem}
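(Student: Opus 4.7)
The plan is to exploit the conserved Lyapunov-like quantity
$\mathcal{L}[U] := E[U] + \beta P[U]$ along the flow of \eqref{NLKG'}, together with
the fact that $\tilde{R}_\beta$ is at every $t$ a critical point of $\mathcal{L}$.
A direct Euler--Lagrange computation---using the stationary equation
$Q''-Q+f(Q)=0$ and the identity $(1-\beta^{2})\gamma^{2}=1$---shows
$\mathcal{L}'[\tilde{R}_\beta(t)] = 0$, while translation invariance of $E$ and
$P$ makes $t\mapsto \mathcal{L}[\tilde{R}_\beta(t)]$ constant. Setting
$X := U - \tilde{R}_\beta = W + Ae^{-e_\beta t}\tilde{Y}_{+,\beta}$, Taylor's
formula at second order yields
\[
\mathcal{L}[U(t)] - \mathcal{L}[\tilde{R}_\beta(t)]
= \tfrac{1}{2}\langle \tilde{H}_\beta X, X\rangle + R(t),
\]
where $\mathcal{L}''[\tilde{R}_\beta]$ is readily identified with the operator
$\tilde{H}_\beta$ of \eqref{def_H_beta}, and the remainder satisfies
$|R(t)|\le C\|X\|_{H^{1}\times L^{2}}^{3}\le C(\|W(t)\|_{H^{1}\times L^{2}}^{3}+e^{-3e_\beta t})$
thanks to $f\in\mathscr{C}^{2}$ and the Sobolev embedding
$H^{1}(\RR)\hookrightarrow L^{\infty}(\RR)$ (the bootstrap assumption
\eqref{est_a_priori} keeps $u$ in a fixed bounded range).

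Next I expand the quadratic form in the decomposition of $X$:
\[
\langle \tilde{H}_\beta X, X\rangle
= \mathcal{F}_W(t)
+ 2Ae^{-e_\beta t}\langle \tilde{H}_\beta \tilde{Y}_{+,\beta}, W\rangle
+ A^{2}e^{-2e_\beta t}\langle \tilde{H}_\beta \tilde{Y}_{+,\beta}, \tilde{Y}_{+,\beta}\rangle.
\]
By Proposition \ref{prop_spectral} there exists $c\in\RR$ with
$\tilde{H}_\beta \tilde{Y}_{+,\beta}=c\,\tilde{Z}_{+,\beta}$, so the middle term
equals $2cAe^{-e_\beta t}\alpha_{+,\beta}(t)$, and the last term vanishes since
$\langle \tilde{Z}_{+,\beta},\tilde{Y}_{+,\beta}\rangle
=\langle Z_{+,\beta},Y_{+,\beta}\rangle = 0$.

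The crucial point is now that the left-hand side of the Taylor identity is
independent of $t$. Evaluating at the final time $t=S_n$, where
$W(S_n)=0$ forces $\mathcal{F}_W(S_n)=0$ and $\alpha_{+,\beta}(S_n)=0$, while
$X(S_n)=Ae^{-e_\beta S_n}\tilde{Y}_{+,\beta}(S_n)$ has norm of order
$e^{-e_\beta S_n}$, this constant reduces to $R(S_n) = \mathrm{O}(e^{-3e_\beta S_n})$.
Since $t\le S_n$, this is also $\mathrm{O}(e^{-3e_\beta t})$. Gathering everything,
\[
\mathcal{F}_W(t) = -2cAe^{-e_\beta t}\alpha_{+,\beta}(t) + 2R(S_n) - 2R(t),
\]
which gives exactly the claimed bound
\[
|\mathcal{F}_W(t)| \le C\bigl(\|W(t)\|_{H^{1}\times L^{2}}^{3}
+ e^{-3e_\beta t} + e^{-e_\beta t}|\alpha_{+,\beta}(t)|\bigr).
\]

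The only mildly delicate step is the cubic estimate on $R$: one writes
$\mathcal{L}[\tilde{R}_\beta + X] - \mathcal{L}[\tilde{R}_\beta]$ minus its
quadratic part as $-\int\bigl(F(\tilde{Q}_\beta+X_{1})-F(\tilde{Q}_\beta)
-f(\tilde{Q}_\beta)X_{1}-\tfrac{1}{2}f'(\tilde{Q}_\beta)X_{1}^{2}\bigr)\,dx$,
and bounds this by $C\|f''\|_{L^\infty}\|X_{1}\|_{L^\infty}\|X_{1}\|_{L^{2}}^{2}
\le C\|X\|_{H^{1}\times L^{2}}^{3}$ using Sobolev embedding in dimension one
and the fact that $\alpha_0$ small keeps all arguments of $f''$ in a uniform
bounded range. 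Everything else is a routine assembly of the identities above,
and no additional dynamical information on $\mathcal{F}_W$ is needed.
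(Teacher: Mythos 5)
Your proposal is correct and follows essentially the same route as the paper: conservation of $E+\beta P$ (your $\mathcal{L}$), the elliptic equation killing the linear term of the Taylor expansion around $\tilde{R}_\beta$, identification of the Hessian with $\tilde{H}_\beta$, the cubic bound on the remainder via $H^1(\RR)\hookrightarrow L^\infty(\RR)$, extraction of the cross term $e^{-e_\beta t}\alpha_{+,\beta}$ using $\tilde{H}_\beta\tilde{Y}_{+,\beta}\in\mathrm{Span}\{\tilde{Z}_{+,\beta}\}$ and $\langle Y_{+,\beta},Z_{+,\beta}\rangle=0$, and evaluation at $t=S_n$ (where $W(S_n)=0$) to bound the conserved constant by $Ce^{-3e_\beta S_n}$. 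The paper merely carries out the same computation in explicit integral form rather than packaging it as a Taylor expansion of a Lyapunov functional at a critical point.
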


\begin{proof}
We have
\begin{equation}\label{lem_aux_F_W_1}
\begin{aligned}
\mathcal{F}_W(t)&=\langle \tilde{H}_\beta(U-\tilde{R}_\beta),U-\tilde{R}_\beta\rangle -2Ae^{-e_\beta t}\langle \tilde{H}_\beta\tilde{Y}_{+,\beta},W\rangle\\
&=\langle \tilde{H}_\beta(U-\tilde{R}_\beta),U-\tilde{R}_\beta\rangle -2Ae^{-e_\beta t}\alpha_{+,\beta}.
\end{aligned}
\end{equation}
Notice that we have used $ \tilde{H}_\beta\tilde{Y}_{+,\beta}=\tilde{Z}_{+,\beta}$ and $\langle\tilde{Y}_{+,\beta},\tilde{Z}_{+,\beta}\rangle=0$. \\

Now let us focus on the quadratic term $\langle \tilde{H}_\beta(U-\tilde{R}_\beta),U-\tilde{R}_\beta\rangle$. This term rewrites $$\langle \tilde{H}_\beta(U-\tilde{R}_\beta),U-\tilde{R}_\beta\rangle=\int_\RR\left\{\varepsilon_2^2+(\partial_x\varepsilon_1)^2+\varepsilon_1^2-f'(\tilde{Q}_\beta)\varepsilon_1^2+2\beta\varepsilon_2\partial_x\varepsilon_1\right\}(t,x)\;dx,$$
where $\varepsilon_1$ and $\varepsilon_2$ are defined as follows:
 $$\varepsilon_1(t,x):=u(t,x)-Q_\beta(t,x-x(t)) \quad\text{and}\quad \varepsilon_2(t,x):=\partial_tu(t,x)-\partial_tQ_\beta(t,x-x(t)).$$
In a compact manner, we can write:
$$\varepsilon_1:=u-\tilde{Q_\beta}\quad\text{and}\quad \varepsilon_2:=\partial_tu+\beta\partial_x\tilde{Q_\beta}$$ since $\partial_tQ_\beta(t,x)=-\beta\partial_xQ_\beta(t,x)$.

We observe that 
\begin{align*}
\MoveEqLeft[4]
\int_\RR\left\{(\partial_tQ_\beta(t,x-x(t)))^2+(\partial_xQ_\beta(t,x-x(t)))^2+(Q_\beta(t,x-x(t)))^2\right\}\;dx\\
&=\int_\RR\left\{(\partial_tQ_\beta(t,x))^2+(\partial_xQ_\beta(t,x))^2+(Q_\beta(t,x))^2\right\}\;dx
\end{align*}
and
$$\int_\RR\partial_xQ_\beta(t,x-x(t))\partial_tQ_\beta(t,x-x(t))\;dx=\int_\RR\partial_xQ_\beta(t,x)\partial_tQ_\beta(t,x)\;dx.$$

Considering that $u_n$ and $Q_\beta$ are solutions of \eqref{NLKG}, the energy and the momentum of these solutions as defined in introduction are conserved. Thus, there exists $C_n\in\RR$ such that
\begin{align*}
\MoveEqLeft[4]
\langle \tilde{H}_\beta(U-\tilde{R}_\beta),U-\tilde{R}_\beta\rangle (t)\\
=&\;C_n+2\int_\RR\left\{F(u(t,x))-F(\tilde{Q}_\beta(t,x))-\frac{1}{2}f'(\tilde{Q}_\beta(t,x))\varepsilon_1(t,x)^2\right\}\;dx\\
&-2\int_\RR\partial_tu(t,x)\left(-\beta\partial_x\tilde{Q}_\beta(t,x)\right)\;dx-2\int_\RR\partial_xu(t,x)\partial_x\tilde{Q}_\beta(t,x)\;dx-2\int_\RR u(t,x)\tilde{Q}_\beta(t,x)\;dx\\
&-2\beta\int_\RR\partial_tu(t,x)\partial_xQ_\beta(t,x-x(t))\;dx-2\beta\int_\RR\partial_xu(t,x)\partial_tQ_\beta(t,x-x(t))\;dx.
\end{align*}

Now, we notice that $$\int_\RR\partial_xu(t,x)\partial_x\tilde{Q}_\beta(t,x)\;dx=-\int_\RR u(t,x)\partial_x^2\tilde{Q}_\beta(t,x)\;dx$$ and
$$\partial_x\tilde{Q}_\beta(t,x)=\partial_xQ_\beta(t,x-x(t)).$$
Thus
\begin{equation}\label{lem_aux_F_W_2}
\begin{aligned}
\MoveEqLeft[4]
\langle \tilde{H}_\beta(U-\tilde{R}_\beta),U-\tilde{R}_\beta\rangle (t)\\
=&\;  C_n+2\int_\RR\left\{F(u)-F(\tilde{Q}_\beta)-f(\tilde{Q}_\beta)\varepsilon_1-\frac{1}{2}f'(\tilde{Q}_\beta)\varepsilon_1^2\right\}(t,x)\;dx\\
&+2\int_\RR u(t,x)\left((1-\beta^2)\partial_x^2\tilde{Q}_\beta(t,x)-\tilde{Q}_\beta(t,x)+f(\tilde{Q}_\beta(t,x))\right)\;dx.
\end{aligned}
\end{equation}
The last integral is zero by the equation satisfied by $Q_\beta$. By means of Taylor inequality, we claim
\begin{equation}\label{lem_aux_F_W_3}
\begin{aligned}
\MoveEqLeft[6]
\int_\RR\left\{F(u(t,x))-F(\tilde{Q}_\beta(t,x))-f(\tilde{Q}_\beta(t,x))\varepsilon_1(t,x)-\frac{1}{2}f'(\tilde{Q}_\beta(t,x))\varepsilon_1(t,x)^2\right\}\;dx\\
&=\mathrm{O}\left(\|U-\tilde{R}_\beta(t)\|_{H^1\times L^2}^3\right)\\
&=\mathrm{O}\left(\|W(t)\|_{H^1\times L^2}^3+e^{-3e_\beta t}\right).
\end{aligned}
\end{equation}
Hence, collecting \eqref{lem_aux_F_W_1}, \eqref{lem_aux_F_W_2}, and \eqref{lem_aux_F_W_3}, we have:
$$\mathcal{F}_W(t)=C_n+\mathrm{O}\left(\|W(t)\|^3_{H^1\times L^2}+e^{-3e_\beta t}+e^{-e_\beta t}|\alpha_{+,\beta}(t)|\right).$$

On the other hand, we immediately have $|\mathcal{F}_W(t)|\le C\|W(t)\|^2_{H^1\times L^2}$. Since $W(S_n)=0$ and $\alpha_{+,\beta}(S_n)=0$, we deduce that $|C_n|\le Ce^{-3e_\beta S_n}\le Ce^{-3e_\beta t}$ for all $t\in[t^*_n,S_n]$.
\end{proof}

\begin{Cor}
There exists $C>0$ such that for $t_0$ sufficently large, we have for all $n$ such that $S_n\ge t_0$, for all $t\in[t_n^*,S_n]$:
$$|\mathcal{F}_W(t)|\le C\left(\|W(t)\|^3_{H^1\times L^2}+e^{-e_\beta t}\sup_{t'\in [t,S_n]}\|W(t')\|^2_{H^1\times L^2}+e^{-3e_\beta t}\right).$$
\end{Cor}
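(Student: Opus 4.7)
The plan is to bound the term $e^{-e_\beta t}|\alpha_{+,\beta}(t)|$ appearing in Lemma \ref{lem_control_F} by quantities of the desired form, and for this I will use the differential estimate on $\alpha_{+,\beta}$ from Lemma \ref{lem_der_alpha_pm} together with the fact that $\alpha_{+,\beta}(S_n)=0$ (since $W(S_n)=0$ by the remark following Lemma \ref{lem_modulation}).

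Concretely, Lemma \ref{lem_der_alpha_pm} gives
$$\left|\left(e^{-e_\beta t}\alpha_{+,\beta}\right)'(t)\right|=e^{-e_\beta t}\left|\frac{d\alpha_{+,\beta}}{dt}(t)-e_\beta\alpha_{+,\beta}(t)\right|\le C\,e^{-e_\beta t}\bigl(\|W(t)\|_{H^1\times L^2}^{2}+e^{-2e_\beta t}\bigr).$$
Integrating on $[t,S_n]$ and using $\alpha_{+,\beta}(S_n)=0$, one obtains
$$e^{-e_\beta t}|\alpha_{+,\beta}(t)|\le C\int_t^{S_n}e^{-e_\beta s}\|W(s)\|^{2}_{H^1\times L^2}\,ds+C\int_t^{S_n}e^{-3e_\beta s}\,ds.$$
The first integral is bounded by $\sup_{t'\in[t,S_n]}\|W(t')\|^{2}_{H^1\times L^2}\int_t^{+\infty}e^{-e_\beta s}\,ds=\frac{1}{e_\beta}e^{-e_\beta t}\sup_{t'\in[t,S_n]}\|W(t')\|^{2}_{H^1\times L^2}$, and the second by $\frac{1}{3e_\beta}e^{-3e_\beta t}$. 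Multiplying by $e^{e_\beta t}$ (to read off an estimate for $|\alpha_{+,\beta}(t)|$ if needed) we conclude
$$e^{-e_\beta t}|\alpha_{+,\beta}(t)|\le C\,e^{-e_\beta t}\sup_{t'\in[t,S_n]}\|W(t')\|^{2}_{H^1\times L^2}+C\,e^{-3e_\beta t}.$$

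Inserting this bound into the conclusion of Lemma \ref{lem_control_F} absorbs the term $e^{-e_\beta t}|\alpha_{+,\beta}(t)|$ into the two desired quantities and yields the statement of the corollary. There is no real obstacle here: the argument is an elementary backward integration that exploits the terminal condition $W(S_n)=0$ built into the definition of $U_n$; the only point requiring a little care is to ensure that $\alpha_{+,\beta}(S_n)=0$ genuinely follows from $W(S_n)=0$, which is immediate from the definition $\alpha_{+,\beta}(t)=\langle W(t),\tilde{Z}_{+,\beta}(t)\rangle$.
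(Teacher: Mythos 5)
Your proof is correct and follows essentially the same route as the paper: derive $\left|\frac{d}{dt}\left(e^{-e_\beta t}\alpha_{+,\beta}\right)\right|\le C\left(e^{-e_\beta t}\|W\|_{H^1\times L^2}^2+e^{-3e_\beta t}\right)$ from Lemma \ref{lem_der_alpha_pm}, integrate backwards from $S_n$ using $\alpha_{+,\beta}(S_n)=0$ (a consequence of $W(S_n)=0$), and plug the resulting bound on $e^{-e_\beta t}|\alpha_{+,\beta}(t)|$ into Lemma \ref{lem_control_F}. No issues.
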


\begin{proof}
From Lemma \ref{lem_der_alpha_pm} we deduce that $$\forall\;t\in[t^*_n,S_n],\qquad \left|\frac{d}{dt}\left(e^{-e_\beta t}\alpha_{+,\beta}\right)\right|\le C\left(e^{-e_\beta t}\|W\|_{H^1\times L^2}^2+e^{-3e_\beta t}\right).$$
Given that $e^{-e_\beta S_n}\alpha_{+,\beta}(S_n)=0$, we deduce by integration of the preceding inequality that:
$$\forall\;t\in[t^*_n,S_n],\qquad\left|e^{-e_\beta t}\alpha_{+,\beta}(t)\right|\le C\left(e^{-e_\beta t}\sup_{t'\in [t,S_n]}\|W(t')\|_{H^1\times L^2}^2+e^{-3e_\beta t}\right).$$
Now the corollary follows from the combination of this last result with Lemma \ref{lem_control_F}.
\end{proof}

\begin{Lem}\label{est_alpha_der_alpha}
There exists $c>0$ such that for $\alpha_0>0$ sufficiently small and $t_0\ge 0$ sufficiently large, for all $n$ such that $S_n\ge t_0$, for all $t\in[t^*_n,S_n]$:
\begin{equation}\label{syst_est_alpha}
\begin{dcases}
\left|\frac{d}{dt}\alpha_{+,\beta}(t)-e_\beta\alpha_{+,\beta}(t)\right|\le c\min\left\{\left(\sup_{t'\in [t,S_n]}\left(\alpha_{+,\beta}^2(t')+\alpha_{-,\beta}^2(t')\right)+e^{-2e_\beta t}\right),\alpha_{+,\beta}^2(t)+\alpha_{-,\beta}^2(t)+e^{-e_\beta  t}\right\}\\
\left|\frac{d}{dt}\alpha_{-,\beta}(t)+e_\beta\alpha_{-,\beta}(t)\right|\le c\min\left\{\left(\sup_{t'\in [t,S_n]}\left(\alpha_{+,\beta}^2(t')+\alpha_{-,\beta}^2(t')\right)+e^{-2e_\beta t}\right),\alpha_{+,\beta}^2(t)+\alpha_{-,\beta}^2(t)+e^{-e_\beta  t}\right\}.
\end{dcases}
\end{equation}
\end{Lem}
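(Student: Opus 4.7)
The plan is to derive two estimates of $\|W(t)\|_{H^1\times L^2}^2$, both in terms of $\alpha_{+,\beta}$, $\alpha_{-,\beta}$ and some exponentially small remainder, and then plug them into the identity provided by Lemma \ref{lem_der_alpha_pm}. The two estimates correspond to the two arguments of the minimum: a ``supremum'' version with a smaller residual $e^{-2e_\beta t}$, and a ``pointwise'' version with a coarser residual $e^{-e_\beta t}$.

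\textbf{Step 1: coercivity.} I would first apply Proposition \ref{prop_coercivity} to the operator $\tilde H_\beta$ (which is just $H_\beta$ translated by $x(t)$) and to $V=W(t)$. Thanks to the orthogonality relation \eqref{prop_ortho}, the term $\langle W,J\partial_x\tilde R_\beta\rangle^2$ drops out (note that $J Z_{0,\beta}$ is, up to sign, $\partial_x R_\beta$), so that
\begin{equation*}
\mathcal{F}_W(t)\ \ge\ \mu\|W(t)\|_{H^1\times L^2}^2-\frac{1}{\mu}\bigl(\alpha_{+,\beta}^2(t)+\alpha_{-,\beta}^2(t)\bigr).
\end{equation*}

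\textbf{Step 2: combining with the upper bound on $\mathcal{F}_W$.} The corollary of Lemma \ref{lem_control_F} gives
\begin{equation*}
|\mathcal{F}_W(t)|\ \le\ C\Bigl(\|W(t)\|_{H^1\times L^2}^{3}+e^{-e_\beta t}\sup_{t'\in[t,S_n]}\|W(t')\|_{H^1\times L^2}^{2}+e^{-3e_\beta t}\Bigr).
\end{equation*}
Since $\|W(t)\|_{H^1\times L^2}\le K_1\alpha_0$ by Lemma \ref{lem_modulation}, the cubic term $\|W\|^{3}$ can be absorbed into $\mu\|W\|^2$ by choosing $\alpha_0$ small enough. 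Thus
\begin{equation*}
\|W(t)\|_{H^1\times L^2}^{2}\ \le\ C\Bigl(\alpha_{+,\beta}^2(t)+\alpha_{-,\beta}^2(t)+e^{-e_\beta t}\sup_{t'\in[t,S_n]}\|W(t')\|_{H^1\times L^2}^{2}+e^{-3e_\beta t}\Bigr).
\end{equation*}

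\textbf{Step 3: the two versions of the estimate of $\|W\|^2$.} Taking the supremum over $s\in[t,S_n]$ on the left-hand side (note that the right-hand side is non-decreasing as $t$ decreases), and then choosing $t_0$ large enough so that $Ce^{-e_\beta t_0}\le 1/2$, the $\sup$ term on the right is absorbed, yielding
\begin{equation*}
\sup_{s\in[t,S_n]}\|W(s)\|_{H^1\times L^2}^{2}\ \le\ C\Bigl(\sup_{s\in[t,S_n]}\bigl(\alpha_{+,\beta}^2(s)+\alpha_{-,\beta}^2(s)\bigr)+e^{-2e_\beta t}\Bigr).
\end{equation*}
Plugging this back into the previous inequality and using the crude bound $\|W\|^2\le K_1^{2}\alpha_0^{2}$ on the $\sup$ term in the residual, one also gets the pointwise bound
\begin{equation*}
\|W(t)\|_{H^1\times L^2}^{2}\ \le\ C\bigl(\alpha_{+,\beta}^2(t)+\alpha_{-,\beta}^2(t)+e^{-e_\beta t}\bigr).
\end{equation*}

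\textbf{Step 4: conclusion.} Now Lemma \ref{lem_der_alpha_pm} reads
\begin{equation*}
\Bigl|\dot\alpha_{\pm,\beta}(t)\mp e_\beta\alpha_{\pm,\beta}(t)\Bigr|\ \le\ C\bigl(\|W(t)\|_{H^1\times L^2}^{2}+e^{-2e_\beta t}\bigr).
\end{equation*}
Inserting the sup estimate gives the first argument of the minimum in \eqref{syst_est_alpha}; inserting the pointwise estimate gives the second. The constant $c$ is then the maximum of the constants produced by the two routes. The only delicate point is the absorption chain in Steps 2--3; the rest is bookkeeping. I expect no further obstacle, since the smallness assumption on $\alpha_0$ (to absorb the cubic term) and the largeness of $t_0$ (to absorb the $e^{-e_\beta t}\sup$ term) are precisely the hypotheses allowed by the statement.
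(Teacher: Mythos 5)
Your proposal is correct and follows essentially the same route as the paper: coercivity of $\mathcal{F}_W=\langle\tilde H_\beta W,W\rangle$ combined with the orthogonality from the modulation, the upper bound on $|\mathcal{F}_W|$ from the corollary of Lemma \ref{lem_control_F}, absorption of the cubic term by smallness of $\alpha_0$ and of the $e^{-e_\beta t}\sup\|W\|^2$ term either by largeness of $t_0$ (sup branch) or by the crude a priori bound (pointwise branch), and finally insertion into Lemma \ref{lem_der_alpha_pm}. The only cosmetic difference is that you keep $e^{-2e_\beta t}$ where the paper retains $e^{-3e_\beta t}$ at the intermediate stage, which is immaterial for the stated conclusion.
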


\begin{proof}
Due to Proposition \ref{prop_coercivity}, we have on the one hand
\begin{equation}\label{est_lem_alpha_aux_1}
\|W\|_{H^1\times L^2}^2\le \frac{1}{\mu}\mathcal{F}_W(t)+\frac{1}{\mu^2}\left(\alpha_{+,\beta}^2+\alpha_{-,\beta}^2\right).
\end{equation}
On the other, 
\begin{equation}\label{est_lem_alpha_aux_2}
|\mathcal{F}_W(t)|\le C\left(\|W\|_{H^1\times L^2}^3+e^{-e_\beta t}\sup_{t'\in [t,S_n]}\|W(t')\|_{H^1\times L^2}^2+e^{-3e_\beta t}\right).
\end{equation}
Now, inserting \eqref{est_lem_alpha_aux_2} into \eqref{est_lem_alpha_aux_1}, we obtain
\begin{equation}\label{est_lem_alpha_aux_3}
\|W\|_{H^1\times L^2}^2\le \frac{2C}{\mu}e^{-3e_\beta t}+\frac{2}{\mu^2}\sup_{t'\in [t,S_n]}\left(\alpha_{+,\beta}^2(t')+\alpha_{-,\beta}^2(t')\right)
\end{equation}
provided $\alpha_0$ is chosen small enough such that $\frac{C}{\mu}\sup_{t'\in [t,S_n]}\|W(t')\|_{H^1\times L^2}\le \frac{1}{4}$ (for all $t$) and $t_0$ is chosen large enough such that $\frac{ C}{\mu}e^{-e_\beta t_0}\le \frac{1}{4}$. \\

Then, combining \eqref{est_lem_alpha_aux_3} and the estimates obtained in Lemma \ref{lem_der_alpha_pm}, we deduce:
\begin{align*}
\left|\frac{d\alpha_{\pm,\beta}}{dt}\mp e_\beta\alpha_{\pm,\beta}\right|&\le C\left(\|W\|_{H^1\times L^2}^2+e^{-2e_\beta t}\right)\\
&\le C\left(\sup_{t'\in [t,S_n]}\left(\alpha_{+,\beta}^2(t')+\alpha_{-,\beta}^2(t')\right)+e^{-2e_\beta t}\right).
\end{align*}
In order to avoid the supremum in front of the expression $\alpha_{+,\beta}^2+\alpha_{-,\beta}^2$ and hence to obtain the second way of estimating $\frac{d}{dt}\alpha_{\pm,\beta}\mp e_\beta\alpha_{\pm,\beta}$ which is described by Lemma \ref{est_alpha_der_alpha}, we can rewrite \eqref{est_lem_alpha_aux_2} more simply as follows
$$
|\mathcal{F}_W(t)|\le C\left(\|W\|_{H^1\times L^2}^3+e^{-e_\beta t}+e^{-3e_\beta t}\right).
$$
Then the analog of \eqref{est_lem_alpha_aux_3} takes the following form:
$$\|W\|_{H^1\times L^2}^2\le Ce^{-e_\beta t}+\left(\alpha_{+,\beta}^2(t)+\alpha_{-,\beta}^2(t)\right);$$ the last arguments remain unchanged. However note that, in this case, the obtained estimates are at the cost of a worse exponential term. 
\end{proof}

\begin{proposition}\label{prop_est_expo}
There exists $C>0$ such that for all $\alpha_0\ge 0$ sufficiently small, for all $t_0$ sufficiently large, for all $n$ such that $S_n\ge t_0$, for all $t\in[t^*_n,S_n]$, the following inequalities hold:
\begin{equation}\label{est_alpha_-+}
|\alpha_{-,\beta}(t)|+|\alpha_{+,\beta}(t)|\le Ce^{-2e_\beta t}
\end{equation}
and \begin{equation}\label{est_W_aux}
\|W(t)\|_{H^1\times L^2}\le Ce^{-\frac{3}{2}e_\beta t}.
\end{equation}
\end{proposition}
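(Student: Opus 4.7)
The strategy is to bootstrap a single weighted sup-norm that captures both instability coefficients simultaneously. Define
\[
\psi(t) := \sup_{s \in [t, S_n]} e^{2e_\beta s}\left(|\alpha_{+,\beta}(s)| + |\alpha_{-,\beta}(s)|\right),
\]
which is continuous, non-increasing, and satisfies $\psi(S_n) = 0$ since $W_n(S_n) = 0$ by the definition \eqref{def_u_n}. Proving a uniform bound $\psi \le M$ on $[t_n^*, S_n]$, for a constant $M$ independent of $n$, immediately gives \eqref{est_alpha_-+} with $C = M$.

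For the bootstrap, suppose $\psi \le M$ on some subinterval $[T, S_n]$. Monotonicity of $\psi$ gives $\sup_{[s, S_n]}(\alpha_{+,\beta}^2 + \alpha_{-,\beta}^2) \le 2M^2 e^{-4e_\beta s}$ for $s \in [T, S_n]$. Plugging this into the first (sup-type) bound of Lemma \ref{est_alpha_der_alpha} and taking $t_0$ large enough that $2M^2 e^{-2e_\beta t_0} \le 1$ produces $|\alpha_{\pm,\beta}'(s) \mp e_\beta \alpha_{\pm,\beta}(s)| \le 2c\, e^{-2e_\beta s}$ on $[T, S_n]$. Backward integration from $S_n$ (where $\alpha_{\pm,\beta}(S_n) = 0$) with the integrating factors $e^{\mp e_\beta s}$ then produces $|\alpha_{\pm,\beta}(s)| \le C_0 e^{-2e_\beta s}$ with a universal $C_0$ depending only on $c$ and $e_\beta$, hence $\psi(T) \le 2C_0 =: M^*$. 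Choosing $M := 2M^*$, the continuity of $\psi$ together with $\psi(S_n) = 0 < M$ closes the bootstrap via a standard continuation argument and yields \eqref{est_alpha_-+}. The main obstacle here is the backward integration for $\alpha_{-,\beta}$: since this component is stable in forward time but \emph{unstable} in backward time, its integrating factor $e^{e_\beta s}$ amplifies the source, and any source of size $e^{-e_\beta s}$ or slower would give an integral growing linearly in $S_n - t$, destroying uniformity in $n$. It is precisely the weighted control through $\psi$ that forces the effective source to decay strictly faster than $e^{-e_\beta s}$, rendering the backward integral convergent uniformly in $n$; this is why a naive bootstrap on $\|W\|$ or on $\alpha_{-,\beta}$ alone would fail.

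Finally, \eqref{est_W_aux} follows from the coercivity-based estimate \eqref{est_lem_alpha_aux_3} already established in the proof of Lemma \ref{est_alpha_der_alpha}: substituting $\sup_{[t,S_n]}(\alpha_{+,\beta}^2 + \alpha_{-,\beta}^2) \le 2M^2 e^{-4e_\beta t}$ yields $\|W(t)\|_{H^1 \times L^2}^2 \le C(e^{-3e_\beta t} + M^2 e^{-4e_\beta t}) \le C' e^{-3e_\beta t}$ for $t \ge t_0$, and hence $\|W(t)\|_{H^1 \times L^2} \le C e^{-\frac{3}{2}e_\beta t}$, as claimed.
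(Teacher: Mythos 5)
Your proof is correct, and it takes a genuinely different route from the paper for the bound \eqref{est_alpha_-+}. The paper proceeds in two asymmetric stages: it first establishes the pointwise slaving relation $|\alpha_{+,\beta}(t)|\le M\bigl(\alpha_{-,\beta}^2(t)+e^{-e_\beta t}\bigr)$ by introducing the auxiliary function $h=\alpha_{+,\beta}-M\alpha_{-,\beta}^2-\tilde{M}e^{-e_\beta t}$, deriving $h'\ge e_\beta h-c_Mh^2$ and running a sign/contradiction argument (following \cite[paragraph 4.4.2]{combet2010}); it then feeds this into the sup-version of Lemma \ref{est_alpha_der_alpha} to get a closed differential inequality for $\alpha_{-,\beta}$ alone and concludes by a Gr\"onwall argument on $y(t)=e^{e_\beta t}\sup_{t'\in[t,S_n]}|\alpha_{-,\beta}(t')|$, before finally integrating forward for $\alpha_{+,\beta}$. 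You instead treat the two directions symmetrically through a single bootstrap on the weighted sup-norm $\psi$, exploiting only the sup-version of the lemma and backward integration from $S_n$; the decisive facts are that $\alpha_{\pm,\beta}(S_n)=0$ (from $W_n(S_n)=0$) and that the sup-version source $e^{-2e_\beta s}$ remains integrable against the unfavourable integrating factor $e^{e_\beta s}$ of the $\alpha_{-,\beta}$ equation — a point you identify accurately. Your constants are non-circular ($C_0$ depends only on $c$ and $e_\beta$, then $M=4C_0$, then $t_0$ is enlarged so that $M^2e^{-2e_\beta t_0}\le 1$), and the continuation argument closes since $\psi$ is continuous, non-increasing and vanishes at $S_n$. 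What your shortcut gives up is the intermediate relation $|\alpha_{+,\beta}|\lesssim\alpha_{-,\beta}^2+e^{-e_\beta t}$ itself, which is the version of the argument that survives in Section 6 (Proposition \ref{prop_est_alpha}), where no final-time vanishing of $\alpha_{\pm,\beta}$ is available; so your method is specific to the backward-construction setting, though perfectly adequate here. Your derivation of \eqref{est_W_aux} from \eqref{est_lem_alpha_aux_3} and \eqref{est_alpha_-+} coincides with the paper's.
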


\begin{proof}
Let us start with the system \eqref{syst_est_alpha} obtained in Lemma \ref{est_alpha_der_alpha}. We have in particular
\begin{equation}\label{syst_est_alpha'}
\begin{dcases}
\left|\frac{d}{dt}\alpha_{+,\beta}-e_\beta\alpha_{+,\beta}\right|\le c\left(\alpha_{+,\beta}^2(t)+\alpha_{-,\beta}^2(t)+e^{-e_\beta  t}\right)\\
\left|\frac{d}{dt}\alpha_{-,\beta}+e_\beta\alpha_{-,\beta}\right|\le c\left(\alpha_{+,\beta}^2(t)+\alpha_{-,\beta}^2(t)+e^{-e_\beta  t}\right).
\end{dcases}
\end{equation}
Taking some inspiration in \cite[paragraph 4.4.2]{combet2010}, we will first show the existence of $M\ge 0$ such that for all $t_0$ large enough, for all $t\in[t_n^*,S_n]$,
\begin{equation}\label{est_alpha+_aux}
\left|\alpha_{+,\beta}(t)\right|\le M\left(\alpha_{-,\beta}^2(t)+e^{-e_\beta t}\right).
\end{equation}

In order to prove \eqref{est_alpha+_aux}, let us consider, for some positive constants $M$ and $\tilde{M}$, the function
$$h:t\mapsto \alpha_{+,\beta}(t)-M\alpha_{-,\beta}^2(t)-\tilde{M}e^{-e_\beta t}$$
and show that it is always negative on $[t^*_n,S_n]$, provided $M$ and $\tilde{M}$ are well chosen. \\
We compute
$$
h'(t)=\alpha_{+,\beta}'(t)-2M\alpha_{-,\beta}'(t)\alpha_{-,\beta}(t)+\tilde{M}e_\beta e^{-e_\beta t}.
$$

By \eqref{syst_est_alpha'}, we obtain
\begin{align*}
h'\ge &\; e_\beta\alpha_{+,\beta}-c\left(\alpha_{+,\beta}^2+\alpha_{-,\beta}^2+e^{-e_\beta t}\right)\\
&-2M\left(-e_\beta\alpha_{-,\beta}^2+c|\alpha_{-,\beta}|\left(\alpha_{+,\beta}^2+\alpha_{-,\beta}^2+e^{-e_\beta t}\right)\right)+\tilde{M}e_\beta e^{-e_\beta t}.
\end{align*}
Replacing $\alpha_{+,\beta}$ by its expression in terms of $h$, $\alpha_{-,\beta}^2$, and $e^{-e_\beta t}$ and using that
$$\left(h+M\alpha_{-,\beta}^2+\tilde{M}e^{-e_\beta t}\right)^2\le 2h^2+4M^2\alpha_{-,\beta}^4+4\tilde{M}^2e^{-2e_\beta t}$$
 lead to the following estimate of $h'$:
\begin{align*}
h'\ge &\; e_\beta h-2c\left(1+2M|\alpha_{-,\beta}|\right)h^2\\
&+\left(e_\beta\tilde{M}-c-2Mc|\alpha_{-,\beta}|-4\tilde{M}^2ce^{-e_\beta t}-8M\tilde{M}^2c|\alpha_{-,\beta}|e^{-e_\beta t}\right)e^{-e_\beta t}\\
&+\alpha_{-,\beta}^2\left(3Me_\beta-c-2Mc|\alpha_{-,\beta}|-4cM^2\alpha_{-,\beta}^2-8M^3c|\alpha_{-,\beta}|\alpha_{-,\beta}^2\right).
\end{align*}
Now we choose $M:=\frac{c}{e_\beta}$ and $\tilde{M}:=\frac{2c}{e_\beta}$ so that the expressions
$$3Me_\beta-c-2Mc|\alpha_{-,\beta}|-4cM^2\alpha_{-,\beta}^2-8M^3c|\alpha_{-,\beta}|\alpha_{-,\beta}^2$$ and
$$ e_\beta\tilde{M}-c-2Mc|\alpha_{-,\beta}|-4\tilde{M}^2ce^{-e_\beta t}-8M\tilde{M}^2c|\alpha_{-,\beta}|e^{-e_\beta t}$$ are positive for $\alpha_0$ sufficiently small and $t_0$ large enough.
Thus for such values of $\alpha_0$ and $t_0$, there exists $c_M>0$ such that for all $t\in[t_n^*,S_n]$,
$$h'(t)\ge e_\beta h(t)-c_Mh(t)^2.$$
At this stage, we can deduce that for all $t$ in $[t^*_n,S_n]$, $h(t)\le 0$. Assume for the sake of contradiction that there exists $\tilde{t}^*_n\in[t^*_n,S_n]$ such that $h(\tilde{t}^*_n)>0$. Then, we can define
$$T:=\sup\{t\in[\tilde{t}^*_n,S_n],\:h(t)>0\}.$$ We necessarily have $h(T)=0$. Indeed, $h(T)<0$ is excluded by continuity of $h$ in $T$ (on the left side) and by definition of the supremum; if $h(T)>0$, then $T<S_n$ (since $h(S_n)=-\tilde{M}e^{-e_\beta S_n}$) which leads once more to a contradiction, using the continuity of $h$ in $T$ (on the right side) and the definition of $T$ as a supremum. It follows that $h'(T)\ge 0$; thus $h$ is non-decreasing in the neighborhood of $T$ and in particular, $h(t)\le 0$ on $[T-\eta,T]$ for some $\eta>0$. This again contradicts the definition of $T$. \\
Hence, for all $t$ in $[t_n^*,S_n]$, $h(t)\le 0$. Given that $-\alpha_{+,\beta}$ satisfies the same differential system as $\alpha_{+,\beta}$, we finally obtain \eqref{est_alpha+_aux}. \\

Now, we have for all $t\in [t^*_n,S_n]$,
$$\sup_{t'\in[t,S_n]}|\alpha_{+,\beta}(t')|\le M\left(\sup_{t'\in[t,S_n]}\alpha_{-,\beta}^2(t')+e^{-e_\beta t}\right).$$
By using Lemma \ref{est_alpha_der_alpha} and even if it means reducing $\alpha_0$ and increasing $t_0$, we obtain that for all $t\in [t^*_n,S_n]$,
$$
|\alpha_{-,\beta}'(t)+e_\beta\alpha_{-,\beta}(t)|\leq C\left(\sup_{t'\in[t,S_n]}\alpha_{-,\beta}^2(t')+e^{-2e_\beta t}\right)\le \frac{e_\beta}{10}|\alpha_{-,\beta}(t)|+Ce^{-2e_\beta t}.
$$ 
We claim that this implies the estimates in Proposition \ref{prop_est_expo}. The preceding inequality rewrites as follows:
$$\left|\frac{d}{dt}(e^{e_\beta t}\alpha_{-,\beta})(t)\right|\le \frac{e_\beta}{10}e^{e_\beta t}\sup_{t'\in[t,S_n]}|\alpha_{-,\beta}(t')|+Ce^{-e_\beta t}.$$
For $t$ belonging to $[t_n^*,S_n]$, we obtain by integration on $[t,S_n]$:
$$
e^{e_\beta t}|\alpha_{-,\beta}(t)|\le Ce^{-e_\beta t}+\frac{e_\beta}{10}\int_t^{S_n}e^{e_\beta s}\sup_{t'\in[s,S_n]}|\alpha_{-,\beta}(t')|\;ds.
$$
Passing to the supremum and defining $y(t):=e^{e_\beta t}\sup_{t'\in[t,S_n]}|\alpha_{-,\beta}(t')|$ on $[t_n^*,S_n]$, this leads to
\begin{equation}\label{est_gronwall}
y(t)\le Ce^{-e_\beta t}+\frac{e_\beta}{10}\int_t^{S_n}y(s)\;ds.
\end{equation}
Now, a standard Grönwall argument allows us to see that $y(t)\le Ce^{-e_\beta t}$, which precisely provides the expected estimate of the parameter $\alpha_{-,\beta}$ in Proposition \ref{prop_est_expo}. Then, the similar estimate of the parameter $\alpha_{+,\beta}$ follows from the integration of the inequality
$$|\alpha_{+,\beta}'(t)-e_\beta\alpha_{+,\beta}(t)|\leq Ce^{-2e_\beta t}$$ and finally \eqref{est_W_aux} follows from \eqref{est_lem_alpha_aux_3} and \eqref{est_alpha_-+}. \\
For the reader's convenience, let us write explicitly the Grönwall argument. The function $\psi(t):=e^{\frac{e_\beta}{10} t}\int_t^{S_n}y(s)\;ds$ is $\mathscr{C}^1$ on $[t^*_n,S_n]$ and for all $t\in[t^*_n,S_n]$,
$$\psi'(t)=e^{\frac{e_\beta}{10} t}\left(-y(t)+\frac{e_\beta}{10}\int_t^{S_n}y(s)\;ds\right)\ge -Ce^{-\frac{9e_\beta}{10}t}$$
by \eqref{est_gronwall}. Observing that $\psi(S_n)=0$, it follows that $\psi(t)\le Ce^{-\frac{9e_\beta}{10}t}$. As a consequence, $$\int_t^{S_n}y(s)\;ds\le Ce^{-e_\beta t}$$ and thus, by \eqref{est_gronwall} again, $$\forall\;t\in[t^*_n,S_n],\qquad y(t)\le Ce^{-e_\beta t}.$$
\end{proof}

\subsubsection{Step 4: Improvement of the exponential decay rate}

The goal of this paragraph is to optimize the exponential decay rate in the estimate of $\|W\|_{H^1\times L^2}$.
We actually prove

\begin{proposition}\label{prop_est_W_improved}
The following estimate holds for all $t\in[t^*_n,S_n]$:
 $$\|W(t)\|_{H^1\times L^2}\le Ce^{-2e_\beta t}.$$
\end{proposition}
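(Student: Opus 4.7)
The plan is to run a second-level bootstrap on top of Step 3, feeding the estimates $\|W\|\le Ce^{-\frac{3}{2}e_\beta t}$ and $|\alpha_{\pm,\beta}|\le Ce^{-2e_\beta t}$ of Proposition~\ref{prop_est_expo} back into the system of differential inequalities. The target $\|W\|\le Ce^{-2e_\beta t}$ is equivalent, via the coercivity $\|W\|^2\le\mu^{-1}\mathcal{F}_W + \mu^{-2}(\alpha_{+,\beta}^2 + \alpha_{-,\beta}^2)$ of Proposition~\ref{prop_coercivity} and the bound $\alpha_{\pm,\beta}^2\le Ce^{-4e_\beta t}$, to the sharper Lyapunov control $|\mathcal{F}_W|\le Ce^{-4e_\beta t}$.

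First, I would feed the improved $\|W\|^2\le Ce^{-3e_\beta t}$ back into Lemma~\ref{lem_der_alpha_pm} to obtain
\[
\left|\alpha'_{\pm,\beta}(t)\mp e_\beta\alpha_{\pm,\beta}(t)\right|\le C(\|W(t)\|_{H^1\times L^2}^2 + e^{-2e_\beta t})\le Ce^{-2e_\beta t}.
\]
Backward integration from $S_n$, with the boundary condition $\alpha_{\pm,\beta}(S_n)=0$, via the same Gr\"onwall-type argument as in Proposition~\ref{prop_est_expo}, reconfirms $|\alpha_{\pm,\beta}(t)|\le Ce^{-2e_\beta t}$, and moreover yields the leading-order asymptotics of $\alpha_{\pm,\beta}(t)$ driven by the $e^{-2e_\beta t}$ forcing that comes from the quadratic self-interaction of $Ae^{-e_\beta t}\tilde Y_{+,\beta}$ through $f''(\tilde Q_\beta)$ in the expression of $\partial_tW$.

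Next, I would revisit the identity $\mathcal{F}_W(t) = \langle\tilde H_\beta(U-\tilde R_\beta), U-\tilde R_\beta\rangle - 2Ae^{-e_\beta t}\alpha_{+,\beta}(t)$ from the proof of Lemma~\ref{lem_control_F}, and expand the Taylor remainder carefully with $\varepsilon_1 = w_1 + Ae^{-e_\beta t}(\tilde Y_{+,\beta})_1$. Splitting $\int f''(\tilde Q_\beta)\varepsilon_1^3$ into the piece $\int f''(\tilde Q_\beta)w_1^3$, the cross terms in $e^{-ke_\beta t}\|w_1\|^{3-k}$ for $k=1,2$, and the deterministic $A^3 e^{-3e_\beta t}\int f''(Q_\beta)(Y_{+,\beta})_1^3 =: A^3\kappa\,e^{-3e_\beta t}$, and invoking the boundary identity $C_n = -\frac{A^3\kappa}{3}e^{-3e_\beta S_n}+O(e^{-4e_\beta S_n})$ forced by $\mathcal{F}_W(S_n)=0$, the contribution reduces to the leading part $\frac{A^3\kappa}{3}e^{-3e_\beta t}-2Ae^{-e_\beta t}\alpha_{+,\beta}(t)$ plus $O(e^{-\frac{7}{2}e_\beta t})$. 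Substituting the asymptotics of $\alpha_{+,\beta}$ from the previous paragraph and using the spectral identity $\tilde H_\beta\tilde Y_{+,\beta} = \tilde Z_{+,\beta}$ from Proposition~\ref{prop_spectral}, these two leading $e^{-3e_\beta t}$ contributions should cancel, producing $|\mathcal{F}_W(t)|\le Ce^{-\frac{7}{2}e_\beta t}$. Coercivity then gives $\|W(t)\|\le Ce^{-\frac{7}{4}e_\beta t}$, and iterating this procedure one further round closes the argument at $\|W(t)\|_{H^1\times L^2}\le Ce^{-2e_\beta t}$.

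The main obstacle will be the verification of the precise cancellation at order $e^{-3e_\beta t}$, which rests on the asymptotic expansion of $\alpha_{+,\beta}$ and on the spectral relations of Proposition~\ref{prop_spectral} linking $Y_{+,\beta}$, $Z_{+,\beta}$ and the quadratic source $\int f''(\tilde Q_\beta)(\tilde Y_{+,\beta})_1^2(\tilde Z_{+,\beta})_2$. Should the cancellation fail to hold structurally, the remedy is to modify the ansatz by adding a quadratic corrector $A^2e^{-2e_\beta t}\tilde Y^{(2)}_{+,\beta}$, constructed as a particular solution of the linearized equation driven by the quadratic Taylor term, and then to redo the modulation and Lyapunov analysis for the smaller residual.
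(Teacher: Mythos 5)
Your strategy diverges from the paper's at the decisive point, and as written it has a genuine gap. You try to upgrade the bound on the functional $\mathcal{F}_W$ itself, starting from the conservation-law identity $\mathcal{F}_W(t)=C_n+2\int\{F(u)-F(\tilde{Q}_\beta)-f(\tilde{Q}_\beta)\varepsilon_1-\tfrac12 f'(\tilde{Q}_\beta)\varepsilon_1^2\}\,dx-2Ae^{-e_\beta t}\alpha_{+,\beta}(t)$, and you need the order-$e^{-3e_\beta t}$ contributions (the pure $A^3e^{-3e_\beta t}\int f''(Q_\beta)(Y_{+,\beta})_1^3$ piece of the cubic Taylor term against $-2Ae^{-e_\beta t}\alpha_{+,\beta}$, whose leading part is governed by $\int f''(Q_\beta)(Y_{+,\beta})_1^2(Z_{+,\beta})_2$) to cancel. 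This cancellation amounts to a specific algebraic identity between these two integrals and $e_\beta$ which you do not verify and which is not a consequence of the relations $H_\beta Y_{+,\beta}=Z_{+,\beta}$, $\mathscr{H}_\beta Z_{+,\beta}=e_\beta Z_{+,\beta}$ listed in Proposition \ref{prop_spectral}; you flag this yourself as the main obstacle and offer a corrector-term fallback, which is really a different (and substantially longer) proof. In addition, note that $C_n$ only cancels the cubic term at $t=S_n$, so for $t$ well below $S_n$ the $e^{-3e_\beta t}$ obstruction is genuinely present unless the cancellation holds. Separately, your closing iteration does not terminate: with $\|W\|\le Ce^{-ae_\beta t}$ the dominant surviving error is the cross term $e^{-2e_\beta t}\|W\|\le Ce^{-(2+a)e_\beta t}$, so coercivity returns $\|W\|\le Ce^{-(1+a/2)e_\beta t}$; starting from $a=3/2$ this gives $7/4,\ 15/8,\ 31/16,\dots$, converging to $2$ without ever reaching it. (That last defect is repairable by keeping the cross term as $e^{-2e_\beta t}\sup\|W\|$ and absorbing it with Young's inequality into $\sup\|W\|^2$, which is exactly how the paper closes its own estimate.)

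The paper avoids the cancellation problem entirely by estimating $\frac{d}{dt}\mathcal{F}_W$ rather than $\mathcal{F}_W$. Differentiating \eqref{def_F_W} and substituting \eqref{eq_W}, every term pairs the source against $\tilde{H}_\beta W$, hence carries at least one factor of $\|W\|_{H^1\times L^2}$; the spectral identities $\tilde{H}_\beta\partial_x\tilde{R}_\beta=0$ and $\langle\tilde{H}_\beta W,J\tilde{Z}_{+,\beta}\rangle=-e_\beta Ae^{-e_\beta t}\langle W,\tilde{Z}_{+,\beta}\rangle$ kill the dangerous $O(e^{-e_\beta t})$ pairings, and an explicit computation cancels the $\beta\int\partial_x\tilde{Q}_\beta f''(\tilde{Q}_\beta)w_1^2\,dx$ term. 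This yields $\bigl|\frac{d}{dt}\mathcal{F}_W\bigr|\le C\bigl(e^{-2e_\beta t}\|W\|_{H^1\times L^2}+e^{-e_\beta t}\|W\|^2_{H^1\times L^2}\bigr)$ after inserting the Step 3 bound; integrating backward from $\mathcal{F}_W(S_n)=0$, applying coercivity together with $|\alpha_{\pm,\beta}|\le Ce^{-2e_\beta t}$, and absorbing the quadratic cross term gives $\|W\|_{H^1\times L^2}\le Ce^{-2e_\beta t}$ in a single step, with no need to identify the coefficient of $e^{-3e_\beta t}$ in $\mathcal{F}_W$. If you want to salvage your route, you must either prove the cubic cancellation or carry out the quadratic-corrector construction; as it stands the argument is not complete.
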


\begin{proof}
Let us consider this time the derivative of $\mathcal{F}_W$ (and not the functional $\mathcal{F}_W$ itself). From the definition \eqref{def_F_W} and the symmetric property of $\tilde{H}_\beta$ for $\langle\cdot,\cdot\rangle$, we immediately obtain:
$$
\frac{d}{dt}\mathcal{F}_W(t)=2\langle \tilde{H}_\beta W,\partial_tW\rangle+\beta\int_\RR\partial_x\tilde{Q}_\beta f''(\tilde{Q}_\beta )w_1^2\;dx,
$$
where $w_1$ (resp. $w_2$) is the first (resp. the second) component of $W$. Replacing $\partial_tW$ by its expression obtained in \eqref{eq_W} and noticing that
$$\left(\begin{array}{cc}
\beta\partial_x&Id\\
\partial_x^2-Id+f'(\tilde{Q}_\beta)&\beta\partial_x
\end{array}\right)\tilde{Y}_{+,\beta}=J\tilde{H}_\beta\tilde{Y}_{+,\beta}=J\tilde{Z}_{+,\beta},$$ we have:
\begin{align*}
\MoveEqLeft[2]
\langle \tilde{H}_\beta W,\partial_tW\rangle\\
=&\left\langle \tilde{H}_\beta W,\left(\begin{array}{cc}
0&Id\\
\partial_x^2-Id+f'(\tilde{Q}_\beta)&0
\end{array}\right)W\right\rangle +\left\langle \tilde{H}_\beta W,\dbinom{0}{f(u)-f(\tilde{Q}_\beta)-f'(\tilde{Q}_\beta)(u-\tilde{Q}_\beta)}\right\rangle \\
&+\langle \tilde{H}_\beta W,J\tilde{Z}_{+,\beta}\rangle +e_\beta Ae^{-e_\beta t}\langle W,\tilde{Z}_{+,\beta}\rangle+\dot{x}\langle \tilde{H}_\beta W,\partial_x\tilde{R}_\beta\rangle+Ae^{-e\beta t}\dot{x}\langle W,\tilde{H}\partial_x\tilde{Y}_{+,\beta}\rangle.
\end{align*}

Let us analyze each term appearing in the preceding decomposition. 
Since $\tilde{H}_\beta$ is self-adjoint, we infer:
\begin{equation}
\langle \tilde{H}_\beta W,\partial_x\tilde{R}_\beta\rangle=\langle W,\tilde{H}_\beta\partial_x\tilde{R}_\beta\rangle=0
\end{equation} and
\begin{equation}
\langle \tilde{H}_\beta W,J\tilde{Z}_{+,\beta}\rangle=-Ae^{-e_\beta t}\langle W,\tilde{\mathscr{H}_\beta}\tilde{Z}_{+,\beta}\rangle=-e_\beta Ae^{-e_\beta t}\langle W,\tilde{Z}_{+,\beta}\rangle.
\end{equation}
Moreover, 
\begin{equation}
\left|\left\langle \tilde{H}_\beta W,\dbinom{0}{f(u)-f(\tilde{Q}_\beta)-f'(\tilde{Q}_\beta)(u-\tilde{Q}_\beta)}\right\rangle \right|\le C\|W\|_{H^1\times L^2}\left(\|W\|_{H^1\times L^2}^2+e^{-2e_\beta t}\right).
\end{equation}
We notice that \eqref{est_x_dot} implies 
\begin{equation}
|Ae^{-e_\beta t}\dot{x}\langle W,\tilde{H}_\beta\partial_x\tilde{Y}_{+,\beta}\rangle|\le Ce^{-e_\beta t}\|W(t)\|^2_{H^1\times L^2}+Ce^{-3e_\beta t}\|W(t)\|_{H^1\times L^2}.
\end{equation}
Defining $\tilde{T}:=-\partial_x^2+Id-f'(\tilde{Q}_\beta)$, it remains us to examine 
\begin{align*}
\MoveEqLeft[4]
\left\langle \tilde{H}_\beta W,\left(\begin{array}{cc}
0&Id\\
-\tilde{T}&0
\end{array}\right)W\right\rangle  \\
&=\left\langle \left(\begin{array}{cc}
\tilde{T}&0\\
0&Id
\end{array}\right) W,\left(\begin{array}{cc}
0&Id\\
-\tilde{T}&0
\end{array}\right)W\right\rangle +\left\langle \left(\begin{array}{cc}
0&-\beta\partial_x\\
\beta\partial_x&0
\end{array}\right)W,\left(\begin{array}{cc}
0&Id\\
-\tilde{T}&0
\end{array}\right)W\right\rangle.
\end{align*}
On the one hand, we observe that
$$\left\langle \left(\begin{array}{cc}
\tilde{T}&0\\
0&Id
\end{array}\right) W,\left(\begin{array}{cc}
0&Id\\
-\tilde{T}&0
\end{array}\right)W\right\rangle =0$$ and on the other
\begin{align*}
\left\langle \left(\begin{array}{cc}
0&-\beta\partial_x\\
\beta\partial_x&0
\end{array}\right)W,\left(\begin{array}{cc}
0&Id\\
-\tilde{T}&0
\end{array}\right)W\right\rangle&=\int_\RR\left\{-\beta\partial_xw_2w_2+\beta\partial_xw_1(-\tilde{T}w_1)\right\}\;dx\\
&=\beta\int_\RR\partial_xw_1f'(\tilde{Q}_\beta)w_1\\
&=-\frac{\beta}{2}\int_\RR \partial_x\tilde{Q}_\beta f''(\tilde{Q}_\beta)w_1^2\;dx.
\end{align*}
Gathering the above lines, we deduce
$$
\left|\frac{d\mathcal{F}_W}{dt}\right|\le C\left(\|W\|_{H^1\times L^2}^3+e^{-2e_\beta t}\|W\|_{H^1\times L^2}+e^{-e_\beta t}\|W\|_{H^1\times L^2}^2\right).
$$
Using the estimate of $\|W\|_{H^1\times L^2}$ obtained in Proposition \ref{prop_est_expo}, we infer:
\begin{equation}\label{est_der_FW}
\left|\frac{d\mathcal{F}_W}{dt}\right|\le C\left(e^{-2e_\beta t}\|W\|_{H^1\times L^2}+e^{-e_\beta t}\|W\|_{H^1\times L^2}^2\right).
\end{equation}
By integration of \eqref{est_der_FW}, we obtain for all $t\in[t^*,S_n]$:
\begin{equation}
\begin{aligned}
\left|\mathcal{F}_W(t)\right|&\le C\sup_{\tau\in[t,S_n]}\|W(\tau)\|_{H^1\times L^2}\int_t^{S_n}e^{-2e_\beta \tau}\;d\tau+C\sup_{\tau\in[t,S_n]}\|W(\tau)\|^2_{H^1\times L^2}\int_t^{S_n}e^{-e_\beta \tau}\;d\tau\\
&\le C\sup_{\tau\in[t,S_n]}\|W(\tau)\|_{H^1\times L^2}e^{-2e_\beta t}+C\sup_{\tau\in[t,S_n]}\|W(\tau)\|^2_{H^1\times L^2}e^{-e_\beta t}.
\end{aligned}
\end{equation}
At this stage, using once again the coercivity property provided by Proposition \ref{prop_coercivity} and the estimates on $\alpha_{\pm,\beta}$ given in Proposition \ref{prop_est_expo}, and taking $t_0$ large enough so that $e^{-e_\beta t}<\frac{1}{2}$ for all $t\ge t_0$, we have:
$$\sup_{\tau\in[t,S_n]}\|W(\tau)\|^2_{H^1\times L^2}\le C\left(\sup_{\tau\in[t,S_n]}\|W(\tau)\|_{H^1\times L^2}e^{-2e_\beta t}+e^{-4e_\beta t}\right).$$
We now deduce the existence of $C>0$ such that for all $n$, for all $t\in [t^*_n,S_n]$,
$$\|W(t)\|_{H^1\times L^2}\le Ce^{-2e_\beta t}.$$

\end{proof}

Now, Proposition \ref{prop_boot} is obtained as a corollary of Proposition \ref{prop_est_W_improved}. The triangular inequality implies
$$\|U_n(t)-R_\beta(t)-Ae^{-e_\beta t}Y_{+,\beta}(t)\|_{H^1\times L^2}\le \|W_n(t)\|_{H^1\times L^2}+C|x(t)|$$
and since $x(S_n)=0$, the result follows from the integration of 
$$|\dot{x}(t)|\le C\left(\|W_n(t)\|_{H^1\times L^2}+e^{-2e_\beta t}\right)\le Ce^{-2e_\beta t}.$$

\section{Classification of the asymptotic soliton-like solutions}

Let us consider a solution $u$ of (NLKG), denote $U=\dbinom{u}{\partial_tu}$, and assume that
$$\|U(t)-R_\beta(t)\|_{H^1\times L^2}\to 0,\quad \text{as } t\to+\infty.$$
We want to show that $U$ equals to $U^A$, for some $A\in\RR$. In this section again, we consider the one-dimensional case.

\subsection{Modulation of $U$ and coercivity property}

\begin{Lem}\label{lem_modulation_2}
There exist $T\in\RR$, $C>0$, and $x:[T,+\infty)\to\RR$ of class $\mathscr{C}^1$ such that, denoting 
$$\tilde{R}_\beta(t,x):=\dbinom{Q_\beta(t,x-x(t))}{\partial_tQ_\beta(t,x-x(t))}\quad \text{and}\quad E:=U-\tilde{R}_\beta,$$
we have $\|E(t)\|_{H^1\times L^2}\to 0$ as $t\to+\infty$, $x(t)\to 0$ as $t\to+\infty$, and for all $t\ge T$, 
\begin{equation}\label{eq_modul}
\left\langle E(t),\partial_x\tilde{R}_\beta(t) \right\rangle = 0
\end{equation} and 
\begin{equation}\label{control_xdot}
|\dot{x}(t)|\le C\|E(t)\|_{H^1\times L^2}.
\end{equation}
\end{Lem}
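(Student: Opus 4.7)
The proof runs parallel to that of Lemma~\ref{lem_modulation}, but simpler, since there is no $Ae^{-e_\beta t}Y_{+,\beta}$ term to deal with, and the qualitative assumption \eqref{cv_zero} replaces the a priori bound \eqref{est_a_priori}.

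First, I would construct $x(t)$ via the implicit function theorem. Consider
\[\Psi : \RR \times (H^1(\RR)\times L^2(\RR)) \times \RR \to \RR, \qquad \Psi(t, V, y) := \langle V - R_\beta(t, \cdot - y),\, \partial_x R_\beta(t, \cdot - y)\rangle.\]
Using that $\langle R_\beta(t, \cdot - y),\, \partial_x R_\beta(t, \cdot - y)\rangle = 0$ by integration by parts (the integrand is a total $x$-derivative of $\tfrac12 |R_\beta|^2_{\RR^2}$), this simplifies to $\Psi(t, V, y) = \langle V,\, \partial_x R_\beta(t, \cdot - y)\rangle$. At $(V, y) = (R_\beta(t), 0)$ we have $\Psi = 0$ and
\[\partial_y \Psi\big|_{(t, R_\beta(t), 0)} = -\langle R_\beta(t),\, \partial_x^2 R_\beta(t)\rangle = \|\partial_x R_\beta\|_{L^2\times L^2}^2 =: \kappa^2 > 0,\]
which is a positive constant independent of $t$ by translation invariance. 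The implicit function theorem supplies a $\mathscr{C}^1$ map $(t, V) \mapsto X(t, V)$ defined on an $H^1\times L^2$-ball around $R_\beta(t)$ of radius uniform in $t$, satisfying $X(t, R_\beta(t)) = 0$, $\Psi(t, V, X(t,V)) = 0$, and $|X(t, V)| \le C\|V - R_\beta(t)\|_{H^1\times L^2}$. Since $\|U(t) - R_\beta(t)\|_{H^1\times L^2} \to 0$, there exists $T \in \RR$ such that $x(t) := X(t, U(t))$ is well-defined on $[T, +\infty)$, which produces \eqref{eq_modul}.

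Second, the convergences follow from the estimate on $X$: $|x(t)| \le C\|U(t) - R_\beta(t)\|_{H^1\times L^2} \to 0$, and by smoothness and exponential decay of $R_\beta$ one has $\|\tilde R_\beta(t) - R_\beta(t)\|_{H^1\times L^2} \le C|x(t)| \to 0$; the triangle inequality then gives $\|E(t)\|_{H^1\times L^2}\to 0$.

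Third, to obtain the $\mathscr{C}^1$ regularity of $x$ together with \eqref{control_xdot}, I would differentiate the orthogonality condition in $t$, exactly as done in the proof of Lemma~\ref{lem_modulation}. Since both $U$ and $\tilde R_\beta$ evolve under \eqref{NLKG'} (the latter up to a drift $\dot x(t)\,\partial_x\tilde R_\beta$ arising from the modulation), $E$ satisfies
\[\partial_t E = \left(\begin{array}{cc} 0 & \mathrm{Id} \\ \partial_x^2 - \mathrm{Id} + f'(\tilde Q_\beta) & 0 \end{array}\right) E + \dbinom{0}{g} + \dot x(t)\,\partial_x \tilde R_\beta,\]
where $g := f(u) - f(\tilde Q_\beta) - f'(\tilde Q_\beta)\,e_1 = \mathrm{O}(\|E\|_{H^1\times L^2}^2)$ by Taylor applied to the $\mathscr{C}^2$ nonlinearity ($e_1$ denotes the first component of $E$). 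Differentiating $\langle E(t), \partial_x \tilde R_\beta(t)\rangle = 0$ in $t$, using $\partial_t \partial_x \tilde R_\beta = (\partial_{tx} R_\beta)(t, \cdot - x(t)) - \dot x(t)\,\partial_x^2 \tilde R_\beta$, and collecting all $\dot x$ terms yields
\[\dot x(t)\bigl[\,\kappa^2 - \langle E,\, \partial_x^2\tilde R_\beta\rangle\,\bigr] = -\langle E,\, K(t)\rangle + \mathrm{O}(\|E\|_{H^1\times L^2}^2),\]
where $K(t)$ denotes a smooth exponentially decaying vector field built from $R_\beta$ and its derivatives. For $\|E(t)\|_{H^1\times L^2}$ small enough, the bracketed coefficient is bounded below by $\kappa^2/2$, so this formula simultaneously shows that $\dot x(t)$ is a continuous function of $t$ (whence $x \in \mathscr{C}^1$) and gives \eqref{control_xdot}.

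The only mild subtlety is that $U$ has natural time regularity $\mathscr{C}^1(\RR, L^2 \times H^{-1})$ rather than $\mathscr{C}^1(\RR, H^1 \times L^2)$, but all pairings above remain well-defined because $\partial_x\tilde R_\beta$ and $\partial_{tx}\tilde R_\beta$ are smooth in $x$ with exponential spatial decay; this compensates the regularity loss in $\partial_t U$ and presents no genuine obstacle. The argument is a direct simplification of the one carried out in the proof of Lemma~\ref{lem_modulation}.
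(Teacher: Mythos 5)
Your argument is correct and is exactly the route the paper takes: the paper's proof of this lemma simply refers back to Lemma \ref{lem_modulation} (implicit function theorem for existence of $x(t)$ and the smallness/convergence bounds, then differentiation of the orthogonality relation $\langle E,\partial_x\tilde R_\beta\rangle=0$ using the evolution equation for $E$ to extract $\dot x$ and obtain \eqref{control_xdot}), and your write-up reproduces that scheme with the correct simplifications coming from $A=0$. Your closing remark on the $\mathscr{C}^1(L^2\times H^{-1})$ time regularity of $U$ is a sound way to justify the differentiation step, which the paper leaves implicit.
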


\begin{proof}
Lemma \ref{lem_modulation_2} is proved similarly as Lemma \ref{lem_modulation}.
Defining $$\tilde{Q}_\beta(t,x):=Q_\beta(t,x-x(t)),$$ the equation satisfied by $E$ reads:
\begin{equation}
\begin{aligned}
\partial_tE=&\left(\begin{array}{cc}
0&Id\\
\partial_x^2-Id+f'(\tilde{Q}_\beta)&0\\
\end{array}\right)E+\dot{x}\partial_x\tilde{R}_\beta\\
&+\dbinom{0}{f(u)-f(\tilde{Q}_\beta)-f'(\tilde{Q}_\beta)(u-\tilde{Q}_\beta)}.
\end{aligned}
\end{equation}
\end{proof}

Let us denote $\varepsilon_1(t,x):=u(t,x)-Q_\beta(t,x-x(t))$ and $\varepsilon_2(t,x):=\partial_tu(t,x)-\partial_tQ_\beta(t,x-x(t))$ so that $E(t,x):=\dbinom{\varepsilon_1(t,x)}{\varepsilon_2(t,x)}$. \\
Then, introduce the functional $\mathcal{F}_E$ defined as follows: for all $t\ge T$,
\begin{equation}
\mathcal{F}_E(t):=\int_\RR\left\{\varepsilon_2^2+(\partial_x\varepsilon_1)^2+\varepsilon_1^2-f'(\tilde{Q}_\beta)\varepsilon_1^2+2\beta\varepsilon_2\partial_x\varepsilon_1\right\}(t,x)\;dx.
\end{equation}

With analogous notations as that employed in the previous section, we denote
$$\tilde{Z}_{\pm,\beta}(t,x):=Z_{\pm,\beta}(t,x-x(t)).$$

\begin{proposition}\label{lem_coercivity}
There exists $\mu>0$ such that for all $t\ge T$:
\begin{equation}
\mathcal{F}_E(t)\ge \mu\|E(t)\|_{H^1\times L^2}^2-\frac{1}{\mu}\left[\left\langle E(t),\tilde{Z}_{-,\beta}(t)\right\rangle^2+\left\langle E(t),\tilde{Z}_{+,\beta}(t)\right\rangle^2\right].
\end{equation}
\end{proposition}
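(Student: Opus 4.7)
The plan is to reduce the coercivity statement to the standard coercivity of Proposition \ref{prop_coercivity} by translating to a frame centered on the modulated soliton, and then to eliminate the degenerate $JZ_{0,\beta}$ direction of $H_\beta$ using the modulation orthogonality $\langle E(t), \partial_x \tilde{R}_\beta(t)\rangle = 0$. First I would observe that $\mathcal{F}_E(t) = \langle \tilde{H}_\beta E(t), E(t)\rangle$, where $\tilde{H}_\beta$ is the matrix operator $H_\beta$ with $Q_\beta$ replaced by $\tilde{Q}_\beta(t,\cdot) = Q_\beta(t,\cdot - x(t))$; this follows from a direct integration by parts on the cross term in the definition of $\mathcal{F}_E$. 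Then, under the isometric change of variables $\tilde{E}(t, x) := E(t, x+x(t))$, the functional becomes $\mathcal{F}_E(t) = \langle H_\beta \tilde{E}(t), \tilde{E}(t)\rangle$ and the modulation condition translates to $\langle \tilde{E}(t), \partial_x R_\beta\rangle = 0$.

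Next I would decompose $\tilde{E}(t) = a(t)\, JZ_{0,\beta} + E'(t)$ with $\langle E'(t), JZ_{0,\beta}\rangle = 0$, which uniquely defines $a(t) \in \RR$. The crucial algebraic fact is that $JZ_{0,\beta}$ belongs to the kernel of $H_\beta$: from $\mathscr{H}_\beta Z_{0,\beta} = 0$ and $\mathscr{H}_\beta = -H_\beta J$, one gets $H_\beta(JZ_{0,\beta}) = 0$. Therefore
\begin{equation*}
\langle H_\beta \tilde{E}(t), \tilde{E}(t)\rangle = \langle H_\beta E'(t), E'(t)\rangle.
\end{equation*}
Applying Proposition \ref{prop_coercivity} to $E'(t)$ and using $\langle E'(t), JZ_{0,\beta}\rangle = 0$ by construction together with $\langle E'(t), Z_{\pm,\beta}\rangle = \langle \tilde{E}(t), Z_{\pm,\beta}\rangle$ (since $\langle JZ_{0,\beta}, Z_{\pm,\beta}\rangle = 0$ by Proposition \ref{prop_spectral}), we get
\begin{equation*}
\mathcal{F}_E(t) \geq \mu_0 \|E'(t)\|_{H^1\times L^2}^2 - \frac{1}{\mu_0}\left[\langle \tilde{E}(t), Z_{+,\beta}\rangle^2 + \langle \tilde{E}(t), Z_{-,\beta}\rangle^2\right].
\end{equation*}

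It remains to lower-bound $\|E'(t)\|^2$ by $\|\tilde{E}(t)\|^2$. A short computation shows that $\langle \partial_x R_\beta, JZ_{0,\beta}\rangle = \|\partial_x Q_\beta\|_{L^2}^2 > 0$ (the second component contributes zero after integration by parts). Combined with $\langle \tilde{E}(t), \partial_x R_\beta\rangle = 0$, the decomposition forces $a(t) \langle JZ_{0,\beta}, \partial_x R_\beta\rangle + \langle E'(t), \partial_x R_\beta\rangle = 0$, hence $|a(t)| \leq C\, \|E'(t)\|_{H^1\times L^2}$, and then $\|\tilde{E}(t)\|_{H^1\times L^2}^2 \leq C'\, \|E'(t)\|_{H^1\times L^2}^2$. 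Translating back to the original variables ($\|\tilde{E}\| = \|E\|$ and $\langle \tilde{E}, Z_{\pm,\beta}\rangle = \langle E, \tilde{Z}_{\pm,\beta}\rangle$) yields the announced inequality with an appropriate $\mu > 0$. The main subtle point will be verifying the transversality $\langle \partial_x R_\beta, JZ_{0,\beta}\rangle \neq 0$, which is precisely what makes the modulation choice an effective substitute for orthogonality to the kernel direction of $\tilde{H}_\beta$; without it, the decomposition step would fail.
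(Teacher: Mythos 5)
Your proposal is correct and takes essentially the same route as the paper, whose proof is a one-liner: observe that $\mathcal{F}_E(t)=\langle \tilde{H}_\beta E(t),E(t)\rangle$, apply Proposition \ref{prop_coercivity}, and use the modulation orthogonality \eqref{eq_modul} to remove the kernel direction $J\tilde{Z}_{0,\beta}$. The only (cosmetic) difference is that you eliminate that direction by a decomposition together with the transversality $\langle \partial_xR_\beta,JZ_{0,\beta}\rangle\neq 0$, whereas the paper implicitly uses the identity $\partial_xR_\beta=JZ_{0,\beta}$ (invoked explicitly in Section 6), under which \eqref{eq_modul} annihilates the term $\langle E,J\tilde{Z}_{0,\beta}\rangle^2$ directly.
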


\begin{proof}
We observe that $\mathcal{F}_E(t)=\left\langle \tilde{H}_\beta E(t),E(t)\right\rangle$ so that Proposition \ref{lem_coercivity} follows from Proposition \ref{prop_coercivity} and from \eqref{eq_modul}. 
\end{proof}

\subsection{Exponential control of $\|E(t)\|_{H^1\times L^2}$}

We improve the control of $\|E(t)\|_{H^1\times L^2}$, proceeding as in section \ref{sect_constr_one_par}.

\subsubsection{Control of the functional $\mathcal{F}_E$} 

\begin{proposition}\label{prop_control_F}
We have 
\begin{equation}
\mathcal{F}_E(t)=\mathrm{O}\left(\|E(t)\|_{H^1\times L^2}^3\right).
\end{equation}
\end{proposition}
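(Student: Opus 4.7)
The plan is to adapt the proof of Lemma \ref{lem_control_F} to the present, simpler setting: here $E(t)=U(t)-\tilde R_\beta(t)$ carries no extra term analogous to $Ae^{-e_\beta t}Y_{+,\beta}$, so the argument essentially reduces to using conservation of energy and momentum of the full solution $u$.

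First I would expand $\mathcal{F}_E(t)$ by substituting $\varepsilon_1=u-\tilde Q_\beta$ and $\varepsilon_2=\partial_t u+\beta\partial_x\tilde Q_\beta$. Using the Taylor formula
\begin{equation*}
F(u)=F(\tilde Q_\beta)+f(\tilde Q_\beta)\varepsilon_1+\tfrac12 f'(\tilde Q_\beta)\varepsilon_1^2+\mathrm{O}(|\varepsilon_1|^3),
\end{equation*}
I can trade $-\tfrac12 f'(\tilde Q_\beta)\varepsilon_1^2$ for $-[F(u)-F(\tilde Q_\beta)-f(\tilde Q_\beta)\varepsilon_1]$ at the cost of a cubic remainder which, by the $1$D Sobolev embedding $H^1\hookrightarrow L^\infty$, is $\mathrm{O}(\|\varepsilon_1\|_{L^\infty}\|\varepsilon_1\|_{L^2}^2)=\mathrm{O}(\|E(t)\|_{H^1\times L^2}^3)$. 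After reorganization the quadratic-in-$u$ contributions collect into $2E(u,\partial_t u)+2\beta P(u,\partial_t u)$, which is constant in $t$ by conservation of energy and momentum; the purely $\tilde Q_\beta$-dependent contribution is translation-invariant in $x$ and hence also constant.

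Next, I would check that the linear-in-$u$ cross terms vanish. After one integration by parts they assemble into
\begin{equation*}
2\int_\RR u\bigl\{(1-\beta^2)\partial_x^2\tilde Q_\beta-\tilde Q_\beta+f(\tilde Q_\beta)\bigr\}\,dx,
\end{equation*}
which is zero because the boosted profile satisfies $(1-\beta^2)\partial_x^2 Q_\beta-Q_\beta+f(Q_\beta)=0$ (equivalently, $\partial_t^2 Q_\beta=\partial_x^2 Q_\beta-Q_\beta+f(Q_\beta)$ together with $\partial_t Q_\beta=-\beta\partial_x Q_\beta$). Gathering everything yields
\begin{equation*}
\mathcal{F}_E(t)=C+\mathrm{O}\bigl(\|E(t)\|_{H^1\times L^2}^3\bigr)
\end{equation*}
for some constant $C$ independent of $t$.

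Finally, to show $C=0$ I would use $|\mathcal{F}_E(t)|\le C'\|E(t)\|_{H^1\times L^2}^2$, together with the hypothesis $\|E(t)\|_{H^1\times L^2}\to 0$ as $t\to+\infty$ from Lemma \ref{lem_modulation_2}: sending $t\to+\infty$ forces $C=0$. This is the key simplification compared with Lemma \ref{lem_control_F}, where the analogous constant $C_n$ was instead controlled by the terminal conditions $W(S_n)=0$ and $\alpha_{+,\beta}(S_n)=0$. I do not expect any serious obstacle here: the main work is the algebraic bookkeeping to verify that the cross terms really do cancel via the soliton equation, and the observation that the asymptotic smallness of $E$ suffices to kill the remaining constant.
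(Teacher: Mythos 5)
Your proposal is correct and follows essentially the same route as the paper, which itself simply reruns the computation of Lemma \ref{lem_control_F} with $A=0$: conservation of energy and momentum handles the quadratic part, the elliptic equation $(1-\beta^2)\partial_x^2 Q_\beta-Q_\beta+f(Q_\beta)=0$ kills the linear cross terms, and the Taylor remainder is $\mathrm{O}(\|E\|_{H^1\times L^2}^3)$. Your way of forcing the constant to vanish — combining $|\mathcal{F}_E(t)|\le C'\|E(t)\|_{H^1\times L^2}^2$ with $\|E(t)\|_{H^1\times L^2}\to 0$ — is exactly the argument the paper uses here, and you correctly note that this replaces the terminal-condition argument of Lemma \ref{lem_control_F}.
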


\begin{proof}
Let us take again the proof of Lemma \ref{lem_control_F} (replacing $A$ in that proof by $0$ here). We obtain in the same way the existence of $K\in\RR$ such that

$$
\mathcal{F}_E(t)= K+2\int_\RR\left\{F(u)-F(\tilde{Q}_\beta)-f(\tilde{Q}_\beta)\varepsilon_1-\frac{1}{2}f'(\tilde{Q}_\beta)\varepsilon_1^2\right\}(t,x)\;dx
$$
By Taylor inequality,
$$
\int_\RR\left\{F(u)-F(\tilde{Q}_\beta)-f(\tilde{Q}_\beta)\varepsilon_1-\frac{1}{2}f'(\tilde{Q}_\beta)\varepsilon_1^2\right\}(t,x)\;dx=\mathrm{O}\left(\|E(t)\|_{H^1\times L^2}^3\right).
$$
It follows that 
$$\mathcal{F}_E(t)=K+\mathrm{O}\left(\|E(t)\|_{H^1\times L^2}^3\right).$$

On the other hand, $\mathcal{F}_E(t)=\mathrm{O}\left(\|E(t)\|_{H^1\times L^2}^2\right)$; thus $K=0$ and Proposition \ref{prop_control_F} is proved.
\end{proof}

\subsubsection{Control of the unstable directions} 

Let us denote $$\alpha_{\pm,\beta}(t):=\left\langle E(t),\tilde{Z}_{\pm,\beta}(t)\right\rangle.$$

\begin{Lem}\label{lem_der_alpha}
We have $$\left|\frac{d\alpha_{+,\beta}}{dt}-e_\beta\alpha_{+,\beta}\right|+\left|\frac{d\alpha_{-,\beta}}{dt}+e_\beta\alpha_{-,\beta}\right|=\mathrm{O}\left(\|E(t)\|_{H^1\times L^2}^2\right).$$
\end{Lem}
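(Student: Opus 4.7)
The plan is to reproduce \emph{mutatis mutandis} the proof of Lemma \ref{lem_der_alpha_pm}, with the significant simplification that here there is no $A e^{-e_\beta t}\tilde Y_{+,\beta}$ correction. Start from the product rule
\[
\frac{d\alpha_{\pm,\beta}}{dt} = \langle \partial_t E, \tilde Z_{\pm,\beta}\rangle + \langle E, \partial_t \tilde Z_{\pm,\beta}\rangle.
\]
For the second bracket, I would use the traveling-wave identity $\partial_t Z_{\pm,\beta} = -\beta \partial_x Z_{\pm,\beta}$ together with the chain rule in the shifted variable $x-x(t)$ to obtain $\partial_t \tilde Z_{\pm,\beta} = -(\beta + \dot x)\partial_x \tilde Z_{\pm,\beta}$. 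For the first, I would plug in the evolution equation for $E$ derived in the proof of Lemma \ref{lem_modulation_2}. After transposing the diagonal linear block and combining with the $-\beta\partial_x$ piece from above, the leading contribution reassembles into $\langle E, \tilde{\mathscr{H}}_\beta \tilde Z_{\pm,\beta}\rangle$, which by the eigenvalue identity of Proposition \ref{prop_spectral} equals $\pm e_\beta \alpha_{\pm,\beta}$. That produces the main term on the left-hand side.

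Three error contributions then remain, each of which must be shown to be $\mathrm{O}(\|E\|_{H^1\times L^2}^2)$. The nonlinear term $\bigl\langle \binom{0}{N},\tilde Z_{\pm,\beta}\bigr\rangle$, with $N=f(u)-f(\tilde Q_\beta)-f'(\tilde Q_\beta)(u-\tilde Q_\beta)$, is quadratic by the $\mathscr C^2$ Taylor inequality $|N|\le C\varepsilon_1^2$ and the Sobolev embedding $H^1(\RR)\hookrightarrow L^\infty(\RR)$. The modulation piece $\dot x \langle \partial_x \tilde R_\beta,\tilde Z_{\pm,\beta}\rangle$ is handled through the orthogonality properties of Proposition \ref{prop_spectral} together with the bound \eqref{control_xdot}, exactly as in the analogous step of Lemma \ref{lem_der_alpha_pm}. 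Finally, the residual $\dot x \langle E,\partial_x\tilde Z_{\pm,\beta}\rangle$ is bounded directly by $|\dot x|\,\|E\|_{H^1\times L^2} \le C\|E\|_{H^1\times L^2}^2$, again by \eqref{control_xdot}. Adding the three bounds yields the lemma.

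No real obstacle is expected: the argument faithfully reproduces (and actually simplifies) that of Lemma \ref{lem_der_alpha_pm}. The only delicate point is the second error term, where one must verify carefully that the inner product $\langle \partial_x \tilde R_\beta, \tilde Z_{\pm,\beta}\rangle$ is compatible, via the orthogonality structure inherited from $\mathscr{H}_\beta$, with a genuinely quadratic-in-$\|E\|_{H^1\times L^2}$ control rather than only a linear one through $|\dot x|\le C\|E\|_{H^1\times L^2}$. Once this is confirmed, the rest is routine bookkeeping.
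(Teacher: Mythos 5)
Your proposal is correct and follows essentially the same route as the paper, which itself just reruns the computation of Lemma \ref{lem_der_alpha_pm} with $A=0$: differentiate $\alpha_{\pm,\beta}$, substitute the evolution equation for $E$, transpose onto $\tilde Z_{\pm,\beta}$ to produce $\langle E,\tilde{\mathscr{H}}_\beta\tilde Z_{\pm,\beta}\rangle=\pm e_\beta\alpha_{\pm,\beta}$, and bound the nonlinear term, the $\dot{x}\langle E,\partial_x\tilde Z_{\pm,\beta}\rangle$ term, and the $\dot{x}\langle\partial_x\tilde R_\beta,\tilde Z_{\pm,\beta}\rangle$ term exactly as you describe. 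You are also right to single out the last of these as the only delicate point: since $|\dot x|\le C\|E\|_{H^1\times L^2}$ is merely linear, one genuinely needs the orthogonality $\langle\partial_x\tilde R_\beta,\tilde Z_{\pm,\beta}\rangle=0$ from Proposition \ref{prop_spectral}, which is precisely what the paper invokes.
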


\begin{proof}
The proof follows the same lines as that of Lemma \ref{lem_der_alpha_pm} (by taking $A=0$).
We compute
\begin{align*}
\frac{d\alpha_{\pm,\beta}}{dt}=&\left\langle \partial_tE,\tilde{Z}_{\pm,\beta}\right\rangle+\left\langle E,\partial_t\tilde{Z}_{\pm,\beta}\right\rangle-\dot{x}\left\langle E(t),\partial_x\tilde{Z}_{\pm,\beta}\right\rangle\\
=&\left\langle E,\left(\begin{array}{cc}
0&Id\\
\partial_x^2-Id+f'(Q_\beta)&0\\
\end{array}\right)\tilde{Z}_{\pm,\beta}\right\rangle+\mathrm{O}\left(\|E\|_{H^1\times L^2}^2\right)\\
&+\dot{x}\left\langle \partial_x\tilde{R}_\beta,\tilde{Z}_{\pm,\beta}\right\rangle+\left\langle E,\left(\begin{array}{cc}
-\beta\partial_x&0\\
0&-\beta\partial_x\\
\end{array}\right)\tilde{Z}_{\pm,\beta}\right\rangle\\
=&\left\langle E,\mathscr{H}_\beta\tilde{Z}_{\pm,\beta}\right\rangle+\mathrm{O}\left(\|E\|_{H^1\times L^2}^2\right)\\
=&\pm e_\beta\alpha_{\pm,\beta}+\mathrm{O}\left(\|E\|_{H^1\times L^2}^2\right),
\end{align*}
which precisely yields Lemma \ref{lem_der_alpha}.
\end{proof}

\begin{proposition}\label{prop_est_alpha}
There exists $C\ge 0$ such that for $t$ large enough,
\begin{equation}\label{est_alpha}
|\alpha_{-,\beta}(t)|\le Ce^{-e_\beta t}\quad \text{and}\quad |\alpha_{+,\beta}(t)|\le Ce^{-2e_\beta t}.
\end{equation}
\end{proposition}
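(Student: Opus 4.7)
The natural plan is to reduce the problem to a closed system of differential inequalities on $\alpha_{\pm,\beta}$ alone and then to iterate a Gronwall-type estimate, just as was done for Proposition \ref{prop_est_expo}, the only essential difference being the absence here of the exponential forcing term $Ae^{-e_\beta t}Y_{+,\beta}$.

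First I would combine the coercivity property of Proposition \ref{lem_coercivity} with the cubic bound $\mathcal{F}_E(t)=\mathrm{O}(\|E(t)\|_{H^1\times L^2}^3)$ of Proposition \ref{prop_control_F}. Because $\|E(t)\|_{H^1\times L^2}\to 0$, the cubic remainder $C\|E\|_{H^1\times L^2}^3$ is eventually absorbed into the coercive quadratic term, which yields, for $t$ large enough,
\[
\|E(t)\|_{H^1\times L^2}^2 \le C\bigl(\alpha_{+,\beta}(t)^2+\alpha_{-,\beta}(t)^2\bigr).
\]
Inserting this into Lemma \ref{lem_der_alpha} would then turn its estimate into the closed autonomous-looking system
\[
\bigl|\alpha_{\pm,\beta}'(t)\mp e_\beta\alpha_{\pm,\beta}(t)\bigr| \le C\bigl(\alpha_{+,\beta}(t)^2+\alpha_{-,\beta}(t)^2\bigr),
\]
which would be the starting point for everything that follows.

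Next I would run a barrier argument analogous to the one used in the proof of Proposition \ref{prop_est_expo} to prove that, for some constant $M>0$ and $t$ large enough,
\[
|\alpha_{+,\beta}(t)|\le M\,\alpha_{-,\beta}(t)^2.
\]
Setting $h(t):=\alpha_{+,\beta}(t)-M\alpha_{-,\beta}(t)^2$, expanding $h'$ with the closed system above and writing $\alpha_{+,\beta}=h+M\alpha_{-,\beta}^2$ should lead, after absorbing the higher-order remainders using $\alpha_{\pm,\beta}(t)\to 0$, to a differential inequality of the form $h'(t)\ge e_\beta h(t)-c_M h(t)^2$. Since $h(t)\to 0$ at infinity, the supersolution argument of Proposition \ref{prop_est_expo} would then force $h\le 0$ on $[T,+\infty)$; applying the same reasoning to $-\alpha_{+,\beta}-M\alpha_{-,\beta}^2$ would give the matching lower bound.

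Once $|\alpha_{+,\beta}|\le M\alpha_{-,\beta}^2$ is established, the inequality for $\alpha_{-,\beta}$ reduces to the scalar problem $|\alpha_{-,\beta}'(t)+e_\beta\alpha_{-,\beta}(t)|\le C\alpha_{-,\beta}(t)^2$. A bootstrap argument starting from the Duhamel representation
\[
\alpha_{-,\beta}(t)=e^{-e_\beta(t-T)}\alpha_{-,\beta}(T)+\int_T^t e^{-e_\beta(t-s)}r(s)\,ds,\qquad |r(s)|\le C\alpha_{-,\beta}(s)^2,
\]
combined with the weak decay already implied by $\alpha_{-,\beta}(t)\to 0$, should upgrade $|\alpha_{-,\beta}(t)|$ in finitely many iterations to the sharp rate $Ce^{-e_\beta t}$, and the bound $|\alpha_{+,\beta}(t)|\le Ce^{-2e_\beta t}$ would then follow immediately from the barrier estimate. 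The delicate point in this program is the barrier step: the quadratic and cubic remainders have to be absorbed using only the qualitative assumption $\alpha_{\pm,\beta}(t)\to 0$, without any quantitative a priori decay, which requires a careful tuning of the constant $M$ and of the threshold beyond which the argument applies; everything else is a standard Gronwall-type bootstrap once this hierarchy has been set up.
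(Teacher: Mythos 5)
Your proposal is correct and follows essentially the same route as the paper: the coercivity of $\mathcal{F}_E$ combined with the cubic bound of Proposition \ref{prop_control_F} gives $\|E\|_{H^1\times L^2}^2\le C(\alpha_{+,\beta}^2+\alpha_{-,\beta}^2)$, which closes the system from Lemma \ref{lem_der_alpha}, and the paper then invokes the barrier argument of Combet (reproduced in detail in the proof of Proposition \ref{prop_est_expo}, here without the $e^{-e_\beta t}$ forcing) to get $|\alpha_{+,\beta}|\le C\alpha_{-,\beta}^2$ and concludes by the same Gronwall-type integration. Your two refinements — using $h\to 0$ at infinity in place of the endpoint condition $h(S_n)\le 0$ to rule out $h>0$, and the two-pass bootstrap (first rate $e_\beta-\epsilon$, then the sharp rate) for $\alpha_{-,\beta}$ — are exactly the details the paper leaves implicit.
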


\begin{proof}
For $t$ large enough, we obtain as a consequence of Lemma \ref{lem_coercivity} and Proposition \ref{prop_control_F}:
\begin{equation}\label{est_E_alpha}
\|E(t)\|_{H^1\times L^2}^2\le C\left(\alpha_{+,\beta}^2+\alpha_{-,\beta}^2\right).
\end{equation} 
Now, we deduce from Lemma \ref{lem_der_alpha} the following differential system, for some constant $C\ge 0$ and for all $t$ large enough:
\begin{equation}\label{syst_diff_alpha}
\begin{dcases}
\left|\frac{d\alpha_{+,\beta}}{dt}-e_\beta\alpha_{+,\beta}\right|\leq C\left(\alpha_{+,\beta}^2+\alpha_{-,\beta}^2\right)\\
\left|\frac{d\alpha_{-,\beta}}{dt}+e_\beta\alpha_{-,\beta}\right|\leq C\left(\alpha_{+,\beta}^2+\alpha_{-,\beta}^2\right).
\end{dcases}
\end{equation} 
Then, the argument exposed in \cite[paragraph 4.4.2]{combet2010} shows that
$$|\alpha_{+,\beta}(t)|\le C\alpha_{-,\beta}^2(t)$$
and allows us to conclude to \eqref{est_alpha}.
\end{proof}

\begin{proposition}\label{prop_E}
There exists $C\ge 0$ such that for $t$ large enough,
\begin{equation}
\|E(t)\|_{H^1\times L^2}\le Ce^{-e_\beta t}.
\end{equation}
\end{proposition}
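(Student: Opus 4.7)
The statement follows essentially directly by combining the two results that immediately precede it, so the plan is short. My proposal is as follows.

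First, I would invoke the coercivity estimate \eqref{est_E_alpha}, which is valid for $t$ large enough and reads
\[
\|E(t)\|_{H^1\times L^2}^2 \le C\bigl(\alpha_{+,\beta}(t)^2 + \alpha_{-,\beta}(t)^2\bigr).
\]
Next, I would plug in the two bounds from Proposition \ref{prop_est_alpha}, namely $|\alpha_{+,\beta}(t)|\le Ce^{-2e_\beta t}$ and $|\alpha_{-,\beta}(t)|\le Ce^{-e_\beta t}$. Since $e^{-4e_\beta t}\le e^{-2e_\beta t}$ for $t$ large, the right-hand side is controlled by $Ce^{-2e_\beta t}$, and taking square roots yields the claimed bound $\|E(t)\|_{H^1\times L^2}\le Ce^{-e_\beta t}$.

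There is no real obstacle here: the work has already been done in Proposition \ref{prop_control_F} (which upgrades $\mathcal{F}_E$ from $\mathrm{O}(\|E\|^2)$ to $\mathrm{O}(\|E\|^3)$, so that for $t$ large the negative contribution can be absorbed and \eqref{est_E_alpha} holds), in Proposition \ref{prop_coercivity} (used through Proposition \ref{lem_coercivity}), and in the bootstrap argument on $(\alpha_{+,\beta},\alpha_{-,\beta})$ carried out for Proposition \ref{prop_est_alpha}. The observation that the rate $e^{-e_\beta t}$ on $\|E\|$ is driven solely by the unstable/stable modes (and indeed by $\alpha_{-,\beta}$, since $\alpha_{+,\beta}^2$ is of higher order) is exactly what makes this step immediate. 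I would simply write out the two-line computation and remark that the bound is sharp at this stage, consistently with the expected behavior $\|U-R_\beta\|\sim |A|e^{-e_\beta t}$ from Theorem \ref{th_main}; the identification of $A$ (and the derivation of the faster decay of $E - Ae^{-e_\beta t}Y_{+,\beta}$) is the business of subsequent arguments, not of this proposition.
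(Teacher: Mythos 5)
Your proposal is correct and follows exactly the paper's own argument: the paper likewise deduces Proposition \ref{prop_E} immediately from the coercivity bound \eqref{est_E_alpha} together with the estimates \eqref{est_alpha} on $\alpha_{\pm,\beta}$. Nothing is missing.
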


\begin{proof}
This is an immediate consequence of \eqref{est_alpha} and \eqref{est_E_alpha}.
\end{proof}

\subsubsection{Identification of $U$ with $U^A$ for some $A\in\RR$}

\begin{Lem}\label{lem_def_A}
There exist $A\in\RR$ and $C\ge 0$ such that for $t$ sufficiently large,
\begin{equation}\label{est_lem_def_A}
|e^{e_\beta t}\alpha_{-,\beta}(t)-A|\le Ce^{-e_\beta t}.
\end{equation}
\end{Lem}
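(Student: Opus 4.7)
The plan is to combine the differential estimate from Lemma \ref{lem_der_alpha} for $\alpha_{-,\beta}$ with the exponential control $\|E(t)\|_{H^1\times L^2}\le Ce^{-e_\beta t}$ provided by Proposition \ref{prop_E}. Concretely, I would first observe that Lemma \ref{lem_der_alpha} gives
$$\left|\frac{d\alpha_{-,\beta}}{dt}(t)+e_\beta\alpha_{-,\beta}(t)\right|\le C\|E(t)\|_{H^1\times L^2}^2\le Ce^{-2e_\beta t}$$
for $t$ large enough. This is the only input we need.

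The idea is then to rewrite the estimate as a bound on the derivative of $e^{e_\beta t}\alpha_{-,\beta}(t)$. Multiplying through by $e^{e_\beta t}$ yields
$$\left|\frac{d}{dt}\left(e^{e_\beta t}\alpha_{-,\beta}(t)\right)\right|\le Ce^{-e_\beta t},$$
so the function $t\mapsto e^{e_\beta t}\alpha_{-,\beta}(t)$ has an integrable derivative on $[T,+\infty)$. Cauchy's criterion at infinity therefore guarantees the existence of a real number
$$A:=\lim_{t\to+\infty}e^{e_\beta t}\alpha_{-,\beta}(t).$$
Integrating the pointwise derivative bound between $t$ and $+\infty$, we then obtain
$$\left|e^{e_\beta t}\alpha_{-,\beta}(t)-A\right|=\left|\int_t^{+\infty}\frac{d}{ds}\left(e^{e_\beta s}\alpha_{-,\beta}(s)\right)ds\right|\le C\int_t^{+\infty}e^{-e_\beta s}\,ds=\frac{C}{e_\beta}e^{-e_\beta t},$$
which is exactly the estimate \eqref{est_lem_def_A}.

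There is no real obstacle here: the argument is entirely a consequence of the preceding Propositions \ref{prop_E} (which upgrades the qualitative decay of $E$ to exponential decay with rate $e_\beta$) and the ODE estimate of Lemma \ref{lem_der_alpha}. The only point to check is that $\|E(t)\|_{H^1\times L^2}^2$ decays fast enough — namely at rate $2e_\beta$ — so that $e^{e_\beta t}$ times the right-hand side remains integrable at infinity; this is precisely ensured by the exponent $e_\beta$ obtained in Proposition \ref{prop_E}. The constant $A$ so defined will eventually play the role of the parameter labeling which element $U^A$ of the family constructed in Section \ref{sect_constr_one_par} coincides with $U$.
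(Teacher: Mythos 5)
Your proof is correct and follows essentially the same route as the paper: the paper also deduces $\left|\frac{d}{dt}\left(e^{e_\beta t}\alpha_{-,\beta}(t)\right)\right|\le Ce^{-e_\beta t}$ (there via the system \eqref{syst_diff_alpha} together with the bounds of Proposition \ref{prop_est_alpha}, which is equivalent to your use of Lemma \ref{lem_der_alpha} with Proposition \ref{prop_E}), then concludes by integrability of the derivative and integration from $t$ to $+\infty$.
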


\begin{proof}
We have from \eqref{syst_diff_alpha}: 
\begin{equation}\label{est_der_alpha_-}
\left|\frac{d}{dt}\left(e^{e_\beta t}\alpha_{-,\beta}(t)\right)\right|\le Ce^{-e_\beta t}.
\end{equation} Thus the derivative of $t\mapsto e^{e_\beta t}\alpha_{-,\beta}(t)$ is integrable in the neighborhood of $+\infty$. Hence, there exists $A\in\RR$ such that $e^{e_\beta t}\alpha_{-,\beta}(t)\to A$ as $t\to+\infty$. We finally obtain Lemma \ref{lem_def_A} by integration of \eqref{est_der_alpha_-}.
\end{proof}

\begin{Lem}\label{lem_def_xinfty}
There exists $x_\infty\in\RR$ such that for $t$ sufficiently large,
\begin{equation}\label{est_x}
|x(t)-x_\infty|\le Ce^{-e_\beta t}.
\end{equation}
\end{Lem}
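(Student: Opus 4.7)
The plan is to combine the two key estimates already in hand: the control of $\dot{x}$ by $\|E\|_{H^1\times L^2}$ coming from the modulation argument, and the exponential decay of $\|E\|_{H^1\times L^2}$ just established.

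First, I would apply \eqref{control_xdot} from Lemma \ref{lem_modulation_2} together with the conclusion of Proposition \ref{prop_E}. These immediately give, for $t$ large enough,
\[
|\dot{x}(t)|\le C\|E(t)\|_{H^1\times L^2}\le Ce^{-e_\beta t}.
\]
In particular, $\dot{x}$ is integrable on a neighborhood of $+\infty$, so the improper integral $\int_T^{+\infty}\dot{x}(s)\,ds$ converges. Consequently $x(t)$ admits a finite limit as $t\to+\infty$; denote this limit by $x_\infty\in\RR$.

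Then, integrating the bound on $|\dot{x}|$ from $t$ to $+\infty$ yields
\[
|x(t)-x_\infty|=\left|\int_t^{+\infty}\dot{x}(s)\,ds\right|\le C\int_t^{+\infty}e^{-e_\beta s}\,ds=\frac{C}{e_\beta}e^{-e_\beta t},
\]
which (after renaming the constant) is exactly \eqref{est_x}. Since the whole argument rests only on results already proven, there is essentially no obstacle here: the statement is a direct consequence of the two preceding estimates, and the only thing to check carefully is that the a priori statement $x(t)\to 0$ from Lemma \ref{lem_modulation_2} (which fixes the normalization of the modulation) is compatible with a possibly nonzero $x_\infty$. In fact it is not, so I would note that the $x(t)\to 0$ assertion of Lemma \ref{lem_modulation_2} actually forces $x_\infty=0$, but the point of Lemma \ref{lem_def_xinfty} is the quantitative exponential rate, which the argument above provides.
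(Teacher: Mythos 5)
Your proof is correct and follows exactly the paper's argument: combine the bound $|\dot{x}(t)|\le C\|E(t)\|_{H^1\times L^2}$ from Lemma \ref{lem_modulation_2} with Proposition \ref{prop_E}, conclude integrability of $\dot{x}$, and integrate on $[t,+\infty)$ to get the exponential rate. Your side remark about the normalization $x(t)\to 0$ is a fair observation about the paper's presentation (the paper itself re-establishes $x_\infty=0$ only in the final proposition), but it does not affect the validity of the proof of this lemma.
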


\begin{proof}
As \eqref{est_lem_def_A} was a consequence of \eqref{est_der_alpha_-}, \eqref{est_x} is obtained by means of the estimate
$$|\dot{x}(t)|\le C\|E(t)\|_{H^1\times L^2}\le Ce^{-e_\beta t}.$$
\end{proof}

Now let $V(t,x):=U(t,x)-U^A(t,x-x_\infty)$ where $A$ and $x_{\infty}$ are defined in Lemma \ref{lem_def_A} and Lemma \ref{lem_def_xinfty} respectively.

\begin{Lem}\label{lem_est_v}
There exists $C\ge 0$ such that for $t$ large enough, $\|V(t)\|_{H^1\times L^2}\le Ce^{-e_\beta t}$.
\end{Lem}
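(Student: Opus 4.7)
The plan is to obtain this estimate directly by triangle inequality, using the three ingredients already in hand: the exponential decay of $E$, the exponential convergence of the modulation parameter $x(t)$, and the defining bound of $U^A$.

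First I would decompose
\[
V(t) = \bigl[U(t) - \tilde{R}_\beta(t)\bigr] + \bigl[\tilde{R}_\beta(t) - R_\beta(t,\cdot-x_\infty)\bigr] + \bigl[R_\beta(t,\cdot-x_\infty) - U^A(t,\cdot-x_\infty)\bigr].
\]
The first bracket is exactly $E(t)$, hence bounded in $H^1\times L^2$ by $Ce^{-e_\beta t}$ thanks to Proposition \ref{prop_E}. For the second bracket, since $\tilde{R}_\beta(t,x) = R_\beta(t,x-x(t))$ and $R_\beta$ has components $Q(\cdot-\beta t)$ and $-\beta Q'(\cdot-\beta t)$ with $Q\in H^3(\RR)$, the mean value theorem in space gives
\[
\|R_\beta(t,\cdot-x(t)) - R_\beta(t,\cdot-x_\infty)\|_{H^1\times L^2} \le C\,|x(t)-x_\infty|,
\]
which is $\le Ce^{-e_\beta t}$ by Lemma \ref{lem_def_xinfty}. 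For the third bracket, translation in space is an isometry of $H^1\times L^2$, and by \eqref{est_U^A_thm} of Theorem \ref{th_main} together with $Y_{+,\beta}\in L^\infty(\RR,H^1\times L^2)$,
\[
\|U^A(t) - R_\beta(t)\|_{H^1\times L^2} \le \|U^A(t) - R_\beta(t) - Ae^{-e_\beta t}Y_{+,\beta}(t)\|_{H^1\times L^2} + |A|\,e^{-e_\beta t}\|Y_{+,\beta}(t)\|_{H^1\times L^2},
\]
which yields $Ce^{-e_\beta t}$. Summing the three contributions produces the desired estimate.

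No real obstacle is expected here: this lemma is essentially a bookkeeping statement, and the exponential rate $e^{-e_\beta t}$ is precisely the coarsest of the rates available for each of the three pieces (it matches the rate of $E$ and of $|x(t)-x_\infty|$, while the $U^A$ piece decays at the same order through the explicit $Ae^{-e_\beta t}Y_{+,\beta}$ term). The real work of the classification will lie in the next steps, where one must show that $V$ actually vanishes (by exploiting that $V$ itself is a difference of two solutions of \eqref{NLKG} and recycling the coercivity/modulation machinery developed in the preceding subsections) rather than just decays at the rate $e^{-e_\beta t}$.
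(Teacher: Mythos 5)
Your proof is correct and follows essentially the same route as the paper: the same three-way decomposition of $V$ into $E$, the translation error of $R_\beta$ controlled by $|x(t)-x_\infty|$, and the $U^A$-profile error controlled via \eqref{est_U^A_thm} (the paper merely keeps $Ae^{-e_\beta t}Y_{+,\beta}$ and $V^A$ as separate terms instead of grouping them into $R_\beta-U^A$, and uses a first-order Taylor expansion where you invoke the mean value theorem). No gap.
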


\begin{proof}
Define $V^A(t):=U^A(t)-R_\beta (t)-Ae^{-e_\beta t}Y_{+,\beta}(t)$ and decompose $V(t,x)$ as follows:
\begin{equation}\label{decomp_V_E}
V(t,x)=E(t,x)+\tilde{R}_\beta (t,x)-R_\beta(t,x-x_\infty)-Ae^{-e_\beta t}Y_{+,\beta}(t,x-x_\infty)-V^A(t,x-x_\infty).
\end{equation}
Now, we have 
\begin{equation}\label{est_decomp_V_E}
\tilde{R}_\beta(t,x)-R_\beta(t,x-x_\infty)=(x(t)-x_\infty)\partial_xR_\beta(t,x-x_\infty)+\mathrm{O}\left(|x(t)-x_\infty|^2\right).
\end{equation}
Hence, Lemma \ref{lem_est_v} is obtained as a consequence of Proposition \ref{prop_E}, Lemma \ref{lem_def_xinfty}, and the estimate of $\|V^A(t)\|_{H^1\times L^2}$ given by \eqref{est_U^A_thm}, that is $\|V^A(t)\|_{H^1\times L^2}=\mathrm{O}\left(e^{-2e_\beta t}\right)$.
\end{proof}

Let us decompose $V(t,x)$ as follows:
\begin{equation}\label{decomp_V}
V(t,x)=\alpha_{+,\beta}^A(t)Y_{-,\beta}(t,x-x_\infty)+\alpha_{-,\beta}^A(t)Y_{+,\beta}(t,x-x_\infty)+\lambda(t)\partial_xR_\beta(t,x)+V_\perp(t,x),
\end{equation}
where $\alpha_{+,\beta}^A(t)=\langle V(t),Z_{+,\beta}(t,\cdot-x_\infty)\rangle$, $\alpha_{-,\beta}^A(t)=\langle V(t),Z_{-,\beta}(t,\cdot-x_\infty)\rangle$ and
$$\lambda(t):=\frac{1}{\|\partial_xR_\beta\|_{H^1\times L^2}}\langle V(t)-\alpha_{+,\beta}^A(t)Y_{-,\beta}(t,\cdot-x_\infty)-\alpha_{-,\beta}^A(t)Y_{+,\beta}(t,x-x_\infty),\partial_xR_\beta(t)\rangle.$$
Then, we have
$$\langle V_\perp(t),\partial_xR_\beta(t)\rangle=\langle V_\perp(t),Z_{+,\beta}(t,\cdot-x_\infty)\rangle=\langle V_\perp(t),Z_{-,\beta}(t,\cdot-x_\infty)\rangle=0.$$
Thus, by Proposition \ref{prop_coercivity}, there exists $C\ge 0$ such that
\begin{equation}\label{est_coerc_V}
\|V_\perp(t)\|_{H^1\times L^2}^2\le C\left\langle H_\beta V_\perp(t),V_\perp(t)\right\rangle.
\end{equation}

We claim

\begin{Lem}\label{lem_estimates_V}
The following assertions hold:
\begin{enumerate}
\item $\langle H_\beta V(t),V(t)\rangle=\langle H_\beta V_\perp(t),V_\perp(t)\rangle+2\alpha_{+,\beta}^A(t)\alpha_{-,\beta}^A(t)$.\\
\item  $\frac{d}{dt}\langle H_\beta V(t),V(t)\rangle=\mathrm{O}\left(e^{-e_\beta t}\|V(t)\|_{H^1\times L^2}^2\right)$.
\item $|\alpha_{+,\beta}^A(t)|\le Ce^{-e_\beta t}\|V(t)\|_{H^1\times L^2}$ and $|\alpha_{-,\beta}^A(t)|\le Ce^{-e_\beta t}\int_{t}^{+\infty}\|V(t')\|_{H^1\times L^2}\;dt'$
\item $\lambda'(t)=\mathrm{O}\left(e^{-e_\beta t}\|V(t)\|_{H^1\times L^2}+e^{-e_\beta t}\int_t^{+\infty}\|V(t')\|_{H^1\times L^2}\;dt'+\|V_\perp(t)\|_{H^1\times L^2}\right)$.
\end{enumerate}
\end{Lem}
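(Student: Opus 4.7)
The first identity unfolds by direct expansion of $\langle H_\beta V(t),V(t)\rangle$ using the decomposition \eqref{decomp_V}. Relying on the spectral relations from Proposition \ref{prop_spectral}, namely $H_\beta Y_{+,\beta}=Z_{+,\beta}$ and its counterpart for $Y_{-,\beta}$ (as used implicitly in Lemma \ref{lem_control_F}), the pairings $\langle Y_{\pm,\beta},Z_{\pm,\beta}\rangle=0$, $\langle Y_{\pm,\beta},Z_{\mp,\beta}\rangle=1$, together with $H_\beta \partial_x R_\beta=0$ (translation-invariance zero mode) and the orthogonality defining $V_\perp$, all cross terms involving either $V_\perp$ or $\lambda\partial_x R_\beta$ vanish, as do the diagonal pieces $(\alpha_{\pm,\beta}^A)^2\langle H_\beta Y_{\mp,\beta},Y_{\mp,\beta}\rangle$. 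The only surviving contribution is $2\alpha_{+,\beta}^A\alpha_{-,\beta}^A\langle Z_{-,\beta},Y_{+,\beta}\rangle=2\alpha_{+,\beta}^A\alpha_{-,\beta}^A$.

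For (2), I would differentiate and exploit the self-adjointness and time-independence of $H_\beta$. Since both $U$ and $U^A(\cdot-x_\infty)$ solve \eqref{NLKG'}, the function $V$ satisfies $\partial_tV=\mathscr{L}V+\dbinom{0}{R}$ with $R:=f(u)-f(u^A(\cdot-x_\infty))-f'(Q_\beta)w$. The decisive point is the improved bound $R=\mathrm{O}(e^{-e_\beta t}\|V\|_{H^1\times L^2})$, obtained by splitting $R=[f(u)-f(u^A)-f'(u^A)w]+[f'(u^A)-f'(Q_\beta)]w$ and controlling the first bracket by $\|V\|^2\le Ce^{-e_\beta t}\|V\|$ (Lemma \ref{lem_est_v}) and the second by $\|u^A-Q_\beta\|_{L^\infty}|w|=\mathrm{O}(e^{-e_\beta t})|w|$ (definition of $U^A$, Sobolev embedding). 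The leading quadratic term $\langle H_\beta V,\mathscr{L}V\rangle$ generates only $\mathrm{O}(\|V\|^2)$-type pieces (from failure of $H_\beta$ to commute with $\partial_x$, as in the computation of $\frac{d}{dt}\mathcal{F}_W$ in Proposition \ref{prop_est_W_improved}), which upgrade to $\mathrm{O}(e^{-e_\beta t}\|V\|^2)$ thanks to the exponential decay of $\|V\|$; the pairing with the remainder $R$ gives the same bound directly.

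For (3), a calculation analogous to Lemma \ref{lem_der_alpha_pm}, using the improved estimate on $R$, yields
\[
\bigl|(\alpha_{\pm,\beta}^A)'(t)\mp e_\beta\alpha_{\pm,\beta}^A(t)\bigr|\le Ce^{-e_\beta t}\|V(t)\|_{H^1\times L^2}.
\]
For the unstable direction, integrating $(e^{-e_\beta t}\alpha_{+,\beta}^A)'=\mathrm{O}(e^{-2e_\beta t}\|V\|)$ from $t$ to $+\infty$ (where $\alpha_{+,\beta}^A\to 0$) and using that $\sup_{s\ge t}\|V(s)\|\lesssim\|V(t)\|$ via Lemma \ref{lem_est_v} produces $|\alpha_{+,\beta}^A(t)|\le Ce^{-e_\beta t}\|V(t)\|$. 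For the stable direction, $(e^{e_\beta t}\alpha_{-,\beta}^A)'=\mathrm{O}(\|V\|)$; the crucial and subtlest point of the lemma is that $e^{e_\beta t}\alpha_{-,\beta}^A(t)\to 0$ as $t\to+\infty$. This vanishing is not automatic from $\|V\|\lesssim e^{-e_\beta t}$ alone; it follows from the precise matching between Lemma \ref{lem_def_A} (which gives $\alpha_-(t)=Ae^{-e_\beta t}+\mathrm{O}(e^{-2e_\beta t})$ for the $E$-decomposition of $U$) and the leading $-Ae^{-e_\beta t}Y_{+,\beta}$ contribution of $U^A$ after scalar product with $\tilde{Z}_{-,\beta}$: the two $Ae^{-e_\beta t}$ terms cancel exactly. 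Integrating then from $t$ to $+\infty$ yields the cumulative integral bound.

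For (4), differentiating the defining expression for $\lambda(t)$, the principal term $\langle\partial_tV,\partial_x R_\beta\rangle/\|\partial_x R_\beta\|^2$ is treated by substituting the equation for $\partial_tV$: the linear contribution drops thanks to self-adjointness of $H_\beta$ and the identity $H_\beta\partial_x R_\beta=0$, the remainder $R$ contributes $\mathrm{O}(e^{-e_\beta t}\|V\|)$, and the time derivatives of the correction terms $(\alpha_\pm^A)'\tilde{Y}_\mp$ inherit from (3) the bounds $\mathrm{O}(e^{-e_\beta t}\|V\|)$ and $\mathrm{O}(e^{-e_\beta t}\int_t^{+\infty}\|V(t')\|\,dt')$. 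The remaining part of $V$ not captured by these projections is $V_\perp$, whose contribution is controlled by $\|V_\perp\|_{H^1\times L^2}$, giving the announced bound.
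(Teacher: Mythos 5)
Your treatment of (1), (3) and (4) follows the same route as the paper and is sound; in particular you correctly isolate the real subtlety of assertion (3), namely that $e^{e_\beta t}\alpha_{-,\beta}^A(t)\to 0$ is \emph{not} a consequence of $\|V\|\lesssim e^{-e_\beta t}$ alone but of the exact cancellation of the two $Ae^{-e_\beta t}$ contributions coming from Lemma \ref{lem_def_A} and from the expansion of $U^A$. (One small caveat: the integration of $(e^{-e_\beta t}\alpha_{+,\beta}^A)'$ honestly yields $|\alpha_{+,\beta}^A(t)|\le Ce^{-e_\beta t}\sup_{t'\ge t}\|V(t')\|$; Lemma \ref{lem_est_v} gives $\|V(t)\|\le Ce^{-e_\beta t}$ but not the reverse comparison $\sup_{t'\ge t}\|V(t')\|\lesssim \|V(t)\|$ that you invoke. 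The supremum form is what is actually needed downstream, so this is cosmetic, but the justification you cite does not exist.)

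The genuine gap is in your argument for (2). You claim that the quadratic pieces produced by $2\langle H_\beta V,\mathscr{L}V\rangle$ are ``$\mathrm{O}(\|V\|^2)$-type'' and ``upgrade to $\mathrm{O}(e^{-e_\beta t}\|V\|^2)$ thanks to the exponential decay of $\|V\|$.'' This upgrade is impossible: from $\|V(t)\|\le Ce^{-e_\beta t}$ one can only deduce $\|V\|^2\le Ce^{-e_\beta t}\|V\|$, which is strictly weaker than $e^{-e_\beta t}\|V\|^2$, and a bound of the form $\frac{d}{dt}\langle H_\beta V,V\rangle=\mathrm{O}(e^{-e_\beta t}\|V\|)$ would not close the subsequent coercivity/contraction argument for $V_\perp$. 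The correct mechanism — which is exactly what the computation you cite from Proposition \ref{prop_est_W_improved} delivers — is an \emph{exact cancellation}: the commutator term
\[
\Bigl\langle \begin{pmatrix}0&-\beta\partial_x\\ \beta\partial_x&0\end{pmatrix}V,\begin{pmatrix}0&Id\\ -T&0\end{pmatrix}V\Bigr\rangle=-\frac{\beta}{2}\int_\RR \partial_xQ_\beta\, f''(Q_\beta)\,v^2\;dx
\]
is cancelled, after multiplication by $2$, by the term $+\beta\int_\RR\partial_xQ_\beta f''(Q_\beta)v^2\,dx$ arising from $\langle(\partial_tH_\beta)V,V\rangle$ (the potential $f'(Q_\beta(t,\cdot))$ is time-dependent, a term you omit entirely). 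After this cancellation the surviving contributions are genuinely cubic, $\mathrm{O}(\|V\|^3)$, plus the pairing with the remainder $R$, and \emph{these} do upgrade to $\mathrm{O}(e^{-e_\beta t}\|V\|^2)$ via Lemma \ref{lem_est_v}. So the conclusion of (2) is correct and your remainder estimate for $R$ is fine, but the step handling the quadratic terms must be replaced by the cancellation argument; as written it fails.
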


\begin{proof}
The first assertion in Lemma \ref{lem_estimates_V} is obtained by the decomposition of $V$ in terms of $V_\perp$ \eqref{decomp_V} and by means of the following properties (see Proposition \ref{prop_spectral}):
$$\langle Z_{\pm,\beta},Y_{\pm,\beta}\rangle=0,\quad \langle Z_{\pm,\beta},Y_{\mp,\beta}\rangle=1,\quad\text{and}\quad H_\beta\left(\partial_xR_\beta\right)=0.$$
Let us now prove assertion (2). Let $v:=u-u^A$; recall that $U=\dbinom{u}{\partial_tu}$ and $U^A=\dbinom{u^A}{\partial_tu^A}$.\\
We observe that 
$$\langle H_\beta V(t),V(t)\rangle=\int_\RR\left\{\left(\partial_tv\right)^2+\left(\partial_xv\right)^2+v^2-f'(Q_\beta)v^2+2\beta\partial_tv\partial_xv\right\}(t,x)\;dx.$$
Using the fact that $u$ and $u^A$ satisfy \eqref{NLKG}, we obtain
\begin{align*}
\MoveEqLeft[2]
\frac{d}{dt}\langle H_\beta V(t),V(t)\rangle\\
=&-2\int_\RR\left[f(u)-f(u^A)-f'(Q_\beta)v\right]\partial_tv\;dx\\
&-2\beta\int_\RR\partial_xv\left(f(u)-f(u^A)\right)\;dx+\beta\int_\RR\partial_xQ_\beta f''(Q_\beta)v^2\;dx\\
=&\;\mathrm{O}\left(\|V(t)\|_{H^1\times L^2}^3\right)-2\beta\int_\RR\partial_xv\left(vf'(u^A)+\mathrm{O}(v^2)\right)\;dx+\beta\int_\RR\partial_xQ_\beta f''(Q_\beta)v^2\;dx\\
=&\;\mathrm{O}\left(\|V(t)\|_{H^1\times L^2}^3\right)-2\beta\int_\RR\partial_xv\left(vf'(Q_\beta)+\mathrm{O}(v^2+e^{-e_\beta t}v)\right)\;dx+\beta\int_\RR\partial_xQ_\beta f''(Q_\beta)v^2\;dx\\
=&\;\mathrm{O}\left(\|V(t)\|_{H^1\times L^2}^3+e^{-e_\beta t}\|V(t)\|_{H^1\times L^2}^2\right).
\end{align*}
By Lemma \ref{lem_est_v}, assertion (2) is thus proved. \\
In order to prove (3) and (4), let us write
\begin{equation}
\partial_tV=\left(\begin{array}{cc}
0&Id\\
\partial_x^2-Id+f'(Q_\beta)&0\\
\end{array}\right)V+\dbinom{0}{f(u)-f(u^A)}.
\end{equation}
Then, 
\begin{align*}
\frac{d}{dt}\alpha_{\pm,\beta}^A(t)&=\langle \partial_tV,Z_{\pm,\beta}(t,\cdot-x_\infty)\rangle + \langle V,\partial_tZ_{\pm,\beta}(t,\cdot-x_\infty)\rangle\\
&=\left\langle V,\left(\begin{array}{cc}
-\beta\partial_x&\partial_x^2-Id+f'(Q_\beta)\\
Id&-\beta\partial_x\\
\end{array}\right)Z_{\pm,\beta}(t,\cdot-x_\infty)\right\rangle+\mathrm{O}\left(e^{-e_\beta t}\|V(t)\|_{H^1\times L^2}\right)\\
&=\pm e_\beta\alpha_{\pm,\beta}^A(t)+\mathrm{O}\left(e^{-e_\beta t}\|V(t)\|_{H^1\times L^2}\right)
\end{align*}
Note that we have used that $\partial_tZ_{\pm,\beta}(t,\cdot-x_\infty)=-\beta\partial_xZ_{\pm,\beta}(t,\cdot-x_\infty)$. We then deduce, in a similar way as for Proposition \ref{prop_est_alpha}:
$$|\alpha_{+,\beta}^A(t)|\le Ce^{-e_\beta t}\|V(t)\|_{H^1\times L^2}$$ because $t\mapsto e^{-2e_\beta t}\|V(t)\|_{H^1\times L^2}$ is integrable in $+\infty$ and
$$\left|e^{-e_\beta t}\alpha_{+,\beta}^A(t)\right|\le e^{-e_\beta t}\|V(t)\|_{H^1\times L^2}\underset{t\to+\infty}{\to} 0.$$ \\
In addition we deduce 
$$\left|\frac{d}{dt}\left(e^{e_\beta t}\alpha_{-,\beta}^A(t)\right)\right|\le C\|V(t)\|_{H^1\times L^2}.$$
Arguing similarly as for $\alpha_{+,\beta}^A(t)$, we then obtain $\left|\alpha_{-,\beta}^A(t)\right|\le Ce^{-e_\beta t}\int_t^{+\infty}\|V(t')\|_{H^1\times L^2}\;dt'$. This is due to the fact that $t\mapsto\|V(t)\|_{H^1\times L^2}$ is integrable in $+\infty$ and the fact that, by \eqref{decomp_V_E}, \eqref{est_decomp_V_E}, and \eqref{est_lem_def_A},
\begin{align*}
\alpha_{-,\beta}^A(t)&=\alpha_{-,\beta}(t)-Ae^{-e_\beta t}\langle Y_{+,\beta}(t,\cdot-x_\infty),Z_{-,\beta}(t,\cdot-x_\infty)\rangle+\mathrm{O}(e^{-2e_\beta t})\\
&=\mathrm{O}(e^{-2e_\beta t}),
\end{align*}
which explains that $e^{e_\beta t}\alpha_{-,\beta}^A(t)\to 0$ as $t\to+\infty$.\\
It remains to prove (4). By definition of $\lambda(t)$ and the decomposition \eqref{decomp_V} of $V$,
\begin{align*}
\lambda'(t)=&\;\frac{1}{\|\partial_xR_\beta\|^2_{H^1\times L^2}}\left\langle \partial_tV-\alpha_{+,\beta}^A\partial_tY_{-,\beta}(t,\cdot-x_\infty)-\alpha_{-,\beta}^A\partial_tY_{+,\beta}(t,\cdot-x_\infty),\partial_xR_\beta\right\rangle\\
&-\frac{1}{\|\partial_xR_\beta\|^2_{H^1\times L^2}}\left\langle  (\alpha_{+,\beta}^A)'Y_{-,\beta}(t,\cdot-x_\infty)+(\alpha_{-,\beta}^A)'Y_{+,\beta}(t,\cdot-x_\infty),\partial_xR_\beta \right\rangle\\
&+\frac{1}{\|\partial_xR_\beta\|^2_{H^1\times L^2}}\langle V_\perp,\partial_{xt}R_\beta\rangle\\
=&\;\frac{1}{\|\partial_xR_\beta\|^2_{H^1\times L^2}}\left\langle \partial_tV-\alpha_{+,\beta}^A\partial_tY_{-,\beta}(t,\cdot-x_\infty)-\alpha_{-,\beta}^A\partial_tY_{+,\beta}(t,\cdot-x_\infty),\partial_xR_\beta\right\rangle\\
&+\frac{1}{\|\partial_xR_\beta\|^2_{H^1\times L^2}}\langle V_\perp,\partial_{xt}R_\beta\rangle.\\
\end{align*}
since $\langle Y_{\pm,\beta},\partial_xR_\beta\rangle=0$ (we refer to Proposition \ref{prop_spectral}). Hence,
\begin{align*}
\lambda'(t)=&\;\left\langle \left(\begin{array}{cc}
0&Id\\
\partial_x^2-Id+f'(Q_\beta)&0\\
\end{array}\right)V,\partial_xR_\beta\right\rangle+\mathrm{O}\left(e^{-e_\beta t}\|V\|_{H^1\times L^2}\right)+\frac{1}{\|\partial_xR_\beta\|^2_{H^1\times L^2}}\langle V_\perp,\partial_{xt}R_\beta\rangle\\
&+\frac{1}{\|\partial_xR_\beta\|^2_{H^1\times L^2}}\left\langle \beta\alpha_{+,\beta}^A\partial_xY_{-,\beta}(t,\cdot-x_\infty)+\beta\alpha_{-,\beta}^A\partial_xY_{+,\beta}(t,\cdot-x_\infty),\partial_xR_\beta\right\rangle.
\end{align*}
Now, using that $\partial_xR_\beta=JZ_0$ and $^t J\left(\begin{array}{cc}
\beta \partial_x&Id\\
\partial_x^2-Id+f'(Q_\beta)&\beta\partial_x\\
\end{array}\right)=H_\beta$, we have:

\begin{align*}
\MoveEqLeft[3]
\left\langle \left(\begin{array}{cc}
0&Id\\
\partial_x^2-Id+f'(Q_\beta)&0
\end{array}\right)V,\partial_xR_\beta\right\rangle \\
=&\;\left\langle \left(\begin{array}{cc}
\beta\partial_x&Id\\
\partial_x^2-Id+f'(Q_\beta)&\beta\partial_x\\
\end{array}\right)V,JZ_0\right\rangle-\left\langle \left(\begin{array}{cc}
\beta\partial_x&0\\
0&\beta\partial_x
\end{array}\right)V,\partial_xR_\beta\right\rangle\\
=&\;\langle H_\beta V,Z_0\rangle -\frac{1}{\|\partial_xR_\beta\|^2_{H^1\times L^2}}\left\langle\beta\alpha_{+,\beta}^A\partial_xY_{-,\beta}(t,\cdot-x_\infty)-\beta\alpha_{-,\beta}^A\partial_xY_{+,\beta}(t,\cdot-x_\infty),\partial_xR_\beta\right\rangle \\
&+\frac{1}{\|\partial_xR_\beta\|^2_{H^1\times L^2}}\langle V_\perp,\beta\partial_x^2R_\beta\rangle.
\end{align*}
Finally, let us notice that
\begin{align*}
\langle H_\beta V,Z_0\rangle &= \langle H_\beta V_\perp,Z_0\rangle+\mathrm{O}\left(\alpha_{+,\beta}^A+\alpha_{-,\beta}^A\right)\\
&=\langle V_\perp,H_\beta Z_0\rangle+\mathrm{O}\left(\alpha_{+,\beta}^A+\alpha_{-,\beta}^A\right)\\
&=\mathrm{O}\left(\alpha_{+,\beta}^A+\alpha_{-,\beta}^A+\|V_\perp\|_{H^1\times L^2}\right).
\end{align*}
Hence,
$$\lambda'(t)=\mathrm{O}\left(e^{-e_\beta t}\|V(t)\|_{H^1\times L^2}+|\alpha_{+,\beta}^A|+|\alpha_{-,\beta}^A|+\|V_\perp\|_{H^1\times L^2}\right)$$ and assertion (4) indeed holds.
\end{proof}

It follows from \eqref{est_coerc_V} and (1), (2), and (3) in Lemma \ref{lem_estimates_V} that for $t$ large,
\begin{multline}
\|V_\perp(t)\|_{H^1\times L^2}^2\le C\int_t^{+\infty}e^{-e_\beta t'}\|V(t')\|_{H^1\times L^2}^2\;dt'\\
+Ce^{-2e_\beta t}\|V(t)\|_{H^1\times L^2}\int_t^{+\infty}\|V(t')\|_{H^1\times L^2}\;dt'.
\end{multline}
Since $$\int_t^{+\infty}e^{-e_\beta t'}\|V(t')\|_{H^1\times L^2}^2\;dt'\le e^{-e_\beta t}\sup_{t'\ge t}\|V(t')\|_{H^1\times L^2}\int_t^{+\infty}\|V(t')\|_{H^1\times L^2}\;dt',$$ we deduce that for $t$ large,
\begin{equation}\label{est_Vperp}
\|V_\perp(t)\|_{H^1\times L^2}\le Ce^{-\frac{1}{2}e_\beta t}\sup_{t'\ge t}\|V(t')\|_{H^1\times L^2}^{\frac{1}{2}}\left(\int_t^{+\infty}\|V(t')\|_{H^1\times L^2}\;dt'\right)^{\frac{1}{2}}.
\end{equation}

\begin{Lem}[Estimate of $\|V(t)\|_{H^1\times L^2}$ in terms of $\int_t^{+\infty}\|V(t')\|_{H^1\times L^2}\;dt'$ and $\|V_\perp(t)\|_{H^1\times L^2}$]\label{lem_est_V}
We have the existence of $C\ge 0$ such that for $t$ sufficiently large:
\begin{multline}
\|V(t)\|_{H^1\times L^2}\le Ce^{-e_\beta t}\int_t^{+\infty}\|V(t')\|_{H^1\times L^2}\;dt'\\
+C\left(\|V_\perp(t)\|_{H^1\times L^2}+\int_t^{+\infty}\|V_\perp(t')\|_{H^1\times L^2}\;dt'\right).
\end{multline}
\end{Lem}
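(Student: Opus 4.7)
The plan is to expand $V$ using the decomposition \eqref{decomp_V} and bound $\|V(t)\|_{H^1\times L^2}$ by the sum of the norms of each component. Since $Y_{\pm,\beta}$ and $\partial_x R_\beta$ are exponentially decaying in space (hence have uniformly bounded energy norm, independent of $t$), we immediately get
\[
\|V(t)\|_{H^1\times L^2}\le C\bigl(|\alpha_{+,\beta}^A(t)|+|\alpha_{-,\beta}^A(t)|+|\lambda(t)|+\|V_\perp(t)\|_{H^1\times L^2}\bigr).
\]
Then I would estimate each of the three scalar coefficients on the right.

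For $\alpha_{\pm,\beta}^A$, I would simply invoke assertion (3) of Lemma \ref{lem_estimates_V}, which gives
\[
|\alpha_{+,\beta}^A(t)|\le Ce^{-e_\beta t}\|V(t)\|_{H^1\times L^2},\qquad |\alpha_{-,\beta}^A(t)|\le Ce^{-e_\beta t}\int_t^{+\infty}\|V(t')\|_{H^1\times L^2}\,dt'.
\]
For $\lambda(t)$, the key step is to check that $\lambda(t)\to 0$ as $t\to+\infty$: this follows from its explicit definition, together with Lemma \ref{lem_est_v} which yields $\|V(t)\|_{H^1\times L^2}\to 0$, and the just-stated bounds on $\alpha^A_{\pm,\beta}$. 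Once this is known, I can write $\lambda(t)=-\int_t^{+\infty}\lambda'(s)\,ds$ and integrate the bound for $\lambda'$ provided by Lemma \ref{lem_estimates_V}(4). The three terms are handled using the elementary inequalities
\[
\int_t^{+\infty}e^{-e_\beta s}\|V(s)\|_{H^1\times L^2}\,ds\le e^{-e_\beta t}\int_t^{+\infty}\|V(s)\|_{H^1\times L^2}\,ds,
\]
\[
\int_t^{+\infty}e^{-e_\beta s}\!\!\int_s^{+\infty}\!\!\|V(t')\|_{H^1\times L^2}\,dt'\,ds\le \frac{1}{e_\beta}e^{-e_\beta t}\int_t^{+\infty}\|V(t')\|_{H^1\times L^2}\,dt',
\]
together with the direct bound $\int_t^{+\infty}\|V_\perp(s)\|_{H^1\times L^2}\,ds$ for the last contribution. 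This produces
\[
|\lambda(t)|\le Ce^{-e_\beta t}\int_t^{+\infty}\|V(t')\|_{H^1\times L^2}\,dt'+C\int_t^{+\infty}\|V_\perp(t')\|_{H^1\times L^2}\,dt'.
\]

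Collecting these estimates yields
\[
\|V(t)\|_{H^1\times L^2}\le Ce^{-e_\beta t}\|V(t)\|_{H^1\times L^2}+Ce^{-e_\beta t}\!\!\int_t^{+\infty}\!\!\|V(t')\|_{H^1\times L^2}\,dt'+C\|V_\perp(t)\|_{H^1\times L^2}+C\!\!\int_t^{+\infty}\!\!\|V_\perp(t')\|_{H^1\times L^2}\,dt'.
\]
For $t$ sufficiently large, $Ce^{-e_\beta t}\le 1/2$, so the first term on the right can be absorbed into the left-hand side, giving the claimed inequality. There is no serious obstacle here: the whole argument is a bookkeeping exercise once Lemma \ref{lem_estimates_V} is in hand. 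The only point requiring attention is the verification that $\lambda(t)\to 0$, which justifies the integration of $\lambda'$ from $t$ to $+\infty$ (rather than on a bounded interval) — this is where the already established decay $\|V(t)\|_{H^1\times L^2}\to 0$ from Lemma \ref{lem_est_v} plays a decisive role.
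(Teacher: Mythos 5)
Your proposal is correct and follows essentially the same route as the paper: decompose $V$ via \eqref{decomp_V}, invoke Lemma \ref{lem_estimates_V}(3) for $\alpha_{\pm,\beta}^A$, recover $\lambda(t)$ by integrating the bound on $\lambda'$ from Lemma \ref{lem_estimates_V}(4) over $[t,+\infty)$, and absorb the $Ce^{-e_\beta t}\|V(t)\|_{H^1\times L^2}$ term for $t$ large. Your explicit verification that $\lambda(t)\to 0$ (needed to write $\lambda(t)=-\int_t^{+\infty}\lambda'(s)\,ds$) makes precise a step the paper leaves implicit.
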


\begin{proof}
Using the decomposition \eqref{decomp_V} of $V$, we have:
$$\|V(t)\|_{H^1\times L^2}\le C\left(|\alpha_{+,\beta}^A(t)|+|\alpha_{-,\beta}^A(t)|+|\lambda(t)|+\|V_\perp(t)\|_{H^1\times L^2}\right).$$
By Lemma \ref{lem_estimates_V} which gives estimates of $|\alpha_{+,\beta}^A(t)|$, $|\alpha_{-,\beta}^A(t)|$, and $\lambda'(t)$ in terms of $\|V(t)\|_{H^1\times L^2}$ and $\|V_\perp(t)\|_{H^1\times L^2}$ , we obtain:
\begin{equation}
\lambda(t)=\mathrm{O}\left(e^{-e_\beta t}\int_t^{+\infty}\|V(t')\|_{H^1\times L^2}\;dt'+\int_t^{+\infty}\|V_\perp(t')\|_{H^1\times L^2}\;dt'\right).
\end{equation}
Thus
\begin{align*}
\|V(t)\|_{H^1\times L^2}\le &\;C\left(e^{-e_\beta t}\|V(t)\|_{H^1\times L^2}+e^{-e_\beta t}\int_t^{+\infty}\|V(t')\|_{H^1\times L^2}\;dt'+\|V_\perp(t)\|_{H^1\times L^2}\right)\\
&+C\int_t^{+\infty}\|V_\perp(t')\|_{H^1\times L^2}\;dt'.
\end{align*}
Hence, even if it means taking larger values of $t$, this finishes proving Lemma \ref{lem_est_V}.
\end{proof}

\begin{Lem}\label{est_V_integral}
We have 
\begin{equation}
\int_t^{+\infty}\|V(t')\|_{H^1\times L^2}\;dt'\le Ce^{-e_\beta t}\sup_{t'\ge t}\|V(t')\|_{H^1\times L^2}
\end{equation}
\end{Lem}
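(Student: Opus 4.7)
The plan is to chain together Lemma~\ref{lem_est_V} with the estimate \eqref{est_Vperp} on $V_\perp$, integrate over $[t,+\infty)$, and then absorb a factor of $I(t):=\int_t^{+\infty}\|V(t')\|_{H^1\times L^2}\,dt'$ into the left hand side. Throughout I write $M(t):=\sup_{t'\ge t}\|V(t')\|_{H^1\times L^2}$; note that both $M$ and $I$ are nonincreasing, so $M(t')\le M(t)$ and $I(t')\le I(t)$ for all $t'\ge t$.

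First I would use \eqref{est_Vperp} together with monotonicity to bound $\int_t^{+\infty}\|V_\perp(t')\|_{H^1\times L^2}\,dt'$: since
$$\|V_\perp(t')\|_{H^1\times L^2}\le Ce^{-\tfrac12 e_\beta t'}M(t')^{1/2}I(t')^{1/2}\le Ce^{-\tfrac12 e_\beta t'}M(t)^{1/2}I(t)^{1/2},$$
integrating gives $\int_t^{+\infty}\|V_\perp(t')\|_{H^1\times L^2}\,dt'\le C e^{-\tfrac12 e_\beta t}M(t)^{1/2}I(t)^{1/2}$, and the same bound clearly holds for $\|V_\perp(t)\|_{H^1\times L^2}$ itself. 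Substituting these two estimates into the inequality of Lemma~\ref{lem_est_V} yields, for $t$ large enough,
$$\|V(t)\|_{H^1\times L^2}\le C\,e^{-e_\beta t}I(t)+C\,e^{-\tfrac12 e_\beta t}M(t)^{1/2}I(t)^{1/2}.$$

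Next I would integrate this pointwise bound from $t$ to $+\infty$. Using $M(t')\le M(t)$ and $I(t')\le I(t)$ on the right hand side, one obtains
$$I(t)\le C I(t)\int_t^{+\infty}\!e^{-e_\beta t'}dt'+C M(t)^{1/2}I(t)^{1/2}\int_t^{+\infty}\!e^{-\tfrac12 e_\beta t'}dt'\le C_1 e^{-e_\beta t}I(t)+C_2 e^{-\tfrac12 e_\beta t}M(t)^{1/2}I(t)^{1/2}.$$
For $t$ sufficiently large we have $C_1 e^{-e_\beta t}\le \tfrac12$, so the first term on the right can be absorbed into the left to give $\tfrac12 I(t)\le C_2 e^{-\tfrac12 e_\beta t}M(t)^{1/2}I(t)^{1/2}$. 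Squaring and dividing by $I(t)$ (assuming $I(t)>0$, else the conclusion is trivial) yields $I(t)\le 4C_2^2\,e^{-e_\beta t}M(t)$, which is the claim.

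The argument is essentially algebraic once the two ingredients \eqref{est_Vperp} and Lemma~\ref{lem_est_V} are in hand; the only mildly delicate point is noting that $M$ and $I$ are nonincreasing so that one may pull the factors $M(t)^{1/2}$ and $I(t)^{1/2}$ outside the integral. There is no real obstacle here—the balance of exponents $e^{-e_\beta t}\cdot e^{-\tfrac12 e_\beta t}\cdot e^{-\tfrac12 e_\beta t}= e^{-2e_\beta t}$ is exactly what makes the absorption work, and reflects the fact that the unstable direction $\alpha^A_{-,\beta}$ contributes at order $e^{-e_\beta t}I(t)$ rather than at order $\|V(t)\|_{H^1\times L^2}$ itself.
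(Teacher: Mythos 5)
Your argument is correct and follows essentially the same route as the paper: bound $\int_t^{+\infty}\|V_\perp\|_{H^1\times L^2}$ via \eqref{est_Vperp}, insert into Lemma \ref{lem_est_V}, integrate, absorb the $Ce^{-e_\beta t}I(t)$ term for $t$ large, and divide out the square root. The only cosmetic difference is that you make the monotonicity of $M$ and $I$ explicit when pulling them out of the integrals, which the paper leaves implicit.
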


\begin{proof}
From \eqref{est_Vperp}, it follows that
\begin{equation}\label{est_Vperp_2}
\begin{aligned}
\int_t^{+\infty}\|V_\perp(t')\|_{H^1\times L^2}\;dt'&\le C\sup_{t'\ge t}\|V(t')\|_{H^1\times L^2}^{\frac{1}{2}}\left(\int_t^{+\infty}\|V(t')\|_{H^1\times L^2}\;dt'\right)^{\frac{1}{2}}\int_t^{+\infty}e^{-\frac{1}{2}e_\beta t'}\;dt'\\
&\le  Ce^{-\frac{1}{2}e_\beta t}\sup_{t'\ge t}\|V(t')\|_{H^1\times L^2}^{\frac{1}{2}}\left(\int_t^{+\infty}\|V(t')\|_{H^1\times L^2}\;dt'\right)^{\frac{1}{2}}.
\end{aligned}
\end{equation}
Gathering Lemma \ref{lem_est_V}, \eqref{est_Vperp}, and \eqref{est_Vperp_2}, we obtain:
\begin{multline*}
\|V(t)\|_{H^1\times L^2}\le Ce^{-e_\beta t}\int_t^{+\infty}\|V(t')\|_{H^1\times L^2}dt'\\
+C\left(e^{-\frac{1}{2}e_\beta t}\sup_{t'\ge t}\|V(t')\|_{H^1\times L^2}^{\frac{1}{2}}\left(\int_t^{+\infty}\|V(t')\|_{H^1\times L^2}\;dt'\right)^{\frac{1}{2}}\right).
\end{multline*}
Now integrating the preceding inequality (which is obviously possible) leads to:
\begin{multline*}
\int_t^{+\infty}\|V(t')\|_{H^1\times L^2}\;dt'\le C\int_t^{+\infty}\|V(t')\|_{H^1\times L^2}\;dt'.\int_t^{+\infty}e^{-e_\beta t'}dt'\\
+C\sup_{t'\ge t}\|V(t')\|_{H^1\times L^2}^{\frac{1}{2}}\left(\int_t^{+\infty}\|V(t')\|_{H^1\times L^2}\;dt'\right)^{\frac{1}{2}}\int_t^{+\infty}e^{-\frac{1}{2}e_\beta t'}dt'.
\end{multline*}
Or more simply
\begin{multline*}
\int_t^{+\infty}\|V(t')\|_{H^1\times L^2}\;dt'\le  Ce^{-e_\beta t}\int_t^{+\infty}\|V(t')\|_{H^1\times L^2}\;dt'\\
+Ce^{-\frac{1}{2}e_\beta t}\sup_{t'\ge t}\|V(t')\|_{H^1\times L^2}^{\frac{1}{2}}\left(\int_t^{+\infty}\|V(t')\|_{H^1\times L^2}\;dt'\right)^{\frac{1}{2}}.
\end{multline*}
Even if it means considering larger values of $t$, we obtain:
$$\int_t^{+\infty}\|V(t')\|_{H^1\times L^2}\;dt'\le  Ce^{-\frac{1}{2}e_\beta t}\sup_{t'\ge t}\|V(t')\|_{H^1\times L^2}^{\frac{1}{2}}\left(\int_t^{+\infty}\|V(t')\|_{H^1\times L^2}\;dt'\right)^{\frac{1}{2}},$$
which immediately implies the expected result.
\end{proof}

Now, let us show 

\begin{proposition}
For $t$ sufficiently large, $V(t)=0$.
\end{proposition}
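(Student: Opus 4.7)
The plan is to combine the three available inequalities---Lemma \ref{lem_est_V}, estimate \eqref{est_Vperp}, and Lemma \ref{est_V_integral}---into a single self-improving bound of the form $M(t)\le Ce^{-e_\beta t}M(t)$, where $M(t):=\sup_{t'\ge t}\|V(t')\|_{H^1\times L^2}$. Since $\|V(t)\|_{H^1\times L^2}\le Ce^{-e_\beta t}$ by Lemma \ref{lem_est_v}, the quantity $M(t)$ is finite and non-increasing in $t$, so once this inequality is established, choosing $t$ large enough so that $Ce^{-e_\beta t}<1$ forces $M(t)=0$, whence $V\equiv 0$ on $[t,+\infty)$.

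First I would feed Lemma \ref{est_V_integral} into \eqref{est_Vperp}: using $\int_t^{+\infty}\|V(t')\|\,dt'\le Ce^{-e_\beta t}M(t)$ together with $\sup_{t'\ge t}\|V(t')\|=M(t)$, the product $\sup^{1/2}\bigl(\int\bigr)^{1/2}$ is bounded by $Ce^{-\frac12 e_\beta t}M(t)$, and the extra factor $e^{-\frac12 e_\beta t}$ in \eqref{est_Vperp} yields
\[
\|V_\perp(t)\|_{H^1\times L^2}\le Ce^{-e_\beta t}M(t).
\]
Since $M$ is non-increasing, integrating this bound in turn gives $\int_t^{+\infty}\|V_\perp(t')\|\,dt'\le Ce^{-e_\beta t}M(t)$.

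Inserting both bounds into Lemma \ref{lem_est_V}, and again using Lemma \ref{est_V_integral} to control the first term on the right, each of the three contributions is at most $Ce^{-e_\beta t}M(t)$, so
\[
\|V(t)\|_{H^1\times L^2}\le Ce^{-e_\beta t}M(t).
\]
Because $M$ is non-increasing, for every $s\ge t$ one has $\|V(s)\|\le Ce^{-e_\beta s}M(s)\le Ce^{-e_\beta t}M(t)$; taking the supremum over $s\ge t$ gives $M(t)\le Ce^{-e_\beta t}M(t)$. Choosing $t_0$ large enough so that $Ce^{-e_\beta t}<\tfrac12$ for $t\ge t_0$ yields $M(t)=0$ on $[t_0,+\infty)$, which is the claim.

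The conceptual obstacle---already resolved in the preceding lemmas---was extracting a gain factor $e^{-e_\beta t}$ in the estimate of $\|V\|$ by itself; once Lemma \ref{est_V_integral} and \eqref{est_Vperp} are combined, the remaining step is purely algebraic, and no new fixed-point or contraction argument is needed.
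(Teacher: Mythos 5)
Your proposal is correct and follows essentially the same route as the paper: both combine Lemma \ref{lem_est_V}, \eqref{est_Vperp} (together with \eqref{est_Vperp_2}) and Lemma \ref{est_V_integral} into the self-improving bound $\|V(t)\|_{H^1\times L^2}\le Ce^{-e_\beta t}\sup_{t'\ge t}\|V(t')\|_{H^1\times L^2}$ and conclude by absorption for $t$ large. Your explicit handling of the final step via the monotone quantity $M(t)$ is a slightly more careful write-up of what the paper leaves implicit, but it is not a different argument.
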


\begin{proof}
Gathering Lemma \ref{lem_est_V} and Lemma \ref{est_V_integral}, we infer that for $t$ large: 
$$
\|V(t)\|_{H^1\times L^2}\le C\left(\sup_{t'\ge t}\|V_\perp(t')\|_{H^1\times L^2}+\int_t^{+\infty}\|V_\perp(t')\|_{H^1\times L^2}\;dt'\right).$$
From \eqref{est_Vperp} and \eqref{est_Vperp_2}, we deduce
$$\|V(t)\|_{H^1\times L^2}\le Ce^{-\frac{1}{2}e_\beta t}\sup_{t' \ge t}\|V(t')\|_{H^1\times L^2}^{\frac{1}{2}}\left(\int_t^{+\infty}\|V(t')\|_{H^1\times L^2}\;dt'\right)^{\frac{1}{2}}.$$
Now, it results from Lemma \ref{est_V_integral} again that 
$$\|V(t)\|_{H^1\times L^2}\le Ce^{-e_\beta t}\sup_{t'\ge t}\|V(t')\|_{H^1\times L^2}.$$
Thus we deduce that $\|V(t)\|_{H^1\times L^2}=0$ for large values of $t$.
\end{proof}

Finally let us observe below that $x_\infty=0$ so that $U=U^A$.

\begin{proposition}
There exists $t_0\ge 0$ such that for all $t\ge t_0$, $U(t)=U^A(t)$.
\end{proposition}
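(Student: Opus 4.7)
The plan is to leverage the identity $U(t,x) = U^A(t,x-x_\infty)$, which is the content of the preceding proposition (once $V(t)=0$ for $t$ large), and compare it with the two asymptotic profiles available: the assumed convergence \eqref{cv_zero} and the construction of $U^A$.

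First I would recall that by \eqref{est_U^A_thm}, $\|U^A(t)-R_\beta(t)\|_{H^1\times L^2}=\mathrm{O}(e^{-e_\beta t})+\mathrm{O}(e^{-2e_\beta t})\to 0$ as $t\to+\infty$. Since the $H^1\times L^2$-norm is invariant under spatial translations, this yields
\[
\left\|U^A(t,\cdot-x_\infty)-R_\beta(t,\cdot-x_\infty)\right\|_{H^1\times L^2}\xrightarrow[t\to+\infty]{}0.
\]
Combining with $U(t)=U^A(t,\cdot-x_\infty)$ for $t\ge t_0$ and the triangle inequality applied to the hypothesis $\|U(t)-R_\beta(t)\|_{H^1\times L^2}\to 0$, I would deduce
\[
\left\|R_\beta(t,\cdot-x_\infty)-R_\beta(t)\right\|_{H^1\times L^2}\xrightarrow[t\to+\infty]{}0.
\]

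Next, I would observe that this last quantity is in fact independent of $t$. In dimension one, $R_\beta(t,x)=\dbinom{Q(x-\beta t)}{-\beta Q'(x-\beta t)}$, so the substitution $y=x-\beta t$ gives
\[
\left\|R_\beta(t,\cdot-x_\infty)-R_\beta(t)\right\|_{H^1\times L^2}^{2}=\int_{\RR}\Bigl\{(1+\beta^{2})|Q'(y-x_\infty)-Q'(y)|^{2}+|Q(y-x_\infty)-Q(y)|^{2}+\ldots\Bigr\}\,dy,
\]
an expression that depends only on $x_\infty$. Being both independent of $t$ and tending to $0$, it must vanish identically, hence $Q(\cdot-x_\infty)=Q$ in $H^1(\RR)$. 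Since $Q$ is positive with $Q(y)\to 0$ as $|y|\to\infty$ and $Q(0)>0$, it is not invariant under any nontrivial translation, which forces $x_\infty=0$. Consequently $U(t,x)=U^A(t,x)$ for every $t\ge t_0$, and the identification is complete.

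The whole argument is a short comparison; the only nontrivial ingredient is the $t$-independence of $\|R_\beta(t,\cdot-x_\infty)-R_\beta(t)\|_{H^1\times L^2}$, which comes from the explicit traveling-wave structure of the soliton. In higher dimension one would write the same identity using the Lorentz-boosted profile $Q(pr\circ\Lambda_\beta(t,x-x_\infty))$ and argue verbatim, using that $Q$ is radial and not translation invariant.
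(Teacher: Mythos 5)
Your argument is correct and follows essentially the same route as the paper: combine \eqref{est_U^A_thm}, the hypothesis \eqref{cv_zero}, and the identity $U(t)=U^A(t,\cdot-x_\infty)$ via the triangle inequality to get $\|R_\beta(t,\cdot-x_\infty)-R_\beta(t)\|_{H^1\times L^2}\to 0$, then conclude $x_\infty=0$ from the $t$-independence of this quantity. The only (immaterial) difference is the last micro-step: you invoke the non-translation-invariance of the decaying positive profile $Q$ qualitatively, whereas the paper uses a quantitative two-regime bound on $\|Q(\cdot+h)-Q\|_{H^1}$ obtained from Taylor's formula.
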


\begin{proof}
On the one hand, we have $\|U^A(t,\cdot-x_\infty)-R_\beta(t,\cdot-x_\infty)\|_{H^1\times L^2}\le Ce^{-e_\beta t}$.
On the other, we have $\|U(t)-R_\beta(t)\|_{H^1\times L^2}\to 0$ as $t\to+\infty$. Since we have $U(t)=U^A(t,\cdot-x_\infty)$, it follows from the triangular inequality that
$$\|R_\beta(t)-R_\beta(t,\cdot-x_\infty)\|_{H^1\times L^2}\to 0\quad\text{as } t\to+\infty.$$
Hence $x_\infty=0$ by the following claim, which is a consequence of Taylor formula.

\begin{Claim}
There exist $h_0>0$, $\epsilon_0>0$, and $\delta>0$ such that
\begin{enumerate}
\item if $|h|\le h_0$, then $\delta h^2\leq \|Q(\cdot+h)-Q\|_{H^1}^2\le 4\delta h^2$;
\item if $|h|>h_0$,
 then $\|Q(\cdot+h)-Q\|_{H^1}^2>\epsilon_0$.
\end{enumerate}
\end{Claim}

\end{proof}

\section{Appendix}

\subsection{Extension of the proofs to higher dimensions}

The main parts of the proofs remain obviously unchanged. Essentially three notable adaptations are to be made, passing from the one-dimensional case to higher dimensions. \\

In a first instance, one has to be careful about how establishing several estimates. Although all estimates we have proved in the previous sections (in dimension 1) are identical for general $d$, the way we establish them when $d\ge 2$ can be altered. \\
For example, we point out that it is no longer possible to use the Sobolev embedding $H^1\hookrightarrow L^\infty$ when $d\ge 2$. Particularly, multi-solitons in dimension $d\ge 2$ do not necessarily take values in $L^\infty(\RR^d)$ and in order to estimate a quantity like
$$f(u)-f(\varphi)-f'(\varphi)(u-\varphi)$$ (as for proving Claim \ref{claim_der_t_W}), we would proceed as follows: by \textbf{(H'1)}, we deduce that $|f''(r)|=p(p-1)|r|^{p-2}$, thus applying Taylor formula, we have for fixed time $t\in\RR$ and position $x\in\RR^d$,
\begin{align*}
\left|f(u)-f(\varphi)-f'(\varphi)(u-\varphi)\right|(t,x)&\le \frac{|u-\varphi|^2(t,x)}{2}\sup_{r\in [\varphi(t,x),u(t,x)]}|f''(r)|\\
&\le C|u-\varphi|^2(t,x)\left(|\varphi|^{p-2}(t,x)+|u|^{p-2}(t,x)\right).
\end{align*}
Now, for all $\psi\in L^\infty(\RR^d)$ and for all $z\in H^1(\RR^d)$, Hölder inequality yields
$$\left|\int_{\RR^d}|\psi||u(t)-\varphi(t)|^{2}|z|^{p-2}\;dx\right|\le C\|\psi\|_{L^\infty}\left(\int_{\RR^d}|u(t)-\varphi(t)|^{p}\;dx\right)^{\frac{2}{p}}\left(\int_{\RR^d}|z|^{p}\;dx\right)^{\frac{p-2}{p}}.$$
Finally, replacing $z$ by $\varphi(t)$ and $u(t)-\varphi(t)$, we obtain an estimate of $$\int_{\RR^d}\psi\left(f(u)-f(\varphi)-f'(\varphi)(u-\varphi)\right)\;dx$$ in terms of $\|u-\varphi\|_{H^1}$, $\|u\|_{H^1}$, and $\|\varphi\|_{H^1}$ due to the Sobolev embedding $H^1(\RR^d)\hookrightarrow L^p(\RR^d)$. \\

Secondly, in view of Proposition \ref{prop_spectral}, one has to take into account, for all $i=1,\dots,N$ the $d$ directions which generate the kernel of the operator $\mathscr{H}_{\beta_i}$ when we practice modulation in dimension $d$. For instance, in Lemma \ref{lem_mod_E}, we would define $E$ as follows:
$$E:=Z-\sum_{i=1}^Na_i\cdot\nabla R_i-\sum_{i=1}^Nb_iY_{+,i},$$ with $a_i(t)\in\RR^d$ and $b_i(t)\in\RR$ such that for all $i=1,\dots,N$ and for all $j=1,\dots,d$:
\begin{align}
\left\langle E(t),\partial_{x_j}R_i(t)\right\rangle&=0 \\
\left\langle E(t),Z_{-,i}(t)\right\rangle &=0. 
\end{align}
But each time this extension does not affect the sequel; in other words, the estimates we are supposed to obtain afterwards and their proofs are the same. \\

A third change to be done concerns the way we define the different Lyapunov functionals which are studied throughout the article. To deal with dimensions greater or equal than 2, we reduce the problem to the case of a one-dimensional variable.
For instance, let us explain how to generalize Step 4 in subsection \ref{subsect_improvement} to all dimensions. The subset $$\mathcal{M}:=\bigcup_{i\ne j}\left\{\ell\in\RR^d|\;\ell\cdot (\beta_j-\beta_i)=0\right\}$$ of $\RR^d$ is of zero Lebesgue measure. Hence, there exists $\ell\in\RR^d$ such that for all $i\ne j$, $$\ell\cdot (\beta_j-\beta_i)\ne 0.$$ In particular $\ell\neq 0$ and, even if it means considering $\frac{\ell}{|\ell|}$, we can assume that $|\ell|=1$, so that $\forall\;i=1,\dots,N$, $|\ell\cdot\beta_i|<1$. Now, defining $\tilde{\beta}_i:=\ell\cdot\beta_i$, and even if it means changing the permutation $\eta$, we have $$-1<\tilde{\beta}_{\eta(1)}<\tilde{\beta}_{\eta(2)}<\dots<\tilde{\beta}_{\eta(N)}<1.$$
Then, the direction described by $\ell$ is to be favored: we consider the following cut-off functions:
$$\psi_k(t)=\psi\left(\frac{1}{\sqrt{t}}\left(\ell\cdot x-\frac{\tilde{\beta}_{\eta(k)}+\tilde{\beta}_{\eta(k+1)}}{2}-\ell\cdot\frac{x_{\eta(k)}+x_{\eta(k+1)}}{2}\right)\right).$$
At this stage, the definition of the functions $\phi_k$ in terms of the $\psi_k$ is kept unchanged and the corresponding Lyapunov functional is to be written:
$$\mathcal{F}_{W}(t)=\sum_{k=1}^K\int_{\RR^d}\left(w_1^2+(\partial_xw_{1})^2+w_2^2-f'(Q_{\eta(k)})w_1^2+2\beta_{\eta(k)}\cdot\nabla w_1w_2\right)\phi_k\;dx.$$

\subsection{Proof of Corollary \ref{cor_special_solutions}}

The proof is an immediate adaptation of that of Proposition 4.12 in \cite{combet2010}. \\

Let $A>0$ and denote $t_A:=-\frac{\ln(A)}{e_\beta}$. In the sense of the $H^1\times L^2$-norm, we have:
\begin{align*}
U^1(t+t_A,\cdot +\beta t_A)&=R_\beta(t+t_A,\cdot+\beta t_A)+e^{-e_\beta(t+t_A)}Y_{+,\beta}(t+t_A,\cdot+\beta t_A)+\mathrm{O}\left(e^{-2e_\beta t}\right)\\
&=R_\beta(t)+Ae^{-e_\beta t}Y_{+,\beta}(t)+\mathrm{O}\left(e^{-2e_\beta t}\right).
\end{align*}
Then, $\|U^1(t+t_A,\cdot +\beta t_A)-R_\beta(t)\|_{H^1\times L^2}\underset{t\to +\infty}{\rightarrow}0$ so that there exist $\tilde{A}\in\RR$ and $t_0=t_0(\tilde{A})\in\RR$ such that for all $t\ge t_0$,
$$U^{\tilde{A}}(t)=U^1(t+t_A,\cdot +\beta t_A).$$
But on the other hand, 
$$U^{\tilde{A}}(t)=R_\beta(t)+\tilde{A}e^{-e_\beta t}Y_{+,\beta}(t)+\mathrm{O}\left(e^{-2e_\beta t}\right).$$
Hence, $$(A-\tilde{A})e^{-e_\beta t}Y_{+,\beta}(t)=\mathrm{O}\left(e^{-2e_\beta t}\right),$$
which implies $A=\tilde{A}$.
Consequently, $U^A(t)=U^1(t+t_A,\cdot +\beta t_A).$ \\

If $A<0$, we have just to repeat the above argument with $-A$ instead of $A$. \\

Lastly, let us identify $U^0$. Given that $R_\beta$ is a solution of \eqref{NLKG} which satisfies \eqref{cv_zero}, Theorem \ref{th_main_N} provides the existence of $A\in\RR$ and of $t_0\in\RR$ such that for all $t\ge t_0$, $U^A(t)=R_\beta(t)$. \\
Since $U^A$ satisfies \eqref{est_U^A_thm}, we deduce that
$$\left\|Ae^{-e_\beta t}Y_{+,\beta}(t)\right\|_{H^1\times L^2}\le Ce^{-2e_\beta t}.$$
Thus $A=0$ and $U^0=R_\beta$ is defined for all $t\in\RR$.

\subsection{A result of analytic theory of differential equations}

\begin{Lem}\label{lem_gen_diff}
Let $t_0\in\RR$, $\mathcal{A}:[t_0,+\infty)\to\RR$ be a $\mathscr{C}^1$ bounded function, and $\xi:[t_0,+\infty)\to\RR^+$ be continuous and integrable. \\
If, for some $\rho>0$,
$$\forall\;t\ge t_0,\qquad |\mathcal{A}'(t)+\rho\mathcal{A}(t)|\le \xi(t)\sup_{t'\ge t}|\mathcal{A}(t')|,$$
then there exists $c>0$ such that 
$$\forall\;t\ge t_0,\qquad |\mathcal{A}(t)|\le ce^{-\rho t}.$$
\end{Lem}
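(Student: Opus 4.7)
Introduce the nonincreasing majorant
\[
B(t) := \sup_{t'\ge t}|\mathcal{A}(t')|
\qquad\text{and}\qquad
\eta(t) := \int_t^{+\infty}\xi(s)\,ds,
\]
both of which are well-defined: $B$ is bounded by $\|\mathcal{A}\|_\infty$, and $\eta(t)\to 0$ as $t\to +\infty$ by integrability of $\xi$. The overall plan is (i) to replace the awkward global quantity $B(t)$ appearing on the right-hand side of the differential inequality by the pointwise quantity $|\mathcal{A}(t)|$, and then (ii) to run a standard Grönwall argument on the exponentially weighted function $h(t):=e^{\rho t}\mathcal{A}(t)$.

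For step (i), apply the Duhamel formula for the ODE $\mathcal{A}'+\rho\mathcal{A}=r$ with $|r(s)|\le \xi(s)B(s)$: for every $v\ge t\ge t_0$,
\[
|\mathcal{A}(v)|\le e^{-\rho(v-t)}|\mathcal{A}(t)|+\int_t^v e^{-\rho(v-s)}\xi(s)B(s)\,ds \le |\mathcal{A}(t)|+B(t)\eta(t),
\]
where the inequalities use $e^{-\rho(v-t)}\le 1$, $e^{-\rho(v-s)}\le 1$ for $t\le s\le v$, and that $B$ is nonincreasing, so $B(s)\le B(t)$ on $[t,v]$. Taking the supremum in $v\ge t$ yields $B(t)(1-\eta(t))\le |\mathcal{A}(t)|$, and choosing $T\ge t_0$ large enough that $\eta(t)\le 1/2$ for all $t\ge T$ gives the pointwise comparison
\[
B(t)\le 2|\mathcal{A}(t)|\qquad \text{for all } t\ge T.
\]

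For step (ii), the hypothesis now reads $|\mathcal{A}'(t)+\rho\mathcal{A}(t)|\le 2\xi(t)|\mathcal{A}(t)|$ on $[T,+\infty)$. Setting $h(t):=e^{\rho t}\mathcal{A}(t)$, one has $h'(t)=e^{\rho t}(\mathcal{A}'(t)+\rho\mathcal{A}(t))$, hence $|h'(t)|\le 2\xi(t)|h(t)|$ and consequently $|(h^2)'(t)|\le 4\xi(t)h(t)^2$. A standard Grönwall argument then yields, for every $t\ge T$,
\[
h(t)^2\le h(T)^2\exp\!\Bigl(4\int_T^t \xi(s)\,ds\Bigr)\le h(T)^2\exp\!\bigl(4\|\xi\|_{L^1}\bigr),
\]
i.e. $|\mathcal{A}(t)|\le Ce^{-\rho t}$ on $[T,+\infty)$. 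Extending this bound to the compact interval $[t_0,T]$ using the boundedness of $\mathcal{A}$ (and boundedness of $e^{\rho t}$ there) concludes the proof.

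The only delicate step is (i): one must commit to bounding the exponentials by $1$ in the Duhamel identity rather than trying to extract extra decay from them, and exploit the monotonicity of $B$ to factor it out of the integral, so that the smallness of $\eta(t)$ alone is enough to absorb the term $B(t)\eta(t)$ into the left-hand side. No finer information on $\xi$ (e.g. a decay rate) is needed beyond mere integrability.
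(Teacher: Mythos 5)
Your proof is correct and follows essentially the same route as the paper: first integrate the weighted inequality (your Duhamel step is just the integrated form of $(e^{\rho t}\mathcal{A})'=e^{\rho t}(\mathcal{A}'+\rho\mathcal{A})$) to absorb the supremum and obtain $\sup_{t'\ge t}|\mathcal{A}(t')|\le 2|\mathcal{A}(t)|$ for $t$ large, then conclude by Grönwall applied to $e^{\rho t}\mathcal{A}(t)$. The only cosmetic difference is that you run Grönwall on $h^2$ rather than on $e^{\rho t}|\mathcal{A}(t)|$ directly, which neatly sidesteps the non-differentiability of the absolute value.
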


\begin{proof}
Let us assume that 
\begin{equation}\label{hyp_lem_gen_diff}
\forall\;t\ge t_0,\qquad |\mathcal{A}'(t)+\rho\mathcal{A}(t)|\le \xi(t)\sup_{t'\ge t}|\mathcal{A}(t')|,
\end{equation} for some $\rho>0$.
Then for all $t\ge t_0$,
$$|(e^{\rho t}\mathcal{A})'(t)|\le \xi(t)e^{\rho t}\sup_{t'\ge t}|\mathcal{A}(t')|.$$
Let us consider $t\ge t_0$. For $t'\ge t$, we obtain by integration
$$|e^{\rho t'}\mathcal{A}(t')- e^{\rho t}\mathcal{A}(t)|\le \int_t^{t'}\xi(s)e^{\rho s}\sup_{u\ge s}|\mathcal{A}(u)|\;ds.$$ This implies that, for $t'\ge t$,
$$e^{\rho t'}|\mathcal{A}(t')|\le e^{\rho t}|\mathcal{A}(t)|+\sup_{u\ge t}|\mathcal{A}(u)|e^{\rho t'}\int_t^{t'}\xi(s)\;ds.$$ From the preceding line, we deduce that for all $t'\ge t$,
\begin{equation}\label{ineq_lem_gen_diff}
|\mathcal{A}(t')|\le |\mathcal{A}(t)|+\sup_{u\ge t}|\mathcal{A}(u)|\int_t^{+\infty}\xi(s)\;ds.
\end{equation}
Now we consider $t_1\ge t_0$ such that $\int_{t_1}^{+\infty}\xi(s)\;ds<\frac{1}{2}$ (which is indeed possible given that $\int_t^{+\infty}\xi(s)\;ds\to 0$ as $t\to +\infty$).
By passing to the supremum on $t'$ in \eqref{ineq_lem_gen_diff}, we obtain for all $t\ge t_1$,
$$\sup_{t'\ge t}|\mathcal{A}(t')|\le 2|\mathcal{A}(t)|.$$
Consequently, assumption \eqref{hyp_lem_gen_diff} becomes 
\begin{equation}\label{hyp_lem_gen_diff1}
\forall\;t\ge t_1,\qquad |\mathcal{A}'(t)+\rho\mathcal{A}(t)|\le 2\xi(t)|\mathcal{A}(t)|.
\end{equation}
Let us define $y(t):=e^{\rho t}|\mathcal{A}(t)|$. By integration of \eqref{hyp_lem_gen_diff1}, we obtain
\begin{equation}\label{hyp_lem_gen_diff2}
\forall\;t\ge t_1,\qquad y(t)\le y(t_1)+\int_{t_1}^t2\xi(s)y(s)\;ds.
\end{equation}
By a standard Grönwall argument, we conclude to the existence of $C>0$ such that for all $t\ge t_1, y(t)\le C$, which implies the desired result. For the sake of completeness, let us explicit this argument.\\
We define $Y(t):=\exp\left(-\int_{t_1}^t2\xi(s)\;ds\right)\int_{t_1}^t2\xi(s)y(s)\;ds$ for $t\ge t_1$. The function $Y$ is $\mathscr{C}^1$ on $[t_1,+\infty)$ and for all $t\ge t_1$,
\begin{align*}
Y'(t)&=2\xi(t)\exp\left(-\int_{t_1}^t2\xi(s)\;ds\right) \left[y(t)-\int_{t_1}^t2\xi(s)y(s)\;ds\right]\\
&\le 2y(t_1)\xi(t)\exp\left(-\int_{t_1}^t2\xi(s)\;ds\right)
\end{align*}
by \eqref{hyp_lem_gen_diff2}.
Integrating the preceding inequality and observing that $Y(t_1)=0$, we have
$$Y(t)\le \int_{t_1}^t2\xi(s)y(t_1)\exp\left(-\int_{t_1}^s2\xi(u)\;du\right)\;ds.$$
We then infer
\begin{equation}\label{hyp_lem_gen_diff3}
\int_{t_1}^t2\xi(s)y(s)\;ds=\exp\left(\int_{t_1}^t2\xi(s)\;ds\right)Y(t)\le 2y(t_1)\int_{t_1}^t\xi(s)\exp\left(\int_s^t2\xi(u)\;du\right)\;ds.
\end{equation}
Lastly, we denote $\nu:=\int_{t_1}^{+\infty}\xi(s)\;ds$; gathering \eqref{hyp_lem_gen_diff2} and \eqref{hyp_lem_gen_diff3}, we obtain
$$\forall\;t\ge t_1,\qquad y(t)\le y(t_1)+2y(t_1)e^{2\nu}\nu.$$
This achieves the proof of Lemma \ref{lem_gen_diff}.
\end{proof}

\bibliographystyle{plain}
\bibliography{references_nlkg}

\end{document}